\documentclass{amsart}
\usepackage{mathabx}
\usepackage{graphicx}
\usepackage{amsfonts}
\usepackage{amsmath}
\usepackage{amscd}
\usepackage{amssymb}
\usepackage{verbatim}
\usepackage{hyperref}
\usepackage{marginnote}
\usepackage{tikz}
\usepackage{cite}
\usepackage{hyperref}
\usepackage{cleveref}
\usepackage{mathdots}

\newenvironment{proof*}{\noindent\emph{Proof}}{$\square$\smallskip}

\newtheorem{theorem}{Theorem}[section]

\newtheorem{Definition}[theorem]{Definition}
\newtheorem{lemma}[theorem]{Lemma}
\newtheorem{Example}[theorem]{Example}

\newtheorem{corollary}[theorem]{Corollary}
\newtheorem{Remark}[theorem]{Remark}
\newtheorem{proposition}[theorem]{Proposition}

\newtheorem{Exercise}[theorem]{Exercise}
\newtheorem{Exercises}[theorem]{Exercises}
\newtheorem{Notation}[theorem]{Notation}
\newtheorem{Convention}[theorem]{Convention}
\newtheorem{standing assumption}[theorem]{Standing Assumption}

\newenvironment{definition}{\begin{Definition}\normalfont}{\end{Definition}}
\newenvironment{example}{\begin{Example}\normalfont}{\end{Example}}
\newenvironment{remark}{\begin{Remark}\normalfont}{\end{Remark}}

\newenvironment{convention}{\begin{Convention}\normalfont}{\end{Convention}}

\newcommand{\lk}{\ensuremath{{\rm lk}}} 
                            
\title[Finiteness properties via expansion sets]{Finiteness properties of generalized Thompson groups via expansion sets}  
\author[D.~S.~Farley]{Daniel S. Farley}
\address{Department of Mathematics\\ Miami University\\ Oxford, OH 45056 U.S.A.}
\email{farleyds@miamioh.edu}

\date{\today}

\begin{document}

\begin{abstract} 
We outline a general procedure that builds classifying spaces for generalized Thompson groups 
$\Gamma$. The construction depends on a small number of choices: (1) an inverse semigroup $S$
of partial transformations that ``locally determine" $\Gamma$; (2) an equivalence relation on 
certain pairs $(f,D)$, and (3) an ``expansion" rule $\mathcal{E}$. These choices determine an \emph{expansion set} $\mathcal{B}$, which is a combinatorial device that outputs a simplicial complex $\Delta^{f}_{\mathcal{B}}$ upon which 
$\Gamma$ acts. Under favorable conditions, often achieved in practice, $\Delta^{f}_{\mathcal{B}}$ is contractible, and the action of $\Gamma$ has small stabilizers.

The definition of $\Delta^{f}_{\mathcal{B}}$ is such that ascending and descending links in $\Delta^{f}_{\mathcal{B}}$ can be described via formulas that depend only on the expansion rule 
$\mathcal{E}$. The result is to facilitate the usual computations of the connectivity of the descending link. Under natural hypotheses, one can prove that the acting group has type $F_{\infty}$. The net effect of our results is to automate results of this kind.

Several applications are given; in particular, we sketch unified proofs that $V$, $nV$, R\"{o}ver's group $G$, and the Lodha-Moore group have type $F_{\infty}$.

\end{abstract}

\subjclass[2010]{Primary 20F65, 20J05; Secondary 20M18}

\keywords{generalized Thompson groups, inverse semigroups, finiteness properties} 

\maketitle

\setcounter{tocdepth}{2} \tableofcontents

\section{Introduction}

\subsection{Generalized Thompson groups and finiteness properties} This paper considers the problem of finding topological finiteness properties of 
generalized Thompson groups.
 
A group $G$ has type $F_{n}$ if there is a $K(G,1)$ complex with finitely many cells 
of dimension $n$ or less. A group $G$ has type $F_{\infty}$ if it has type $F_{n}$ for all $n$ (or, equivalently, if there is a $K(G,1)$ complex with finitely many cells of each dimension). The properties $F_{n}$ (and $F_{\infty}$) are \emph{topological finiteness properties}; i.e., topological properties of a group that hold for all finite groups. 

 The basic examples of Thompson groups are $F$, $T$, and $V$. 
 The group $F$ can be described as the group of piecewise linear homeomorphisms $h$ of the unit interval such that: i) $h'(x)$ is an integral power of $2$, wherever $h'$
 is defined; ii) if $h'(x)$ is undefined, then $x$ is a dyadic rational number. The groups $T$ and $V$ are similarly defined groups of homeomorphisms of the circle and the Cantor set (respectively). A standard introduction
 to $F$, $T$, $V$, and some related groups, is \cite{CFP}. 

In the present paper, 
 the property of being a generalized Thompson group is understood to be the result of the \emph{piecewise}
 nature of the group's definition. Thus, just as members of Thompson's group $F$ are defined by ``local" behavior on certain particular pieces, the groups in our class of generalized Thompson groups will act on spaces $X$ in ways that are locally determined by a collection of partial transformations $S$. Certain generalizations of the Thompson groups, like the braided Thompson groups \cite{BrinBraid, Dehornoy}, lie outside the scope of this paper, although the problem of extending our methods to those groups remains an interesting one. 
 
 The study of the topological finiteness properties of Thompson's groups and their generalizations began in the 1980s. 
Brown and Geoghegan \cite{BrownGeoghegan} showed that Thompson's group $F$ has type $F_{\infty}$. Brown \cite{Brown} subsequently showed that Thompson's groups $T$ and $V$, and the generalized $n$-ary Thompson groups
$F_{n,r}$, $T_{n,r}$, and $V_{n,r}$, have type $F_{\infty}$.  

In recent years, there have been numerous studies of generalized Thompson groups and their topological finiteness properties \cite{QV, BelkMatucci, BraidedV, F(LM), nV, Lodha, Nucinkis,  Skipper, SkipperZaremsky}. In addition to the above results, several authors have proposed general frameworks for establishing finiteness properties of generalized Thompson groups \cite{FH2, FH1, Thumann, Witzel, WZcloning}. We will now describe one such framework, which
was proposed by Hughes and the author in \cite{FH2}.

\subsection{A framework for establishing finiteness properties} \label{subsection:framework} 
Let $\Gamma$ be a generalized Thompson group. From the perspective of \cite{FH2}, this means that $\Gamma$ is a group of bijections of a set $X$, and there is an inverse semigroup $S$ of partial transformations of $X$ such that each $\gamma \in \Gamma$ is expressible as a finite disjoint union of members from $S$; i.e., we have an equality
\[ \gamma = \coprod_{i=1}^{n} s_{i}, \]
where each $s_{i}$ is in $S$, and the domains (and images) of the $s_{i}$ do not overlap. (By an \emph{inverse semigroup}, we will always mean a collection $S$ of partial bijections $s: A \rightarrow B$, where $A,B \subseteq X$, such that $S$ is closed under inverses and compositions. The composition $s_{1} \circ s_{2}$ is defined ``on overlaps":  $dom(s_{1} \circ s_{2})$ is the set of all $x \in dom(s_{2})$ such that $s_{2}(x) \in dom(s_{1})$. The empty function serves as a zero element.) The main argument of \cite{FH2} builds a classifying space for $\Gamma$ in two steps: first, the authors build a natural directed set upon which $\Gamma$ acts, and, second, the authors introduce \emph{expansion schemes}, which offer a systematic way to simplify the directed set construction. The latter, simplified classifying spaces lead to proofs that various generalized Thompson groups have type $F_{\infty}$. Here, the directed set construction resembles the ones due to Brown \cite{Brown}, while the simplified constructions (employing expansion schemes) are modelled after the ones found in Stein \cite{Stein}.

The directed set construction begins with the set $\mathcal{D}^{+}_{S}$ of non-empty \emph{domains}, which are simply the domains of transformations from $S$. 
We then introduce an \emph{$S$-structure} $(\mathbb{S},\mathbb{P})$, where $\mathbb{S}$ and $\mathbb{P}$ are functions, called the \emph{structure} and \emph{pattern} functions, respectively. The function $\mathbb{S}$ associates to each pair
$(D_{1}, D_{2}) \in \mathcal{D}^{+}_{S} \times \mathcal{D}^{+}_{S}$ a (possibly empty) collection of transformations $\mathbb{S}(D_{1}, D_{2})$. Each set $\mathbb{S}(D_{1}, D_{2})$ is a subset of $S$ whose members have $D_{1}$ as domain and $D_{2}$ as image. The sets $\mathbb{S}(D_{1},D_{2})$ are further required to satisfy various ``groupoid-like" properties. The function $\mathbb{P}$ associates to each domain $D$ a collection of preferred finite partitions into subdomains. Thus, $\mathbb{P}$ assigns a kind of subdivision rule to the domains of $\mathcal{D}^{+}_{S}$. 

Let $\widehat{S}$ denote the set of all finite disjoint unions 
of members of $S$.
For pairs $(f,D), (g,D') \in \widehat{S} \times \mathcal{D}^{+}_{S}$ such that
$D \subseteq dom(f)$ and $D' \subseteq dom(g)$, we write
\[ (f,D) \sim (g,D') \]
if there is $h \in \mathbb{S}(D',D)$ such that $f \circ h = g$. The relation $\sim$ is an equivalence relation. We let $[f,D]$ denote the equivalence class of $(f,D)$.

A \emph{vertex} is a finite collection
\[ v= \{ [f_{1}, D_{1}], \ldots, [f_{m},D_{m}] \} \]
such that the images $f_{i}(D_{i})$ ($i=1, \ldots, m$) partition $X$. We then define an \emph{expansion partial order}, writing $v \leq v'$ if there is a sequence of vertices 
\[ v = v_{0}, v_{1}, v_{2}, \ldots, v_{k} = v' \]
such that, for $i=1, \ldots, k$, $v_{i}$ is the result of replacing some member $[g,E]$ of $v_{i-1}$
by a collection 
\[ [g_{\mid}, E_{1}], \ldots, [g_{\mid}, E_{\ell}], \]
where $\{ E_{1}, \ldots, E_{\ell}\}$ is one of the preferred partitions of $E$ determined by the function $\mathbb{P}$. [This definition of expansion is a slight oversimplification, but it is sufficient for this brief overview.] Under suitable assumptions on $\mathbb{S}$ and $\mathbb{P}$, as laid out in \cite{FH2}, the set of vertices $\mathcal{V}$ becomes a directed set under the expansion partial order. It follows that the simplicial realization $\Delta(\mathcal{V})$ of the directed set is contractible.

The simplicial complex $\Delta(\mathcal{V})$ has various properties that make it difficult to work with in practice. For instance, $\Delta(\mathcal{V})$ almost always fails to be locally finite, and indeed it is generally not even locally finite-dimensional. In order to simplify the complex $\Delta(\mathcal{V})$, \cite{FH2} introduces an \emph{expansion scheme} $\mathcal{E}$, which restricts the admissible expansions from each pair $[f,D]$. We then define a complex $\Delta^{\mathcal{E}}(\mathcal{V})$, which consists of the simplices 
\[ v_{0} < v_{1} < \ldots < v_{k} \]
in $\Delta(\mathcal{V})$ such that each vertex $v_{i}$ may be obtained from 
$v_{0}$ using only the expansions allowed by $\mathcal{E}$. Under suitable hypotheses, the complex 
$\Delta^{\mathcal{E}}(\mathcal{V})$ remains contractible, and in many cases it is (for instance) locally finite.

Once the complex $\Delta^{\mathcal{E}}(\mathcal{V})$ has been produced, we apply Brown's Finiteness Criterion \cite{Brown}. The complex $\Delta^{\mathcal{E}}(\mathcal{V})$ has a natural filtration $\{ \Delta^{\mathcal{E}}(\mathcal{V})_{n}: n \in \mathbb{N} \}$, where $\Delta^{\mathcal{E}}(\mathcal{V})_{n}$ denotes the subcomplex of 
$\Delta^{\mathcal{E}}(\mathcal{V})$ spanned by vertices of cardinality no greater than $n$. (The assignment
$v \mapsto |v|$ thus plays the role of a height function.) The latter part of \cite{FH2} gives general criteria that determine 
(1) when $\Gamma$ acts cocompactly 
on the complexes $\Delta^{\mathcal{E}}(\mathcal{V})_{n}$;
(2) when the stabilizers of vertices have type $F_{\infty}$ (or type $F_{n}$);
(3) when the descending link of a vertex in $\Delta^{\mathcal{E}}(\mathcal{V})$ is highly connected.
As a result, it is possible to conclude, in many cases, that the group $\Gamma$ has type $F_{\infty}$.

\subsection{A new approach via ``expansion sets"}
The Lodha-Moore group of \cite{Lodha,LM}, here denoted $G$, offered a useful test case of the approach from \cite{FH2}. We note that Lodha has shown \cite{Lodha} that $G$ has type $F_{\infty}$. The author attempted to use the results of \cite{FH2} to give another proof that $G$ has type $F_{\infty}$. This attempt, while ultimately successful \cite{F(LM)}, revealed a few shortcomings of \cite{FH2}. Most notably, the initial decision of \cite{FH2} to work with the full set of domains $\mathcal{D}^{+}_{S}$ proved to be very inconvenient. 

The author's experience in writing \cite{F(LM)} suggested that, to facilitate further developments, it may be useful to loosen some of the dependence on a specific list of initial assumptions.
In this paper, we will therefore, in effect, tell the story of \cite{FH2} backwards. Our starting point is a topological construction, into which algebraic data may be input. Under favorable conditions, the outcome is a proof that the relevant group has type $F_{\infty}$.

We begin with a new abstract object called an \emph{expansion set}, which is a $4$-tuple $(\mathcal{B}, X, supp, \mathcal{E})$, where $\mathcal{B}$ and $X$ are sets and
$supp$ and $\mathcal{E}$ are functions. We will frequently refer to $\mathcal{B}$
itself as the expansion set, since the remaining components play supporting roles.
Each member $b$ of $\mathcal{B}$ takes up a certain (non-empty) amount of space, called its \emph{support} and denoted $supp(b)$, within the set $X$. Thus, $supp$ is a function from $\mathcal{B}$ to $\mathcal{P}(X)$. Each $b \in \mathcal{B}$ is also assigned a partially ordered set $\mathcal{E}(b)$ of \emph{expansions}. 
The set $\mathcal{E}(b)$ has $\{ b \}$ as its least element. Every other element of $\mathcal{E}(b)$ takes the form $\{ b_{1}, \ldots, b_{m} \} \subseteq \mathcal{B}$
(for varying $2 \leq m < \infty$), where $\{ supp(b_{1}), \ldots, supp(b_{m}) \}$ is a partition of $supp(b)$. More generally, if two members 
$v _{1} = \{ b_{1}, \ldots, b_{m} \}$ and $v_{2} = \{ b'_{1}, \ldots, b'_{\ell} \}$ of $\mathcal{E}(b)$ are such that $v_{1} \lneq v_{2}$, then $\{ supp(b'_{1}), \ldots, supp(b'_{\ell}) \}$ is a proper refinement 
of $\{ supp(b_{1}), \ldots, supp(b_{m}) \}$.  (Here we omit discussion of a third property, Definition \ref{definition:expansionsets}(3), which can be viewed as a compatibility condition between different sets $\mathcal{E}(b)$ and $\mathcal{E}(b')$.) We will often think of $\mathcal{E}(b)$ as a simplicial complex in which the simplices are finite ascending chains. 

The expansion set $(\mathcal{B}, X, supp, \mathcal{E})$ determines a simplicial complex $\Delta_{\mathcal{B}}$. A \emph{vertex} in $\Delta_{\mathcal{B}}$ is a finite  subset $v = \{ b_{1}, \ldots, b_{m} \}$ of $\mathcal{B}$ (for some $m$) such that $supp(b_{i}) \cap supp(b_{j}) = \emptyset$ for $1 \leq i < j \leq m$. The cardinality of $v$ is its \emph{height}.
The simplicial structure in $\Delta_{\mathcal{B}}$ is completely determined by the ascending stars of the vertices, and each ascending star $st_{\uparrow}(v)$ is essentially defined to be the product of the individual sets $\mathcal{E}(b_{i})$ for $i=1, \ldots, m$, where 
$v = \{ b_{1}, \ldots, b_{m} \}$. As a result, each ascending link $lk_{\uparrow}(v)$ in $\Delta_{\mathcal{B}}$ has a natural join structure. The descending star $st_{\downarrow}(v)$ is more complicated, but it admits a cover by subcomplexes $st_{\downarrow}^{\mathcal{P}}(v)$, each of which has a natural factorization as a product. Analogous statements are true of the descending link $lk_{\downarrow}(v)$: it does not directly admit a join structure, but there are subcomplexes $lk_{\downarrow}^{\mathcal{P}}(v)$ that cover
$lk_{\downarrow}(v)$, such that each $lk_{\downarrow}^{\mathcal{P}}(v)$ has a natural join structure.

The complex of greatest interest is the \emph{full-support complex}, denoted $\Delta^{f}(\mathcal{B})$, which is spanned by the vertices 
$v = \{ b_{1}, \ldots, b_{m} \}$ such that $\{ supp(b_{1}), \ldots, supp(b_{m}) \}$
partitions $X$. It is this complex that can be most relevantly compared to the complexes in Stein's work \cite{Stein}. (We note, however, that this type of comparison depends on the size of the sets $\mathcal{E}(b)$: for very large sets $\mathcal{E}(b)$, the construction is more like the ones in \cite{Brown}, while, for small sets $\mathcal{E}(b)$, the construction is like the ones in \cite{Stein}.)

Within the above combinatorial-topological set-up, it is possible to prove that most parts of Brown's Finiteness Criterion (Theorem \ref{theorem:Brown}) apply to 
$\Delta^{f}_{\mathcal{B}}$, under suitable (but rather general) hypotheses. Perhaps most notably, the above set-up gives us a very useful and general inductive approach to analyzing the connectivity of the descending link (Corollary \ref{corollary:connectivityeasy}), which is essential in proving that the connectivity of the members of the natural filtration
\[ \{ \Delta^{f}_{\mathcal{B},n} \mid n \in \mathbb{N} \} \]
 tends to infinity with $n$. (The complex $\Delta^{f}_{\mathcal{B},n}$ is the subcomplex of $\Delta^{f}_{\mathcal{B}}$ spanned by vertices of height no more than $n$.)
 
All that is missing at this point is a group action. But it is straightforward enough to create a suitable definition of a group action on the expansion set $\mathcal{B}$. Our approach actually defines the (partial) action of an inverse semigroup $\widehat{S}$
on $\Delta_{\mathcal{B}}$ (Definition \ref{definition:actionsonexpansionsets}). Using our definitions, we are able to set down general conditions under which cell stabilizers have type $F_{n}$ (Corollary \ref{corollary:Fnstabilizers}), and under which the action of $\Gamma$ on
$\Delta^{f}_{\mathcal{B},n}$ is cocompact (Proposition \ref{proposition:cocompact}). With these elements in place, one has a general schema (Theorem \ref{theorem:template}) for proving that a generalized Thompson group has type $F_{\infty}$.
(While we do not directly concern ourselves here with groups of $F_{n}$ that are not of type $F_{n+1}$, we believe that our methods can be adapted to such groups without too much modification.)  

The abstract definition of expansion sets confers the usual benefits of abstraction: generality and clarity. The current version of the theory applies seamlessly to the Lodha-Moore group, for example. Putting expansion sets at the center of our exposition also leads to a streamlining of the topological arguments. An additional benefit is that we are able to handle ``$F$"- and ``$T$''-like groups, where \cite{FH2} considers only ``$V$"-like groups. 

Finally, in Section \ref{section:construction}, we offer examples of expansion sets. The first example, which we simply call the basic construction in Subsection \ref{subsection:basic}, can be applied to any group $\Gamma$ that is locally determined by an inverse semigroup $S$ acting on $X$. The price of such generality is that the complex $\Delta^{f}_{\mathcal{B}}$ is difficult to analyze directly. We view the basic construction as a direct descendant of the complexes considered by Brown in \cite{Brown}. Here the sets $\mathcal{E}(b)$ are very large. In subsequent examples, we show how one can strategically reduce the size of $\mathcal{E}(b)$ and achieve much more economical complexes. We explicitly consider Thompson's group $V$, the Brin-Thompson groups $nV$, R\"{o}ver's group, and the Lodha-Moore group, sketching proofs that all have type $F_{\infty}$. (The latter groups were first proved to have type $F_{\infty}$ in
\cite{Brown, BrownGeoghegan}, \cite{nV}, \cite{BelkMatucci}, and \cite{Lodha}, respectively.) The complexes that we build for these groups may be considered versions of the ones first constructed by Stein \cite{Stein}. 

The underlying message of Section \ref{section:construction} is that the proof of $F_{\infty}$ can largely be reduced to a single choice, namely the choice of function $\mathcal{E}$. (The other choices are fairly standard. However, the example of R\"{o}ver's group also shows the potential benefits of tinkering with the basic definition of the equivalence relation on pairs $(f,D)$.) The desired topological properties of $\Delta^{f}_{\mathcal{B}}$ follow directly, based on the work of earlier sections. The proof of $F_{\infty}$ for most of the groups listed above becomes a straightforward application of the theory. A partial exception is the Lodha-Moore group, which still requires an involved combinatorial analysis \cite{F(LM)} that will not be reproduced here.  

  Let us summarize the paper section-by-section. Section \ref{section:expansion}
  contains an exposition of expansion sets, beginning in Subsection \ref{subsection:expdef} with their definition and the definitions of the associated complexes $\Delta^{f}_{\mathcal{B}}$ and $\Delta_{\mathcal{B}}$. Subsection \ref{subsection:upanddown} establishes basic facts about upward and downward stars and links in $\Delta_{\mathcal{B}}$. The upward stars and links have straightforward factorizations as products and joins (respectively) of simpler complexes, while the downward stars and links are covered by subcomplexes with such factorizations. Subsection \ref{subsection:nconncrit} considers the connectivity of $\Delta^{f}_{\mathcal{B}}$ as a whole, culminating in Theorem \ref{theorem:nconnectivity}, which establishes a criterion for $n$-connectivity of $\Delta^{f}_{\mathcal{B}}$. Subsection \ref{subsection:filtration} defines a natural filtration 
  $\{ \Delta^{f}_{\mathcal{B},n} \}$ of $\Delta^{f}_{\mathcal{B}}$ and characterizes the connectivity of the former in terms of the descending link. This subsection also marks the end of the general theory of expansion sets and the topology of their associated complexes. Subsection \ref{subsection:threetypes} introduces two commonly encountered types of expansion set: linear and cyclic. These correspond
  to ``$F$"- and ``$T$"-like groups, respectively. The subsection goes on to consider, in each case, a combinatorial analysis of the connectivity of the descending links, under commonly encountered (but not completely general) hypotheses.  

Section \ref{section:groupactions} introduces group actions on expansion sets in the linear, cyclic, and ``permutational" (i.e., ``$V$''-like) cases. Here we consider definitions (Subsection \ref{subsection:actions}), cell stabilizers (Subsection \ref{subsection:stabilizers}),
cocompactness of the members of the filtration (Subsection \ref{subsection:cocompact}), and review Brown's Finiteness Criterion (Subsection \ref{subsection:Brown}). Section \ref{section:groupactions}
culminates in Theorem \ref{theorem:template}, which assembles all previous results into a set of sufficient conditions for the group $\Gamma$ to have type $F_{\infty}$. 

Section \ref{section:construction} contains examples. Subsection \ref{subsection:basic} considers the most generic examples, while Subsections \ref{subsection:thompson}, \ref{subsection:brin}, \ref{subsection:rover}, and \ref{subsection:LM} consider Thompson's group
$V$, the Brin-Thompson group $nV$, R\"{o}ver's group, and the Lodha-Moore group, respectively. The treatment of $V$ in Subsection \ref{subsection:thompson} is the most complete, and is intended to show that most of the necessary verifications are routine.

I would like to thank Yuya Kodama for suggesting a correction to an earlier version of this paper. Remark \ref{remark:final} previously stated that Moawad \cite{Moawad} had shown that certain groups defined by Kodama \cite{Kodama} have type $F_{\infty}$. In fact, Moawad's paper concerns slightly different groups (as Moawad himself is careful to note).


\section{Expansion sets} \label{section:expansion} 

In this section, we define expansion sets and consider the topological properties of the associated simplicial complexes
$\Delta_{\mathcal{B}}$ and $\Delta^{f}_{\mathcal{B}}$. Subsection \ref{subsection:expdef} contains the basic definitions. Subsection \ref{subsection:upanddown} describes the topology of upward and downward stars and links. The latter description results in a criterion  for the $n$-connectivity of $\Delta^{f}_{\mathcal{B}}$. This is established in Subsection\ref{subsection:nconncrit}. Subsection \ref{subsection:filtration} introduces a natural finite-height filtration of $\Delta^{f}_{\mathcal{B}}$, and also relates the connectivity of the latter to the descending link. Subsection \ref{subsection:threetypes} specializes the discussion somewhat, introducing special types of expansion set, and providing criteria that allow one to conclude that the descending link is highly connected, for sufficiently large height.

\subsection{Definition of expansion sets and associated simplicial complexes}
\label{subsection:expdef}


\begin{definition} \label{definition:expansionsets}
(Expansion sets) 
An \emph{expansion set} is a $4$-tuple $(\mathcal{B}, X, supp, \mathcal{E})$, where $\mathcal{B}$ and $X$ are sets, and $supp: \mathcal{B} \rightarrow \mathcal{P}(X)$ and $\mathcal{E}$ are functions. We may sometimes let $\mathcal{B}$ itself denote an expansion set, for the sake of brevity. It is occasionally convenient to call $\mathcal{B}$ an \emph{expansion set over $X$} when we want to emphasize the role of the set $X$.

For each $b \in \mathcal{B}$, $supp(b)$ is required to be non-empty. The function $supp$ is called the \emph{support function}, and $supp(b)$ is the \emph{support} of $b$.  

A \emph{vertex} is a finite subset $v = \{ b_{1}, \ldots, b_{k}\} \subseteq \mathcal{B}$ such that $supp(b_{i}) \cap supp(b_{j}) = \emptyset$ when $i \neq j$. The set of all vertices is denoted $\mathcal{V}_{\mathcal{B}}$. For each $v \in \mathcal{V}_{\mathcal{B}}$, we define 
\[ supp(v) = \bigcup_{\ell =1}^{k} supp(b_{\ell}); \quad \quad  
P(v) = \{ supp(b_{\ell}) \mid \ell \in \{ 1, \ldots, k \} \}.  \]
The collection $P(v)$ is the \emph{partition induced by $v$}. It is a partition of $supp(v)$.

 The function $\mathcal{E}$ assigns a partially ordered set of vertices, denoted $\mathcal{E}(b)$, to each $b \in \mathcal{B}$. 
 The set $\mathcal{E}(b)$  and partial order $\leq$ are required to satisfy the following three conditions:
\begin{enumerate}
\item The vertex $\{ b \}$ is the least member of $\mathcal{E}(b)$; i.e., $\{ b \} \in \mathcal{E}(b)$ and, if $v \in \mathcal{E}(b)$, then $\{ b \} \leq v$;
\item if $v_{1}, v_{2} \in \mathcal{E}(b)$ and $v_{1} < v_{2}$, then $P(v_{2})$ is a proper refinement of $P(v_{1})$.
\end{enumerate}

Let $v_{1}, v_{2} \in \mathcal{E}(b)$. For  $b \in v_{1}$, we define
\[ r_{b}(v_{2}) = \{ b' \in v_{2} \mid supp(b') \subseteq supp(b) \}. \]
(Thus, $r_{b}(v_{2})$ is the portion of the vertex $v_{2}$ whose support is contained in that of $b$.)
With these conventions, we can state the final condition:
\begin{enumerate}
\item[(3)] If $v_{1}, v_{2} \in \mathcal{E}(b)$, $v_{1} \leq v_{2}$ if and only if $r_{b'}(v_{2}) \in \mathcal{E}(b')$, for each $b' \in v_{1}$.
\end{enumerate}  
\end{definition}

\begin{remark} \label{remark:posetremark}
We note that each set $\mathcal{E}(b)$ is a separate partially ordered set. We are denoting a (usually infinite) set of partial orders by $\leq$, avoiding notation like $\leq_{b}$ for the sake of simplicity.
\end{remark}

\begin{figure}[!b]
\begin{center}
\begin{tikzpicture}
\filldraw[lightgray] (0,0) -- (1,1) -- (0,2) -- cycle;
\draw[black] (0,0) -- (0,2);
\draw[black] (0,0) -- (1,1);
\draw[black] (1,1) -- (0,2);

\draw[black] (5.5,.25) -- (5.5,1.75);

\draw[black] (-1.4,-.2) rectangle (-.4,.2);
\node at (-.9, 0){\footnotesize $b$};

\draw[black] (-1.4,1.8) rectangle (-.4,2.2);
\draw[black] (-1.07,1.8) rectangle (-.73,2.2);
\node at (-1.235,2){\footnotesize $b_{1}$};
\node at (-.905,2){\footnotesize $b_{2}$};
\node at (-.55,2){\footnotesize $b_{3}$};

\draw[black] (1.4,.8) rectangle (2.4,1.2);
\draw[black] (1.4,.8) rectangle (1.73,1.2);
\node at (1.6,1){\footnotesize $b_{1}$};
\node at (2.065,1){\footnotesize $b'$};

\filldraw [black] (0,2) circle (1.5pt);
\filldraw [black] (1,1) circle (1.5pt);
\filldraw [black] (0,0) circle (1.5pt);

\filldraw [black] (5.5,.25) circle (1.5pt);
\filldraw [black] (5.5,1.75) circle (1.5pt);

\draw[black](5.9,.05) rectangle (6.57,.45); 
\node at (6.235,.25){\footnotesize $b'$};
\draw[black](5.9,1.55) rectangle (6.57,1.95); 
\draw[black](6.235,1.55) -- (6.235,1.95);
\node at (6.08,1.75){\footnotesize $b_{2}$};
\node at (6.42,1.75){\footnotesize $b_{3}$};

\node at (-.3,1){$\mathbf{e_{2}}$};
\node at (.8,.3){$\mathbf{e_{1}}$};
\node at (.8,1.6){$\mathbf{e_{3}}$};
\node at (.4,1){$\mathbf{f}$};

\end{tikzpicture}
\end{center}
\caption{The figure illustrates possible definitions of $\mathcal{E}(b)$ (left) and $\mathcal{E}(b')$ (right).}
\label{expansionsetpicture}
\end{figure}
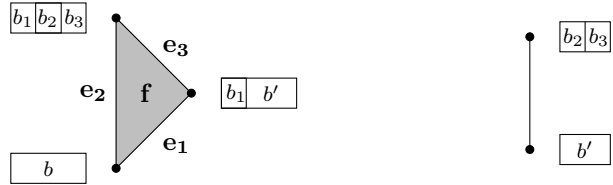

\begin{remark} \label{remark:Ebasasimplicialcomplex}(the partially ordered sets $\mathcal{E}(b)$ as abstract simplicial complexes)
We can also view the partially ordered set $(\mathcal{E}(b), \leq)$ as the vertex set for an abstract simplicial complex, in which the simplices are finite non-empty strictly ascending chains. We will generally do this without additional comment. 
\end{remark}

\begin{example} \label{example:figure} 
Figure \ref{expansionsetpicture} depicts two different partially ordered sets,
$\mathcal{E}(b)$ and $\mathcal{E}(b')$, on the left and right respectively, which are part of some larger expansion set $\mathcal{B}$. The vertices of the simplicial complex
$\mathcal{E}(b)$ are $\{ b \}$, $\{ b_{1}, b' \}$, and 
$\{ b_{1}, b_{2}, b_{3} \}$ (reading counterclockwise from the bottom). The vertices of $\mathcal{E}(b')$ are $\{ b' \}$ and 
$\{ b_{2}, b_{3} \}$.

The rectangles labelling the vertices are intended to describe the supports of the elements in question, where the base of a rectangle represents the support of its label. Thus, for instance,
\[ supp(b) = supp(b_{1}) \cup supp(b') = supp(b_{1}) \cup supp(b_{2}) \cup supp(b_{3}). \]
(The height of the rectangles has no specific meaning.) Properties (1) and (2) from Definition \ref{definition:expansionsets} are straightforward: property (1) simply says that $\{ b \}$ and $\{ b' \}$ 
are the least vertices in $\mathcal{E}(b)$ and $\mathcal{E}(b')$, while (2) forces $P(v_{2})$ to refine $P(v_{1})$, whenever $v_{1}$ and $v_{2}$ are members of the same $\mathcal{E}(b'')$, and $v_{1} \leq v_{2}$ in $\mathcal{E}(b'')$. Both of these properties are clearly satisfied. 

We consider property (3) from Definition \ref{definition:expansionsets} in more detail. Let us assume that $\mathcal{E}(b')$ is fixed, and consider certain variant definitions of $\mathcal{E}(b)$. 

First, suppose that $\mathcal{E}(b)$ consists only of the edges $e_{1}$ and $e_{2}$, and their endpoints. By (3),
\[ v:= \{ b_{1}, b' \} \leq \{ b_{1}, b_{2}, b_{3} \} \]
in $\mathcal{E}(b)$
if and only if $r_{b_{1}}(v) = \{ b_{1} \} \in \mathcal{E}(b_{1})$ and 
$r_{b'}(v) = \{ b_{2}, b_{3} \} \in \mathcal{E}(b')$. The latter two conditions are clear: $\{ b_{2}, b_{3} \} \in \mathcal{E}(b')$ by the right half of the figure, while $\{ b_{1} \} \in \mathcal{E}(b_{1})$ by Definition \ref{definition:expansionsets}(1). Thus, the absence of $e_{3}$ violates
(3). This reasoning shows not only that the presence of $e_{1}$ and $e_{2}$ forces $e_{3}$ to be an edge, but also that the $2$-simplex 
$f$ must be present, since the simplices of $\mathcal{E}(b)$ are finite chains.

Note that, if only $e_{1}$ (and endpoints) were present in $\mathcal{E}(b)$, $\mathcal{E}(b)$ would satisfy (3). This means, in particular, that (3) does not force $\mathcal{E}(b)$ to be ``closed under expansion", except in a restricted sense (as indicated in the previous paragraph). Similarly, the assignment that puts only $e_{2}$ into $\mathcal{E}(b)$ is also consistent with Definition \ref{definition:expansionsets}.   
\end{example}

\begin{definition} \label{definition:deltaB} (The abstract simplicial complex $\Delta_{\mathcal{B }}$) 
Let $\mathcal{B}$ be an expansion set. We define 
$\Delta_{\mathcal{B}} = (\mathcal{V}_{\mathcal{B}}, \mathcal{S}_{\mathcal{B}})$ as follows. 
\begin{enumerate}
\item $\mathcal{V}_{\mathcal{B}}$ is as defined in Definition \ref{definition:expansionsets};
\item Let $\{ v_{1}, v_{2}, \ldots, v_{n} \} \subseteq \mathcal{V}_{\mathcal{B}}$. 
The collection $\{ v_{1}, \ldots, v_{n} \}$ is in
$\mathcal{S}_{\mathcal{B}}$ if and only if, possibly after a (fixed) reordering of the $v_{i}$,
\[ \{ b \} = r_{b}(v_{1}) \leq r_{b}(v_{2}) \leq \ldots \leq r_{b}(v_{n})\] is a chain
in $\mathcal{E}(b)$ for each $b \in v_{1}$. where $r_{b}$ is  as in Definition \ref{definition:expansionsets}.
\end{enumerate}
\end{definition}

\begin{remark} (Canonical ordering) \label{remark:canonical}
If $S = \{ v_{1}, \ldots, v_{n} \} \in \mathcal{S}_{\mathcal{B}}$, then we say that $S$ is in \emph{canonical order} if 
$P(v_{j})$ refines $P(v_{i})$, for all $i < j$. It is a consequence of Definition \ref{definition:expansionsets}(2) that the canonical order exists and is unique. The canonical order is the ordering referenced in Definition \ref{definition:deltaB}(2).
\end{remark}

\begin{lemma} \label{lemma:complex}
Let $v_{1} \leq \ldots \leq v_{n}$ be a chain
in $\mathcal{E}(b)$, for some $b \in \mathcal{B}$. If $j \in \{1, 2, \ldots, n \}$ and $b' \in v_{j}$, then 
\[ r_{b'}(v_{j}) \leq r_{b'}(v_{j+1}) \leq \ldots \leq r_{b'}(v_{n}) \]
is a chain in $\mathcal{E}(b')$.
\end{lemma}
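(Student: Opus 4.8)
The plan is to derive everything from condition~(3) of Definition~\ref{definition:expansionsets}, invoked at two different ``levels'': an instance inside $\mathcal{E}(b)$, which tells us that a restriction $r_{c}(w)$ of a large enough vertex $w$ lies in $\mathcal{E}(c)$, and an instance inside $\mathcal{E}(b')$, which lets us compare consecutive terms of the proposed chain. The one genuinely computational ingredient is an associativity identity for the restriction operator: if $supp(c)\subseteq supp(b')$, then $r_c(r_{b'}(w))=r_c(w)$ for every $w\subseteq\mathcal{B}$, because the nested condition ``$supp(d)\subseteq supp(b')$ \emph{and} $supp(d)\subseteq supp(c)$'' collapses to ``$supp(d)\subseteq supp(c)$''. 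I would isolate this identity at the outset.

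First I would check that $r_{b'}(v_k)\in\mathcal{E}(b')$ for each $k$ with $j\le k\le n$. Since $v_1\le\cdots\le v_n$ is a chain in $\mathcal{E}(b)$ we have $v_j\le v_k$ (reflexivity covers $k=j$), and $b'\in v_j$, so Definition~\ref{definition:expansionsets}(3) applied to the relation $v_j\le v_k$ immediately yields $r_{b'}(v_k)\in\mathcal{E}(b')$; in the case $k=j$ this simply recovers $r_{b'}(v_j)=\{b'\}$, since $v_j$ induces a partition and $b'$ is the unique member of $v_j$ whose support lies inside $supp(b')$. Second I would prove $r_{b'}(v_k)\le r_{b'}(v_{k+1})$ in $\mathcal{E}(b')$ for $j\le k\le n-1$. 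Both sides lie in $\mathcal{E}(b')$ by the first step, so Definition~\ref{definition:expansionsets}(3)---now read inside $\mathcal{E}(b')$, with $r_{b'}(v_k)$ playing the role of the smaller vertex---reduces the desired inequality to the assertion that $r_c(r_{b'}(v_{k+1}))\in\mathcal{E}(c)$ for every $c\in r_{b'}(v_k)$. Any such $c$ lies in $v_k$ and has $supp(c)\subseteq supp(b')$, so the associativity identity gives $r_c(r_{b'}(v_{k+1}))=r_c(v_{k+1})$, and $r_c(v_{k+1})\in\mathcal{E}(c)$ follows from Definition~\ref{definition:expansionsets}(3) applied to the relation $v_k\le v_{k+1}$ in $\mathcal{E}(b)$ (valid since $c\in v_k$). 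Concatenating these inequalities gives the chain $r_{b'}(v_j)\le r_{b'}(v_{j+1})\le\cdots\le r_{b'}(v_n)$ in $\mathcal{E}(b')$, as required.

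The step I expect to cause trouble is not mathematical depth but the careful separation of the two appeals to condition~(3): the instance controlling order relations inside $\mathcal{E}(b)$ versus the instance controlling order relations inside $\mathcal{E}(b')$, linked by the identity $r_c(r_{b'}(w))=r_c(w)$. Once that identity is set down, the remainder is a short unwinding of definitions; in particular, no induction on the length of the chain is needed, and transitivity of the order on $\mathcal{E}(b')$ is automatic, since every term has already been placed in $\mathcal{E}(b')$.
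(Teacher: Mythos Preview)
Your proof is correct and follows essentially the same approach as the paper's: two nested applications of Definition~\ref{definition:expansionsets}(3), one inside $\mathcal{E}(b)$ and one inside $\mathcal{E}(b')$, bridged by the identity $r_c(r_{b'}(w))=r_c(w)$ when $supp(c)\subseteq supp(b')$. The only cosmetic differences are that the paper first reduces to the case $j=1$ and then verifies all pairwise inequalities $r_{b'}(v_{k_1})\le r_{b'}(v_{k_2})$ directly, whereas you work with general $j$ and check only consecutive inequalities, appealing to transitivity of the partial order on $\mathcal{E}(b')$; both routes are equally valid.
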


\begin{proof}
First, we note that the general case follows from the case $j=1$. Indeed, assume that, whenever $v_{1} \leq v_{2} \leq \ldots \leq v_{n}$ is an arbitrary chain in some $\mathcal{E}(b)$ and $b' \in v_{1}$, then $r_{b'}(v_{1}) \leq \ldots \leq r_{b'}(v_{n})$ is a chain in
$\mathcal{E}(b')$. Let $v_{1} \leq \ldots \leq v_{n}$ and let $b' \in v_{j}$. We can apply our assumption to the chain $v_{j} \leq \ldots \leq v_{n}$ in $\mathcal{E}(b)$, concluding that $r_{b'}(v_{j}) \leq \ldots \leq r_{b'}(v_{n})$ is a chain in $\mathcal{E}(b')$, as required.

We turn to the $j=1$ case. Consider a chain 
$v_{1} \leq \ldots \leq v_{n}$ in $\mathcal{E}(b)$, and let $b' \in v_{1}$. Since
$v_{1} \leq v_{k}$ in $\mathcal{E}(b)$ for $k = 1, \ldots, n$, we can conclude from Definition \ref{definition:expansionsets}(3) that $r_{b'}(v_{k}) \in \mathcal{E}(b')$ for $k=1, \ldots, n$. The final step is to prove that 
$r_{b'}(v_{k_{1}}) \leq r_{b'}(v_{k_{2}})$ in $\mathcal{E}(b')$ whenever $1 \leq k_{1} \leq k_{2} \leq n$. By Definition \ref{definition:expansionsets}(3), it suffices to show that
\[ r_{b''}(r_{b'}(v_{k_{2}})) \in \mathcal{E}(b'') \]
for all $b'' \in r_{b'}(v_{k_{1}})$. 

Let $b'' \in r_{b'}(v_{k_{1}})$ be arbitrary.
We note that $b'' \in v_{k_{1}}$ (since $r_{b'}(v_{k_{1}}) \subseteq v_{k_{1}}$), so
$r_{b''}(v_{k_{2}}) \in \mathcal{E}(b'')$, by Definition \ref{definition:expansionsets}(3)
(since $v_{k_{1}} \leq v_{k_{2}}$ in $\mathcal{E}(b)$ and $b'' \in v_{k_{1}}$). The proof is then concluded by noting that, since $supp(b'') \subseteq supp(b')$, $r_{b''}(r_{b'}(v_{k_{2}})) = r_{b''}(v_{k_{2}}) \in \mathcal{E}(b'')$.
\end{proof}

\begin{proposition} \label{proposition:deltaB}
($\Delta_{\mathcal{B}}$ is a simplicial complex)
The pair $\Delta_{\mathcal{B}} = (\mathcal{V}_{\mathcal{B}}, \mathcal{S}_{\mathcal{B}})$
is an abstract simplicial complex.
\end{proposition}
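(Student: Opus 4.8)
The plan is to check the two defining properties of an abstract simplicial complex for the pair $(\mathcal{V}_{\mathcal{B}}, \mathcal{S}_{\mathcal{B}})$: (i) every singleton $\{v\}$ with $v \in \mathcal{V}_{\mathcal{B}}$ lies in $\mathcal{S}_{\mathcal{B}}$, and (ii) $\mathcal{S}_{\mathcal{B}}$ is closed under passing to non-empty subsets. Property (i) is essentially immediate, and property (ii) is where Lemma \ref{lemma:complex} carries the weight.

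For (i), fix $v \in \mathcal{V}_{\mathcal{B}}$ and $b \in v$. I would first note that $r_{b}(v) = \{b\}$: if $b' \in v$ with $supp(b') \subseteq supp(b)$ and $b' \neq b$, then the disjointness condition in Definition \ref{definition:expansionsets} forces $supp(b') = supp(b') \cap supp(b) = \emptyset$, contradicting non-emptiness of supports. Since $\{b\} \in \mathcal{E}(b)$ by Definition \ref{definition:expansionsets}(1), the one-term "chain" $\{b\} = r_{b}(v)$ is a chain in $\mathcal{E}(b)$, so $\{v\}$ satisfies the condition of Definition \ref{definition:deltaB}(2), whence $\{v\} \in \mathcal{S}_{\mathcal{B}}$.

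For (ii), let $S = \{v_{1}, \ldots, v_{n}\} \in \mathcal{S}_{\mathcal{B}}$, taken in canonical order (Remark \ref{remark:canonical}), so that for each $b_{0} \in v_{1}$ the chain $\{b_{0}\} = r_{b_{0}}(v_{1}) \leq \cdots \leq r_{b_{0}}(v_{n})$ lies in $\mathcal{E}(b_{0})$, and let $T = \{v_{i_{1}}, \ldots, v_{i_{m}}\}$ with $i_{1} < \cdots < i_{m}$ be a non-empty subset. Since $P(v_{i_{a}})$ refines $P(v_{i_{c}})$ whenever $i_{c} < i_{a}$ (inherited from the canonical order of $S$), the displayed ordering of $T$ is its canonical order. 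Now fix $b \in v_{i_{1}}$. There is $b_{0} \in v_{1}$ with $supp(b) \subseteq supp(b_{0})$: take $b_{0} = b$ if $i_{1} = 1$, and otherwise use that $P(v_{i_{1}})$ refines $P(v_{1})$. Then $b \in r_{b_{0}}(v_{i_{1}})$, so applying Lemma \ref{lemma:complex} to the chain $r_{b_{0}}(v_{1}) \leq \cdots \leq r_{b_{0}}(v_{n})$ in $\mathcal{E}(b_{0})$ with $j = i_{1}$ and $b' = b$ gives that $r_{b}(r_{b_{0}}(v_{i_{1}})) \leq r_{b}(r_{b_{0}}(v_{i_{1}+1})) \leq \cdots \leq r_{b}(r_{b_{0}}(v_{n}))$ is a chain in $\mathcal{E}(b)$. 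Since $supp(b) \subseteq supp(b_{0})$ we have $r_{b}(r_{b_{0}}(v_{k})) = r_{b}(v_{k})$ for all $k$ (the identity already used in the proof of Lemma \ref{lemma:complex}), so $r_{b}(v_{i_{1}}) \leq r_{b}(v_{i_{1}+1}) \leq \cdots \leq r_{b}(v_{n})$ is a chain in $\mathcal{E}(b)$; restricting to the indices $i_{1} < i_{2} < \cdots < i_{m}$ and recalling $r_{b}(v_{i_{1}}) = \{b\}$ shows that $\{b\} = r_{b}(v_{i_{1}}) \leq r_{b}(v_{i_{2}}) \leq \cdots \leq r_{b}(v_{i_{m}})$ is a chain in $\mathcal{E}(b)$. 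As $b \in v_{i_{1}}$ was arbitrary, $T$ meets the condition of Definition \ref{definition:deltaB}(2) with respect to its canonical order, so $T \in \mathcal{S}_{\mathcal{B}}$.

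The only genuine obstacle is the subcase of a subset $T$ that omits the canonical minimum $v_{1}$ of $S$: one cannot then directly restrict the chain given in some $\mathcal{E}(b_{0})$, because the element $b$ under consideration need not belong to $v_{1}$. The device of first passing to $r_{b_{0}}(\,\cdot\,)$ for a suitable "ancestor" $b_{0} \in v_{1}$ of $b$, and then invoking Lemma \ref{lemma:complex} to re-localize at $b$, is exactly what overcomes this; everything else is routine bookkeeping with the operators $r_{b}$ and the refinement relation on the partitions $P(v_{i})$.
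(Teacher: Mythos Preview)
Your proof is correct and follows essentially the same approach as the paper: both arguments reduce the non-trivial closure property to Lemma~\ref{lemma:complex}, with the only difference being that the paper removes one vertex at a time (splitting into the easy case $j\geq 2$ and the case $j=1$ where the lemma is invoked) while you handle an arbitrary subset in one pass by locating an ancestor $b_{0}\in v_{1}$ and applying the lemma there. Your more explicit verification that $r_{b}(v)=\{b\}$ in part (i) is a nice addition that the paper leaves implicit.
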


\begin{proof}
It is clear that $\{ v \} \in \mathcal{S}_{\mathcal{B}}$, for all $v \in \mathcal{V}_{\mathcal{B}}$.

We need to show that if $n \geq 2$ and $\{ v_{1}, \ldots, v_{n} \} \in \mathcal{S}_{\mathcal{B}}$, then so is
$\{ v_{1}, \ldots, \hat{v}_{j}, \ldots, v_{n} \}$, for $j=1, \ldots, n$, where $\hat{v}_{j}$ indicates that $v_{j}$ is omitted. We assume that $\{ v_{1}, \ldots, v_{n} \}$ is in canonical order (Remark \ref{remark:canonical}). Our hypothesis implies that 
\[ \{ b \} = r_{b}(v_{1}) \leq r_{b}(v_{2}) \leq \ldots \leq r_{b}(v_{n}) \]
is a chain in $\mathcal{E}(b)$,
for each $b \in v_{1}$.

First assume that $j \geq 2$. It follows from the hypothesis that
\[ r_{b}(v_{1}) \leq \ldots \leq r_{b}(v_{j-1}) \leq r_{b}(v_{j+1}) \leq \ldots \leq r_{b}(v_{n}) \]
is a chain in $\mathcal{E}(b)$, for each $b \in v_{1}$. Thus,
$\{ v_{1}, \ldots, \hat{v}_{j}, \ldots, v_{n} \} \in \mathcal{S}_{\mathcal{B}}$, as desired.

Finally, assume that $j=1$. We need to show that 
$\{ v_{2}, \ldots, v_{n} \}$ is a simplex in $\Delta_{\mathcal{B}}$. According to Definition \ref{definition:deltaB}, it suffices to show that 
\[ r_{b'}(v_{2}) \leq r_{b'}(v_{3}) \leq \ldots \leq r_{b'}(v_{n}) \]
is a chain in $\mathcal{E}(b')$, for each $b' \in v_{2}$. This  follows directly from Lemma \ref{lemma:complex}.
\end{proof}


\begin{definition} \label{definition:fullsupportcomplex}
(The full-support subcomplex of $\Delta_{\mathcal{B}}$) 
We let $\Delta^{f}_{\mathcal{B}}$ denote the subcomplex of $\Delta_{\mathcal{B}}$ spanned by vertices $v$ such that $supp(v) = X$. This is the \emph{full-support subcomplex of $\Delta_{\mathcal{B}}$}. 
\end{definition}


\begin{remark}
If two vertices $v_{1}$ and $v_{2}$ are such that $supp(v_{1}) \neq supp(v_{2})$, then $v_{1}$ and $v_{2}$ are necessarily in distinct path components of $\Delta_{\mathcal{B}}$. Thus, $\Delta_{\mathcal{B}}$ will generally have infinitely many path components.

The object of ultimate interest for us is $\Delta^{f}_{\mathcal{B}}$, but it will be useful to follow the example of \cite{FH2} 
and treat the vertices of $\Delta_{\mathcal{B}} - \Delta^{f}_{\mathcal{B}}$ more or less equally. (Such vertices were called ``pseudovertices" in \cite{FH2}.) This will allow us to adopt a ``piecewise" approach to the (local) topology of $\Delta_{\mathcal{B}}$, as exemplified in Subsection \ref{subsection:upanddown}.  
 \end{remark}

\subsection{Upward and downward links and stars in $\Delta_{\mathcal{B}}$}
\label{subsection:upanddown}
In this subsection, we will show that the complexes $\Delta_{\mathcal{B}}$ and 
$\Delta^{f}_{\mathcal{B}}$ have natural height functions, and that the ascending star of any vertex $v \in \Delta_{\mathcal{B}}$ has a natural product structure. It follows directly that the ascending link has a natural join structure. We will also establish ``relativized" product and link structures for the applications in Subsection \ref{subsection:nconncrit}. 

The situation for descending stars and links is more complicated. We will describe subcomplexes $st_{\downarrow}^{\mathcal{P}}(v)$ and $lk_{\downarrow}^{\mathcal{P}}(v)$
of the descending star and link (respectively) that retain the product and link structures (respectively). We will later show that the descending star and link can be covered by these subcomplexes, which will allow us to apply the Nerve Theorem (Theorem \ref{theorem:Nerve}) to compute the connectivity of the downward link.

(We note that ``upward link" and ``ascending link" are used interchangeably throughout the paper. Similarly for ``downward link" and ``descending link", etc.)

\subsubsection{The height function on $\Delta_{\mathcal{B}}$; upward and downward links and stars}

\begin{definition} \label{definition:height}
(the height function on $\Delta_{\mathcal{B}}$)
For a vertex $v \in \mathcal{V}_{\mathcal{B}}$, we refer to $|v|$ (the cardinality of $v$) as the \emph{height} of the vertex $v$.
\end{definition}

\begin{definition} \label{definition:starsandlinks}
(ascending and descending stars and links)
For any vertex $v \in \mathcal{V}_{\mathcal{B}}$, we let $st(v)$ denote the subcomplex of $\Delta_{\mathcal{B}}$ consisting of those simplices that contain $v$, and all faces of such simplices. We let $lk(v)$ denote the subcomplex of $st(v)$ consisting of all simplices of which $v$ is not a vertex. These are the \emph{star} and \emph{link} of $v$, respectively.

The \emph{ascending star} of $v$, denoted $st_{\uparrow}(v)$, is the subcomplex of $st(v)$ consisting of simplices whose vertices have height no less than $|v|$. Similarly, the \emph{descending star} of $v$, denoted $st_{\downarrow}(v)$, is the subcomplex of $st(v)$ determined by simplices whose vertices have height no more than $|v|$. 

The \emph{ascending and descending links} of $v$, denoted $lk_{\uparrow}(v)$ and $lk_{\downarrow}(v)$ respectively, are the subcomplexes of the corresponding stars consisting of simplices that do not contain $v$.
\end{definition}

\begin{remark}
In what follows, we will sometimes denote the vertex $\{ b \}$ (for $b \in \mathcal{B}$) simply by $b$, in order to simplify notation, and we shall do so without further comment.
\end{remark}

\subsubsection{Topology of upward links and stars in $\Delta_{\mathcal{B}}$}

\begin{definition} \label{definition:productofposets}
(The product partial order)
Let $S_{1}, \ldots, S_{\ell}$ be partially ordered sets. The
\emph{product partial order} is defined as follows:
\[ (s'_{1}, \ldots, s'_{\ell}) \leq (s_{1}, \ldots, s_{\ell}) \quad \text{if and only if} \quad s'_{i} \leq s_{i} \]
for each $i =1 \ldots, \ell$.
\end{definition}

\begin{definition} \label{definition:ascstarasposet}
(The partial order on ascending stars)
Let $v \in \mathcal{V}_{\mathcal{B}}$.
Define a relation $\leq$ on the vertices of $st_{\uparrow}(v)$ by the rule
\[ u \leq w \quad \text{if and only if} \quad r_{b}(u) \leq r_{b}(w) \]
for all $b \in v$, where the latter inequality is relative to the partial order on $\mathcal{E}(b)$.
\end{definition}

\begin{proposition} \label{proposition:ascstarasproduct} 
(The ascending star as a product) Let $v = \{ b_{1}, \ldots, b_{k} \} \in \mathcal{V}_{\mathcal{B}}$.
The function $p: st_{\uparrow}(v) \rightarrow \mathcal{E}(b_{1}) \times \ldots \times \mathcal{E}(b_{k})$ defined by
\[ p(w) = (r_{b_{1}}(w), \ldots, r_{b_{k}}(w)) \]
is an isomorphism between the partially ordered sets
$st_{\uparrow}(v)$ and $\mathcal{E}(b_{1}) \times \ldots \times \mathcal{E}(b_{k})$.
\end{proposition}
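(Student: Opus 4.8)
The plan is to exhibit $p$ as an order isomorphism by constructing its inverse and checking that both $p$ and $p^{-1}$ are order-preserving. I would first verify that $p$ is well-defined, i.e.\ that for $w \in st_{\uparrow}(v)$ each coordinate $r_{b_i}(w)$ actually lies in $\mathcal{E}(b_i)$. This is exactly the content of applying Lemma \ref{lemma:complex} (or Definition \ref{definition:expansionsets}(3)) to the simplex $\{v, w\}$, which is a simplex of $st_{\uparrow}(v)$ precisely because $w \in st_{\uparrow}(v)$ and $|w| \geq |v|$; writing the canonical-order chain $v \leq w$ in the poset structure coming from Definition \ref{definition:deltaB}(2) and restricting along each $b_i \in v$ gives $r_{b_i}(w) \in \mathcal{E}(b_i)$. (Strictly, one should note that being a vertex of $st_\uparrow(v)$ other than $v$ itself means $\{v,w\}$ is an edge, hence a chain of length two in canonical order with $v$ first.)

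Next I would define the candidate inverse $q: \mathcal{E}(b_1) \times \cdots \times \mathcal{E}(b_k) \to st_{\uparrow}(v)$ by $q(w_1, \ldots, w_k) = w_1 \cup \cdots \cup w_k$ (union of the vertices $w_i \in \mathcal{E}(b_i) \subseteq \mathcal{V}_{\mathcal{B}}$). One checks this union is again a vertex: the supports within a single $w_i$ are disjoint because $w_i$ is a vertex, and $supp(w_i) \subseteq supp(b_i)$ by definition of $\mathcal{E}(b_i)$ while the $supp(b_i)$ are pairwise disjoint since $v$ is a vertex, so supports across different indices are disjoint too. Then $\{v, q(w_1,\ldots,w_k)\}$ is a simplex of $\Delta_{\mathcal{B}}$ — for each $b_i \in v$ we have $\{b_i\} = r_{b_i}(v) \leq r_{b_i}(q(w_1,\ldots,w_k)) = w_i$ in $\mathcal{E}(b_i)$, using Definition \ref{definition:deltaB}(2) — and every vertex of $q(\vec{w})$ has support contained in some $supp(b_i)$, hence (when $\vec w \neq (\{b_1\},\ldots,\{b_k\})$) is a proper subdomain, so $q(\vec w) \in st_\uparrow(v)$.

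The two compositions are then immediate from the definition of $r_b$: $r_{b_i}(w_1 \cup \cdots \cup w_k) = \{b' : supp(b') \subseteq supp(b_i)\} = w_i$ because the $w_j$ with $j \neq i$ contribute nothing (their supports sit inside the $supp(b_j)$, disjoint from $supp(b_i)$, and each such support is nonempty), giving $p \circ q = \id$; and conversely $w = \bigcup_{b \in v} r_b(w)$ for any vertex $w$ in $st(v)$ whose partition refines $P(v)$ — every $b' \in w$ has $supp(b')$ contained in exactly one block $supp(b_i)$ of $P(v)$ — giving $q \circ p = \id$. Finally, $p$ is order-preserving by the very definition of $\leq$ on $st_\uparrow(v)$ in Definition \ref{definition:ascstarasposet} compared with the product order of Definition \ref{definition:productofposets} (the conditions $r_{b_i}(u) \leq r_{b_i}(w)$ for all $i$ are literally the same on both sides), and the same equivalence shows $p^{-1} = q$ is order-preserving.

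The main obstacle is not any single deep step but rather the bookkeeping around well-definedness: one must carefully confirm that the partial order $\leq$ of Definition \ref{definition:ascstarasposet} on $st_\uparrow(v)$ genuinely records the simplicial structure (so that an order isomorphism of posets is the same as a simplicial isomorphism of the associated complexes of chains, via Remark \ref{remark:Ebasasimplicialcomplex}), and that the support-disjointness needed for $q(\vec w)$ to be a vertex, and for $r_{b_i}$ to "see only" the $i$-th factor, really follows from the axioms — in particular from the nonemptiness of supports in Definition \ref{definition:expansionsets} and the refinement condition (2). Once those routine checks are dispatched, the isomorphism is formal.
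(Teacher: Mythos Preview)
Your proof is correct and essentially identical to the paper's own argument. The paper proves bijectivity by separately establishing surjectivity (via the union map you call $q$) and injectivity (via the decomposition $w = \bigcup_i r_{b_i}(w)$), while you package the same two checks as $p\circ q = \id$ and $q\circ p = \id$; the order-isomorphism step is handled identically in both, by direct comparison of Definitions~\ref{definition:productofposets} and~\ref{definition:ascstarasposet}.
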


\begin{proof}
Let $w$ be a vertex of $st_{\uparrow}(v)$. By property (2) from Definition \ref{definition:deltaB} and the definition of ascending star, 
$\{ b \} = r_{b}(v) \leq r_{b}(w)$ is a chain in $\mathcal{E}(b)$, for each $b \in v$. In particular, $r_{b}(w) \in \mathcal{E}(b)$ for each $b \in v$, so 
$(r_{b_{1}}(w), \ldots, r_{b_{k}}(w)) \in \mathcal{E}(b_{1}) \times \ldots \times
\mathcal{E}(b_{k})$. This shows that $p$ is well-defined.

Next we prove surjectivity of $p$. Let $(w_{1}, \ldots, w_{k}) \in \mathcal{E}(b_{1}) \times \ldots \times \mathcal{E}(b_{k})$. We consider the union $w = w_{1} \cup w_{2} \cup \ldots \cup w_{k}$. Note first that $w$ is a vertex (i.e., no two $b_{i}$ in $w$ have overlapping supports)
since each individual $w_{\beta}$ is a vertex, and $supp(w_{i}) \cap supp(w_{j}) = \emptyset$ for $i \neq j$. Clearly, $w \in st_{\uparrow}(v)$ (since $r_{b_{i}}(w) = w_{i}$ for $i=1, \ldots, k$).  It follows immediately that 
$p(w) = (w_{1}, \ldots, w_{k})$, so $p$ is surjective.

Suppose that $p(w) = p(w')$, for some $w, w' \in st_{\uparrow}(v)$.
This means that $r_{b_{\beta}}(w) = r_{b_{\beta}}(w')$, for $\beta = 1, \ldots, k$. It follows directly that $w=w'$ since 
$w = r_{b_{1}}(w) \cup \ldots \cup r_{b_{k}}(w)$ and $w' = r_{b_{1}}(w') \cup \ldots \cup r_{b_{k}}(w')$. (The latter expressions for $w$ and $w'$ follow from the fact that 
$P(w)$ and $P(w')$ are both refinements of $P(v)$, a consequence of Definition \ref{definition:expansionsets}(2).)  Thus $p$ is injective.

The isomorphism property follows directly from a comparison of Definitions \ref{definition:productofposets} and \ref{definition:ascstarasposet}. 
\end{proof}

\begin{definition}\label{definition:DeltaHat}
(Expansion sequences and the expansion partial order on $\mathcal{V}_{\mathcal{B}}$)
Let $v', v'' \in \mathcal{V}_{\mathcal{B}}$. We write $v' \nearrow v''$ if there is a sequence
\[ v' = v_{0}, v_{1}, \ldots, v_{n} = v'' \]
where $v_{i+1}$ is a vertex of $st_{\uparrow}(v_{i})$ for $i=0, \ldots, n-1$. A sequence with the latter property is called an \emph{expansion sequence}.
The relation $\nearrow$ is a partial order on $\mathcal{V}_{\mathcal{B}}$, called the \emph{expansion partial order}.
\end{definition}

\begin{lemma} \label{lemma:expchar} 
(Piecewise property of the expansion partial order)
Assume that $v' = \{ b_{1}, \ldots, b_{k} \}$ and 
$v''$ are vertices in $\mathcal{V}_{\mathcal{B}}$. 
\begin{enumerate}
\item If 
$v' = v_{0}, v_{1}, \ldots, v_{n} = v''$
is an expansion sequence, then,  
for each $i \in \{ 1, \ldots, k \}$,
\[ b_{i} = r_{b_{i}}(v_{0}), r_{b_{i}}(v_{1}), \ldots,
r_{b_{i}}(v_{n}) = r_{b_{i}}(v'') \]
is an expansion sequence.

\item If, for $i = 1, \ldots, k$, the sequence
$\displaystyle b_{i} = v_{i0}, v_{i1}, \ldots, v_{in} = r_{b_{i}}(v'')$
is an expansion sequence, then 
$\displaystyle v' = v_{0}, \ldots, v_{n} = v''$
is an expansion sequence, where $v_{j} = \cup_{i=1}^{k} v_{ij}$.
\end{enumerate}
\end{lemma}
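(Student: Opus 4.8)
The statement is a piecewise/factorization property, closely analogous to Proposition \ref{proposition:ascstarasproduct}, and the plan is to prove the two parts by induction on the length $n$ of the expansion sequence, using the single-step case as the engine. The single step is exactly the assertion that, given $v$ and $w \in st_{\uparrow}(v)$, the passage $v \rightsquigarrow w$ is ``coordinatized'' by the passages $r_{b_i}(v) \rightsquigarrow r_{b_i}(w)$ inside each $\mathcal{E}(b_i)$. For part (1), a single step $v_j \to v_{j+1}$ with $v_{j+1} \in st_{\uparrow}(v_j)$ gives, by Definition \ref{definition:deltaB}(2) (or directly by the definition of the ascending star and the chain condition), that $r_b(v_j) \leq r_b(v_{j+1})$ in $\mathcal{E}(b)$ for every $b \in v_j$; but I also need this for $b = b_i \in v' = v_0$, which requires knowing $b_i$ ``survives'' as a sub-collection at stage $j$, i.e.\ that $r_{b_i}(v_j)$ makes sense as a vertex of $st_{\uparrow}(b_i)$. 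This is where I would invoke Lemma \ref{lemma:complex}: since $v_0 \leq v_j$ in the relevant sense along the expansion sequence, $r_{b_i}(v_j) \in \mathcal{E}(b_i)$, and then $r_{b_i}(v_{j+1}) \in st_{\uparrow}(r_{b_i}(v_j))$ because restriction is transitive ($r_{b'}(r_{b_i}(v_{j+1})) = r_{b'}(v_{j+1})$ when $supp(b') \subseteq supp(b_i)$, exactly as in the last line of the proof of Lemma \ref{lemma:complex}). Concatenating over $j = 0, \dots, n-1$ yields that $b_i = r_{b_i}(v_0), \dots, r_{b_i}(v_n)$ is an expansion sequence.

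For part (2), the natural approach is again step-by-step: given expansion sequences $b_i = v_{i0}, v_{i1}, \dots, v_{in} = r_{b_i}(v'')$ for each $i$, set $v_j = \bigcup_{i=1}^k v_{ij}$ and show each $v_j$ is a genuine vertex and that $v_{j+1} \in st_{\uparrow}(v_j)$. That $v_j$ is a vertex follows because the $v_{ij}$ have pairwise disjoint supports (since $supp(v_{ij}) \subseteq supp(b_i)$ and the $supp(b_i)$ are disjoint, $v'$ being a vertex) and each $v_{ij}$ is itself a vertex. For the ascending-star condition at step $j$, I need $r_b(v_j) \leq r_b(v_{j+1})$ in $\mathcal{E}(b)$ for every $b \in v_j$; but every such $b$ lies in a unique $v_{ij}$ (disjoint supports), so $r_b(v_j) = r_b(v_{ij})$ and $r_b(v_{j+1}) = r_b(v_{i,j+1})$, and the required inequality is precisely the step-$j$ condition of the $i$-th given expansion sequence, restricted to $b$ via Lemma \ref{lemma:complex} applied inside $\mathcal{E}(b_i)$ (the chain $v_{ij} \leq v_{i,j+1}$ in $\mathcal{E}(b_i)$ restricts to a chain in $\mathcal{E}(b)$). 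Hence $v_j = v_0, v_1, \dots, v_n$ is an expansion sequence with $v_0 = \bigcup_i v_{i0} = \bigcup_i \{b_i\} = v'$ and $v_n = \bigcup_i r_{b_i}(v'') = v''$ (the last equality because $P(v'')$ refines $P(v')$, so $v''$ is partitioned by the sets $supp(b_i)$).

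The main obstacle, such as it is, is bookkeeping rather than conceptual: in part (1) one must be careful that the sequence $r_{b_i}(v_0), \dots, r_{b_i}(v_n)$ has the property that each consecutive pair is an honest pair ``$u, u'$ with $u' \in st_{\uparrow}(u)$'' and not merely ``$r_{b_i}(v_j) \leq r_{b_i}(v_{j+1})$ in $\mathcal{E}(b_i)$''—but these coincide by definition of $st_{\uparrow}$ together with Proposition \ref{proposition:ascstarasproduct} (the ascending star of $r_{b_i}(v_j)$ inside $\Delta_{\mathcal{B}}$ is identified with the up-set of $r_{b_i}(v_j)$ in $\mathcal{E}(b_i)$). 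A symmetric remark handles part (2). One small point worth flagging explicitly: in both parts the sequences are required to have the \emph{same} length $n$; in (1) this is automatic since we restrict a fixed sequence, while in (2) it is a hypothesis, and it is what makes the union $v_j = \bigcup_i v_{ij}$ well-indexed. I would present (1) first, then (2), each as a two- or three-line argument leaning on Lemma \ref{lemma:complex} and Proposition \ref{proposition:ascstarasproduct}, with the transitivity identity $r_{b'} \circ r_{b} = r_{b'}$ (for $supp(b') \subseteq supp(b)$) invoked by name.
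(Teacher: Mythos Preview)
Your proposal is correct and lands on the same key step as the paper: the transitivity identity $r_{b'}(r_{b_i}(w)) = r_{b'}(w)$ for $supp(b') \subseteq supp(b_i)$, applied to each consecutive pair in the sequence, is exactly how the paper proves both (1) and (2), working directly from Definition~\ref{definition:deltaB}.

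That said, you over-engineer the argument. Your detour through Lemma~\ref{lemma:complex} in part (1) (to secure $r_{b_i}(v_j) \in \mathcal{E}(b_i)$) is unnecessary and in fact misdirected: Lemma~\ref{lemma:complex} concerns chains inside a single $\mathcal{E}(b)$, not expansion sequences, and in any case you do not need $r_{b_i}(v_j) \in \mathcal{E}(b_i)$ at all. To show $r_{b_i}(v_{j+1}) \in st_{\uparrow}(r_{b_i}(v_j))$ you only need $r_{b_i}(v_j)$ to be a vertex (automatic, as a subset of the vertex $v_j$) and then to verify $r_{b'}(r_{b_i}(v_{j+1})) \in \mathcal{E}(b')$ for each $b' \in r_{b_i}(v_j) \subseteq v_j$; the latter follows immediately from $v_{j+1} \in st_{\uparrow}(v_j)$ and the transitivity identity. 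Similarly, the appeal to Proposition~\ref{proposition:ascstarasproduct} in your ``main obstacle'' paragraph is not needed; the paper's proof never touches it. Part (2) in your write-up matches the paper almost verbatim.
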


\begin{proof}
We first prove (1). Let $v' = v_{0}, v_{1}, \ldots, v_{n} = v''$ be an expansion sequence. Letting $i \in \{ 1, \ldots, k \}$ and $j \in \{ 0, \ldots, n-1 \}$ be arbitrary, we must show that $r_{b_{i}}(v_{j+1}) \in
st_{\uparrow}(r_{b_{i}}(v_{j}))$. By Definition \ref{definition:deltaB}(2), the latter is equivalent to showing that 
$r_{b'}(r_{b_{i}}(v_{j+1})) \in \mathcal{E}(b')$, for each 
$b' \in r_{b_{i}}(v_{j})$. We thus let $b' \in r_{b_{i}}(v_{j})$ be arbitrary. Since $v_{j+1} \in st_{\uparrow}(v_{j})$, by our assumption, and $b' \in r_{b_{i}}(v_{j}) \subseteq v_{j}$, Definition \ref{definition:deltaB} implies that $r_{b'}(v_{j+1}) \in \mathcal{E}(b')$. Now since 
$supp(b') \subseteq supp(b_{i})$, $r_{b'}(r_{b_{i}}(v_{j+1}) = r_{b'}(v_{j+1})$, and the proof of (1) is complete.

Assume that we are given expansion sequences 
$b_{i} = v_{i0}, v_{i1}, \ldots, v_{in} = r_{b_{i}}(v'')$ for each $i \in \{ 1, \ldots, k \}$. We wish to show that $v' = v_{0}, \ldots, v_{n} = v''$ is an expansion sequence, where the $v_{j}$ are as defined in the lemma. Let $j \in \{ 0, \ldots, n-1 \}$ be arbitrary, and let $b' \in v_{j}$. We must show that $r_{b'}(v_{j+1}) \in \mathcal{E}(b')$. By the definition of $v_{j}$, $b' \in v_{ij}$, for some $i$. Our assumption then  implies that $r_{b'}(v_{i(j+1)}) \in \mathcal{E}(b')$. Note that $v_{i(j+1)} = r_{b_{i}}(v_{j+1})$ and $supp(b') \subseteq supp(b_{i})$ (by the assumption that $b' \in v_{ij}$). 
Thus, $r_{b'}(v_{j+1}) = r_{b'}(r_{b_{i}}(v_{j+1})) = r_{b'}(v_{i(j+1)}) \in \mathcal{E}(b')$, as desired.
\end{proof}

\begin{example} \label{example:piecewise}
Figure \ref{expset} offers a schematic picture of vertices in $\Delta_{\mathcal{B}}$, and specifically illustrates Lemma \ref{lemma:expchar}. On the left, we have an expansion sequence 
\[ v_{0}, v_{1}, v_{2}, v_{3}. \]
These are the vertices 
\[ \{ b_{1}, b_{2}, \}, \{ b_{1}, b_{3}, b_{4}, \}, \{ b_{3}, b_{5}, b_{6}, b_{7}, b_{8} \}, \{ \hat{b}, \tilde{b}, b_{3}, b_{6}, b_{7}, b_{8} \}, \]
 respectively. Thus, each row in the left-hand rectangle represents a vertex in $\Delta_{\mathcal{B}}$.
 The bottom line, labelled ``$X$", represents the set $X$ in the definition of the expansion set 
 $\mathcal{B}$ (Definition \ref{definition:expansionsets}). Each rectangle represents an individual member of $\mathcal{B}$. The projection of a given labelled rectangle onto the $X$-axis determines the support of that rectangle's label. Our representation of elements of $\mathcal{B}$ by labelled rectangles is exactly as in Example \ref{example:figure}.
 
 
 \begin{figure}[!b]
\begin{center}
\begin{tikzpicture}
\node at (4,1.5){\huge $\leftrightarrow$};

\draw[black](0,0) rectangle (1.33,3); 
\draw[black](0,0) rectangle (1.33,.75);
\node at (.665, .375){\small $b_{1}$};
\node at (.665,1.125){\small $b_{1}$};
\node at (.4,1.875){\small $b_{5}$};
\node at (1.065,1.875){\small $b_{6}$};
\node at (1.065,2.625){\small $b_{6}$};
\node at (.2,2.625){\small $\hat{b}$};
\node at (.6,2.625){\small $\tilde{b}$};
\node at (-.4,.375){\small $v_{0}$};
\node at (-.4,1.125){\small $v_{1}$};
\node at (-.4,1.875){\small $v_{2}$};
\node at (-.4,2.625){\small $v_{3}$};
\draw[black](0,0) rectangle (1.33,1.5);
\draw[black](.8,1.5) rectangle (1.33,2.25);
\draw[black](0,1.5) rectangle (.8,2.25);
\draw[black](0,2.25) rectangle (.8,3);
\draw[black](0,2.25) rectangle (.4,3);

\draw[black](1.33,0) rectangle (3,.75); 
\node at (2.165,.375){\small $b_{2}$};
\node at (1.605,1.125){\small $b_{3}$};
\node at (1.605,1.875){\small $b_{3}$};
\node at (1.605,2.625){\small $b_{3}$};
\node at (2.44,1.125){\small $b_{4}$};
\node at (2.16,1.875){\small $b_{7}$};
\node at (2.16,2.625){\small $b_{7}$};
\node at (2.715,1.875){\small $b_{8}$};
\node at (2.715,2.625){\small $b_{8}$};
\draw[black](1.33,.75) rectangle (1.88,1.5);
\draw[black](1.33,0) rectangle (3,3);
\draw[black](1.88,.75) rectangle (3,1.5);
\draw[black](1.33,1.5) rectangle (3,2.25);
\draw[black](1.33,1.5) rectangle (1.88,3);
\draw[black](1.33,1.5) rectangle (2.43,3);

\draw[black,dotted](0,0)--(0,-.5);
\draw[black,dotted](1.33,0)--(1.33,-.5);
\draw[black,dotted](3,0)--(3,-.5);

\draw[black, ->](-.2,-.5) -- (3.2,-.5); 
\node at (1.5,-.8){$X$};

\draw[black](5.5,0) rectangle (6.83,3); 
\draw[black](5.5,0) rectangle (6.83,.75); 
\draw[black](5.5,0) rectangle (6.83,1.5); 
\draw[black](6.3,1.5) rectangle (6.83,2.25);
\draw[black](5.5,1.5) rectangle (6.3,2.25);
\draw[black](5.5,2.25) rectangle (6.3,3);
\draw[black](5.5,2.25) rectangle (5.9,3);
\node at (6.165, .375){\small $b_{1}$};
\node at (6.165,1.125){\small $b_{1}$};
\node at (5.9,1.875){\small $b_{5}$};
\node at (6.565,1.875){\small $b_{6}$};
\node at (6.565,2.625){\small $b_{6}$};
\node at (5.7,2.625){\small $\hat{b}$};
\node at (6.1,2.625){\small $\tilde{b}$};
\node at (5.1,.375){\small $v_{10}$};
\node at (5.1,1.125){\small $v_{11}$};
\node at (5.1,1.875){\small $v_{12}$};
\node at (5.1,2.625){\small $v_{13}$};

\draw[black,dotted](5.5,0)--(5.5,-.5);
\draw[black,dotted](6.8,0)--(6.8,-.5);
\draw[black,dotted](7.33,0)--(6.8,-.5);
\draw[black,dotted](9,0)--(8.47,-.5);

\draw[black](7.33,0) rectangle (9,.75); 
\draw[black](7.33,.75) rectangle (7.88,1.5); 
\draw[black](7.33,0) rectangle (9,3); 
\draw[black](7.88,.75) rectangle (9,1.5);
\draw[black](7.33,1.5) rectangle (9,2.25);
\draw[black](7.33,1.5) rectangle (7.88,3);
\draw[black](7.33,1.5) rectangle (8.43,3);
\node at (8.165,.375){\small $b_{2}$};
\node at (7.605,1.125){\small $b_{3}$};
\node at (7.605,1.875){\small $b_{3}$};
\node at (7.605,2.625){\small $b_{3}$};
\node at (8.44,1.125){\small $b_{4}$};
\node at (8.16,1.875){\small $b_{7}$};
\node at (8.16,2.625){\small $b_{7}$};
\node at (8.715,1.875){\small $b_{8}$};
\node at (8.715,2.625){\small $b_{8}$};
\node at (9.4,.375){\small $v_{20}$};
\node at (9.4,1.125){\small $v_{21}$};
\node at (9.4,1.875){\small $v_{22}$};
\node at (9.4,2.625){\small $v_{23}$};

\draw[black, ->](5.3,-.5) -- (8.7,-.5); 
\node at (7,-.8){$X$};

\end{tikzpicture}

\end{center} 

\caption{The figure illustrates how an expansion sequence (on the left) factors naturally into multiple expansion sequences (appearing on the right). The indicated correspondence is reversible.}
\label{expset} 
\end{figure}
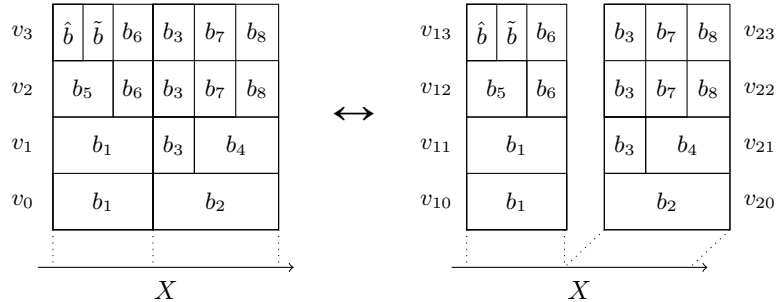

 To say that $v_{0}, v_{1}, v_{2}, v_{3}$ is an expansion sequence means that each vertex is in the upward star of the previous vertex. This means, for instance, that $\{ b_{1} \} \in \mathcal{E}(b_{1})$ and 
 $\{ b_{3}, b_{4} \} \in \mathcal{E}(b_{2})$ because $v_{0}, v_{1}$ is an expansion sequence. (The first inclusion $\{ b_{1} \} \in \mathcal{E}(b_{1})$ is a consequence of Definition \ref{definition:expansionsets}(1), while 
 the inclusion $\{ b_{3}, b_{4} \} \in \mathcal{E}(b_{2})$ is assumed to be part of the definition of $\mathcal{E}$ that is particular to this expansion set.) Note that it is not necessarily the case that
 $\{ b_{3}, b_{7}, b_{8} \} \in \mathcal{E}(b_{2})$; this may be true or not, depending on the specific definition of $\mathcal{E}$. 
 
Lemma \ref{lemma:expchar}(1) says that an expansion sequence $v_{0}, \ldots, v_{n}$ can be split into multiple expansion sequences,
each beginning with $\{ b \}$, for some $b \in v_{0}$. The right half of the figure illustrates this process. The two rectangles from the right half of the figure represent the expansion sequence
\[ r_{b_{i}}(v_{0}), r_{b_{i}}(v_{1}), r_{b_{i}}(v_{2}), r_{b_{i}}(v_{3}), \]
for $i=1,2$, where the first (left-hand) rectangle on the right represents the $i=1$ case, while 
the other represents the $i=2$ case. We note that, for $i=1,2$ and $j=0,1,2,3$, we have
$r_{b_{i}}(v_{j}) = v_{ij}$. Conversely, the expansion sequences on the right half of the diagram can be merged into the single sequence appearing on the left. 

Note that an expansion sequence, geometrically speaking, represents a (weakly) ascending edge-path in $\Delta_{\mathcal{B}}$. (``Weakly" because repetitions are allowed.) It is by no means the case that the vertices in an expansion sequence are the vertices of a simplex in $\Delta_{\mathcal{B}}$. In the figure, $\{ v_{0}, v_{1}, v_{2}, v_{3} \}$
is a simplex of $\Delta_{\mathcal{B}}$ exactly if $v_{10} \leq v_{11} \leq v_{12} \leq v_{13}$ in $\mathcal{E}(b_{1})$
and $v_{20} \leq v_{21} \leq v_{22} \leq v_{23}$ in $\mathcal{E}(b_{2})$ (see Definition \ref{definition:deltaB}(2)). As matters stand, it is not even guaranteed that
$v_{22} \in \mathcal{E}(b_{2})$, for instance (as noted above).  
\end{example}

\begin{definition}
\label{definition:relativeupward}
(Relative ascending stars and links)
Let $v', v''$ be vertices such that $v' \nearrow v''$. We let
$st_{\uparrow}(v', v'')$ be the subcomplex of $st_{\uparrow}(v')$ that is spanned by the vertices $u$ such that $u \nearrow v''$. We let 
$lk_{\uparrow}(v',v'')$ be the link of $v'$ in $st_{\uparrow}(v',v'')$. These are \emph{relative ascending stars} and \emph{links}, respectively.
\end{definition}

\begin{proposition} \label{proposition:relascstarasproduct} 
(Relative ascending stars as products) Let $v = \{ b_{1}, \ldots, b_{k} \}$ be a vertex, and let $v \nearrow v'$. 

The function $p$ from Proposition \ref{proposition:ascstarasproduct}
restricts to an isomorphism 
\[ p_{\mid}: st_{\uparrow}(v,v') \rightarrow
st_{\uparrow}(b_{1}, r_{b_{1}}(v')) \times
\ldots \times
st_{\uparrow}(b_{k}, r_{b_{k}}(v')) \]  of partially ordered sets.
\end{proposition}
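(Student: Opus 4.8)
The statement asserts that the poset isomorphism $p$ of Proposition~\ref{proposition:ascstarasproduct} carries the relative ascending star $st_{\uparrow}(v,v')$ onto the product of the relative ascending stars $st_{\uparrow}(b_i, r_{b_i}(v'))$. Since $p$ is already known to be a poset isomorphism from $st_{\uparrow}(v)$ onto $\mathcal{E}(b_1)\times\cdots\times\mathcal{E}(b_k)$, and since $st_{\uparrow}(b_i, r_{b_i}(v'))$ is by definition a subposet of $\mathcal{E}(b_i) = st_{\uparrow}(b_i)$, everything reduces to a set-theoretic identification: I must show that for a vertex $w \in st_{\uparrow}(v)$, we have $w \in st_{\uparrow}(v,v')$ if and only if $r_{b_i}(w) \in st_{\uparrow}(b_i, r_{b_i}(v'))$ for every $i \in \{1,\ldots,k\}$. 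Once that equivalence is in hand, the restriction and corestriction of $p$ to these subposets is automatically an isomorphism, because a restriction of a poset isomorphism to a subset, with image the corresponding subset, is again a poset isomorphism.

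\textbf{Key steps.} First I would unwind the definition (Definition~\ref{definition:relativeupward}): $w \in st_{\uparrow}(v,v')$ means $w \in st_{\uparrow}(v)$ and $w \nearrow v'$; similarly $r_{b_i}(w) \in st_{\uparrow}(b_i, r_{b_i}(v'))$ means $r_{b_i}(w) \in \mathcal{E}(b_i)$ (automatic once $w \in st_{\uparrow}(v)$, by Proposition~\ref{proposition:ascstarasproduct}) together with $r_{b_i}(w) \nearrow r_{b_i}(v')$. So the whole proposition collapses to the claim: \emph{for $w \in st_{\uparrow}(v)$, one has $w \nearrow v'$ if and only if $r_{b_i}(w) \nearrow r_{b_i}(v')$ for all $i$.} This is exactly the piecewise characterization of the expansion partial order, and it follows from Lemma~\ref{lemma:expchar}. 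For the forward direction, given an expansion sequence $w = u_0, u_1, \ldots, u_n = v'$, Lemma~\ref{lemma:expchar}(1) applied with $w$ in the role of $v'$ (noting $P(w)$ refines $P(v)$, so $w = \{b'_1,\ldots\}$ with each $b'$ supported inside some $b_i$, and $r_{b_i}(w)$ is the ``$b_i$-piece'' of $w$) yields that $r_{b_i}(u_0), \ldots, r_{b_i}(u_n)$ is an expansion sequence from $r_{b_i}(w)$ to $r_{b_i}(v')$; hence $r_{b_i}(w) \nearrow r_{b_i}(v')$. For the reverse direction, given expansion sequences $r_{b_i}(w) = u_{i0}, \ldots, u_{in} = r_{b_i}(v')$ for each $i$ (after padding with repetitions so the lengths agree), Lemma~\ref{lemma:expchar}(2) assembles the unions $u_j = \bigcup_i u_{ij}$ into an expansion sequence from $w$ to $v'$, so $w \nearrow v'$.

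\textbf{Main obstacle.} The only genuinely fiddly point is matching up the indexing: Lemma~\ref{lemma:expchar} is phrased for expansion sequences out of a \emph{single} starting vertex $v' = \{b_1,\ldots,b_k\}$, whereas here I want to decompose sequences out of $w$ relative to the coarser vertex $v = \{b_1,\ldots,b_k\}$. The resolution is to observe that $P(w)$ refines $P(v)$ (Definition~\ref{definition:expansionsets}(2), since $w \in st_{\uparrow}(v)$), so the ``pieces'' $r_{b_i}(w)$ genuinely partition $w$, and one applies Lemma~\ref{lemma:expchar} with $w$ itself as the base vertex but then groups its constituents according to which $supp(b_i)$ they lie in, using the transitivity of the relation $supp(\cdot) \subseteq supp(\cdot)$ and $r_{b_i}(r_b(\cdot)) = r_b(\cdot)$ for $supp(b) \subseteq supp(b_i)$, exactly as in the proof of Lemma~\ref{lemma:expchar} itself. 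A secondary bookkeeping issue is that the expansion sequences for different $i$ may naturally have different lengths; this is harmless since one may repeat the terminal vertex of any sequence to pad it to a common length $n$, and an expansion sequence remains one after such repetition (as $v_i \in st_{\uparrow}(v_i)$ trivially). Having settled this, the isomorphism claim for $p_{\mid}$ is immediate from Proposition~\ref{proposition:ascstarasproduct} together with the observation that $p$ restricted to a subposet with image the product of the corresponding subposets is still an order isomorphism.
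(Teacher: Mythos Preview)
Your proposal is correct and takes essentially the same approach as the paper: reduce to showing $w\nearrow v'$ iff $r_{b_i}(w)\nearrow r_{b_i}(v')$ for all $i$, and invoke Lemma~\ref{lemma:expchar} for both directions. The paper sidesteps the indexing obstacle you flag by prepending $v$ to the expansion sequence---since $w\in st_{\uparrow}(v)$, any expansion sequence $w,\ldots,v'$ extends to $v,w,\ldots,v'$, so Lemma~\ref{lemma:expchar} applies directly with base vertex $v$ (whose constituents are the $b_i$) rather than $w$, eliminating the regrouping step.
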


\begin{proof}
We will argue that $p_{\mid}$ is well-defined and surjective. The remaining properties will then follow directly from the fact that $p$ is an isomorphism (Proposition \ref{proposition:ascstarasproduct}).

We first prove that $p_{\mid}$ is well-defined; i.e., that the given  codomain of $p_{\mid}$ is correct. Let $u \in st_{\uparrow}(v;v')$. It follows that there is an expansion sequence 
\[ v, u, v_{2}, \ldots, v_{n} = v', \]
for some $n$. Applying Lemma \ref{lemma:expchar}(1), we get expansion sequences 
\[ b_{i}, r_{b_{i}}(u), r_{b_{i}}(v_{2}), \ldots, r_{b_{i}}(v_{n}) = r_{b_{i}}(v') \]
for $i = 1, \ldots, k$. It follows directly that 
$r_{b_{i}}(u) \in st_{\uparrow}(b_{i}, r_{b_{i}}(v'))$, for $i = 1, \ldots, k$, so $p_{\mid}$ is indeed well-defined.

Next, for $i=1, \ldots, k$, assume that 
$u_{i} \in st_{\uparrow}(b_{i},r_{b_{i}}(v'))$. It follows that, for each $i$, we have an expansion sequence
\[ b_{i} = u_{i0}, u_{i} = u_{i1}, u_{i2}, u_{i3}, \ldots, u_{in} = r_{b_{i}}(v'). \]
(We can arrange that all of the sequences have length $n$ simply by padding each to a common length, by repeating terms as necessary.) Letting $v_{j} = u_{1j} \cup \ldots \cup u_{kj}$ for $j=0, \ldots, n$, Lemma \ref{lemma:expchar}(2) gives us an expansion sequence
\[ v = v_{0}, v_{1}, \ldots, v_{n} = v', \]
where $p(v_{1}) = (u_{1}, \ldots, u_{k})$, proving surjectivity. 
\end{proof}

\begin{theorem}
\label{theorem:topologyoflinksandstars}
(Topology of ascending links and stars)
Let $v = \{ b_{1}, \ldots, b_{k} \}$ be a vertex. We have homeomorphisms
\[
|st_{\uparrow}(v)| \cong \prod_{i=1}^{k}|st_{\uparrow}(b_{i})|;  \quad \quad
|lk_{\uparrow}(v)| \cong \bigast_{i=1}^{k}|lk_{\uparrow}(b_{i})|. \]
If $v \nearrow v'$, we additionally have homeomorphisms
\[ |st_{\uparrow}(v,v')| \cong \prod_{i=1}^{k}|st_{\uparrow}(b_{i}, r_{b_{i}}(v')|;  \quad \quad
|lk_{\uparrow}(v,v')| \cong \bigast_{i=1}^{k}|lk_{\uparrow}(b_{i},r_{b_{i}}(v')|, \] 
where all of the spaces in question are given the compactly generated topology. \qed
\end{theorem}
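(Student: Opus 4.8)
The plan is to deduce all four homeomorphisms from the poset-level product decompositions already established (Proposition \ref{proposition:ascstarasproduct} and Proposition \ref{proposition:relascstarasproduct}), together with the standard fact that geometric realization turns products and joins of (the order complexes of) posets into products and joins of spaces. Concretely, for the first pair: Proposition \ref{proposition:ascstarasproduct} gives a poset isomorphism $p \colon st_{\uparrow}(v) \to \mathcal{E}(b_{1}) \times \ldots \times \mathcal{E}(b_{k})$, and $\mathcal{E}(b_{i}) = st_{\uparrow}(b_{i})$ by definition (the ascending star of the vertex $\{b_{i}\}$ is exactly $\mathcal{E}(b_{i})$, since by Definition \ref{definition:deltaB}(2) the vertices of $st_{\uparrow}(b_{i})$ are the $w$ with $\{b_{i}\} = r_{b_{i}}(b_{i}) \le r_{b_{i}}(w)$ a chain, i.e.\ $w \in \mathcal{E}(b_{i})$, and the simplices are chains in $\mathcal{E}(b_{i})$ by Remark \ref{remark:Ebasasimplicialcomplex}). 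Passing to order complexes, $p$ induces a simplicial isomorphism from the order complex of $st_{\uparrow}(v)$ onto the order complex of the product poset. It remains to identify the order complex of a finite product of posets, after geometric realization, with the topological product of the realizations of the factors.

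For that identification I would invoke (or prove in one line from the Nerve/standard literature) the classical statement: if $Q_{1}, \ldots, Q_{k}$ are posets then $|\Delta(Q_{1} \times \cdots \times Q_{k})|$ is homeomorphic to $|\Delta(Q_{1})| \times \cdots \times |\Delta(Q_{k})|$, where the right-hand side carries the compactly generated (Kelley) topology — this is exactly why the theorem statement stipulates the compactly generated topology, since the product of CW complexes is a CW complex only after applying the $k$-ification functor. The join statement is its companion: the order complex of a poset is, up to the usual subdivision, built so that $|\Delta(Q_{1})| * \cdots * |\Delta(Q_{k})|$ is recovered from the simplicial join, and the ascending link $lk_{\uparrow}(v)$ is the link of the cone point $v$ in $st_{\uparrow}(v)$; under the product decomposition of the star the cone point corresponds to $(\{b_{1}\}, \ldots, \{b_{k}\})$, the tuple of least elements, whose link in a product of cones is the join of the links of the cone points in the factors. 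So $|lk_{\uparrow}(v)| \cong \bigast_{i=1}^{k} |lk_{\uparrow}(b_{i})|$ follows formally.

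The relative statements go through verbatim with Proposition \ref{proposition:relascstarasproduct} in place of Proposition \ref{proposition:ascstarasproduct}: that proposition gives a poset isomorphism $st_{\uparrow}(v,v') \cong \prod_{i=1}^{k} st_{\uparrow}(b_{i}, r_{b_{i}}(v'))$, and since $r_{b_{i}}(v)$ is the least element $\{b_{i}\}$ of each relative star, the same cone-point-link argument yields the join decomposition of $lk_{\uparrow}(v,v')$. I would present the four homeomorphisms in the order: (i) the absolute star product, (ii) the relative star product (identical argument), (iii) the absolute link join (cone-point link of (i)), (iv) the relative link join (cone-point link of (ii)).

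The only genuine subtlety — and the step I expect to be the main obstacle in writing this cleanly — is the point-set topology of infinite, non-locally-finite products: $st_{\uparrow}(b_{i})$ and $\mathcal{E}(b_{i})$ can be enormous (in the "basic construction" they are deliberately huge), so the realizations are not locally compact and the naive product topology on $\prod |st_{\uparrow}(b_{i})|$ need not even be a CW complex, let alone homeomorphic to $|\Delta(\text{product poset})|$. The fix is precisely the compactly generated topology, and the homeomorphism $|\Delta(Q_{1} \times \cdots \times Q_{k})| \cong \prod_{i} |\Delta(Q_{i})|$ (compactly generated) holds for arbitrary posets — this is standard, but I would either cite it carefully or note that each side receives a natural continuous bijection and both are $k$-spaces, so the continuous bijection, being a homeomorphism on compacta (it restricts to the honest homeomorphism of finite subcomplexes, products of finitely many finite complexes being genuinely locally finite), is a homeomorphism. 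Everything else is bookkeeping that is subsumed in the cited propositions.
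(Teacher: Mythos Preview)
Your proposal is correct and follows essentially the same route as the paper: the paper's proof simply invokes the poset isomorphisms of Propositions \ref{proposition:ascstarasproduct} and \ref{proposition:relascstarasproduct} and then applies Theorem 5.1 of \cite{Walker}, whose parts (a) and (b) give exactly the product and join homeomorphisms you describe (in compactly generated topology). Your cone-point-link derivation and your discussion of the $k$-ification subtlety are correct elaborations of what the paper packages into that single citation.
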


\begin{proof}
The first two homeomorphisms are the result of applying Theorem 5.1 from \cite{Walker} and using
the abstract isomorphism from Proposition \ref{proposition:ascstarasproduct}. Theorem 5.1(a) from \cite{Walker} applies to the star,
while Theorem 5.1(b) applies to the join.

Similarly, the last two homeomorphisms are the result of applying Theorem 5.1 from \cite{Walker} and using the abstract isomorphism from Proposition \ref{proposition:relascstarasproduct}.
\end{proof}

\subsubsection{Topology of partitioned downward links and stars in $\Delta_{\mathcal{B}}$}

\begin{definition}
\label{definition:partitionedlink} 
(Partitioned downward links and stars)
Let $v = \{ b_{1}, \ldots, b_{k} \}$ be a vertex, and let $\mathcal{P} = \{ v_{1}, \ldots, v_{\ell} \}$ be a partition of $v$. We define
$st_{\downarrow}^{\mathcal{P}}(v)$ to be the subcomplex of
$st_{\downarrow}(v)$ spanned by vertices $u = \{ b'_{1}, \ldots, b'_{m} \}$ such that, for each $b'_{i} \in u$, there is some $v_{j} \in \mathcal{P}$ such that $supp(b'_{i}) \subseteq supp(v_{j})$.

The subcomplex $st_{\downarrow}^{\mathcal{P}}(v)$ is the \emph{partitioned downward star of $v$ relative to $\mathcal{P}$}. The link of $v$ in $st_{\downarrow}^{\mathcal{P}}(v)$ is the 
\emph{partitioned downward link of $v$ relative to $\mathcal{P}$}, and is denoted $lk_{\downarrow}^{\mathcal{P}}(v)$. 
\end{definition}

\begin{example} \label{example:quick}
(A discussion of $lk_{\downarrow}^{\mathcal{P}}(v)$ in a simple case)
Let $v = \{ b_{1}, \ldots, b_{k} \}$, and let $\mathcal{P} = \{ \{ b_{1}, b_{2} \},
\{ b_{3}, \ldots, b_{k} \} \}$. A vertex of $lk_{\downarrow}(v)$ is a collection 
$v' = \{ b'_{1}, \ldots, b'_{\ell} \}$ such that $v' \neq v$ and
$r_{b'_{j}}(v) \in \mathcal{E}(b'_{j})$, for each $b'_{j} \in v'$. We might think of $v'$ as arising from $v$ by a finite set of disjoint \emph{contractions}, where the contractions in question replace the subset $r_{b'_{j}}(v)$ with $b'_{j}$. (This is the reverse of ``expansion".)

The vertices of $lk_{\downarrow}^{\mathcal{P}}(v)$ consist of only such $v'$ with the property that each contraction takes place \emph{within} a given member of $\mathcal{P}$,
never straddling more than one member of the partition. For instance, if $v' \in lk_{\downarrow}(v)$ is such that $r_{b'_{j}}(v) = \{ b_{2}, b_{3} \}$, then 
$v' \not \in lk_{\downarrow}^{\mathcal{P}}(v)$. 
\end{example}

\begin{definition} \label{definition:meet}
(The meet of partitions)
If $\mathcal{P}_{1}$ and $\mathcal{P}_{2}$ are partitions of a set $S$, then we define
\[ \mathcal{P}_{1} \wedge \mathcal{P}_{2} = \{ U \cap V \mid
U \cap V \neq \emptyset; U \in \mathcal{P}_{1}; V \in \mathcal{P}_{2} \}. \]
The set $\mathcal{P}_{1} \wedge \mathcal{P}_{2}$ is the \emph{meet} of $\mathcal{P}_{1}$ and $\mathcal{P}_{2}$.
\end{definition}

\begin{proposition} \label{proposition:intersections}
(Intersections of partitioned downward links and stars)
Let $\mathcal{P}_{1}, \ldots, \mathcal{P}_{\gamma}$ be partitions of the vertex $v$. Let $\mathcal{P} = \mathcal{P}_{1} \wedge \ldots \wedge \mathcal{P}_{\gamma}$. We have the following equalities:
\[
\bigcap_{i=1}^{\gamma} lk_{\downarrow}^{\mathcal{P}_{i}}(v)
= lk_{\downarrow}^{\mathcal{P}}(v); \quad \quad
\bigcap_{i=1}^{\gamma} st_{\downarrow}^{\mathcal{P}_{i}}(v)
= st_{\downarrow}^{\mathcal{P}}(v). \]
\end{proposition}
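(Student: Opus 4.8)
The plan is to prove both equalities by verifying equality of vertex sets and then noting that all the complexes involved are \emph{full subcomplexes} of $st_\downarrow(v)$ (respectively $lk_\downarrow(v)$), so that equality on vertices forces equality as complexes. The key observation is that membership of a vertex $u = \{b_1', \ldots, b_m'\}$ in $st_\downarrow^{\mathcal{P}}(v)$ is governed by a condition applied \emph{separately to each} $b_i' \in u$: namely, there exists a block of $\mathcal{P}$ containing $supp(b_i')$. Thus it suffices to analyze, for a single $b' \in \mathcal{B}$ with $supp(b') \subseteq supp(v)$, when $b'$ is "subordinate" to $\mathcal{P}$, i.e. when some block of $\mathcal{P}$ contains $supp(b')$.

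First I would record the pointwise lemma: for a partition $\mathcal{P} = \mathcal{P}_1 \wedge \cdots \wedge \mathcal{P}_\gamma$ of the vertex $v$, a subset $A \subseteq supp(v)$ is contained in some block of $\mathcal{P}$ if and only if, for each $i$, $A$ is contained in some block of $\mathcal{P}_i$. The forward direction is immediate since every block of $\mathcal{P}$ is contained in a block of each $\mathcal{P}_i$ (by the definition of meet in Definition \ref{definition:meet}). For the reverse direction, suppose $A \subseteq U_i$ with $U_i \in \mathcal{P}_i$ for each $i$; then $A \subseteq U_1 \cap \cdots \cap U_\gamma$, and since $A \neq \emptyset$ (here I use that $supp(b') \neq \emptyset$ when $A = supp(b')$, or more simply that $A$ can be taken nonempty), this intersection is a nonempty intersection of blocks, hence a block of $\mathcal{P}$. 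Applying this with $A = supp(b_i')$ for each $b_i' \in u$ gives: $u$ is a vertex of $\bigcap_{i=1}^\gamma st_\downarrow^{\mathcal{P}_i}(v)$ iff every $b_i' \in u$ has $supp(b_i')$ contained in a block of every $\mathcal{P}_i$, iff every $b_i' \in u$ has $supp(b_i')$ contained in a block of $\mathcal{P}$, iff $u$ is a vertex of $st_\downarrow^{\mathcal{P}}(v)$.

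Next I would address why equality of vertex sets upgrades to equality of simplicial complexes. Each $st_\downarrow^{\mathcal{P}_i}(v)$ is by definition the subcomplex of $st_\downarrow(v)$ \emph{spanned by} a set of vertices, hence is a full subcomplex of $st_\downarrow(v)$; a finite intersection of full subcomplexes is the full subcomplex spanned by the intersection of their vertex sets. Since $st_\downarrow^{\mathcal{P}}(v)$ is also the full subcomplex of $st_\downarrow(v)$ on its vertex set, and the two vertex sets coincide by the previous paragraph, the two complexes are equal. This proves the second displayed equality. The first equality then follows by intersecting both sides with the subcomplex $lk_\downarrow(v)$ (equivalently, by repeating the vertex-set argument, noting that $lk_\downarrow^{\mathcal{P}_i}(v)$ is the link of $v$ in $st_\downarrow^{\mathcal{P}_i}(v)$ and links commute with this kind of restriction); concretely, $lk_\downarrow^{\mathcal{P}}(v) = st_\downarrow^{\mathcal{P}}(v) \cap lk_\downarrow(v) = \bigl(\bigcap_i st_\downarrow^{\mathcal{P}_i}(v)\bigr) \cap lk_\downarrow(v) = \bigcap_i lk_\downarrow^{\mathcal{P}_i}(v)$.

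The only genuine subtlety — the step I expect to be the main obstacle to a fully clean writeup rather than a deep difficulty — is the reverse direction of the pointwise lemma, specifically the need for nonemptiness of $A$ to conclude that $U_1 \cap \cdots \cap U_\gamma$ is a genuine block of the meet rather than the empty set. This is exactly where the hypothesis $supp(b) \neq \emptyset$ from Definition \ref{definition:expansionsets} earns its keep: since $supp(b_i') \subseteq U_1 \cap \cdots \cap U_\gamma$ and $supp(b_i') \neq \emptyset$, the intersection is nonempty and therefore lies in $\mathcal{P} = \mathcal{P}_1 \wedge \cdots \wedge \mathcal{P}_\gamma$. With that point flagged, the rest is bookkeeping about full subcomplexes and the definition of the meet.
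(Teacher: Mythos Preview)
Your proof is correct and follows essentially the same route as the paper: reduce to checking equality on vertex sets (the paper simply asserts ``it suffices to check that the given subcomplexes have the same underlying vertex sets,'' which is your full-subcomplex observation), verify both inclusions via the definition of the meet, and then obtain the link equality from the star equality. Your writeup is slightly more explicit about why the nonemptiness of $supp(b')$ is needed and about the full-subcomplex reasoning; the only cosmetic point is that the $\mathcal{P}_i$ are partitions of the vertex $v$ (a subset of $\mathcal{B}$), not of $supp(v)$, so strictly speaking one compares $supp(b')$ with $supp(v_j)$ for blocks $v_j$, using that elements of $v$ have pairwise disjoint supports to pass between $\bigcap_i supp(U_i)$ and $supp(\bigcap_i U_i)$ --- but the paper makes the same tacit identification.
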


\begin{proof}
We will prove the second equality; the first equality follows easily.

It suffices to check that the given subcomplexes have the same underlying vertex sets.
Let $u = \{ b'_{1}, \ldots, b'_{m} \}$ be a vertex of $st_{\downarrow}^{\mathcal{P}}(v)$. Let $i$ be arbitrary; we show that $u \in st_{\downarrow}^{\mathcal{P}_{i}}(v)$. For an arbitrary $b'_{j} \in u$, there is some $v_{\alpha} \in \mathcal{P}$ such that $supp(b'_{j}) \subseteq supp(v_{\alpha})$. By the definition of meet, 
\[ v_{\alpha} = v_{\alpha_{1}} \cap \ldots \cap v_{\alpha_{\gamma}}, \] where $v_{\alpha_{\ell}} \in \mathcal{P}_{\ell}$, for each $\ell \in \{ 1, \ldots, \gamma \}$. If follows that $supp(b'_{j}) \subseteq supp(v_{\alpha_{i}})$. Thus, $u \in st_{\downarrow}^{\mathcal{P}_{i}}(v)$. This proves the reverse inclusion.

Conversely, suppose that $u = \{ b'_{1}, \ldots, b'_{m} \} \in \cap_{i=1}^{\gamma} st_{\downarrow}^{\mathcal{P}_{i}}(v)$. Let $b'_{j} \in u$; it follows that, for each $\mathcal{P}_{i}$, there is some $v_{\alpha_{i}} \in \mathcal{P}_{i}$ such that $supp(b'_{j}) \subseteq supp(v_{\alpha_{i}})$. Thus, letting $v_{\alpha}$ denote the intersection of the $v_{\alpha_{i}}$, we find that $supp(b'_{j}) \subseteq 
supp(v_{\alpha})$. By the definition of meet, $v_{\alpha} \in \mathcal{P}$, so $u \in st_{\downarrow}^{\mathcal{P}}(v)$.
\end{proof}

\begin{definition} \label{definition:morerestrictions}
(The functions $r_{v_{\alpha}}$)
Let $v \in \mathcal{V}_{\mathcal{B}}$, and let $\mathcal{P} = \{ v_{1}, \ldots, v_{\ell} \}$ be a partition of $v$. For each vertex $u$
in $st_{\downarrow}^{\mathcal{P}}(v)$ and for each $i \in \{1, \ldots, \ell \}$, we set
\[ r_{v_{i}}(u) = \{ b' \in u \mid supp(b') \subseteq supp(v_{i}) \}. \]
\end{definition}

\begin{theorem} \label{theorem:desctopology} 
(Topology of partitioned descending downward links and stars)
Let $\mathcal{P} = \{ v_{1}, \ldots, v_{k} \}$ be a partition of the vertex $v$. We have homeomorphisms
\[
|st_{\downarrow}^{\mathcal{P}}(v)| \cong \prod_{\beta=1}^{k}|st_{\downarrow}(v_{\beta})|;  \quad \quad
|lk_{\downarrow}^{\mathcal{P}}(v)| \cong \bigast_{\beta=1}^{k}|lk_{\downarrow}(v_{\beta})|, \]
provided that all of the spaces in question are given the compactly generated topology.
\end{theorem}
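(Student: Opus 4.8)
The plan is to establish both homeomorphisms by first proving the corresponding statements at the level of abstract simplicial complexes (or, equivalently, partially ordered sets), and then invoking Theorem 5.1 of \cite{Walker} exactly as in the proof of Theorem \ref{theorem:topologyoflinksandstars}. So the real content is to show that $st_{\downarrow}^{\mathcal{P}}(v)$ is isomorphic, as a poset, to the product $\prod_{\beta=1}^{k} st_{\downarrow}(v_{\beta})$, where each $v_{\beta}$ is regarded as a vertex of $\Delta_{\mathcal{B}}$ in its own right (noting $supp(v_{\beta})$ need not be all of $X$, which is exactly why it was worth developing the theory of $\Delta_{\mathcal{B}}$ rather than just $\Delta^{f}_{\mathcal{B}}$). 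Once the product isomorphism is in hand, the join statement for the links follows formally: $lk_{\downarrow}^{\mathcal{P}}(v)$ is the link of $v = \coprod v_{\beta}$ in the product complex, and the link of a product vertex in a product of posets is the join of the individual links — this is precisely Theorem 5.1(b) of \cite{Walker} again, so I would phrase the final paragraph of the proof to mirror the phrasing in Theorem \ref{theorem:topologyoflinksandstars}'s proof.

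First I would define the candidate isomorphism $q \colon st_{\downarrow}^{\mathcal{P}}(v) \to \prod_{\beta=1}^{k} st_{\downarrow}(v_{\beta})$ by $q(u) = (r_{v_{1}}(u), \ldots, r_{v_{k}}(u))$, using the functions $r_{v_{i}}$ from Definition \ref{definition:morerestrictions}. To check this is well-defined I need: (i) each $r_{v_{\beta}}(u)$ is a vertex of $\Delta_{\mathcal{B}}$ — this is immediate since $r_{v_{\beta}}(u) \subseteq u$ and $u$ is a vertex, so its members have pairwise disjoint supports; (ii) each $r_{v_{\beta}}(u)$ lies in $st_{\downarrow}(v_{\beta})$, i.e. $\{v_{\beta}, r_{v_{\beta}}(u)\}$ is a simplex of $\Delta_{\mathcal{B}}$ with $r_{v_{\beta}}(u)$ of height at most $|v_{\beta}|$. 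The height bound holds because $P(r_{v_{\beta}}(u))$ refines $P(v_{\beta})$ (each member of $r_{v_{\beta}}(u)$ has support inside a single block of $P(v_{\beta})$, since $u$ is a vertex of $st_{\downarrow}^{\mathcal{P}}(v)$ and... here I must use that $u$ being \emph{below} $v$ means each $b' \in u$ satisfies $r_{b'}(v) \in \mathcal{E}(b')$, hence $P(v)$ refines $P(u)$ block-by-block, so $supp(b')$ is a union of blocks of $P(v)$; combined with $supp(b') \subseteq supp(v_{\beta})$ this forces $supp(b')$ to be a union of blocks of $P(v_{\beta})$). The simplex condition $\{v_{\beta}, r_{v_{\beta}}(u)\} \in \mathcal{S}_{\mathcal{B}}$, by Definition \ref{definition:deltaB}(2), requires $r_{b'}(v_{\beta}) \in \mathcal{E}(b')$ for each $b' \in r_{v_{\beta}}(u)$; but $r_{b'}(v_{\beta}) = r_{b'}(v)$ since $supp(b') \subseteq supp(v_{\beta})$, and $r_{b'}(v) \in \mathcal{E}(b')$ because $\{v, u\}$ is a simplex of $st_{\downarrow}(v)$.

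Next I would check that $q$ is a poset isomorphism. \emph{Surjectivity}: given $(u_{1}, \ldots, u_{k})$ with $u_{\beta} \in st_{\downarrow}(v_{\beta})$, set $u = \bigcup_{\beta} u_{\beta}$; the supports of the $u_{\beta}$ are pairwise disjoint (they sit inside the disjoint blocks $supp(v_{\beta})$), so $u$ is a vertex; each $b' \in u$ lies in some $u_{\beta}$, so $r_{b'}(v) = r_{b'}(v_{\beta}) \in \mathcal{E}(b')$, giving $u \in st_{\downarrow}(v)$; and every member of $u$ has support in some $supp(v_{\beta})$, so $u \in st_{\downarrow}^{\mathcal{P}}(v)$, with $r_{v_{\beta}}(u) = u_{\beta}$. \emph{Injectivity}: if $q(u) = q(u')$ then $u = \bigcup_{\beta} r_{v_{\beta}}(u) = \bigcup_{\beta} r_{v_{\beta}}(u') = u'$, using that every member of a vertex in $st_{\downarrow}^{\mathcal{P}}(v)$ has support inside exactly one block $supp(v_{\beta})$. \emph{Order}: the order on $st_{\downarrow}(v)$ (as faces of the star, i.e. the order inherited from $\Delta_{\mathcal{B}}$ via the canonical ordering of Remark \ref{remark:canonical}) translates into refinement-type conditions that are checked block-by-block, and comparing with Definition \ref{definition:productofposets} shows $u \leq u'$ in $st_{\downarrow}^{\mathcal{P}}(v)$ iff $r_{v_{\beta}}(u) \leq r_{v_{\beta}}(u')$ in $st_{\downarrow}(v_{\beta})$ for all $\beta$; the forward direction is immediate from the definitions and the reverse follows by reassembling as in the surjectivity argument.

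The main obstacle I anticipate is precisely the bookkeeping in the well-definedness step — specifically, verifying that for $u \in st_{\downarrow}^{\mathcal{P}}(v)$ the support of each $b' \in u$ is a union of whole blocks of the partition $P(v_{\beta})$ (not merely contained in $supp(v_{\beta})$), which is what makes $r_{v_{\beta}}(u)$ land in $st_{\downarrow}(v_{\beta})$ with the correct height. This rests on the interaction between the "downward" condition (which makes $P(v)$ a refinement of $P(u)$) and the partition condition defining $st_{\downarrow}^{\mathcal{P}}(v)$, and I would want to isolate it as a short preliminary observation, perhaps even recording it as a remark before the theorem, so the proof itself reads cleanly. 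Everything after that — surjectivity, injectivity, order-preservation, and the final appeal to \cite{Walker} — is routine and parallels Theorem \ref{theorem:topologyoflinksandstars} closely enough that I would keep it brief.
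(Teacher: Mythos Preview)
Your plan is correct but differs from the paper's execution. You argue via a poset isomorphism $q(u)=(r_{v_{1}}(u),\ldots,r_{v_{k}}(u))$ and then invoke Walker's Theorem~5.1 exactly as in the ascending case, which is elegant and symmetric with Theorem~\ref{theorem:topologyoflinksandstars}. The paper instead works simplicially throughout: for the star it chooses a height-compatible linear order on each $st_{\downarrow}(v_{\beta})^{0}$, defines the \emph{simplicial product} explicitly (simplices are ``matrices'' whose rows are distinct $k$-tuples and whose columns are nondecreasing sequences forming simplices in the factors; see the figure with the $u_{ij}$), and checks directly that the same map $\phi=q$ is a simplicial isomorphism onto this product; for the link it writes down an explicit subdivision map $u\mapsto \frac{1}{wgt(u)}\sum_{\beta}r'_{v_{\beta}}(u)$ (where $r'_{v_{\beta}}(u)$ equals $r_{v_{\beta}}(u)$ unless that is $v_{\beta}$, in which case it is $0$) and appeals to Walker's notion of subdivision map rather than to Theorem~5.1(b). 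What your route buys is brevity and a clean parallel with the ascending proof; what the paper's route buys is self-containment, since it avoids needing the fact that $st_{\downarrow}^{\mathcal{P}}(v)$ and each $st_{\downarrow}(v_{\beta})$ are \emph{order complexes} of their canonical partial orders. That fact is true (it is a flag-type statement: pairwise-comparable vertices span a simplex, which follows from Definition~\ref{definition:expansionsets}(3) by an argument like that of Lemma~\ref{lemma:complex}), but it is never isolated in the paper, and your write-up would need to state and prove it explicitly---your sketch currently glosses over this under ``the order inherited from $\Delta_{\mathcal{B}}$ via the canonical ordering,'' which is the one genuine gap to close.
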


\begin{proof}
We will prove the existence of the homeomorphism between stars in some detail, and then indicate the proof in the case of the links.

For each $\beta \in \{ 1, \ldots, k \}$, we choose a linear order 
on the vertices of $st_{\downarrow}(v_{\beta})$. The linear order can be chosen so that it increases with the height; i.e., if $|w_{1}| < |w_{2}|$, then $w_{1} < w_{2}$ in the linear order. We define the simplicial product
$\prod_{\beta = 1}^{k} st_{\downarrow}(v_{\beta})$ as follows: 
\begin{enumerate}
\item The vertices are $k$-tuples in
$\prod_{\beta = 1}^{k} (st_{\downarrow}(v_{\beta}))^{0}$. 
\item An $n$-simplex is a collection of $n+1$ distinct vertices
$\{ w_{0}, w_{1}, w_{2}, \ldots, w_{n} \}$ satisfying certain additional conditions. We write 
$w_{i} = (u_{i1}, \ldots, u_{ik})$ for $i=0, \ldots, n$. We require that
\begin{enumerate}
\item each collection $\{ u_{0\beta}, u_{1\beta}, \ldots, u_{n\beta} \}$ be a simplex in $st_{\downarrow}(v_{\beta})$, for $\beta = 1, \ldots, k$, and
\item the sequence $u_{0\beta}, u_{1\beta}, \ldots, u_{n\beta}$ be non-decreasing with respect to the chosen linear order, for $\beta = 1, \ldots, k$.
\end{enumerate}
We emphasize that the sequence $u_{0\beta}, \ldots, u_{n\beta}$ may contain repetitions, for some or all $\beta$.
\end{enumerate}

Simplices in $\prod_{\beta=1}^{k} st_{\downarrow}(v_{\beta})$ can be usefully represented by matrices, as in Figure \ref{matrix}.

It is a standard fact that 
\[ \left| \prod_{\beta=1}^{k} L_{\beta} \right| \cong
 \prod_{\beta=1}^{k} |L_{\beta}|, \]
 for any finite collection of simplicial complexes $\{ L_{\beta} \}$, provided that the spaces in question are given the compactly generated topology. Thus, to establish the homeomorphism, it suffices to show that
 $st_{\downarrow}^{\mathcal{P}}(v)$ is simplicially isomorphic to the above simplicial product.
 
 \begin{figure}[!t]
\begin{center}
\begin{tikzpicture}
\draw[black](0,0) rectangle (.5,2.5);
\draw[black](0,0) rectangle (3.5,.5);
\draw[black](0,0) rectangle (3.5,2.5);
\draw[black](.5,0) rectangle (1,2.5);
\draw[black](0,.5) rectangle (3.5,1);
\draw[black](0,2) rectangle (3.5,2.5);
\draw[black](3,0) rectangle (3.5,2.5);
\node at (-.4,.25){\footnotesize $w_{0}$};
\node at (-.4,.75){\footnotesize $w_{1}$};
\node at (-.4,1.6){\large $\vdots$};
\node at (-.4,2.25){\footnotesize $w_{n}$};
\node at (.25,2.8){\footnotesize $v_{1}$};
\node at (.75,2.8){\footnotesize $v_{2}$};
\node at (3.25,2.8){\footnotesize $v_{k}$};
\node at (.25,.25){\footnotesize $u_{01}$};
\node at (.75,.25){\footnotesize $u_{02}$};
\node at (3.25,.25){\footnotesize $u_{0k}$};
\node at (.25,.75){\footnotesize $u_{11}$};
\node at (.75,.75){\footnotesize $u_{12}$};
\node at (3.25,.75){\footnotesize $u_{1k}$};
\node at (.25,2.25){\footnotesize $u_{n1}$};
\node at (.75,2.25){\footnotesize $u_{n2}$};
\node at (3.25,2.25){\footnotesize $u_{nk}$};
\node at (.25,1.6){\large $\vdots$};
\node at (.75,1.6){\large $\vdots$};
\node at (2,1.6){\large $\iddots$};
\node at (3.25,1.6){\large $\vdots$};
\node at (2,.75){\large $\ldots$};
\node at (2,.25){\large $\ldots$};
\node at (2,2.25){\large $\ldots$};
\node at (2,2.8){\large $\ldots$};
\end{tikzpicture}
\end{center}
\caption{The figure illustrates a matrix notation for $n$-simplices in the simplicial product. Each row is a vertex in $\prod_{\beta=1}^{k} st_{\downarrow}(v_{\beta})$. The $\beta$th column is a simplex in $st_{\downarrow}(v_{\beta})$, possibly with repetitions. Height weakly increases as we read a column from bottom to top.}
\label{matrix} 
\end{figure}
 We define a map 
\[ \phi: u \mapsto (r_{v_{1}}(u), r_{v_{2}}(u), \ldots, r_{v_{k}}(u)) \]
on the vertices $st_{\downarrow}^{\mathcal{P}}(v)$. It is straightforward to see that the right-hand expression is a vertex in our simplicial product, and indeed that the map is bijective on vertices (the map that sends a $k$-tuple to the union of its entries is a two-sided inverse). We must show that the map takes simplices to simplices, and that every simplex in the simplicial product is the image of a simplex from
$st_{\downarrow}^{\mathcal{P}}(v)$.

Let $S = \{ u_{0}, \ldots, u_{n} \}$ be an $n$-simplex in $st_{\downarrow}^{\mathcal{P}}(v)$. We will assume that $S$ is in canonical order so that, in particular,
\[ r_{b}(u_{0}) \leq r_{b}(u_{1}) \leq \ldots \leq r_{b}(u_{n}) \]
in $\mathcal{E}(b)$, for each $b \in u_{0}$ (by Definition \ref{definition:deltaB}).
We claim that
\[ r_{v_{\beta}}(S) := \{ r_{v_{\beta}}(u_{0}), r_{v_{\beta}}(u_{1}), \ldots, r_{v_{\beta}}(u_{n}) \} \] is a simplex in $st_{\downarrow}(v_{\beta})$, for $\beta = 1, \ldots, k$. Indeed, let $b \in r_{v_{\beta}}(u_{0})$. It follows that  
$b \in u_{0}$, so that $r_{b}(u_{0}) \leq \ldots, r_{b}(u_{n})$ in 
$\mathcal{E}(b)$, as noted above. Now note that, since $supp(b) \subseteq supp(v_{\beta})$, 
$r_{b}(r_{v_{\beta}}(u_{i})) = r_{b}(u_{i})$ for each $i$. Thus, 
\[ r_{b}(r_{v_{\beta}}(u_{0})) \leq r_{b}(r_{v_{\beta}}(u_{1})) \leq \ldots \leq r_{b}(r_{v_{\beta}}(u_{n})) \]
in $\mathcal{E}(b)$,
for all $b \in r_{v_{\beta}}(u_{0})$.
 The definition of $\Delta_{\mathcal{B}}$ (Definition \ref{definition:deltaB})
now implies that $r_{v_{\beta}}(S)$ is a simplex in $\Delta_{\mathcal{B}}$, for $\beta = 1, \ldots, k$. The fact that 
$r_{v_{\beta}}(S)$ is  a simplex in $st_{\downarrow}(v_{\beta})$ also follows easily, because we can apply essentially the same reasoning to
$r_{v_{\beta}}(S \cup \{ v \})$ (if $v \not \in S$) or to 
$r_{v_{\beta}}(S)$ (if $v = u_{n}$).

We must also check that the sequence
\[ r_{v_{\beta}}(u_{0}), \ldots, r_{v_{\beta}}(u_{n}) \]
is non-decreasing with respect to the linear order chosen above. This follows from the fact that height either increases as we move from term to term in this sequence (which implies also an increase in the linear order, by the hypothesis), or the sequence members repeat. This proves that the given sequence is non-decreasing with respect to the chosen linear order.

It follows from all of this that the map $\phi$ sends simplices to simplices. The fact that each simplex in the product arises as the image of a simplex under $\phi$ can be derived from the matrix notation for the simplices in the product: given such a matrix representing a simplex $\sigma$ in the product, we let the union of the $i$th row be $w_{i}$. The set $\{ w_{0}, \ldots, w_{n}\}$ is the desired simplex $\sigma' \in st_{\downarrow}^{\mathcal{P}}(v)$ such that
$\phi(\sigma') = \sigma$. Thus, $\phi$ is an isomorphism of simplicial complexes, establishing the first homeomorphism.

In the case of the links, we will indicate how to define a homeomorphism between the realizations of $lk_{\downarrow}^{\mathcal{P}}(v)$  and $\bigast_{\beta=1}^{k}lk_{\downarrow}(v_{\beta})$. We first recall the definition of the abstract join, as it specializes to the current case:
\begin{itemize}
\item $\mathcal{V}\left(\bigast_{\beta=1}^{k} lk_{\downarrow}(v_{\beta})\right) = \bigcup_{\beta=1}^{k} \mathcal{V}(lk_{\downarrow}(v_{\beta}))$;
\item $\mathcal{S}\left(\bigast_{\beta=1}^{k} lk_{\downarrow}(v_{\beta})\right) = 
\left\{ S_{1} \cup \ldots \cup S_{k} \mid S_{\beta} = \emptyset \text{ or } S_{\beta} \in 
\mathcal{S}(lk_{\downarrow}(v_{\beta})) \right\} - \{ \emptyset \}$.
\end{itemize}
We define a map from the realization of $lk_{\downarrow}^{\mathcal{P}}(v)$ to $\bigast_{\beta = 1}^{k} lk_{\downarrow}(v_{\beta})$ by sending a vertex of $lk_{\downarrow}^{\mathcal{P}}(v)$ to a certain weighted sum of vertices from the target. We must first introduce some notation.
We let $r'_{v_{\beta}}(u) = r_{v_{\beta}}(u)$ if the latter is not $v_{\beta}$, and $r'_{v_{\beta}}(u) = 0$ if $r_{v_{\beta}}(u) = v_{\beta}$.
We let $wgt(u)$ denote the number of non-zero terms in the set
\[ \{ r'_{v_{\beta}}(u) \mid \beta = 1, \ldots, k \}. \]
With these definitions, the desired map is
\[ u \mapsto \frac{1}{wgt(u)} \sum_{\beta = 1}^{k} r'_{v_{\beta}}(u) \]
on vertices, and we extend linearly over simplices of 
$lk_{\downarrow}^{\mathcal{P}}(v)$. The resulting map is a \emph{subdivision map}, in the terminology of \cite{Walker} (pg. 99), and thus determines a homeomorphism between the simplicial realizations.
\end{proof}

\subsection{A criterion for $n$-connectivity of $\Delta^{f}_{\mathcal{B}}$}
\label{subsection:nconncrit}

\begin{lemma} \label{lemma:nconnectedness} (A sufficient condition for $n$-connectedness)
Assume that any two vertices in $\Delta^{f}_{\mathcal{B}}$ have a common upper bound with respect to $\nearrow$ (i.e., 
$((\Delta^{f}_{\mathcal{B}})^{0}, \nearrow)$ is a directed set). 
If whenever $v' \nearrow v''$ ($v' \neq v''$; $v', v'' \in \Delta^{f}_{\mathcal{B}}$), $lk_{\uparrow}(v',v'')$ is
$(n-1)$-connected, then $\Delta^{f}_{\mathcal{B}}$ is $n$-connected.
\end{lemma}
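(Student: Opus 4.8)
The plan is to deduce $n$-connectivity of $\Delta^{f}_{\mathcal{B}}$ from the hypotheses by a Morse-theoretic/direct-limit argument, using the ascending links to ``push'' spheres up the poset $((\Delta^{f}_{\mathcal{B}})^{0}, \nearrow)$. First I would fix $m \leq n$ and a map $g \colon S^{m} \to |\Delta^{f}_{\mathcal{B}}|$; since $S^{m}$ is compact, $g$ factors through a finite subcomplex $K \subseteq \Delta^{f}_{\mathcal{B}}$, so it suffices to show that $K$ is contained in an $n$-connected subcomplex of $\Delta^{f}_{\mathcal{B}}$ (or that $g$ is nullhomotopic in $\Delta^{f}_{\mathcal{B}}$). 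Because $((\Delta^{f}_{\mathcal{B}})^{0}, \nearrow)$ is directed, the finitely many vertices appearing in $K$ have a common upper bound $v''$; I would then replace $K$ by a larger finite complex built from $v''$ and aim to show everything relevant is nullhomotopic there.

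The technical engine will be the following claim, proved by induction: if $v' \nearrow v''$, then the inclusion $st_{\uparrow}(v', v'') \hookrightarrow \Delta^{f}_{\mathcal{B}}$ carries over enough connectivity, and more to the point, one can homotope maps of $S^{m}$ off the ``lower'' part of the complex. Concretely, I would filter the relevant finite subcomplex by the height function $|{\cdot}|$ and do discrete Morse theory with $\nearrow$ as the flow: if $v$ is a vertex of height $h$ that is \emph{not} maximal within the finite complex with respect to $\nearrow$, then one wants to collapse a neighborhood of $v$ towards something higher. The key point is that the part of the link of $v$ that matters when removing $v$ (moving upward) is governed by $lk_{\uparrow}(v, v'')$ for an appropriate common bound $v''$, and this is $(n-1)$-connected by hypothesis; an $(n-1)$-connected link means a vertex can be removed without changing $\pi_{i}$ for $i \leq n-1$ and without creating obstructions in $\pi_{n}$. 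Iterating, one pushes the image of $g$ into $st(v'')$ — or, better, into the cone-like piece on which $v''$ itself is a vertex — which is contractible (it is a star). Hence $g$ is nullhomotopic and $\Delta^{f}_{\mathcal{B}}$ is $n$-connected.

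I would organize the induction on $m$: for $m = 0$, directedness alone gives path-connectedness (any two vertices have a common upper bound, which they are both joined to by an edge-path through the expansion sequence — using that $st_{\uparrow}(v', v'')$ is nonempty and connected, e.g.\ by Theorem \ref{theorem:topologyoflinksandstars} and that a join of nonempty complexes is connected). For the inductive step, assuming $lk_{\uparrow}(v', v'')$ is $(n-1)$-connected, I would take a map $g \colon S^{m} \to |\Delta^{f}_{\mathcal{B}}|$ with $m \leq n$, make it simplicial on some subdivision landing in a finite subcomplex $K$, choose a common upper bound $v''$ of all vertices of $K$ \emph{and} of a chosen basepoint, and then remove the vertices of $K$ one at a time in order of increasing height: each removal of a vertex $u$ with $u \neq v''$ uses that the relative ascending link $lk_{\uparrow}(u, v'')$ is $(n-1)$-connected to extend $g$ over the new material without obstruction (this is the standard ``link is highly connected $\Rightarrow$ can homotope across the star'' move). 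After all such removals, the image lies in a subcomplex all of whose vertices are $\nearrow v''$ and which deformation retracts onto (or is absorbed into) $st(v'')$, which is a cone, hence contractible.

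The main obstacle I anticipate is making the ``remove vertices one at a time'' step rigorous while keeping the relevant links \emph{relative} ones of the form $lk_{\uparrow}(u, v'')$ rather than the full (possibly badly behaved, non-directed) $lk_{\uparrow}(u)$ inside $\Delta_{\mathcal{B}}$ — in particular one must check that when $u$ is deleted the portion of its star that needs to be filled in is exactly a subcomplex of $st_{\uparrow}(u, v'')$, and that the filling can be chosen compatibly as $u$ ranges over the (finitely many) vertices being removed, so that the homotopies glue. This is a bookkeeping argument of the type standard in the Brown/Bux--Geoghegan--Witzel ``deleting vertices'' lemmas, and I expect it to go through, but it is the place where care is needed; the directedness hypothesis is exactly what guarantees the common bound $v''$ exists so that all the relative ascending links in play are the hypothesized $(n-1)$-connected ones.
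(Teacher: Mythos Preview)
Your proposal is correct and follows essentially the same approach as the paper: the paper's proof is a brief sketch that takes a map $f\colon S^{k}\to \Delta^{f}_{\mathcal{B}}$ (for $k\le n$), lands it in a finite subcomplex $K$ by compactness, chooses a common upper bound $v''$ of the vertices of $K$ using directedness, and then homotopes $f$ upward toward $v''$ using the $(n-1)$-connectivity of the relative ascending links, terminating in finitely many steps since the height of $v''$ is finite. Your write-up supplies considerably more detail than the paper (which defers to Lemma~2.6 of \cite{FH2}), and your identification of the main bookkeeping obstacle --- ensuring the relevant links at each removal are the relative ones $lk_{\uparrow}(u,v'')$ --- is exactly the point where care is needed.
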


\begin{proof}
This is essentially Lemma 2.6 from \cite{FH2}, but with minor differences in notation. We will therefore only sketch the argument and direct the interested reader to \cite{FH2} for details.

We let $f: S^{k} \rightarrow \Delta^{f}_{\mathcal{B}}$ $(k \leq n)$ be a continuous map. The image $f(S^{k})$ is compact, and thus lies inside a finite subcomplex $K$ of $\Delta^{f}_{\mathcal{B}}$. 
It follows from our assumptions that the vertices of $K$ have a common upper bound $v''$ under the partial order $\nearrow$. The hypothesis on the connectivity 
of the relative ascending links $lk_{\uparrow}(v',v'')$ assures that we can always homotope $f$ upward, toward the common upper bound $v''$. After finitely many steps (by finiteness of the height of $v''$), we succeed at homotoping $f$ to the constant map at $v''$. Thus, $f$ is null-homotopic. 

\end{proof}

\begin{definition} \label{definition:nconnectedE}
($n$-connected expansion sets) We say that the expansion set 
$\mathcal{B}$ is \emph{$\mathit{n}$-connected} if
\begin{enumerate}
\item for every $b \in \mathcal{B}$ and $v \in \mathcal{V}$ such that $\{ b \} \, \nearrow \, v$, 
$lk_{\uparrow}(b,v)$ is $(n-1)$-connected;
\item the vertices of $\Delta^{f}_{\mathcal{B}}$ are a directed set with respect to the partial order $\nearrow$.
\end{enumerate}
\end{definition}

\begin{lemma} \label{lemma:joincon}
(The connectivity of joins)
Let $K_{1}, \ldots, K_{m}$ be a family of simplicial complexes. Suppose that each $K_{j}$ is $n_{j}$-connected ($n_{j} \geq -1$; ``$-1$-connected" means ``non-empty"). The join
\[ K_{1} \ast \ldots \ast K_{m} \]
is $(n_{1} + \ldots + n_{m} + 2m-2)$-connected. \qed
\end{lemma}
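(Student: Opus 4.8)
The plan is to reduce the general case to the two-fold join by an easy induction, and then prove the two-fold statement $\pi_i(K_1 \ast K_2) = 0$ for $i \le n_1 + n_2 + 2$ using the standard long exact (Mayer--Vietoris / homotopy) machinery for a join. First, recall the homotopy-theoretic meaning of the claimed bound: $K_1 \ast \ldots \ast K_m$ should be $(n_1 + \ldots + n_m + 2(m-1))$-connected. For the induction step, one writes $K_1 \ast \ldots \ast K_{m} = (K_1 \ast \ldots \ast K_{m-1}) \ast K_m$, uses the inductive hypothesis that the first factor is $N$-connected with $N = n_1 + \ldots + n_{m-1} + 2(m-2)$, and then applies the two-fold case to conclude that the whole is $(N + n_m + 2)$-connected; since $N + n_m + 2 = n_1 + \ldots + n_m + 2(m-1)$, the induction closes. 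So everything comes down to the base case $m=2$.

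For $m=2$: I would invoke the well-known fact that for CW-complexes (or simplicial complexes) $A$, $B$, the join $A \ast B$ is homotopy equivalent to the (reduced) suspension of the smash product, $\Sigma(A \wedge B)$, and more precisely $|A \ast B| \simeq |A| \ast |B|$, with the connectivity statement being classical: if $A$ is $p$-connected and $B$ is $q$-connected then $A \ast B$ is $(p+q+2)$-connected. The cleanest self-contained route is via the Mayer--Vietoris / van Kampen argument: cover $|K_1 \ast K_2|$ by the two open sets $U = |K_1 \ast K_2| \setminus |K_2|$ and $V = |K_1 \ast K_2| \setminus |K_1|$, which deformation retract onto $|K_1|$ and $|K_2|$ respectively, while $U \cap V$ deformation retracts onto $|K_1| \times |K_2|$ (the ``middle slice'' of the join parameter). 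One then feeds the connectivity of $|K_1|$, $|K_2|$, and $|K_1| \times |K_2|$ (the product is $(\min(n_1, n_2))$-connected, but more importantly its reduced homology vanishes in the relevant range by the Künneth formula) into the Mayer--Vietoris sequence to get that $\widetilde{H}_i(K_1 \ast K_2) = 0$ for $i \le n_1 + n_2 + 2$, and into van Kampen to get simple connectivity when $n_1 + n_2 + 2 \ge 1$; an application of the Hurewicz theorem then upgrades the homology vanishing to vanishing of homotopy groups.

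Alternatively — and this may be the more appropriate level for the paper — I would simply cite the statement from the literature: this is a standard lemma appearing, for instance, in Milnor's work on joins, and it is quoted in essentially this form in Brown's article \cite{Brown}, in \cite{FH2}, and in many references on finiteness properties; the convention that ``$(-1)$-connected'' means ``non-empty'' is exactly what makes the formula work uniformly, since the join of two non-empty complexes is connected ($= 0$-connected $= (-1) + (-1) + 2$-connected). Given that every other topological input in this subsection (the Nerve Theorem, Walker's product/join theorem) is quoted rather than reproved, I expect the paper's actual proof to consist of the one-line induction above together with a citation for the base case.

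The main obstacle, if one insists on a from-scratch proof, is bookkeeping the boundary cases of the Hurewicz argument: one must separately handle $n_j = -1$ (factors that are merely non-empty), ensure the total connectivity bound is at least $0$ before claiming path-connectedness and at least $1$ before invoking van Kampen for simple connectivity, and only then is the relative Hurewicz theorem legitimately applicable to translate $\widetilde{H}_\ast = 0$ into $\pi_\ast = 0$. None of this is deep, but it is exactly the kind of edge-case checking that a ``\qed after the statement'' in the excerpt signals the author intends to skip by citation.
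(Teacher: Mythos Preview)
Your proposal is correct, and you have anticipated the paper exactly: the statement in the paper ends with a bare \qed and no proof whatsoever is given --- the lemma is treated as a standard fact to be quoted. Your sketch (induction to the two-fold case, then Mayer--Vietoris/Hurewicz or a citation to Milnor) is the standard argument and would serve perfectly well if a proof were required.
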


\begin{theorem}
\label{theorem:nconnectivity}
($n$-connectivity of $\Delta^{f}_{\mathcal{B}}$)
If $\mathcal{B}$ is an $n$-connected expansion set, then
$\Delta^{f}_{\mathcal{B}}$ is $n$-connected. If $\mathcal{B}$ is $n$-connected for all $n$, then $\Delta^{f}_{\mathcal{B}}$ is contractible.
\end{theorem}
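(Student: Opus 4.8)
The plan is to deduce Theorem \ref{theorem:nconnectivity} directly from Lemma \ref{lemma:nconnectedness}, using Theorem \ref{theorem:topologyoflinksandstars} and Lemma \ref{lemma:joincon} to bridge the gap between the hypotheses of the two statements. The key point is that Lemma \ref{lemma:nconnectedness} requires the relative ascending links $lk_{\uparrow}(v',v'')$ to be $(n-1)$-connected for \emph{all} pairs $v' \nearrow v''$ in $\Delta^{f}_{\mathcal{B}}$, whereas the definition of an $n$-connected expansion set (Definition \ref{definition:nconnectedE}) only assumes this when $v' = \{b\}$ is a single-element vertex. The whole content of the proof is to upgrade the single-generator hypothesis to the general one via the join decomposition.

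First I would take an arbitrary pair $v' = \{b_1, \ldots, b_k\} \nearrow v''$ with $v', v'' \in \Delta^{f}_{\mathcal{B}}$ and $v' \neq v''$. By Theorem \ref{theorem:topologyoflinksandstars} (the relative version), there is a homeomorphism
\[ |lk_{\uparrow}(v',v'')| \cong \bigast_{i=1}^{k} |lk_{\uparrow}(b_i, r_{b_i}(v''))|. \]
For each $i$, either $r_{b_i}(v'') = \{b_i\}$ — in which case $lk_{\uparrow}(b_i, r_{b_i}(v''))$ is empty, so that factor is ``$(-1)$-connected'' in the sense of Lemma \ref{lemma:joincon} — or $\{b_i\} \nearrow r_{b_i}(v'')$ properly, in which case hypothesis (1) of Definition \ref{definition:nconnectedE} gives that $lk_{\uparrow}(b_i, r_{b_i}(v''))$ is $(n-1)$-connected. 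Since $v' \neq v''$, at least one index $i$ falls into the second case. Thus each join factor is at least $(-1)$-connected, and at least one is $(n-1)$-connected; Lemma \ref{lemma:joincon} then gives a connectivity estimate of the form $(n_1 + \cdots + n_k + 2k - 2)$ where each $n_i \geq -1$ and at least one $n_i \geq n-1$. A short arithmetic check shows this quantity is at least $n-1$: replacing every other $n_i$ by its minimum value $-1$ contributes $-(k-1)$, while the factor $2k-2$ contributes $+(k-1)$ more than enough to compensate, leaving at least $(n-1) + 0 = n-1$. Hence $lk_{\uparrow}(v',v'')$ is $(n-1)$-connected.

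With this established for every relevant pair, and using hypothesis (2) of Definition \ref{definition:nconnectedE} (the vertices of $\Delta^{f}_{\mathcal{B}}$ form a directed set under $\nearrow$), all hypotheses of Lemma \ref{lemma:nconnectedness} are met, so $\Delta^{f}_{\mathcal{B}}$ is $n$-connected. For the final assertion, if $\mathcal{B}$ is $n$-connected for all $n$, then $\Delta^{f}_{\mathcal{B}}$ is $n$-connected for all $n$, hence weakly contractible; since $\Delta^{f}_{\mathcal{B}}$ is (the realization of) a CW complex, Whitehead's theorem upgrades weak contractibility to contractibility.

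I do not anticipate a serious obstacle here; this is a bookkeeping argument assembling earlier results. The one place requiring genuine (if minor) care is the arithmetic with the join connectivity bound and, relatedly, the edge cases: handling the empty join factors correctly (reading ``$-1$-connected'' as ``nonempty'' per the convention in Lemma \ref{lemma:joincon}) and confirming that the estimate does not degrade when several factors are empty. One should also double-check the degenerate possibility $k = 1$, where $v' = \{b\}$ and the statement is immediate from Definition \ref{definition:nconnectedE}(1) with no join needed. Beyond that, the passage from weak contractibility to contractibility should be stated but needs no elaboration.
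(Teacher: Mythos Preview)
Your proposal is correct and follows essentially the same route as the paper: reduce to Lemma \ref{lemma:nconnectedness}, apply the join decomposition of the relative ascending link from Theorem \ref{theorem:topologyoflinksandstars}, invoke Lemma \ref{lemma:joincon}, and finish with Whitehead's theorem.

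One small terminological slip to fix: you say that when $r_{b_i}(v'') = \{b_i\}$ the factor $lk_{\uparrow}(b_i, r_{b_i}(v''))$ is empty and hence ``$(-1)$-connected in the sense of Lemma \ref{lemma:joincon}'', but that lemma's convention is precisely that $(-1)$-connected means \emph{non-empty}, so an empty factor is not covered by its hypothesis. The clean way to handle this (which the paper leaves implicit) is simply to drop the empty factors, since joining with the empty complex is the identity; all remaining factors are then genuinely $(n-1)$-connected, and Lemma \ref{lemma:joincon} applies directly to give connectivity at least $n-1$. Your arithmetic with $n_i = -1$ for empty factors happens to produce the right number, but it is not a valid application of the lemma as stated.
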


\begin{proof}
In view of Lemma \ref{lemma:nconnectedness}, it suffices to show that
$lk_{\uparrow}(v_{1}, v_{2})$ is $(n-1)$-connected, for every 
pair of vertices $v_{1}, v_{2} \in \Delta^{f}_{\mathcal{B}}$ such that $v_{1} \nearrow v_{2}$ and $v_{1} \neq v_{2}$.

We assume that $v_{1} = \{ b_{1}, \ldots, b_{k} \}$ and $v_{2} = v'_{1} \cup \ldots \cup v'_{k}$, where $supp(b_{i}) = supp(v'_{i})$ for $i = 1, \ldots, k$. (Note that $P(v_{2})$ is a proper refinement of $P(v_{1})$ by the definition of $\nearrow$, so $v_{1}$ and $v_{2}$ necessarily take the above forms.) By Theorem \ref{theorem:topologyoflinksandstars}, we have a homeomorphism
\[ |lk_{\uparrow}(v_{1}, v_{2})|
\cong \bigast_{\beta=1}^{k} |lk_{\uparrow}(b_{\beta}, v'_{\beta})|. \]
By our assumptions, each factor on the right is either empty or $(n-1)$-connected, and at least one of the factors is $(n-1)$-connected. It follows from Lemma \ref{lemma:joincon} that $|lk_{\uparrow}(v_{1},v_{2})|$ is $(n-1)$-connected, completing the proof of the first statement.

The final statement follows from an application of Whitehead's Theorem. 
\end{proof}

\subsection{The natural filtration of $\Delta^{f}_{\mathcal{B}}$}
\label{subsection:filtration}

\begin{definition} \label{definition:thefiltration} (Finite-height subcomplexes of $\Delta^{f}_{\mathcal{B}}$)
For $n \in \mathbb{N}$, we let $\Delta^{f}_{\mathcal{B},n}$ denote the subcomplex of 
$\Delta^{f}_{\mathcal{B}}$ spanned by vertices $v$ of height no more than $n$. 
\end{definition}

\begin{lemma} \label{lemma:filtrationlemma} (The sequence $\{ \Delta^{f}_{\mathcal{B},n} \}$ as a filtration of $\Delta^{f}_{\mathcal{B}}$)
For each $n \in \mathbb{N}$, $\Delta^{f}_{\mathcal{B},n}$ is a finite-dimensional complex. We have
\[ \Delta^{f}_{\mathcal{B},1} \subseteq \Delta^{f}_{\mathcal{B},2} \subseteq
\ldots \subseteq \Delta^{f}_{\mathcal{B}}, \]
and 
\[ \Delta^{f}_{\mathcal{B}} = \bigcup_{n=1}^{\infty} \Delta^{f}_{\mathcal{B},n}. \]
\end{lemma}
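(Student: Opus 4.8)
The plan is to verify each of the three assertions in Lemma~\ref{lemma:filtrationlemma} directly from the definitions, as all three are essentially bookkeeping once one recalls the relevant structure.

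First I would address finite-dimensionality of $\Delta^{f}_{\mathcal{B},n}$. A simplex of $\Delta_{\mathcal{B}}$ is a finite chain $v_{0} < v_{1} < \ldots < v_{d}$ in the canonical order (Remark~\ref{remark:canonical}), so that $P(v_{j})$ is a \emph{proper} refinement of $P(v_{i})$ for $i < j$ by Definition~\ref{definition:expansionsets}(2). If every $v_{i}$ has height at most $n$, then each $P(v_{i})$ is a partition of $X$ into at most $n$ blocks (using that $v_{i}$ has full support), and a strictly refining chain of partitions of a set into at most $n$ blocks has length at most $n$; hence $d \le n-1$, so $\Delta^{f}_{\mathcal{B},n}$ has dimension at most $n-1$. (One must be slightly careful: strictly speaking the chain $v_0 < \cdots < v_d$ in $\mathcal{S}_{\mathcal{B}}$ need not have each consecutive $P$ properly refine the previous as ordered in $\mathcal{S}_{\mathcal{B}}$, but after passing to canonical order this holds, which is all that is needed to bound the number of vertices.) The nesting $\Delta^{f}_{\mathcal{B},1} \subseteq \Delta^{f}_{\mathcal{B},2} \subseteq \ldots \subseteq \Delta^{f}_{\mathcal{B}}$ is immediate: a simplex all of whose vertices have height $\le n$ also has all vertices of height $\le n+1$, and all such simplices lie in $\Delta^{f}_{\mathcal{B}}$ by definition since a vertex of height $\le n$ with full support is a vertex of $\Delta^{f}_{\mathcal{B}}$.

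For the union statement $\Delta^{f}_{\mathcal{B}} = \bigcup_{n=1}^{\infty} \Delta^{f}_{\mathcal{B},n}$, the inclusion $\supseteq$ is clear from the previous point. For $\subseteq$, let $\sigma = \{v_{0}, \ldots, v_{d}\}$ be an arbitrary simplex of $\Delta^{f}_{\mathcal{B}}$; since $\sigma$ is a finite set of vertices, each of which is itself a finite subset of $\mathcal{B}$, the quantity $n := \max_{i} |v_{i}|$ is a well-defined natural number, and then $\sigma \subseteq \Delta^{f}_{\mathcal{B},n}$. This exhausts all simplices of $\Delta^{f}_{\mathcal{B}}$, giving the equality.

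I do not expect a genuine obstacle here; the only point requiring a moment's care is the dimension bound, where one must use both the full-support hypothesis (to know $P(v_i)$ is a partition of all of $X$, not merely of $supp(v_i)$) and the strict-refinement property of Definition~\ref{definition:expansionsets}(2) together with the existence of the canonical order (Remark~\ref{remark:canonical}) to conclude that a $d$-simplex forces a strictly refining chain of $d+1$ partitions of $X$ into at most $n$ parts, whence $d \le n-1$.
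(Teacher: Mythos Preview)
Your proposal is correct and takes essentially the same approach as the paper; the paper phrases the dimension bound slightly more directly by observing that distinct vertices of a simplex have distinct heights (an immediate consequence of proper refinement), so Pigeonhole gives at most $n$ vertices. One small note: contrary to your closing remark, the full-support hypothesis is not actually needed for the dimension bound, since proper refinement of partitions of the common support already forces strict increase in cardinality---the same argument bounds the dimension of $\Delta_{\mathcal{B},n}$ as well.
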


\begin{proof}
The chain of inclusions $\Delta^{f}_{\mathcal{B},1} \subseteq \Delta^{f}_{\mathcal{B},2} \subseteq \ldots \Delta^{f}_{\mathcal{B}}$ is clear.

We first show that $\Delta^{f}_{\mathcal{B},n}$ is finite-dimensional.  Let $\{ v_{0}, \ldots, v_{k} \}$ be a simplex in $\Delta^{f}_{\mathcal{B},n}$. It must be that $|v_{i}| \neq |v_{j}|$, for $i \neq j$. Since each $|v_{i}|$ is a positive integer, we must have that $k \leq n-1$ by the Pigeonhole Principle; thus the dimension of a simplex in $\Delta^{f}_{\mathcal{B},n}$ is at most $n-1$.

Finally, note that if $\{ v_{0}, \ldots, v_{k} \}$ is a simplex in $\Delta^{f}_{\mathcal{B}}$, then there is some subscript $i$ such that $|v_{i}|$ is a maximum. Assuming that $|v_{i}| = m$, we have that
$\{ v_{0}, \ldots, v_{k} \}$ is a simplex of $\Delta^{f}_{\mathcal{B},m}$. It follows directly that 
$\Delta^{f}_{\mathcal{B}} \subseteq \cup_{n=1}^{\infty} \Delta^{f}_{\mathcal{B},n}$; the reverse inclusion is obvious.
\end{proof}

\begin{proposition} \label{proposition:connectfilt}
(Connectivity of the filtration in terms of descending links) Assume that $\Delta^{f}_{\mathcal{B}}$ is $n$-connected.
If the descending links $lk_{\downarrow}(v)$ of all vertices $v \in \Delta^{f}_{\mathcal{B}}$ of height at least $k$ are $n$-connected, then the maps on homotopy groups
\[ \pi_{j}(\Delta^{f}_{\mathcal{B},\ell - 1}) \rightarrow \pi_{j}(\Delta^{f}_{\mathcal{B}, \ell}) \] 
are isomorphisms, for $j=0, \ldots, n$ and $\ell \geq k$. 

In particular, $\Delta^{f}_{\mathcal{B},\ell-1}$ is $n$-connected, for $\ell \geq k$.
\end{proposition}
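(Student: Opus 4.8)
The plan is to prove this by the standard ``discrete Morse theory'' / ``building up one height level at a time'' argument, using the height function $v \mapsto |v|$ as the Morse function. First I would fix $\ell \geq k$ and analyze the inclusion $\Delta^{f}_{\mathcal{B},\ell-1} \hookrightarrow \Delta^{f}_{\mathcal{B},\ell}$. The complex $\Delta^{f}_{\mathcal{B},\ell}$ is obtained from $\Delta^{f}_{\mathcal{B},\ell-1}$ by attaching the (finitely many types of) vertices $v$ of height exactly $\ell$ together with all simplices having $v$ as their top-height vertex. Because, by Lemma~\ref{lemma:filtrationlemma}, the heights of the vertices of any simplex are distinct, each simplex of $\Delta^{f}_{\mathcal{B},\ell}$ not already in $\Delta^{f}_{\mathcal{B},\ell-1}$ has a \emph{unique} vertex of maximal height, which has height exactly $\ell$; its remaining vertices all have strictly smaller height and hence form a simplex of the descending link of that top vertex. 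So the relative cells being attached, organized by their unique height-$\ell$ vertex $v$, are exactly the cones on the simplices of $lk_{\downarrow}(v)$ — that is, we are coning off $lk_{\downarrow}(v)$ (which sits inside $\Delta^{f}_{\mathcal{B},\ell-1}$) by the new vertex $v$.

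Concretely, I would write $\Delta^{f}_{\mathcal{B},\ell} = \Delta^{f}_{\mathcal{B},\ell-1} \cup \bigcup_{v} \overline{st}(v)$, the union over vertices $v$ of height $\ell$, where $\overline{st}(v)$ denotes the closed star of $v$ within $\Delta^{f}_{\mathcal{B},\ell}$. For a single such $v$, $\overline{st}(v)$ is the cone $v * lk(v)$, and since every simplex through $v$ in $\Delta^{f}_{\mathcal{B},\ell}$ has all its other vertices of height $< \ell = |v|$, we get $lk(v) = lk_{\downarrow}(v)$ as computed in $\Delta^{f}_{\mathcal{B}}$; moreover $lk_{\downarrow}(v) \subseteq \Delta^{f}_{\mathcal{B},\ell-1}$. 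Thus, up to the usual care about doing all the $v$'s simultaneously (the closed stars of distinct height-$\ell$ vertices meet only in lower-height simplices, hence inside $\Delta^{f}_{\mathcal{B},\ell-1}$, so the attachments do not interfere), $\Delta^{f}_{\mathcal{B},\ell}$ is homotopy equivalent to $\Delta^{f}_{\mathcal{B},\ell-1}$ with a cone attached along each $lk_{\downarrow}(v)$. By hypothesis every such $lk_{\downarrow}(v)$ (height $\ell \geq k$) is $n$-connected, so coning it off changes $\pi_j$ for $j \leq n$ not at all: attaching a cone on an $n$-connected subcomplex induces isomorphisms on $\pi_j$ for $j = 0, \ldots, n$. (This is the elementary fact that if $A \subseteq Y$ with $A$ $n$-connected, then $Y \hookrightarrow Y \cup_A CA$ is an $n$-equivalence, which one sees from the long exact sequence of the pair together with excision/the fact that $(CA, A)$ is $n$-connected.) This gives the claimed isomorphisms $\pi_j(\Delta^{f}_{\mathcal{B},\ell-1}) \to \pi_j(\Delta^{f}_{\mathcal{B},\ell})$ for $j = 0, \ldots, n$ and $\ell \geq k$.

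For the ``in particular'' clause: compose these isomorphisms up the filtration. For fixed $\ell \geq k$ and any $m \geq \ell$, the maps $\pi_j(\Delta^{f}_{\mathcal{B},\ell-1}) \to \pi_j(\Delta^{f}_{\mathcal{B},m})$ are isomorphisms for $j \leq n$; since $\Delta^{f}_{\mathcal{B}} = \bigcup_m \Delta^{f}_{\mathcal{B},m}$ is a directed union (Lemma~\ref{lemma:filtrationlemma}) and homotopy groups commute with such colimits (every sphere and every homotopy is compact, hence lands in some $\Delta^{f}_{\mathcal{B},m}$), we get $\pi_j(\Delta^{f}_{\mathcal{B},\ell-1}) \cong \pi_j(\Delta^{f}_{\mathcal{B}})$ for $j = 0, \ldots, n$. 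By hypothesis $\Delta^{f}_{\mathcal{B}}$ is $n$-connected, so $\pi_j(\Delta^{f}_{\mathcal{B},\ell-1}) = 0$ for $j = 1, \ldots, n$ and $\Delta^{f}_{\mathcal{B},\ell-1}$ is path-connected, i.e. it is $n$-connected.

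I expect the only real obstacle to be the bookkeeping in the first step — verifying carefully that attaching the cones on the various $lk_{\downarrow}(v)$ for all height-$\ell$ vertices $v$ at once behaves like attaching them one at a time, i.e. that $\overline{st}(v) \cap \overline{st}(v') \subseteq \Delta^{f}_{\mathcal{B},\ell-1}$ for $v \neq v'$ of height $\ell$ and that $\overline{st}(v) \cap \Delta^{f}_{\mathcal{B},\ell-1} = lk_{\downarrow}(v)$. Both are immediate from the distinct-heights observation (a simplex containing two distinct height-$\ell$ vertices would violate it; a simplex in $\overline{st}(v)$ not containing $v$ has all vertices of height $<\ell$). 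Everything else is the standard connectivity-of-filtrations lemma (compare Brown's criterion and Lemma~2.6 of~\cite{FH2}), so I would state it crisply and cite the elementary homotopy-theoretic input (e.g. \cite{Walker} or a standard reference) rather than reprove it.
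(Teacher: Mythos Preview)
Your argument is correct and is precisely the standard Bestvina--Brady/Brown style Morse argument that the paper has in mind: the paper actually omits the proof entirely, simply citing Proposition~7.6 of \cite{FH2} and \cite{BB} and calling it ``standard.'' So your write-up supplies exactly the details the paper chose to suppress, via the same approach.
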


\begin{proof}
We omit the proof, which has become standard. The interested reader should see Proposition 7.6 from \cite{FH2}, or consult \cite{BB}, which is a ``classical" source for this observation.

\end{proof}

\subsection{Connectivity of the descending link}
\label{subsection:threetypes}
In this subsection, we begin to specialize the discussion somewhat. We introduce two natural subtypes of expansion set, which we call linear and cyclic (respectively). These correspond to groups of ``$F$"- and ``$T$"-type, respectively. We also give a rather general discussion of the descending link in the case that $\mathcal{B}$ is ``rich in contractions" and has the ``bounded contractions" property. In the presence of the latter properties, we show that the connectivity of $lk_{\downarrow}(v)$ tends to infinity with the height of $v$ (Corollary \ref{corollary:connectivityeasy}). In view of Proposition \ref{proposition:connectfilt}, this is the crucial step in proving the connectivity of the filtration $\{ \Delta^{f}_{\mathcal{B},n} \}$. 

\subsubsection{Two types of expansion set} 

\begin{definition} \label{definition:3types}
(Linear and cyclic expansion sets; consecutively ordered)
We say that the expansion set $\mathcal{B}$ is \emph{linear}
if:
\begin{enumerate}
\item [(L1)] the set $X$ is linearly ordered, and
\item [(L2)] for each $b \in \mathcal{B}$, $supp(b)$ is an interval of $X$.
\end{enumerate}
We say that the expansion set $\mathcal{B}$ is \emph{cyclic} if:

\begin{enumerate}
\item [(C1)] the set $X$ is cyclically ordered, and
\item [(C2)] for each $b \in \mathcal{B}$, $supp(b)$ is a (cyclic) interval of $X$.
\end{enumerate}
If $\mathcal{B}$ is linear (respectively, cyclic) and $v = \{ b_{1}, \ldots, b_{k} \} \in \mathcal{V}_{\mathcal{B}}$, we say that $v$ is \emph{consecutively ordered} if $supp(v)$ is an interval (respectively, an interval in the cyclic sense).
We say that $v$ is in \emph{linear order} (respectively, in \emph{cyclic order}) if 
$supp(b_{1}), supp(b_{2}), \ldots, supp(b_{k})$ are ordered from left to right, or if the same supports are ordered cyclically (respectively).

\begin{remark}
(Ordered and consecutively ordered)
Suppose that $\mathbb{B}$ is a linear expansion set over $\mathbb{R}$ and 
$\{ b_{i} \mid i \in \mathbb{Z}\} \subseteq \mathbb{B}$ is such that
$supp(b_{i}) = [i,i+1)$.

The vertex $v = \{ b_{1}, b_{2}, b_{3} \}$ is consecutively ordered ($supp(v) = [1,4)$), while $\{ b_{1}, b_{3} \}$
is not
(since its support is not an interval). We note that, in contrast, the definition of ``linearly ordered" simply describes a convention for listing members of a vertex $v$: if they are listed ``from left to right" (in a suitable sense), then the vertex is linearly ordered, but not otherwise. Thus, the vertex $v = \{ b_{1}, b_{3}, b_{5} \}$ is linearly ordered (since the supports are ordered from left to right), but $v = \{ b_{1}, b_{5}, b_{3} \}$ (the same vertex) is not. 
\end{remark}

\end{definition}

\begin{remark} (``Permutational" expansion sets) 
\label{remark:permutational}
It appears difficult to give a direct definition of ``permutational" expansion sets, so we will avoid doing so here. (These would be the expansion sets that model ``$V$''-like groups.) We will instead capture the ``$V$"-like nature of the expansion sets using other definitions  (like Definition \ref{definition:rich}, or the explicit definitions of the expansion sets given in Section \ref{section:construction}). 
\end{remark}

\subsubsection{The bounded and rich-in-contractions property}

\begin{definition} \label{definition:boundedcontractions}
(The bounded contractions property)
We say that the expansion set $\mathcal{B}$ has the \emph{bounded contractions property} if 
\[ \{ |v| \mid v \in \mathcal{E}(b), \text{ for some } b \in \mathcal{B} \} \]
is a bounded set of positive integers. 

In the event that the above set is bounded, our custom will be to let $C_{0}$ denote an upper bound.
\end{definition}

\begin{definition} \label{definition:rich}
(Rich in contractions) Let $\mathcal{B}$ be an expansion set. 
\begin{enumerate}
\item If $\mathcal{B}$ is linear or cyclic, we say that $\mathcal{B}$ is \emph{rich in contractions} if there is a constant $C_{1}$ such that, for every consecutively ordered $v \in \mathcal{V}_{\mathcal{B}}$ satisfying
$|v| \geq C_{1}$, $lk_{\downarrow}(v)$ is non-empty.

We may also say, for emphasis, that $\mathcal{B}$ is \emph{rich in contractions in the linear sense (respectively, the cyclic sense)} in the above cases.
\item We say that $\mathcal{B}$ is \emph{rich in contractions in the permutational sense} if there is a constant $C_{1}$ such that, for every $v \in \mathcal{V}_{\mathcal{B}}$ satisfying $|v| \geq C_{1}$, $lk_{\downarrow}(v)$ is non-empty.

We may also say that $\mathcal{B}$ is \emph{rich in contractions}, if ``permutational" is understood from context.
\end{enumerate}
\end{definition}

\subsubsection{Generalities about the downward link}

\begin{remark}(Conventions and vocabulary)
\label{remark:convvoc}
Let $\{ u, v\}$ be a $1$-simplex in $\Delta_{\mathcal{B}}$. Assume that $\{ u, v \}$ is written in canonical order; i.e., that $v$ is a vertex in $st_{\uparrow}(u)$ and $u$ is a vertex in $lk_{\downarrow}(v)$. Let $u = \{ b'_{1}, \ldots, b'_{\ell} \}$ and $v = \{ b_{1}, \ldots, b_{k} \}$. We assume that $u$ and $v$ are written in either the linear or cyclic ordering if 
$\mathcal{B}$ is linear or cyclic, respectively. We will need to develop a bit of vocabulary that will be useful in the forthcoming analysis of the descending link.

For each $b'_{i} \in u$, $r_{b'_{i}}(v)$ is a subset of $v$, which we call the \emph{production of $b'_{i}$ in $v$}, or the \emph{production of $b'_{i}$} if the $v$ is understood. The production of $b'_{i}$ is \emph{trivial} if $r_{b'_{i}}(v) = \{ b'_{i} \}$; otherwise, it is \emph{non-trivial}. Since $u \neq v$, there is at least one $b'_{i}$ with non-trivial production. 

Due to the fact that $supp(b)$ is always an interval when $\mathcal{B}$ is linear or cyclic (by Definition \ref{definition:3types}), the production of each $b'_{i}$ is always consecutively ordered (in the appropriate sense) for such $\mathcal{B}$. (We note, though, that the $supp(b)$ is essentially arbitrary when $\mathcal{B}$ is not assumed to be linear or cyclic.)

Finally, we make the following observation. For a canonically-ordered simplex $\sigma = \{ u_{0}, \ldots, u_{n} \}$ in $lk_{\downarrow}(v)$, a necessary and sufficient condition that $\sigma$ be a simplex
in $lk_{\downarrow}^{\mathcal{P}}(v)$, where $\mathcal{P} = \{ v_{1}, \ldots, v_{k} \}$, is that
$u_{0}$ be a vertex in $lk_{\downarrow}^{\mathcal{P}}(v)$; i.e., that the production of
each member of $u_{0}$ be a subset of one of the $v_{j}$. This is because each
$P(u_{i})$ (for $i>0$) properly refines $P(u_{0})$, by Definition \ref{definition:deltaB} and 
Definition \ref{definition:expansionsets}(2). 
\end{remark}

\subsubsection{Connectivity of the downward link}

We now proceed to an analysis of the connectivity of the downward link. Our approach is to use nerves of covers (Definition \ref{definition:nerve}) and the Nerve Theorem (Theorem \ref{theorem:Nerve}). 

We note that the bounded contractions property and rich-in-contractions property (Definitions \ref{definition:boundedcontractions} and \ref{definition:rich}) reduce the entire discussion to a fairly simple matter regarding the combinatorics of finite sets, despite any impression of technicality to the contrary.

\begin{definition} \label{definition:nerve} (Nerve of a cover) Let $S$ be any set, and let $\mathcal{C}$ be a cover of $S$. We let $\mathcal{N}(\mathcal{C})$ denote the \emph{nerve of the cover $\mathcal{C}$}. The vertices of $\mathcal{N}(\mathcal{C})$ are the non-empty elements $C \in \mathcal{C}$. A subset $\{ C_{0}, \ldots, C_{k} \} \subseteq \mathcal{C}$ is a simplex if and only if $C_{0} \cap \ldots \cap C_{k} \neq \emptyset$.
\end{definition}

\begin{theorem} \label{theorem:Nerve} \cite{AB} (Nerve Theorem) Let $K$ be a simplicial complex and let 
$\{ K_{i} \}_{i \in I}$ be a family of subcomplexes of $K$ such that $K = \bigcup_{i \in I} K_{i}$. If every non-empty finite intersection $K_{i_{1}} \cap \ldots \cap K_{i_{t}}$ is  $(k-t+1)$-connected, then $K$ is $k$-connected if and only if $\mathcal{N}(\{K_{i}\}_{i\in I})$ is $k$-connected. \qed
\end{theorem}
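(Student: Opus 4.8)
The statement is the Nerve Theorem, quoted from \cite{AB}; I sketch the standard proof via the Mayer--Vietoris blow-up. Since spheres and disks are compact, $k$-connectivity of $|K|$ and of $|\mathcal{N}|$ may each be checked on a finite subcomplex, so one reduces at once to the case in which $I$ is finite. Recall that a finite set $J\subseteq I$ spans a simplex of $\mathcal{N}$ exactly when $\bigcap_{i\in J}K_{i}\neq\emptyset$, and that for subcomplexes one has $\bigl|\bigcap_{i\in J}K_{i}\bigr|=\bigcap_{i\in J}|K_{i}|$. For a point $x\in|K|$ write $\sigma_{x}$ for the simplex of $\mathcal{N}$ on the vertex set $\{i\in I:x\in|K_{i}|\}$ (this set does span a simplex, as it records the $K_{i}$ containing whichever cell of $K$ carries $x$). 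Define the blow-up
\[
\widehat{K}=\bigl\{(x,\xi)\in|K|\times|\mathcal{N}| : \xi\in|\sigma_{x}|\bigr\},
\]
with its two projections $\pi\colon\widehat{K}\to|K|$ and $\rho\colon\widehat{K}\to|\mathcal{N}|$.

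The plan is to show that $\pi$ and $\rho$ both induce isomorphisms on $\pi_{j}$ for $j\le k$; since $k$-connectivity of a space is a condition on $\pi_{j}$ for $j\le k$, it then follows that $|K|$ is $k$-connected $\iff$ $\widehat{K}$ is $k$-connected $\iff$ $|\mathcal{N}|$ is $k$-connected, as claimed. For $\pi$: the fiber over $x$ is the simplex $|\sigma_{x}|$, hence contractible, and $\pi$ restricts to a trivial bundle over the interior of each simplex of $K$, these pieces gluing together cellularly; by the usual argument for the nerve lemma this forces $\pi$ to be a homotopy equivalence, with no connectivity hypothesis needed. For $\rho$: over the interior of a $(t-1)$-simplex $\tau=\{i_{1},\dots,i_{t}\}$ of $\mathcal{N}$ the fiber is $|K_{i_{1}}\cap\cdots\cap K_{i_{t}}|$, which by hypothesis is $(k-t+1)$-connected. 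Feeding this into a skeletal induction on $\mathcal{N}$ — equivalently, identifying $\widehat{K}\simeq\operatorname*{hocolim}_{\tau\in\mathcal{N}}\bigl|\bigcap_{i\in\tau}K_{i}\bigr|$ and invoking the standard connectivity estimate for homotopy colimits over the face poset of a simplicial complex — one gets that $\rho$ induces isomorphisms on $\pi_{j}$ for $j\le k$.

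The main obstacle is the $\rho$ step: one must run the skeletal induction so that the (possibly small) connectivity of a fiber $|K_{i_{1}}\cap\cdots\cap K_{i_{t}}|$ over a high-dimensional simplex is correctly offset against the dimension $t-1$ of that simplex, and this is precisely where the ``$(k-t+1)$'' pattern in the hypothesis is consumed: the obstruction to extending a lift, or a null-homotopy, of degree $\le k$ across the $t$-cells of $\mathcal{N}$ lives in homotopy of the fiber in a range that $(k-t+1)$-connectedness kills. The point-set verifications surrounding $\pi$ — that $\widehat{K}$ is a CW complex, that $\pi$ is locally trivial over open cells, and that contractible fibers together with this local structure yield a homotopy equivalence — are routine and can be quoted from the standard treatments of the nerve lemma referenced in \cite{AB}.
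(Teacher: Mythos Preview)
The paper does not prove this theorem; it is quoted from \cite{AB} and closed with a \qed, so there is no proof in the paper to compare against. Your sketch via the blow-up $\widehat{K}$ and the two projections is the standard argument and is correct in outline; one minor remark is that the reduction to finite $I$ in your opening sentence is more delicate than stated (one must reduce $K$ and $\mathcal{N}$ compatibly while preserving the intersection hypothesis), but the hocolim argument you describe afterward works for arbitrary $I$ directly, so that reduction is not actually needed.
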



\begin{definition} \label{definition:standardpartitions}
(Standard partitions) Let $v = \{ b_{1}, \ldots, b_{k} \}$. If $\mathcal{B}$ is linear or cyclic, then we assume that $v$ is in linear or cyclic order.
\begin{enumerate}
\item Assume that $i,j \in \{1, \ldots, k \}$.
If $1 \leq i < j \leq k$, we let 
\[ \mathcal{P}_{[i,j]} = \{ \{ b_{i}, b_{i+1}, \ldots, b_{j} \},  
v - \{ b_{i}, b_{i+1}, \ldots, b_{j} \} \}. \] If $j < i$, we let
\[ \mathcal{P}_{[i,j]} = \{ \{ b_{i}, \ldots, b_{k}, b_{1}, \ldots, b_{j} \}, v- \{ b_{i}, \ldots, b_{k}, b_{1}, \ldots, b_{j} \} \}. \]
The partition $\mathcal{P}_{[i,j]}$ is called a \emph{standard linear partition} if $i<j$; we will refer to all of the above collectively as \emph{standard cyclic partitions}.
In the first of the above cases ($i<j$), we say that $\{ b_{i}, \ldots, b_{j} \}$ is the \emph{principal interval} of $\mathcal{P}_{[i,j]}$. If $j<i$, then 
$\{ b_{i}, \ldots, b_{k}, b_{1}, \ldots, b_{j} \}$ is 
the \emph{principal interval} of $\mathcal{P}_{[i,j]}$. The \emph{length} of the principal interval of $\mathcal{P}_{[i,j]}$ is its cardinality, which is $j-i+1$ if $i<j$ and $k+j-i+1$ if $j<i$. The \emph{initial symbol} of $\mathcal{P}_{[i,j]}$ is $i$.

\item If $T \subseteq \{ 1, \ldots, k \}$, we let
\[ \mathcal{P}_{T} = \{ \{ b_{\ell} \mid \ell \in T \}, \{ b_{\ell} \mid \ell \not \in T \} \}. \]
We say that $\mathcal{P}_{T}$ is a \emph{standard partition} with \emph{principal set} $T$. The \emph{length} of $T$ is $|T|$. 
\end{enumerate}
\end{definition}

\begin{definition} (Standard covers)\label{definition:standardcovers}
Let $v = \{ b_{1}, \ldots, b_{k} \}$ be a vertex. If $\mathcal{B}$ is linear or cyclic, then we further assume that $v$ is both consecutively ordered and ordered in the appropriate sense (i.e., linearly or cyclically, respectively).
Let $C_{0}$ and $C_{1}$ be positive integers.
\begin{enumerate}
\item We let $\mathcal{C}^{\ell}(v,C_{0},C_{1})$
denote the set of all standard linear partitions of $v$ having principal intervals of length $C_{0}$ or less, and  initial symbols no more than $C_{1}$. 
\item We let $\mathcal{C}^{c}(v,C_{0},C_{1})$ denote the set of all standard cyclic partitions $\mathcal{P}$ of $v$ such that
\begin{enumerate}
\item the principal interval of $\mathcal{P}$ is of length $C_{0}$ or less, and
\item either $b_{1}$ is in the principal interval of $\mathcal{P}$, or the 
initial symbol of $\mathcal{P}$ is no more than $C_{1}$.
\end{enumerate} 
\item We let $\mathcal{C}(v,C_{0})$ denote the set of all standard partitions of $v$ having principal sets with length $C_{0}$ or less.
\end{enumerate}
\end{definition}

\begin{proposition} \label{proposition:naturalcovers} 
(Covers of the downward links)
Let $\mathcal{B}$ be an expansion set having both the bounded contractions property and the rich-in-contractions property, with constants $C_{0}$ and $C_{1}$, respectively, where $C_{0} \geq C_{1}$.

Let $v = \{ b_{1}, \ldots, b_{k} \}$ be a vertex; assume $k > C_{0}$. If $\mathcal{B}$ is linear or cyclic, then we further assume that $v$ is both consecutively ordered and ordered in the appropriate sense (i.e., linearly or cyclically, respectively).

We write $\mathcal{C}$ in place of $\mathcal{C}^{\ell}(v,C_{0},C_{1})$, $\mathcal{C}^{c}(v,C_{0},C_{1})$, or $\mathcal{C}(v,C_{0})$, according to whether $\mathcal{B}$ is linear or cyclic, or satisfies the rich-in-contractions property in the permutational sense, respectively.

Under these assumptions, $lk_{\downarrow}(v)$ is covered by 
\[ \{ lk_{\downarrow}^{\mathcal{P}}(v) \mid \mathcal{P} \in \mathcal{C} \}. \]
\end{proposition}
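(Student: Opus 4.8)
The plan is to show that every vertex $u$ of $lk_{\downarrow}(v)$ lies in $lk_{\downarrow}^{\mathcal{P}}(v)$ for some $\mathcal{P} \in \mathcal{C}$; by Remark \ref{remark:convvoc}, this suffices to cover $lk_{\downarrow}(v)$ as a complex, since a canonically-ordered simplex $\sigma = \{u_0, \ldots, u_n\}$ of $lk_{\downarrow}(v)$ lies in $lk_{\downarrow}^{\mathcal{P}}(v)$ as soon as its minimal vertex $u_0$ does. So I fix $u = \{b'_1, \ldots, b'_\ell\} \in lk_{\downarrow}(v)^{0}$. Since $u \neq v$, at least one $b'_i$ has non-trivial production $r_{b'_i}(v)$; by the bounded contractions property, every production has cardinality at most $C_0$ (it is an element of some $\mathcal{E}(b'_i)$, hence has height $\le C_0$).

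First I would treat the permutational case, which is the cleanest. Pick any $b'_i \in u$ with non-trivial production, and let $T \subseteq \{1, \ldots, k\}$ be the set of indices $\ell$ with $b_\ell \in r_{b'_i}(v)$; then $1 \le |T| \le C_0$, so $\mathcal{P}_T \in \mathcal{C}(v, C_0)$. By Remark \ref{remark:convvoc}, to check $u \in lk_{\downarrow}^{\mathcal{P}_T}(v)$ I need every production $r_{b'_j}(v)$ to be contained in one block of $\mathcal{P}_T$. Productions of distinct members of $u$ are disjoint subsets of $v$ (their supports are contained in the disjoint supports of the $b'_j$), so $r_{b'_j}(v)$ for $j \neq i$ is disjoint from $r_{b'_i}(v) = \{b_\ell : \ell \in T\}$, hence contained in the complementary block; and $r_{b'_i}(v)$ is the principal block itself. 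This handles the permutational case. A subtlety to flag: if some $b'_j$ has trivial production $\{b'_j\}$, I must be sure $b'_j = b_m$ for some $m$ and that $\{b_m\}$ sits inside one block — it does, since a singleton always lies in one block of any partition.

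For the linear and cyclic cases, the argument is the same in spirit but uses that all supports are intervals, so productions are consecutive blocks. Again pick $b'_i \in u$ with non-trivial production; $r_{b'_i}(v)$ is a consecutively-ordered subset of $v$ (Remark \ref{remark:convvoc}), say $\{b_p, b_{p+1}, \ldots, b_q\}$ in the linear case (cyclically $\{b_p, \ldots, b_k, b_1, \ldots, b_q\}$ in the cyclic case), of length $\le C_0$. The natural candidate is $\mathcal{P}_{[p,q]}$, and the same disjointness argument shows every production lies in one of its two blocks. The remaining point — and the main obstacle — is verifying the \emph{initial symbol} condition built into the definitions of $\mathcal{C}^{\ell}$ and $\mathcal{C}^{c}$: I need $p \le C_1$ (linear), or else $b_1$ in the principal interval together with length $\le C_0$ (cyclic). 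This is where the rich-in-contractions hypothesis with its constant $C_1$ and the assumption $C_0 \ge C_1$ enter. In the linear case, if $p > C_1$ then $\{b_1, \ldots, b_{p-1}\}$ is a consecutively-ordered vertex of height $\ge C_1$, so by rich-in-contractions $lk_{\downarrow}(\{b_1, \ldots, b_{p-1}\})$ is non-empty, i.e. there is a nontrivial contraction supported inside $supp(b_1) \cup \cdots \cup supp(b_{p-1})$; using property (3) of expansion sets (the compatibility condition) this contraction extends to a member $b''$ of $\mathcal{B}$ with $r_{b''}(v)$ a consecutive block of length $\le C_0$ ending before $b_p$, and replacing $b'_i$ by this new choice (iterating if necessary, each step strictly decreasing the smallest index with non-trivial production among the blocks we need to cover) eventually produces a production whose leftmost index is $\le C_1$. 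In the cyclic case one argues symmetrically, allowing the principal interval to "wrap around" through $b_1$. Care is needed to ensure the iteration terminates — it does because the leftmost index strictly decreases and is bounded below by $1$ — and to confirm that after the substitution all of $u$'s original productions still sit inside one block of the new partition, which holds because the new principal interval, being shorter and further left, still cannot split any of the pairwise-disjoint interval productions of $u$ unless one of them straddled it, and rich-in-contractions lets us choose it to avoid exactly that.
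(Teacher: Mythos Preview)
Your permutational case is correct and matches the paper. The gap is in the linear and cyclic cases: the iterative construction you sketch, invoking rich-in-contractions to manufacture a new element $b''$ and ``replace $b'_i$ by this new choice,'' does not work. The vertex $u$ is fixed, and the element $b''$ you produce via rich-in-contractions need not belong to $u$; you then need every production of $u$ to lie inside one block of the partition determined by $b''$'s production, and your final sentence (``rich-in-contractions lets us choose it to avoid exactly that'') is unjustified --- rich-in-contractions guarantees the existence of \emph{some} contraction in a long enough block, not one that avoids straddling the productions of $u$.

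The paper's argument is much simpler and avoids this entirely. Instead of picking an arbitrary $b'_i$ with non-trivial production, order $u = \{b'_1,\ldots,b'_\ell\}$ linearly and pick the \emph{first} one, say $b'_j$. If the index where its production begins is at most $C_1$, use that production as the principal interval exactly as you do. If instead $j > C_1$, then $b'_1,\ldots,b'_{j-1}$ all have trivial production, so $b'_1 = b_1,\ldots,b'_{C_1} = b_{C_1}$; now take $\mathcal{P}_{[1,C_1]}$, which is in $\mathcal{C}^{\ell}(v,C_0,C_1)$ because the principal interval has length $C_1 \le C_0$ and initial symbol $1$. Every production of $u$ lies in one block: the singletons $\{b_1\},\ldots,\{b_{C_1}\}$ sit in the principal interval, and everything else sits in the complement. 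No iteration and no appeal to rich-in-contractions is needed --- indeed, the rich-in-contractions hypothesis is not used in the proof at all; only its constant $C_1$ enters, via the definition of the cover. The cyclic case is handled by first checking whether $b_1$ lies in a non-trivial production (if so, use that one as the principal interval, which now contains $b_1$), and otherwise reducing to the linear argument.
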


\begin{proof}
We first assume that $\mathcal{B}$ is linear, and that $\mathcal{B}$ satisfies the bounded contractions property and the
rich-in-contractions property, the latter in the linear sense. We let $C_{0}$ and $C_{1}$ respectively denote the associated constants. We now show that the set from the statement of the proposition covers $\lk_{\downarrow}(v)$.

Let $\sigma = \{ u_{0}, u_{1}, \ldots, u_{n} \}$ be a simplex in $lk_{\downarrow}(v)$, written in canonical order (so that 
$\sigma$ is a simplex in the ascending star of $u_{0}$, and indeed $\{ u_{0}, v \}$ is a simplex in the ascending star of $u_{0}$). Assume that $u_{0} = \{ b'_{1}, \ldots, b'_{\ell} \}$ is consecutively and linearly ordered. By the definition of $\Delta_{\mathcal{B}}$, the vertex $v$ 
may be obtained by replacing each of the $b'_{i}$ with the members of some vertex $\hat{u}_{i} \in \mathcal{E}(b'_{i})$.
If $\hat{u}_{i} = \{ b'_{i} \}$, then the production of $b'_{i}$ is trivial (in the terminology of Remark \ref{remark:convvoc}). 
We let $b'_{j}$ be the first member of $u_{0}$ with non-trivial production. It follows that
\[ b_{1} = b'_{1}; \quad b_{2} = b'_{2}; \quad \ldots \quad b_{j-1} = b'_{j-1}, \] 
while 
\[ \{ b_{j}, \ldots, b_{j+d} \} \in \mathcal{E}(b'_{j}), \]
where the height $d+1$ of the latter vertex is no more than $C_{0}$, and $d$ is at least $1$.

There are two cases: either $j \leq C_{1}$, or $j > C_{1}$. If $j \leq C_{1}$, then we note that
\[ \mathcal{P}_{[j,j+d]} = \{ \{ b_{j}, \ldots, b_{j+d} \}, v - \{ b_{j}, \ldots, b_{j+d} \} \} \]
is a standard linear partition such that the length of the principal interval is less than or equal to $C_{0}$, and the initial symbol is no more than $C_{1}$. Thus, $\mathcal{P}_{[j,j+d]} \in \mathcal{C}^{\ell}(v,C_{0},C_{1})$. We claim that, furthermore, 
$u_{0} \in lk_{\downarrow}^{\mathcal{P}_{[j,j+d]}}(v)$. Indeed, since the production of $b'_{j}$ is exactly the principal interval of $\mathcal{P}_{[j,j+d]}$, and the productions of distinct members of $u_{0}$ have disjoint support, it follows 
that the production of $b'_{i}$ is a subset of $v - \{ b_{j}, \ldots, b_{j+d} \}$, for each $i \neq j$. This proves the claim. Now since $u_{0} \in lk_{\downarrow}^{\mathcal{P}_{[j,j+d]}}(v)$, it follows that
$\sigma$ is a simplex in $lk_{\downarrow}^{\mathcal{P}_{[j,j+d]}}(v)$, by Remark \ref{remark:convvoc}.

If $j > C_{1}$, then we have
\[ b_{1} = b'_{1}, \ldots, b_{C_{1}} = b'_{C_{1}}. \]
Thus, each of $b'_{1}, \ldots, b'_{C_{1}}$ has trivial production, and this production is the entirety of the set
$\{ b_{1}, \ldots, b_{C_{1}} \} \subseteq v$. Every other production from a $b'_{i} \in u_{0}$ is a subset of
$\{ b_{C_{1}+1}, \ldots, b_{k} \}$. Note that 
\[ \mathcal{P}_{[1,C_{1}]} = \{ \{ b_{1}, \ldots, b_{C_{1}} \}, \{ b_{C_{1}+1}, \ldots, b_{k} \} \} \]
is a member of $\mathcal{C}^{\ell}(v,C_{0},C_{1})$ (since $C_{1} \leq C_{0}$ and the initial symbol $1$ is no more than $C_{1}$), and it follows from the above observations that
\[ u_{0} \in lk_{\downarrow}^{\mathcal{P}_{[1,C_{1}]}}(v), \]
so $\sigma$ is a simplex in $lk_{\downarrow}^{\mathcal{P}_{[1,C_{1}]}}(v)$, again by Remark \ref{remark:convvoc}.
Since $\sigma$ was arbitrary, we have now shown that 
\[ lk_{\downarrow}(v) \subseteq \bigcup_{\mathcal{P} \in \mathcal{C}^{\ell}(v,C_{0},C_{1})} lk_{\downarrow}^{\mathcal{P}}(v), \]
which proves the proposition in the linear case.

We next consider the cyclic case. By Remark \ref{remark:convvoc} (and as in the linear case), it suffices to show that every vertex $u$ in the descending link $lk_{\downarrow}(v)$ lies in 
$lk_{\downarrow}^{\mathcal{P}}(v)$, for some $\mathcal{P} \in \mathcal{C}^{c}(v,C_{0},C_{1})$. First, assume that $b_{1} \in v$ is part of the non-trivial production of some member $b'$ of $u$. In this case, the production in question involves no more than $C_{0}$ members of $v$, by the definition of the constant $C_{0}$. The production of all other members of $u$ is disjoint from that of $b'$.   It follows that there is some $\mathcal{P} \in \mathcal{C}^{c}(v,C_{0},C_{1})$ whose principal interval coincides with the production of $b'$ in $v$, and thus, $u \in lk_{\downarrow}^{\mathcal{P}}(v)$ (since the production of each $b'' \in u$ ($b'' \neq b'$) is disjoint from the principal interval of $\mathcal{P}$, by the above reasoning).
In the remaining case (in which $b_{1}$ is not part of a non-trivial production), one proceeds exactly as in the linear case. The cyclic case of the proposition follows.

In the remaining case, we assume that $\mathcal{B}$ satisfies the rich-in-contractions property in the permutational sense.
It again suffices to show that every vertex $u$ in $lk_{\downarrow}(v)$ lies in 
$lk_{\downarrow}^{\mathcal{P}}(v)$, for some $\mathcal{P} \in \mathcal{C}(v,C_{0})$. This follows from the fact that there is some $b' \in u$ with non-trivial production: taking a partition $\mathcal{P}$ with principal set equal to the production of $b'$, we easily conclude that $u \in lk_{\downarrow}^{\mathcal{P}}(v)$ (since all other productions are necessarily contained in the complement of the principal set). 
\end{proof}

\begin{theorem} \label{theorem:connectivityeasylinear}
(Connectivity of descending links in $\Delta_{\mathcal{B}}$: an easy sufficient condition for the linear case) Let $\mathcal{B}$ be a linear expansion set. Assume that $\mathcal{B}$ has the bounded contractions property and the rich-in-contractions property with constants $C_{0}$ and $C_{1}$, respectively, where ``rich-in-contractions" is in the  linear sense. We assume $C_{0} \geq C_{1}$. Assume that $v$ is consecutively ordered.

If \[ |v| \geq (n+2)C_{1} + (n+1)C_{0} - (n+1), \] for some $n \geq -1$, then 
$lk_{\downarrow}(v)$ is $n$-connected.
\end{theorem}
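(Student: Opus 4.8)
The plan is to prove this by induction on $n$, using the Nerve Theorem (Theorem \ref{theorem:Nerve}) applied to the cover of $lk_{\downarrow}(v)$ by the partitioned descending links $lk_{\downarrow}^{\mathcal{P}}(v)$, $\mathcal{P} \in \mathcal{C}^{\ell}(v,C_0,C_1)$, furnished by Proposition \ref{proposition:naturalcovers}. The two ingredients the Nerve Theorem demands are: (a) high connectivity of all nonempty finite intersections of the covering pieces, and (b) high connectivity of the nerve of the cover. For (a), Proposition \ref{proposition:intersections} tells us that an intersection $\bigcap_{i} lk_{\downarrow}^{\mathcal{P}_i}(v)$ equals $lk_{\downarrow}^{\mathcal{P}}(v)$ where $\mathcal{P}$ is the meet of the $\mathcal{P}_i$, and Theorem \ref{theorem:desctopology} factors this as a join $\bigast_{\beta} lk_{\downarrow}(v_\beta)$ over the blocks $v_\beta$ of $\mathcal{P}$; since each standard linear partition has a principal interval of length $\le C_0$, the meet of $t$ of them has at most $t+1$ "small" blocks together with one "large" complementary block of height $\ge |v| - tC_0$ (roughly), and the inductive hypothesis applied to that large block — which is still consecutively ordered — gives the join a controlled connectivity, to which Lemma \ref{lemma:joincon} then applies.

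**The base case** $n = -1$ is exactly the statement that $lk_{\downarrow}(v)$ is nonempty when $|v| \ge C_1$, which (since $|v| > C_0 \ge C_1$ here) is the rich-in-contractions hypothesis. For the inductive step, assume the result for $n - 1$ and suppose $|v| \ge (n+2)C_1 + (n+1)C_0 - (n+1)$. I would first estimate the connectivity of a $t$-fold intersection: its meet $\mathcal{P}$ has a complementary block $v_{\text{big}}$ of height at least $|v| - (\text{something like } C_1 + tC_0)$, which I want to be $\ge (n+1)C_1 + nC_0 - n$ so the inductive hypothesis makes $lk_{\downarrow}(v_{\text{big}})$ be $(n-1)$-connected; the small blocks contribute at least $(-1)$-connectivity (nonempty) by rich-in-contractions applied to each — here one must check each small block is itself consecutively ordered and of height $\ge C_1$, or handle small-height blocks directly. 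Lemma \ref{lemma:joincon} then yields that the $t$-fold intersection is at least $\big((n-1) + t\cdot(-1) + 2(t+1) - 2\big) = (n + t - 1)$-connected, i.e. $(k - t + 1)$-connected with $k = n$, which is precisely the Nerve Theorem hypothesis.

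**The nerve itself** must be shown $n$-connected. The vertices of the nerve are the partitions $\mathcal{P}_{[i,j]} \in \mathcal{C}^{\ell}(v,C_0,C_1)$, i.e. intervals $[i,j] \subseteq \{1,\ldots,k\}$ of length $\le C_0$ with $i \le C_1$; a collection of them spans a simplex iff the corresponding $lk_{\downarrow}^{\mathcal{P}}$'s have common intersection, which by the factorization argument happens iff the meet is still "rich enough", essentially an unconditional combinatorial condition once $k$ is large. I expect the nerve to be highly connected essentially because it resembles (a subdivision or thickening of) a simplicial complex built on short initial intervals of a long linear order, and the relevant bound — that it is $n$-connected once $|v|$ exceeds the stated linear-in-$n$ threshold — is a finite combinatorial fact about such interval complexes. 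I would isolate this as the crux.

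**The main obstacle** I anticipate is precisely this last step: pinning down the exact connectivity of the nerve $\mathcal{N}\big(\{lk_{\downarrow}^{\mathcal{P}}(v)\}_{\mathcal{P} \in \mathcal{C}^{\ell}}\big)$ as a function of $|v|$, $C_0$, and $C_1$, and verifying that the arithmetic of the bound $(n+2)C_1 + (n+1)C_0 - (n+1)$ is exactly what makes both the intersection estimates and the nerve estimate go through simultaneously in the induction. The intersection estimates are a mechanical bookkeeping of heights through the meet operation; the nerve estimate requires genuinely understanding the combinatorics of the poset of admissible principal intervals, and getting the constants to line up so that the inductive hypothesis is applicable to the complementary block (height must drop by at most $C_1 + C_0 - 1$ per unit decrease in $n$) is where the precise form of the hypothesis is forced. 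Everything else — well-definedness of the cover, the join decompositions, the passage from connectivity of pieces and nerve to connectivity of the union — is supplied directly by Proposition \ref{proposition:naturalcovers}, Theorem \ref{theorem:desctopology}, Lemma \ref{lemma:joincon}, and Theorem \ref{theorem:Nerve}.
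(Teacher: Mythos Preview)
Your overall architecture is right — induction on $n$ via the Nerve Theorem applied to the cover from Proposition~\ref{proposition:naturalcovers} — but you have inverted the difficulty, and in doing so missed the one observation that makes the whole argument run.

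The key point you overlooked is this: every partition $\mathcal{P}_{[i,j]} \in \mathcal{C}^{\ell}(v,C_0,C_1)$ has initial symbol $i \le C_1$ and principal-interval length $j-i+1 \le C_0$, so $j \le C_0 + C_1 - 1$. Hence \emph{every} principal interval is contained in $\{b_1,\ldots,b_{C_0+C_1-1}\}$, regardless of how many cover elements you intersect. The complementary interval (the consecutive block to the right of all principal intervals) therefore always has at least $|v| - (C_0 + C_1 - 1)$ elements, and this bound is \emph{independent of $t$}. Your estimate ``$|v| - (\text{something like }C_1 + tC_0)$'' is far too pessimistic; the principal intervals overlap rather than accumulate.

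This single observation dissolves what you identified as the crux. Since any intersection $\bigcap_{s} K_s$ has the descending link of a complementary interval of height $\ge |v| - (C_0+C_1-1)$ as a join factor (Theorem~\ref{theorem:desctopology}), and the join with anything is at least as connected as each factor, the intersection is nonempty whenever $|v| \ge C_0 + 2C_1 - 1$. So the nerve $\mathcal{N}(\mathcal{U})$ is a \emph{full simplex} once $|v|$ passes that threshold — there is no combinatorics of interval posets to analyze. Likewise, in the inductive step every $t$-fold intersection (for $t\ge 2$) is at least $n$-connected, uniformly in $t$, because the complementary interval is long enough to apply the inductive hypothesis; only the case $t=1$ needs the extra observation that $K$ itself is a join of a nonempty factor (the link on the principal interval) with an $n$-connected factor (the link on the complementary interval), giving $(n+1)$-connectivity.

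Your plan to extract contributions from the ``small blocks'' via Lemma~\ref{lemma:joincon} is also problematic: those blocks can have height below $C_1$, so their descending links may be empty, and your arithmetic $(n-1) + t\cdot(-1) + 2(t+1) - 2$ assumes they are not. The paper simply ignores them: a join containing one $n$-connected factor is already $n$-connected, and that is all that is needed.
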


\begin{proof}
The proof uses the Nerve Theorem \ref{theorem:Nerve} and induction on $n$. We first note that $lk_{\downarrow}(v)$ is non-empty (i.e., $(-1)$-connected) when $|v| \geq C_{1}$, by the definition of the rich-in-contractions property. Thus, the theorem is true if $n=-1$.

Let $v = \{ b_{1}, \ldots, b_{k} \}$ be consecutively and linearly ordered.
We let $\mathcal{U}$ denote the cover of $lk_{\downarrow}(v)$ from Proposition \ref{proposition:naturalcovers}. We denote the members of $\mathcal{U}$, which are partitioned downward links of $v$, by $K_{i}$ (for various $i$), to simplify notation. For any finite subcollection 
\[ \mathcal{V} = \{ K_{1}, K_{2}, \ldots, K_{t} \}, \]
we define the \emph{complementary interval of $\mathcal{V}$} to be the subset of $v$ consisting of all $b_{j}$ such that 
$b_{j}$ is not in any of the principal intervals associated to the various $K_{i}$, nor is any $b_{j'}$ when $j' > j$. (Here each $K_{s}$ is
$lk_{\downarrow}^{\mathcal{P}_{s}}(v)$, for some standard linear partition $\mathcal{P}_{s}$. The complementary interval of $\mathcal{V}$ consists of all $b_{j}$ that lie ``to the right" of all of the principal intervals of the various $\mathcal{P}_{s}$.) We note that the initial symbol of each of the principal intervals in question is one of the numbers $1, \ldots, C_{1}$, and the length of each principal interval is at most $C_{0}$. It follows from this description that each complementary interval takes the form $\{ b_{k'}, b_{k'+1}, \ldots, b_{k} \}$, for
some $k'$ less than or equal to $C_{1} + C_{0}$. Thus, the complementary interval of $\mathcal{V}$ is consecutively ordered, and has at least $k - (C_{1} + C_{0} - 1)$ elements. 

Next, we let $\mathcal{P}$ denote the meet of the partitions $\mathcal{P}_{s}$, $s = 1, \ldots, t$. We have
\begin{align*}
|\cap_{s=1}^{t} K_{s}| &= |lk_{\downarrow}^{\mathcal{P}}(v)| \\
&\cong \bigast_{\beta} |lk_{\downarrow}(v_{\beta})|
\end{align*}
where the vertices $v_{\beta}$ are the members of the partition $\mathcal{P}$. The descending link of the complementary interval of
$\mathcal{V}$ is a factor in the latter join. (We note here that the complementary interval may not actually be a member of $\mathcal{P}$, but there will necessarily be a $v_{\hat{\beta}}$ of the form $v' \cup v''$, where $v''$ is the complementary interval, $v'$ is a subset of $v$, and 
$supp(v' \cup v'')$ is a disjoint union of $supp(v')$ and $supp(v'')$. In this case, we have
\[ |lk_{\downarrow}(v_{\hat{\beta}})| \cong |lk_{\downarrow}(v')| \ast |lk_{\downarrow}(v'')|, \]
so the desired conclusion stands.) It follows from all of this that \emph{$|\cap_{s=1}^{t} K_{s}|$ is at least as highly connected as $lk_{\downarrow}(v'')$, where $v''$ is the complementary interval of $\mathcal{V}$, and where $v''$ is consecutively ordered and has at least $k-(C_{0}+ C_{1}-1)$ members}.

Now we check the hypotheses of the Nerve Theorem (Theorem \ref{theorem:Nerve}). First, we consider the nerve $\mathcal{N}(\mathcal{U})$. 
Note that $\mathcal{U}$ is necessarily finite. It follows from the previous reasoning that an arbitrary intersection $|\cap K_{s}|$ of members of $\mathcal{U}$ is non-empty whenever $k - (C_{0} + C_{1} -1)$ is at least $C_{1}$. Thus, $\mathcal{N}(\mathcal{U})$ is isomorphic to a simplex when
$|v| = k \geq C_{0} + 2C_{1} -1$. In particular, $\mathcal{N}(\mathcal{U})$ is $n$-connected for arbitrary $n$ when $|v| \geq C_{0} + 2C_{1}-1$. 

Suppose that the theorem is true for some $n$. Thus, $lk_{\downarrow}(v)$ is $n$-connected when 
\[ |v| \geq (n+1)C_{0} + (n+2)C_{1} - (n+1), \]
and the specified hypotheses are satisfied. Assume that 
\[ |v| \geq (n+2)C_{0} + (n+3)C_{1} - (n+2). \]
It is sufficient, by Theorem \ref{theorem:Nerve}, to show that $t$-fold intersections of members of $\mathcal{U}$ are $(n-t+2)$-connected. However, 
such an intersection is at least as highly connected as the descending link on
the associated complementary interval, and the complementary interval in question has at least $(n+1)C_{0} + (n+2)C_{1} - (n+1)$ elements. Thus, the $t$-fold intersection is at least $n$ connected. This handles all cases except the case $t=1$ (i.e., the connectivity of an individual $K \in \mathcal{U}$). If $t=1$, then $K$ factors as a join in which one of the factors is the descending link on the principal interval, while another factor is the descending link on the complementary interval. Since the former is non-empty and the latter is at least
$n$-connected, $K$ is at least $(n+1)$-connected, as required. This completes the induction.
\end{proof}

\begin{theorem} \label{theorem:connectivityeasycyclic}
(Connectivity of descending links in $\Delta_{\mathcal{B}}$: an easy sufficient condition for the cyclic case) Let $\mathcal{B}$ be a cyclic expansion set. Assume that $\mathcal{B}$ has the bounded contractions property and the rich-in-contractions property with constants $C_{0}$ and $C_{1}$, respectively, where ``rich-in-contractions" is in the  cyclic sense. We assume $C_{0} \geq C_{1}$. Assume that $v$ is consecutively ordered.

The descending link $lk_{\downarrow}(v)$ is non-empty if $|v| \geq C_{1}$.
If \[ |v| \geq (n+2)C_{1} + (n+2)C_{0} - (n+2), \] for some $n \geq 0$, then 
$lk_{\downarrow}(v)$ is $n$-connected. 
\end{theorem}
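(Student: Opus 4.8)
The plan is to mirror the proof of Theorem \ref{theorem:connectivityeasylinear}: we argue by induction on $n$ via the Nerve Theorem (Theorem \ref{theorem:Nerve}), but using the cyclic cover $\mathcal{C}^{c}(v,C_{0},C_{1})$ in place of the linear one. The non-emptiness assertion is immediate from Definition \ref{definition:rich}: if $v$ is consecutively ordered and $|v| \geq C_{1}$, then $lk_{\downarrow}(v) \neq \emptyset$ because $\mathcal{B}$ is rich in contractions in the cyclic sense; this also serves as the base case $n=-1$. For the inductive step, assume the theorem holds with $n$ replaced by $n-1$, let $v = \{ b_{1}, \ldots, b_{k} \}$ be consecutively and cyclically ordered with $|v| \geq (n+2)C_{1} + (n+2)C_{0} - (n+2)$, and let $\mathcal{U} = \{ lk_{\downarrow}^{\mathcal{P}}(v) \mid \mathcal{P} \in \mathcal{C}^{c}(v,C_{0},C_{1}) \}$, which by Proposition \ref{proposition:naturalcovers} is a finite cover of $lk_{\downarrow}(v)$.

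The core of the argument is a connectivity estimate for a $t$-fold intersection of members $K_{s} = lk_{\downarrow}^{\mathcal{P}_{s}}(v)$ of $\mathcal{U}$. By Proposition \ref{proposition:intersections} this intersection equals $lk_{\downarrow}^{\mathcal{P}}(v)$ with $\mathcal{P}$ the meet of the $\mathcal{P}_{s}$, and by Theorem \ref{theorem:desctopology} its realization is homeomorphic to the join $\bigast_{\beta} |lk_{\downarrow}(v_{\beta})|$ over the blocks of $\mathcal{P}$. The one genuinely new feature relative to the linear case is that a standard cyclic partition's principal interval may wrap across $b_{1}$, occupying up to $C_{0}-1$ positions near $b_{k}$ in addition to positions near $b_{1}$. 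Consequently the union of the principal intervals of $\mathcal{P}_{1}, \ldots, \mathcal{P}_{t}$ lies inside the single cyclic arc $\{ b_{k-C_{0}+2}, \ldots, b_{k}, b_{1}, \ldots, b_{C_{0}+C_{1}-1} \}$, so its complement in $v$ contains a consecutively ordered sub-vertex $w$ lying in a proper sub-arc of $supp(v)$ with $|w| \geq |v| - (2C_{0}+C_{1}-2)$. Exactly as in the linear proof, $lk_{\downarrow}(w)$ occurs — after at most one further harmless join-split of a block — as a join factor of $\bigcap_{s=1}^{t}K_{s}$, so this intersection is at least as highly connected as $lk_{\downarrow}(w)$.

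Since $w$ lies in a proper arc, the restriction of $\mathcal{B}$ to $supp(w)$ behaves linearly, so Theorem \ref{theorem:connectivityeasylinear} applies to $lk_{\downarrow}(w)$: it is $(n-1)$-connected as soon as $|w| \geq (n+1)C_{1}+nC_{0}-n$. Our hypothesis yields precisely $|w| \geq (n+2)C_{1}+(n+2)C_{0}-(n+2) - (2C_{0}+C_{1}-2) = (n+1)C_{1}+nC_{0}-n$, so every $t$-fold intersection with $t \geq 2$ is $(n-1)$-connected, hence $(n-t+1)$-connected. For $t=1$ we instead write $K = lk_{\downarrow}^{\mathcal{P}}(v) \cong |lk_{\downarrow}(I)| \ast |lk_{\downarrow}(v \setminus I)|$ with $I$ the principal interval ($|I| \leq C_{0}$, $lk_{\downarrow}(I) \neq \emptyset$) and $v \setminus I$ a proper arc of size at least $|v| - C_{0} \geq (n+1)C_{1}+nC_{0}-n$; then $lk_{\downarrow}(v\setminus I)$ is $(n-1)$-connected, $lk_{\downarrow}(I)$ is non-empty, and Lemma \ref{lemma:joincon} upgrades $K$ to $n$-connectivity. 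Finally, each complementary arc above has at least $C_{1}$ elements (since $|v| \geq 2C_{0}+2C_{1}-2$ for $n \geq 0$), so every intersection is non-empty and $\mathcal{N}(\mathcal{U})$ is a simplex, hence $n$-connected. The Nerve Theorem with $k=n$ then gives that $lk_{\downarrow}(v)$ is $n$-connected, completing the induction.

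The hard part will be making the second and third steps rigorous: one must track carefully which positions of $v$ can be absorbed by wrapping principal intervals — this is exactly what produces the extra $C_{0}$ in the hypothesis compared with the linear bound — and one must justify that for a sub-vertex $w$ sitting in a proper arc the descending link $lk_{\downarrow}(w)$ really is governed by Theorem \ref{theorem:connectivityeasylinear}, either by a short lemma identifying the relevant restriction of a cyclic expansion set as linear, or by rerunning that theorem's induction in place. A minor bookkeeping point is to work with the sub-cover of $\mathcal{U}$ whose principal intervals have non-empty descending link (these are the ones actually produced in the proof of Proposition \ref{proposition:naturalcovers}), so that the $t=1$ instance of the Nerve hypothesis is valid.
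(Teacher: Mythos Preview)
Your proposal is correct and follows essentially the same approach as the paper: both use the cyclic cover $\mathcal{C}^{c}(v,C_{0},C_{1})$, observe that all principal intervals lie in the arc $\{b_{k-C_{0}+2},\ldots,b_{k},b_{1},\ldots,b_{C_{0}+C_{1}-1}\}$ so that the complementary piece $w$ has $|w|\geq |v|-(2C_{0}+C_{1}-2)$, apply the linear Theorem~\ref{theorem:connectivityeasylinear} to $lk_{\downarrow}(w)$ since $w$ sits in a proper arc, and then invoke the Nerve Theorem. Your arithmetic matches, and you even flag two points the paper leaves implicit --- restricting to the sub-cover with $lk_{\downarrow}(I)\neq\emptyset$, and the passage from cyclic to linear for the complementary arc --- so in this respect your write-up is slightly more careful than the original.
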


\begin{proof}
It is clear, by the definition of $C_{1}$, that $lk_{\downarrow}(v)$ is non-empty if $|v| \geq C_{1}$.

Let $v = \{ b_{1}, \ldots, b_{k} \}$ be cyclically and consecutively ordered. The proof largely follows that of Theorem \ref{theorem:connectivityeasylinear}. We let $\mathcal{U}$ denote the cover from Proposition \ref{proposition:naturalcovers}, and we let $\mathcal{V}$ denote a particular subcollection
\[ \mathcal{V} = \{ K_{1}, \ldots, K_{t} \}, \]
where the $K_{s}$ are partitioned downward links from the cover $\mathcal{U}$. We define the \emph{complementary interval of $\mathcal{V}$}, denoted $v''$, to be the largest consecutively ordered subset
of
\[ v - (I_{1} \cup \ldots \cup I_{t}), \]
where $I_{s}$ denotes the principal interval of $\mathcal{P}_{s}$, and $K_{s} = lk_{\downarrow}^{\mathcal{P}_{s}}(v)$. By the definition of
$\mathcal{C}^{c}(v,C_{0},C_{1})$, the complementary interval of $\mathcal{V}$ has at least $k - (2C_{0} + C_{1} - 2)$ elements (since the only $b_{j}$ that lie in any $\mathcal{P}' \in \mathcal{C}^{c}(v,C_{0},C_{1})$ are
\[ b_{k-C_{0}+2}, \ldots, b_{k}, b_{1}, \ldots, b_{C_{0}+C_{1}-1}.) \]
Let $\mathcal{P}$ denote the meet of the partitions $\mathcal{P}_{1}, \ldots, \mathcal{P}_{k}$.
One has the same formula for the intersection of the $K_{s}$ as before:
\begin{align*}
| \cap_{s=1}^{t} K_{s}| &= |lk_{\downarrow}^{\mathcal{P}}(v)| \\
&\cong \bigast_{\beta} |lk_{\downarrow}(v_{\beta})|, 
\end{align*}
where the vertices $v_{\beta}$ are the members of the partition of $\mathcal{P}$. The descending link of the complementary interval of 
$\mathcal{V}$ is once again a factor of the above join (as in the proof of Theorem \ref{theorem:connectivityeasylinear}). Note also that the support of 
$v''$ is a linear interval, since $v''$ is consecutively ordered and $supp(v'')$ is a proper subset of the (cyclically ordered) $X$. 
It follows from this that \emph{$|\cap_{s=1}^{t} K_{s}|$ is at least as highly connected as $lk_{\downarrow}(v'')$, and the connectivity 
of $lk_{\downarrow}(v'')$ may be estimated from below using Theorem \ref{theorem:connectivityeasylinear}}.
 
Now we check the hypotheses of Theorem \ref{theorem:Nerve}. First, we consider $\mathcal{N}(\mathcal{U})$. If $|v| = k \geq 
2C_{1} + 2C_{0} - 2$, then $|v''| \geq C_{1}$, where $v''$ is the complementary interval for an arbitrary collection 
$\mathcal{V} \subseteq \mathcal{U}$. Thus, $\mathcal{N}(\mathcal{U})$ is a simplex when $|v| \geq 2C_{1} + 2C_{0} -2$, and thus
$n$-connected for all $n$.

We now prove the remainder of the theorem by induction on $n$. Consider first the base case, $n=0$. We assume that $|v| \geq 2C_{1} + 2C_{0} - 2$. By the previous paragraph, $\mathcal{N}(\mathcal{U})$ is $0$-connected, and so it suffices, by Theorem \ref{theorem:Nerve}, to prove that $t$-fold intersections are $(1-t)$-connected. This is a vacuous condition if $t \geq 3$. Suppose $t=2$. We must show that 
any intersection $K_{1} \cap K_{2}$ is non-empty ($K_{1}, K_{2} \in \mathcal{U}$). This is true since the intersection is at least as connected as $lk_{\downarrow}(v'')$ and 
\[ |v''| \geq (2C_{0} + 2C_{1} - 2) - (2C_{0} + C_{1} - 2) = C_{1}, \]
and thus $lk_{\downarrow}(v'')$ is non-empty. This settles the case $t=2$. If $t=1$, then $K_{1}$ is the join 
$|lk_{\downarrow}(I_{1}) \ast lk_{\downarrow}(v'')|$, where both factors are non-empty. Thus, 
$1$-fold intersections are $0$-connected, as desired. This completes the base case.

Now suppose that $lk_{\downarrow}(v)$ is $n$-connected if $|v| \geq (n+2)C_{0} + (n+2)C_{1} - (n+2)$. We consider 
$v$ such that $|v| \geq (n+3)(C_{0} + C_{1} - 1)$; we must show that $lk_{\downarrow}(v)$ is $(n+1)$-connected; it suffices, by Theorem \ref{theorem:Nerve}, to show that $t$-fold intersections of members from $\mathcal{U}$ are $(n-t+2)$-connected. 
Suppose first that $t \geq 2$. In this case, we know that $\cap_{s=1}^{t} K_{s}$ is at least as highly connected
as $lk_{\downarrow}(v'')$, where 
\[ |v''| = |v| - (2C_{0} + C_{1} - 2) \geq (n+1)C_{0} + (n+2)C_{1} - (n+1). \]
It follows from Theorem \ref{theorem:connectivityeasylinear} that $lk_{\downarrow}(v'')$ is $n$-connected. This proves that $t$-fold are $n$-connected when $t \geq 2$. We now must consider $1$-fold intersections. Thus, let $K \in \mathcal{U}$. The subcomplex $K$ factors as a join
\[ |K| = |lk_{\downarrow}(I)| \ast |lk_{\downarrow}(v'')|, \]
where $I$ is the principal interval of the partition $\mathcal{P}$ associated to $K$, $|lk_{\downarrow}(I)|$ is thus non-empty and $|lk_{\downarrow}(v'')|$ is $n$-connected. It follows that $K$ is $(n+1)$-connected, as required. This completes the induction.
\end{proof}

\begin{theorem} \label{theorem:connectivityeasyperm}
(Connectivity of descending links in $\Delta_{\mathcal{B}}$: an easy sufficient condition for the ``permutational" case) Let $\mathcal{B}$ be an expansion set. Assume that $\mathcal{B}$ has the bounded contractions property and the rich-in-contractions property with constants $C_{0}$ and $C_{1}$, respectively, where ``rich-in-contractions" is in the permutational sense. We assume $C_{0} \geq C_{1}$. 

The descending link $lk_{\downarrow}(v)$ is non-empty if $|v| \geq C_{1}$.
If \[ |v| \geq (2n+2)C_{0} + C_{1}, \] for some $n \geq 0$, then 
$lk_{\downarrow}(v)$ is $n$-connected. 
\end{theorem}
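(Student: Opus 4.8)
The plan is to imitate the proof of Theorem~\ref{theorem:connectivityeasylinear}, using the Nerve Theorem (Theorem~\ref{theorem:Nerve}) together with the cover of $lk_{\downarrow}(v)$ by partitioned descending links $\{ lk_{\downarrow}^{\mathcal{P}}(v) \mid \mathcal{P} \in \mathcal{C}(v,C_{0}) \}$ supplied by Proposition~\ref{proposition:naturalcovers}, and to run an induction on $n$. First I would dispose of the case $n=-1$ (nonemptiness): by the rich-in-contractions property in the permutational sense, $lk_{\downarrow}(v) \neq \emptyset$ as soon as $|v| \geq C_{1}$, which is implied by $|v| \geq (2n+2)C_{0}+C_{1}$ for any $n \geq 0$ since $C_0 \geq 1$. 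The key structural observation, exactly as in the linear case, is that for a subcollection $\mathcal{V} = \{ K_{1}, \ldots, K_{t} \}$ of the cover, with $K_{s} = lk_{\downarrow}^{\mathcal{P}_{s}}(v)$ and $\mathcal{P}_{s}$ a standard partition with principal set $T_{s}$ of size at most $C_{0}$, Proposition~\ref{proposition:intersections} gives $\bigcap_{s=1}^{t} K_{s} = lk_{\downarrow}^{\mathcal{P}}(v)$ where $\mathcal{P} = \mathcal{P}_{1} \wedge \ldots \wedge \mathcal{P}_{t}$, and Theorem~\ref{theorem:desctopology} then realizes this as a join $\bigast_{\beta} |lk_{\downarrow}(v_{\beta})|$ over the blocks $v_{\beta}$ of $\mathcal{P}$. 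One of the blocks of $\mathcal{P}$ is the ``complement block'' $v'' = v - (T_{1} \cup \ldots \cup T_{t})$, which has $|v''| \geq |v| - tC_{0}$; since this $v''$ is itself a factor of the join (up to absorbing it into a larger block, with the join decomposition of Theorem~\ref{theorem:desctopology} still giving $|lk_{\downarrow}(v_{\hat\beta})| \cong |lk_{\downarrow}(v')| \ast |lk_{\downarrow}(v'')|$), the intersection $\bigcap_{s=1}^{t} K_{s}$ is at least as highly connected as $lk_{\downarrow}(v'')$.

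Next I would check the nerve. Because $|T_{s}| \leq C_{0}$, a $t$-fold intersection has complement block of size at least $|v| - tC_{0}$; once $|v| - tC_{0} \geq C_{1}$ the intersection is nonempty. Hence if $|v| \geq (2n+2)C_{0} + C_{1}$ then every $t$-fold intersection with $t \leq 2n+2$ is nonempty, and in particular $\mathcal{N}(\mathcal{C}(v,C_{0}))$ is $n$-connected (indeed it is a simplex as long as $|v| \geq \gamma C_{0} + C_{1}$, where $\gamma$ is the number of members of the finite cover, but the cover can in general have more than $2n+2$ members, so I should only claim $n$-connectivity of the nerve from the fact that all intersections of size $\leq n+1$, say, are nonempty — actually nonemptiness of \emph{all} finite intersections forces the nerve to be a full simplex; since $|v|-tC_0 \geq C_1$ fails for large $t$ this needs care). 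The clean way: the cover $\mathcal{C}(v,C_0)$ is finite with, say, $\gamma$ members; once $|v| \geq \gamma C_{0} + C_{1}$ every intersection is nonempty and the nerve is a simplex, hence $n$-connected for all $n$. Since $(2n+2)C_0 + C_1$ may be smaller than $\gamma C_0 + C_1$, I should instead argue that the nerve is $n$-connected because it is $(2n+1)$-connected follows from all intersections of $\leq 2n+2$ sets being nonempty — which is what $|v| \geq (2n+2)C_0 + C_1$ gives. (A finite flag-like cover in which every $\leq m$-fold intersection is nonempty has $(m-2)$-connected nerve; apply with $m = 2n+2$.)

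With the nerve handled, the induction proceeds. Assume the statement for $n$; suppose $|v| \geq (2n+4)C_{0} + C_{1}$. By Theorem~\ref{theorem:Nerve} it suffices to show that every $t$-fold intersection of members of $\mathcal{C}(v,C_{0})$ is $(n-t+2)$-connected, and that the nerve is $(n+1)$-connected — the latter holds by the paragraph above since $2n+4 = 2(n+1)+2$. For $t \geq 2$: the intersection is at least as connected as $lk_{\downarrow}(v'')$ with $|v''| \geq |v| - tC_{0} \geq (2n+4)C_{0} + C_{1} - tC_{0}$. For $t = 2$ this is $(2n+2)C_{0} + C_{1}$, so by the inductive hypothesis $lk_{\downarrow}(v'')$ is $n$-connected, hence certainly $(n-t+2) = n$-connected; for $t \geq 3$ the required connectivity $n-t+2 \leq n-1$ is met since $|v''|$ is still $\geq (2n+4-t)C_0 + C_1$ and one checks $2n+4-t \geq 2(n-t+2)+2 = 2n-2t+6$ iff $t \geq 2$, so the inductive hypothesis (or triviality of the condition when $n-t+2 < -1$) applies. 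For $t = 1$: $K = |lk_{\downarrow}(T)| \ast |lk_{\downarrow}(v'')|$ where $T$ is the principal set, $|lk_{\downarrow}(T)|$ may be empty but $|lk_{\downarrow}(v'')|$ is $(n+1)$-connected since $|v''| \geq |v| - C_{0} \geq (2n+3)C_{0} + C_{1} \geq (2(n+1)+2)C_{0}+C_{1}$; a join with a (possibly empty) complex stays $(n+1)$-connected, so $K$ is $(n+1)$-connected $\geq (n-t+2) = n+1$. This closes the induction.

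The main obstacle I anticipate is the bookkeeping around the nerve: unlike the linear and cyclic cases, where the standard covers $\mathcal{C}^{\ell}$ and $\mathcal{C}^{c}$ are small enough that all intersections become nonempty (so the nerve is literally a simplex) once $|v|$ is moderately large, the permutational cover $\mathcal{C}(v,C_{0})$ of \emph{all} standard partitions with principal set of size $\leq C_{0}$ can be large, so I cannot expect the nerve to be a full simplex at the claimed threshold; I must instead invoke the general fact that if every intersection of at most $m$ members of a finite cover is nonempty then the nerve is $(m-2)$-connected, and track that $m = 2n+2$ suffices — which is precisely where the ``$2n$'' (rather than ``$n$'') in the bound $|v| \geq (2n+2)C_{0}+C_{1}$ comes from, since intersections of up to $2n+2$ sets remove up to $(2n+2)C_{0}$ generators and we still need $C_{1}$ left over. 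Aside from that, everything is a direct transcription of the linear argument with the complementary-interval estimate replaced by the cruder ``remove at most $C_{0}$ generators per set'' bound, the rest being routine.
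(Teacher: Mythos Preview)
Your approach is the same as the paper's, and your identification of the nerve subtlety (that $\mathcal{N}(\mathcal{C}(v,C_0))$ is not a full simplex, only a high skeleton of one) is exactly right and matches what the paper does.

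There is, however, a genuine error in your $t=1$ step of the induction. You write
\[
|v''| \geq |v| - C_{0} \geq (2n+3)C_{0} + C_{1} \geq (2(n+1)+2)C_{0} + C_{1},
\]
but the last inequality asserts $(2n+3)C_{0} \geq (2n+4)C_{0}$, which is false. So you only get that $lk_{\downarrow}(v'')$ is $n$-connected (from $|v''| \geq (2n+2)C_{0}+C_{1}$ and the inductive hypothesis), not $(n+1)$-connected. Since you explicitly allow $lk_{\downarrow}(T)$ to be empty, the join $lk_{\downarrow}(T) \ast lk_{\downarrow}(v'')$ is then only guaranteed to be $n$-connected, and the Nerve Theorem hypothesis fails at $t=1$.

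The paper closes this case differently: it asserts $lk_{\downarrow}(I)$ is nonempty, so that (nonempty)$\,\ast\,(n$-connected) is $(n+1)$-connected by Lemma~\ref{lemma:joincon}. To justify this, one should work not with all of $\mathcal{C}(v,C_{0})$ but with the subcover consisting of those $\mathcal{P}_{T}$ for which $lk_{\downarrow}(T)\neq\emptyset$. This is still a cover: by the proof of Proposition~\ref{proposition:naturalcovers}, any simplex of $lk_{\downarrow}(v)$ has a minimal vertex $u_{0}$ containing some $b'$ with nontrivial production $T=r_{b'}(v)$, and then $\{b'\}\in lk_{\downarrow}(T)$. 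Every other estimate in your argument (the $t\geq 2$ bound $2n+4-t \geq 2(n-t+2)+2$, and the nerve connectivity via ``all $\leq m$-fold intersections nonempty $\Rightarrow$ nerve $(m-2)$-connected'') is correct and is exactly what the paper uses.
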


\begin{proof}
The proof is like those of Theorems \ref{theorem:connectivityeasylinear}
and \ref{theorem:connectivityeasycyclic}, but easier in most respects.
Let $v = \{b_{1}, \ldots, b_{k} \}$. We first note that $lk_{\downarrow}(v)$ is $(-1)$-connected if $|v| \geq C_{1}$.

We first consider an arbitrary subcollection $\mathcal{V}$ of $\mathcal{U}$, where 
$\mathcal{U}$ is the cover from Proposition \ref{proposition:naturalcovers}. Thus, 
\[ \mathcal{V} = \{ K_{1}, \ldots, K_{t} \}, \]
for some $t$, where each $K_{s}$ is a partitioned downward link $lk_{\downarrow}^{\mathcal{P}_{s}}(v)$, for a partition $\mathcal{P}_{s} \in \mathcal{C}(v,C_{0})$. We let $I_{1}, \ldots, I_{t}$ be the principal sets associated to the partitions $\mathcal{P}_{1}, \ldots, \mathcal{P}_{t}$. We define $v''$, the \emph{complementary set of $\mathcal{V}$}, to be the complement of $(I_{1} \cup \ldots \cup I_{t})$ in $v$. We note that
\[ |v''| \geq (|v| - tC_{0}). \]
The usual argument shows that $|\cap_{s=1}^{t} K_{s}|$ is at least as highly  connected as $lk_{\downarrow}(v'')$.

We now try to apply the Nerve Theorem, first considering the nerve $\mathcal{N}(\mathcal{U})$. Consider any $\mathcal{V} \subseteq \mathcal{U}$ with $t$ elements
$K_{1}, \ldots, K_{t}$ (as above). The complementary set $v''$ has at least $|v| - tC_{0}$ elements. Thus, if $|v| \geq tC_{0} + C_{1}$, we find that 
\[ |v''| \geq (tC_{0} + C_{1}) - tC_{0} = C_{1}, \]
So that $\{ K_{1}, \ldots, K_{t} \}$ is a simplex in $\mathcal{N}(\mathcal{U})$ when
$|v| \geq tC_{0} + C_{1}$. Thus, every $t$-element subset of $\mathcal{U}$ is a simplex of 
$\mathcal{N}(\mathcal{U})$ when $|v| \geq tC_{0} + C_{1}$, which makes $\mathcal{N}(\mathcal{U})$ the $(t-1)$-skeleton of a high-dimensional simplex under the latter hypothesis. Thus, $\mathcal{N}(\mathcal{U})$ is $(t-2)$-connected when $|v| \geq tC_{0} + C_{1}$, or, replacing $t$ by $t+2$, we find that $\mathcal{N}(\mathcal{U})$ is $t$-connected when $|v| \geq (t+2)C_{0} + C_{1}$.  

We now prove the theorem by induction, beginning with the base case $n=0$. Assume that $|v| \geq 2C_{0} + C_{1}$. It follows, by the previous reasoning, that $\mathcal{N}(\mathcal{U})$ is $0$-connected, so it suffices to show that $t$-fold intersections of members of $\mathcal{U}$ are $(1-t)$-connected. If $t=2$, we find that the intersection
$|K_{1} \cap K_{2}|$ is at least as connected as $|lk_{\downarrow}(v'')|$, where $|v''| \geq C_{1}$. Thus, $|K_{1} \cap K_{2}|$ is non-empty, as required. If $t=1$, then $K_{1}$ is 
a join
\[ lk_{\downarrow}(I) \ast lk_{\downarrow}(v''), \]
where both factors are non-empty, so $|K_{1}|$ is $0$-connected, as required. (Here $I$ is the principal interval associated to $K_{1}$.)

We proceed to the inductive step. Assume that, for $0 \leq m \leq n$, whenever $|v| \geq (2m+2)C_{0} + C_{1}$,
$lk_{\downarrow}(v)$ is $m$-connected. We assume that $|v| \geq (2n+4)C_{0}  + C_{1}$.
This easily implies that $\mathcal{N}(\mathcal{U})$ is $(n+1)$-connected; it therefore suffices to show that $t$-fold intersections of members of $\mathcal{U}$ are $(n-t+2)$-connected. Letting $2 \leq t \leq n+3$, we find that 
\[ |v''| \geq (2n -t +4)C_{0} + C_{1} \geq (2(n-t+2)+2)C_{0} + C_{1}. \]
It follows by induction that such intersections $(n-t+2)$-connected, as desired. If $t=1$, then $K$ factors as a join 
\[ lk_{\downarrow}(I) \ast lk_{\downarrow}(v''), \]
where $lk_{\downarrow}(I)$ is non-empty and $lk_{\downarrow}(v'')$ is $n$-connected. It follows that $K$ is $(n+1)$-connected, as desired.
\end{proof}

\begin{corollary} \label{corollary:connectivityeasy}
(Connectivity of the filtration $\{ \Delta_{\mathcal{B},m}^{f} \}$: an easy case) Let $n \geq 0$.
Let $\mathcal{B}$ be an expansion set such that $\Delta^{f}_{\mathcal{B}}$ is $n$-connected. Assume that $\mathcal{B}$ has the bounded contractions property and the rich-in-contractions property with constants $C_{0}$ and $C_{1}$, respectively, where ``rich-in-contractions" is interpreted in the appropriate sense. We assume that $C_{0} \geq C_{1}$. 
\begin{enumerate}
\item If $\mathcal{B}$ is linear, and 
\[ m \geq (n+2)C_{1} + (n+1)C_{0} - (n+2), \]
then $\Delta_{\mathcal{B},m}^{f}$ is $n$-connected.
\item If $\mathcal{B}$ is cyclic, and 
\[ m \geq (n+2)C_{1} + (n+2)C_{0} - (n+3), \]
then $\Delta_{\mathcal{B},m}^{f}$ is $n$-connected.
\item If $\mathcal{B}$ is permutational and 
\[ m \geq (2n+2)C_{0} + C_{1} -1, \]
then $\Delta_{\mathcal{B},m}^{f}$ is $n$-connected.
\end{enumerate}
\end{corollary}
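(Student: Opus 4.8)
The proof will simply assemble Proposition \ref{proposition:connectfilt} with the three descending-link estimates, Theorems \ref{theorem:connectivityeasylinear}, \ref{theorem:connectivityeasycyclic} and \ref{theorem:connectivityeasyperm}, according to the type of $\mathcal{B}$. Two small reductions make those theorems directly applicable to vertices of $\Delta^{f}_{\mathcal{B}}$. First, when $\mathcal{B}$ is linear or cyclic, every vertex $v$ of $\Delta^{f}_{\mathcal{B}}$ has $supp(v) = X$, which is an interval of $X$ in the relevant sense, so $v$ is automatically consecutively ordered; thus the hypothesis ``$v$ consecutively ordered'' in Theorems \ref{theorem:connectivityeasylinear} and \ref{theorem:connectivityeasycyclic} is free here. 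Second, for any $v \in \Delta^{f}_{\mathcal{B}}$ the descending link $lk_{\downarrow}(v)$ is the same whether formed inside $\Delta_{\mathcal{B}}$ or inside $\Delta^{f}_{\mathcal{B}}$: if $\{u,v\}$ is a $1$-simplex with $u$ the lower vertex, then $P(v)$ refines $P(u)$, so $v = \bigcup_{b' \in u} r_{b'}(v)$ with each $r_{b'}(v) \in \mathcal{E}(b')$ a partition of $supp(b')$, whence $supp(u) = supp(v) = X$; so every vertex of $lk_{\downarrow}(v)$ already lies in $\Delta^{f}_{\mathcal{B}}$. Consequently the three theorems yield connectivity statements about $lk_{\downarrow}(v)$ for $v \in \Delta^{f}_{\mathcal{B}}$ of large enough height.

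From here each case is a one-line deduction. Take the linear case: by Theorem \ref{theorem:connectivityeasylinear}, $lk_{\downarrow}(v)$ is $n$-connected for every vertex $v \in \Delta^{f}_{\mathcal{B}}$ with $|v| \geq k$, where $k := (n+2)C_{1} + (n+1)C_{0} - (n+1)$. Since $\Delta^{f}_{\mathcal{B}}$ is $n$-connected by hypothesis, Proposition \ref{proposition:connectfilt} applies with this value of $k$ and gives that $\Delta^{f}_{\mathcal{B},\ell-1}$ is $n$-connected for all $\ell \geq k$; equivalently, $\Delta^{f}_{\mathcal{B},m}$ is $n$-connected for $m \geq k-1 = (n+2)C_{1} + (n+1)C_{0} - (n+2)$, which is statement (1). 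For the cyclic case one repeats this verbatim with $k := (n+2)C_{1} + (n+2)C_{0} - (n+2)$ supplied by Theorem \ref{theorem:connectivityeasycyclic}, obtaining the threshold $m \geq k-1 = (n+2)C_{1} + (n+2)C_{0} - (n+3)$ of statement (2). For the permutational case one uses $k := (2n+2)C_{0} + C_{1}$ from Theorem \ref{theorem:connectivityeasyperm}, obtaining $m \geq k-1 = (2n+2)C_{0} + C_{1} - 1$ of statement (3).

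There is no real obstacle here: the content is entirely contained in Proposition \ref{proposition:connectfilt} and the three descending-link theorems, all of which we are entitled to quote. The only points that need a moment's care are the reduction described in the first paragraph (so that ``descending link'' may be read inside $\Delta^{f}_{\mathcal{B}}$ and the consecutive-ordering hypotheses become automatic) and keeping the off-by-one straight between the height threshold $k$ appearing in Proposition \ref{proposition:connectfilt} and the filtration index $m = k-1$ in the corollary.
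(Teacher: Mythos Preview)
Your proof is correct and follows exactly the approach the paper takes: combine Proposition \ref{proposition:connectfilt} with Theorems \ref{theorem:connectivityeasylinear}, \ref{theorem:connectivityeasycyclic}, and \ref{theorem:connectivityeasyperm}. The paper's own proof is a one-sentence pointer to those results; your version usefully spells out the two small reductions (automatic consecutive ordering for full-support vertices, and agreement of descending links in $\Delta_{\mathcal{B}}$ and $\Delta^{f}_{\mathcal{B}}$) and the off-by-one bookkeeping, all of which are correct.
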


\begin{proof}
This follows by combining the result of Proposition \ref{proposition:connectfilt} with the results of Theorems 
\ref{theorem:connectivityeasylinear},\ref{theorem:connectivityeasycyclic}, and 
\ref{theorem:connectivityeasyperm}, respectively.
\end{proof}

\section{Group actions on expansion sets}
\label{section:groupactions}

Each generalized Thompson group $\Gamma$ is piecewise determined by an inverse semigroup
$S$ acting on a set $X$. We describe how to create a larger inverse semigroup $\widehat{S}$, which contains $\Gamma$ as a subgroup, and also describe actions of $\widehat{S}$ on $\mathcal{B}$. The result is a ``piecewise" treatment of the actions of $\Gamma$ on $\Delta_{\mathcal{B}}$ and $\Delta^{f}_{\mathcal{B}}$. We consider sufficient conditions for the action of $\Gamma$ on $\Delta^{f}_{\mathcal{B}}$ to have cell stabilizers of type $F_{n}$ (Subsection \ref{subsection:stabilizers}) and to be cocompact (Subsection \ref{subsection:cocompact}). Subsection \ref{subsection:Brown} compares Brown's Finiteness Criterion (Theorem \ref{theorem:Brown}) with the results that have been proved so far; the result is a template for proving the $F_{\infty}$ property in a wide number of cases (Theorem \ref{theorem:template}). 

\subsection{Actions of inverse semigroups on expansion sets}
\label{subsection:actions}

\begin{definition} \label{definition:inversesemigroup} 
(Inverse semigroups of partial bijections; actions of inverse semigroups) 
Let $X$ be a set. A \emph{partial bijection} of $X$ is a bijection $h: A \rightarrow B$ between subsets of $X$. A collection $S$ of partial bijections of $X$ is called an \emph{inverse semigroup acting on $X$} if it is closed under inverses and compositions; i.e.,
\begin{enumerate}
\item if $h:A \rightarrow B$ is in $S$, then $h^{-1}:B \rightarrow A$ is also in $S$;
\item if $h: A \rightarrow B$ and $g:A' \rightarrow B'$ are members of $S$, then 
the composition $g \circ h: h^{-1}(A') \rightarrow g(A' \cap B)$ is also in $S$. 
\end{enumerate}
We note that the empty function is a zero element.
\end{definition}

\begin{lemma} \label{lemma:restrictions} (Closure under restriction to subdomains)
Let $S$ be an inverse semigroup acting on $X$. Let $s_{1}, s_{2} \in S$.
The restriction of $s_{1}$ to the domain of $s_{2}$ is also a member of $S$.
\end{lemma}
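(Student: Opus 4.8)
The plan is to realize the restriction as an explicit composition of members of $S$, so that membership follows at once from closure under inverses and compositions (Definition \ref{definition:inversesemigroup}(1),(2)). Write $s_2 \colon A \rightarrow B$, so that $A = dom(s_2)$.

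The first step is to observe that $e := s_2^{-1} \circ s_2$ is the identity map on $A$. Indeed, for $x \in A$ we have $s_2(x) \in B = dom(s_2^{-1})$, hence $s_2^{-1}(s_2(x)) = x$; and applying the composition formula of Definition \ref{definition:inversesemigroup}(2) with $h = s_2$ and $g = s_2^{-1}$ shows that $dom(e) = s_2^{-1}(B) = A$. By Definition \ref{definition:inversesemigroup}(1),(2), $e \in S$, and therefore $s_1 \circ e \in S$ as well.

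The second step is to identify $s_1 \circ e$ with the desired restriction. Applying the composition formula once more, now with $h = e \colon A \rightarrow A$ and $g = s_1$, we obtain $dom(s_1 \circ e) = e^{-1}(dom(s_1)) = A \cap dom(s_1) = dom(s_1) \cap dom(s_2)$, and $(s_1 \circ e)(x) = s_1(e(x)) = s_1(x)$ for every $x$ in this set. Thus $s_1 \circ e$ is precisely the restriction of $s_1$ to $dom(s_1) \cap dom(s_2)$, which is exactly what ``the restriction of $s_1$ to the domain of $s_2$'' should mean, since a partial bijection can only be restricted to a subset of its own domain. As $s_1 \circ e \in S$, this completes the argument.

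I do not expect any genuine obstacle here; the one point deserving a word of care is the harmless convention just mentioned. If one instead stipulates $dom(s_2) \subseteq dom(s_1)$ from the outset, the computation is unchanged and the intersection $dom(s_1) \cap dom(s_2)$ simply collapses to $dom(s_2)$.
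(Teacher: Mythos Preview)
Your proof is correct and uses exactly the same idea as the paper: the restriction is realized as $s_{1}s_{2}^{-1}s_{2}$, which lies in $S$ by closure under inverses and compositions. You have simply supplied more detail than the paper's one-line argument.
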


\begin{proof}
The restriction in question is $s_{1} s_{2}^{-1} s_{2}$, which is in $S$, since $S$ is closed under compositions and inverses.
\end{proof}

\begin{definition} \label{definition:union} 
(Disjoint union of partial bijections)
Let $s_{1}: A \rightarrow B$ and $s_{2}: C \rightarrow D$ be bijections, where $A$, $B$, $C$, and $D$ are subsets of a common set $X$.
Assume that $A \cap C = \emptyset$ and $B \cap D = \emptyset$. We define 
$s_{1} \amalg s_{2}: A \cup C \rightarrow B \cup D$ by the rules:
\[ (s_{1} \amalg s_{2})_{\mid A} = s_{1}; \quad (s_{1} \amalg s_{2})_{\mid C} = s_{2}. \]
The function $s_{1} \amalg s_{2}$ is the \emph{disjoint union} of $s_{1}$ and $s_{2}$. It is a bijection from 
$A \cup C$ to $B \cup D$.
\end{definition}

\begin{convention} \label{convention:SShatGamma} 
(The inverse semigroups $S$ and $\widehat{S}$, and the group $\Gamma$) 
We assume throughout this section that an expansion set $\mathcal{B}$ over $X$ and an inverse semigroup $S$ 
acting on $X$ are given. We will define a larger inverse semigroup $\widehat{S}$ and a group $\Gamma$ from this starting information.

We let $\widehat{S}$ be an inverse semigroup with the following properties:
\begin{enumerate}
\item $S \subseteq \widehat{S}$;
\item each $\hat{s}$ is a finite disjoint union of members of $S$;
\item $\widehat{S}$ is closed under disjoint unions;
\item $\widehat{S}$ has a non-empty subset of full-support elements; i.e., bijections from $X$ to $X$. Such elements obviously form a group, which we denote $\Gamma$, or $\Gamma_{\widehat{S}}$, if we wish to emphasize the role of $\widehat{S}$;
\item for each $b \in \mathcal{B}$, $supp(b)$ is the domain of some $\hat{s} \in \widehat{S}$. (We do not need to assume the converse; i.e., that every domain of an $\hat{s} \in \widehat{S}$ is the support of some $b$.)  
\end{enumerate}

If $\mathcal{B}$ is linear or cyclic, then $X$ has a linear or cyclic order topology. We will assume that each member of $\widehat{S}$ is
continuous and order-preserving (in the relevant sense). By (1), this forces each member of $S$ to be continuous and order-preserving.
Property (3) also has to be qualified slightly: we assume that, if $\hat{s}_{1}, \hat{s}_{2} \in \widehat{S}$, then $\hat{s}_{1} \amalg \hat{s}_{2} \in \widehat{S}$ if the latter is continuous and order-preserving, with continuous and order-preserving inverse.

We note that the group $\Gamma$ is the ultimate object of study in this paper.
\end{convention}

\begin{definition} \label{definition:actionsonexpansionsets}
(Actions of inverse semigroups on expansion sets)
Let $\widehat{S}$ be an inverse semigroup acting on $X$. Let $\mathcal{B} = (\mathcal{B}, X, supp,\mathcal{E})$ be an expansion set. An \emph{action $\cdot$ of $\widehat{S}$ on $\mathcal{B}$} satisfies the following:
\begin{enumerate}
\item for each $\hat{s} \in \widehat{S}$ and $b \in \mathcal{B}$, $\hat{s} \cdot b \in \mathcal{B}$ is defined if and only if $supp(b)$ is contained in  $dom(\hat{s})$;
 
\item for each $(\hat{s},b) \in \widehat{S} \times \mathcal{B}$ for which $\hat{s} \cdot b$ is defined, $supp(\hat{s} \cdot b) = \hat{s}(supp(b))$;
\item for $\hat{s}_{1}$, $\hat{s}_{2} \in \widehat{S}$ and $b \in \mathcal{B}$, 
\[ \hat{s}_{1} \cdot ( \hat{s}_{2} \cdot b ) = (\hat{s}_{1} \hat{s}_{2}) \cdot b ,\]
if both sides of the equation are defined. If $id_{A}$ is the identity function on some $A \subseteq X$, then
 \[ id_{A} \cdot b = b \]
if $supp(b) \subseteq A$;
\item for each pair $(\hat{s},b)  \in \widehat{S} \times \mathcal{B}$ for which $\hat{s} \cdot b$ is defined, the natural function 
$\hat{s}: \mathcal{E}(b) \rightarrow \mathcal{V}_{\mathcal{B}}$ determines an order preserving bijection $\hat{s}: \mathcal{E}(b) \rightarrow \mathcal{E}(\hat{s} \cdot b)$.
\end{enumerate}
\end{definition}

\begin{definition} \label{definition:subcomplexes}
(The subcomplex with support in $A \subseteq X$)
Let $\mathcal{B}$ be an expansion set over $X$, and let $A \subseteq X$. We
let $\Delta^{A}_{\mathcal{B}}$ be the subcomplex of $\Delta_{\mathcal{B}}$ spanned by vertices $v \in \mathcal{V}_{\mathcal{B}}$ such that $supp(v) = A$.
\end{definition}

\begin{remark}
The complex $\Delta^{A}_{\mathcal{B}}$ may be empty, since not every subset of $X$ is equal to the support of a vertex.
\end{remark}

\begin{theorem} \label{theorem:partialaction}
(The action of $\Gamma$ on $\Delta_{\mathcal{B}}$)
Let $\mathcal{B}$ be an expansion set. Let $\widehat{S}$ act on $\mathcal{B}$. Let $\Gamma$ denote the full-support subgroup of $\widehat{S}$. 

Each $\hat{s} \in \widehat{S}$ determines a simplicial isomorphism from
$\Delta^{A}_{\mathcal{B}}$ to $\Delta^{\hat{s}(A)}_{\mathcal{B}}$, provided that
$A \subseteq dom(\hat{s})$. 

In particular, the group $\Gamma$ acts on
$\Delta^{f}_{\mathcal{B}}$ and $\Delta_{\mathcal{B}}$ by simplicial automorphisms.
\end{theorem}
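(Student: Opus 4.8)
The plan is to read the map off directly from the action of $\widehat{S}$ on $\mathcal{B}$ supplied by Definition \ref{definition:actionsonexpansionsets}, check it is a simplicial isomorphism on the support-component $\Delta^{A}_{\mathcal{B}}$, and then specialize to $\Gamma$. Fix $\hat{s} \in \widehat{S}$ and $A \subseteq dom(\hat{s})$. For a vertex $v = \{ b_{1}, \ldots, b_{k} \}$ of $\Delta^{A}_{\mathcal{B}}$ we have $supp(b_{i}) \subseteq A \subseteq dom(\hat{s})$, so each $\hat{s} \cdot b_{i}$ is defined; put $\hat{s} \cdot v = \{ \hat{s} \cdot b_{1}, \ldots, \hat{s} \cdot b_{k} \}$. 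Since $\hat{s}$ is injective on $dom(\hat{s})$ and the $supp(b_{i})$ are pairwise disjoint, Definition \ref{definition:actionsonexpansionsets}(2) makes the sets $supp(\hat{s} \cdot b_{i}) = \hat{s}(supp(b_{i}))$ pairwise disjoint, so $\hat{s} \cdot v$ is a vertex with $supp(\hat{s} \cdot v) = \hat{s}(A)$. Using Definition \ref{definition:actionsonexpansionsets}(3) together with $\hat{s}^{-1}\hat{s} = id_{dom(\hat{s})}$ and $\hat{s}\hat{s}^{-1} = id_{\hat{s}(dom(\hat{s}))}$, one checks that $v \mapsto \hat{s} \cdot v$ and $v' \mapsto \hat{s}^{-1} \cdot v'$ are mutually inverse bijections between the vertex sets of $\Delta^{A}_{\mathcal{B}}$ and $\Delta^{\hat{s}(A)}_{\mathcal{B}}$.

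The linchpin is the compatibility identity
\[ \hat{s} \cdot r_{b}(w) = r_{\hat{s} \cdot b}(\hat{s} \cdot w), \]
valid whenever $w$ is a vertex and $supp(w) \cup supp(b) \subseteq dom(\hat{s})$: by Definition \ref{definition:actionsonexpansionsets}(2) and injectivity of $\hat{s}$, an element $b' \in w$ satisfies $supp(b') \subseteq supp(b)$ if and only if $\hat{s} \cdot b' \in \hat{s} \cdot w$ satisfies $supp(\hat{s} \cdot b') \subseteq supp(\hat{s} \cdot b)$. The same injectivity shows that $P(\hat{s} \cdot w_{2})$ refines $P(\hat{s} \cdot w_{1})$ exactly when $P(w_{2})$ refines $P(w_{1})$, so the vertex bijection sends canonically ordered tuples to canonically ordered tuples (Remark \ref{remark:canonical}).

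With these in hand, checking that the bijection is simplicial is essentially formal. Let $\{ v_{1}, \ldots, v_{n} \}$ be vertices in canonical order. By Definition \ref{definition:deltaB}(2) this is a simplex if and only if, for each $b \in v_{1}$, the sequence $\{ b \} = r_{b}(v_{1}) \leq \ldots \leq r_{b}(v_{n})$ is a chain in $\mathcal{E}(b)$; in that case each $r_{b}(v_{j}) \in \mathcal{E}(b)$, so Definition \ref{definition:actionsonexpansionsets}(4) applies and $\hat{s} \colon \mathcal{E}(b) \rightarrow \mathcal{E}(\hat{s} \cdot b)$ is an order-preserving bijection (with order-preserving inverse $\hat{s}^{-1}$, by the same statement applied to $\hat{s}^{-1}$). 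Hence the chain condition transports, via the identity above, to $r_{\hat{s} \cdot b}(\hat{s} \cdot v_{1}) \leq \ldots \leq r_{\hat{s} \cdot b}(\hat{s} \cdot v_{n})$ being a chain in $\mathcal{E}(\hat{s} \cdot b)$; and as $b$ runs over $v_{1}$ the element $\hat{s} \cdot b$ runs over $\hat{s} \cdot v_{1}$, while $\{ \hat{s} \cdot v_{1}, \ldots, \hat{s} \cdot v_{n} \}$ is canonically ordered, so Definition \ref{definition:deltaB}(2) identifies this with $\{ \hat{s} \cdot v_{1}, \ldots, \hat{s} \cdot v_{n} \}$ being a simplex. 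Thus $\hat{s}$ induces a simplicial isomorphism $\Delta^{A}_{\mathcal{B}} \rightarrow \Delta^{\hat{s}(A)}_{\mathcal{B}}$. For the final claim, take $\hat{s} = \gamma \in \Gamma$: since $dom(\gamma) = X$, the map $v \mapsto \gamma \cdot v$ is defined on every vertex of $\Delta_{\mathcal{B}}$, the simplex computation above (run with no constraint on supports) shows it preserves $\mathcal{S}_{\mathcal{B}}$, it carries $\Delta^{A}_{\mathcal{B}}$ onto $\Delta^{\gamma(A)}_{\mathcal{B}}$ for each $A$, and in particular restricts to a simplicial automorphism of $\Delta^{X}_{\mathcal{B}} = \Delta^{f}_{\mathcal{B}}$. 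Finally $\gamma_{1} \cdot (\gamma_{2} \cdot v) = (\gamma_{1} \gamma_{2}) \cdot v$ and $id_{X} \cdot v = v$ follow from Definition \ref{definition:actionsonexpansionsets}(3) applied termwise, so these are genuine simplicial group actions.

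The step I expect to need the most care is the simplicial verification: one must make sure the canonical orders on the two simplices genuinely correspond under $\hat{s}$, and that Definition \ref{definition:actionsonexpansionsets}(4) is invoked only where its hypothesis $r_{b}(v_{j}) \in \mathcal{E}(b)$ is in force, which is precisely where the simplex hypothesis is consumed. Isolating and proving the identity $\hat{s} \cdot r_{b}(w) = r_{\hat{s} \cdot b}(\hat{s} \cdot w)$ at the outset reduces everything else to unwinding definitions.
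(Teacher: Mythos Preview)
Your proposal is correct and follows essentially the same route as the paper's own proof: both arguments push $\hat{s}$ through the definitions of vertex and simplex via the identity $\hat{s}\cdot r_{b}(w)=r_{\hat{s}\cdot b}(\hat{s}\cdot w)$ and Definition~\ref{definition:actionsonexpansionsets}(4), then obtain the isomorphism by pairing with $\hat{s}^{-1}$. Your version is slightly more explicit in isolating the $r_{b}$ identity and in verifying the group-action axioms for $\Gamma$, but there is no substantive difference in strategy.
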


\begin{proof}
We extend the action of $\widehat{S}$  on $\mathcal{B}$ in the natural way to $2^{\mathcal{B}}$ (the power set of $\mathcal{B}$, of which $\mathcal{V}_{\mathcal{B}}$ is a subset) and to $2^{2^{\mathcal{B}}}$ (which contains $\mathcal{S}_{\mathcal{B}}$ as a subset).

We assume that $A \subseteq dom(\hat{s}) \subseteq X$. 
Let 
$v = \{ b_{1}, \ldots, b_{k} \}$ be a vertex in $\Delta^{A}_{\mathcal{B}}$. By definition,
$\hat{s} \cdot v = \{ \hat{s} \cdot b_{1}, \ldots, \hat{s} \cdot b_{k} \}$. Since $\hat{s}$ is injective and the supports $supp(b_{i})$ ($i=1, \ldots, k$) are pairwise disjoint, 
the sets $\hat{s}(supp(b_{i}))$ ($i=1, \ldots, k$) are also pairwise disjoint. Thus, 
by Definition \ref{definition:actionsonexpansionsets}(2), the sets 
$supp( \hat{s} \cdot b_{i})$ ($i=1, \ldots, k$) are pairwise disjoint, 
so $\hat{s} \cdot v$ is a vertex. Definition \ref{definition:actionsonexpansionsets}(2) also implies that 
\[ supp(\hat{s} \cdot v) = \hat{s}(supp(v)) = \hat{s}(A),\] so
$\hat{s} \cdot v \in \Delta^{\hat{s}(A)}_{\mathcal{B}}$. It follows that $\hat{s}$ sends vertices of $\Delta^{A}_{\mathcal{B}}$ to vertices of $\Delta^{\hat{s}(A)}_{\mathcal{B}}$.

Now let  $\sigma = \{ u_{0}, \ldots, u_{n} \}$ be an $n$-simplex in $\Delta^{A}_{\mathcal{B}}$, where 
$u_{0} = \{ b_{1}, \ldots, b_{k} \}$. 
We assume that $\sigma$ is in canonical order, so that, for each $b_{i} \in u_{0}$,
\[ \{ b_{i} \} \leq r_{b_{i}}(u_{1}) \leq \ldots \leq r_{b_{i}}(u_{n}) \]
is a chain in $\mathcal{E}(b_{i})$. We must show that $\hat{s} \cdot \sigma = \{ \hat{s} \cdot u_{0}, \ldots, \hat{s} \cdot u_{n} \}$ is a simplex. For this, it suffices to show that
\[ \{ \hat{s} \cdot b_{i} \} \leq r_{\hat{s} \cdot b_{i}}(\hat{s} \cdot u_{1}) \leq \ldots 
\leq r_{\hat{s} \cdot b_{i}}(\hat{s} \cdot u_{n}) \]
is a chain in $\mathcal{E}(\hat{s} \cdot b_{i})$, for each $i$. This follows from Definition \ref{definition:actionsonexpansionsets}(4) and from the fact that
$\hat{s} \cdot r_{b_{i}}(u_{j}) = r_{\hat{s} \cdot b_{i}}(\hat{s} \cdot u_{j})$, for $j =1, \ldots, n$.

Thus, $\hat{s}$ determines a simplicial map from $\Delta^{A}_{\mathcal{B}}$ to 
$\Delta^{\hat{s}(A)}_{\mathcal{B}}$. The fact that this map is an isomorphism is a consequence of the fact that $\hat{s}^{-1}$ determines a simplicial map from
$\Delta^{\hat{s}(A)}_{\mathcal{B}}$ to $\Delta^{A}_{\mathcal{B}}$, and the fact that these simplicial maps are two-sided inverses for each other (by Definition \ref{definition:actionsonexpansionsets}(3)). This proves the main statement.

The statements about $\Gamma$ follow immediately. 
\end{proof}

\subsection{Stabilizers of simplices}
\label{subsection:stabilizers}

\begin{definition} \label{definition:stabilizer}
(Stabilizers) 
Let $\cdot$ be an inverse semigroup action of $\widehat{S}$ on 
$\mathcal{B}$.
Let $b \in \mathcal{B}$. The \emph{stabilizer} of $b$, denoted $\widehat{S}_{b}$, is defined as follows:
\[ \widehat{S}_{b} = \{ \hat{s} \in \widehat{S} \mid \hat{s} \cdot b = b \text{ and } dom(\hat{s}) = supp(b) \}. \]
\end{definition}

\begin{lemma} \label{lemma:group}
(The stabilizer of each $b \in \mathcal{B}$ is a group)
Assume that $\widehat{S}$ satisfies Convention \ref{convention:SShatGamma}.
Let $\cdot$ be an inverse semigroup action of $\widehat{S}$ on 
$\mathcal{B}$. For each $b \in \mathcal{B}$, $\widehat{S}_{b}$ is a group.
\end{lemma}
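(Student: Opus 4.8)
The plan is to check directly that $\widehat{S}_{b}$, with the composition operation inherited from $\widehat{S}$, satisfies the group axioms. The single observation that makes everything routine is this: if $\hat{s} \in \widehat{S}_{b}$, then $dom(\hat{s}) = supp(b)$ and, by Definition \ref{definition:actionsonexpansionsets}(2), $\hat{s}(supp(b)) = supp(\hat{s}\cdot b) = supp(b)$, so each element of $\widehat{S}_{b}$ is a bijection of $supp(b)$ onto itself. Consequently, whenever $\hat{s}_{1}, \hat{s}_{2}\in\widehat{S}_{b}$, the (a priori partial) composite $\hat{s}_{1}\circ\hat{s}_{2}$ again has domain exactly $supp(b)$, so composition is a well-defined total binary operation on $\widehat{S}_{b}$; associativity is then automatic since these are genuine partial functions on $X$.

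The remaining verifications I would carry out in the order: identity, closure under composition, closure under inverses. For the identity, Convention \ref{convention:SShatGamma}(5) yields some $\hat{s}_{0}\in\widehat{S}$ with $dom(\hat{s}_{0}) = supp(b)$, and then $\hat{s}_{0}^{-1}\hat{s}_{0}\in\widehat{S}$ is precisely $id_{supp(b)}$; by Definition \ref{definition:actionsonexpansionsets}(3) we have $id_{supp(b)}\cdot b = b$ and $dom(id_{supp(b)}) = supp(b)$, so $id_{supp(b)}\in\widehat{S}_{b}$, and it is clearly a two-sided identity. For closure under composition, with $\hat{s}_{1},\hat{s}_{2}\in\widehat{S}_{b}$ one has $\hat{s}_{1}\circ\hat{s}_{2}\in\widehat{S}$ and $dom(\hat{s}_{1}\hat{s}_{2}) = supp(b)$ by the observation above, and since $supp(b)\subseteq dom(\hat{s}_{1}\hat{s}_{2})$ both sides of the relation in Definition \ref{definition:actionsonexpansionsets}(3) are defined, giving $(\hat{s}_{1}\hat{s}_{2})\cdot b = \hat{s}_{1}\cdot(\hat{s}_{2}\cdot b) = \hat{s}_{1}\cdot b = b$. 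For closure under inverses, $\hat{s}^{-1}\in\widehat{S}$, its domain is $\hat{s}(supp(b)) = supp(b)$, and applying the operation $\hat{s}^{-1}\cdot(-)$ to the equation $\hat{s}\cdot b = b$ together with Definition \ref{definition:actionsonexpansionsets}(3) gives $\hat{s}^{-1}\cdot b = \hat{s}^{-1}\cdot(\hat{s}\cdot b) = (\hat{s}^{-1}\hat{s})\cdot b = id_{supp(b)}\cdot b = b$; hence $\hat{s}^{-1}\in\widehat{S}_{b}$, and it is a two-sided inverse of $\hat{s}$. Assembling these facts shows $\widehat{S}_{b}$ is a group.

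I do not anticipate a genuine obstacle here; the only thing requiring care is the bookkeeping of which instances of Definition \ref{definition:actionsonexpansionsets}(3) are legitimately defined and which a priori partial compositions in $\widehat{S}$ turn out to be total on $supp(b)$. Both are handled uniformly by the remark that every member of $\widehat{S}_{b}$ maps $supp(b)$ bijectively onto $supp(b)$, so once that is established at the outset, the verification of the group axioms is immediate.
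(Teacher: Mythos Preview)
Your proof is correct and takes essentially the same approach as the paper: both use Convention~\ref{convention:SShatGamma}(5) to produce an $\hat{s}$ with $dom(\hat{s})=supp(b)$ and then $\hat{s}^{-1}\hat{s}=id_{supp(b)}$ to establish non-emptiness, with the remaining group axioms following from Definition~\ref{definition:actionsonexpansionsets}(1)--(3). The paper simply declares the rest ``entirely routine,'' whereas you spell out the details; your preliminary observation that every element of $\widehat{S}_b$ is a self-bijection of $supp(b)$ is exactly the bookkeeping that makes the routine part go through.
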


\begin{proof}
Let $b \in \mathcal{B}$. By Convention \ref{convention:SShatGamma}(5), there is some $\hat{s} \in \widehat{S}$ such that $dom(\hat{s}) = supp(b)$. The function $\hat{s}^{-1}\hat{s}$ is equal to $id_{supp(b)}$, and so 
$\hat{s}^{-1}\hat{s} \cdot b = b$ by Definition \ref{definition:actionsonexpansionsets}(3).
Thus, 
$\widehat{S}_{b}$ is non-empty. The rest of the proof is an entirely routine consequence of properties (1)-(3) from Definition \ref{definition:actionsonexpansionsets}.
\end{proof}

\begin{proposition} \label{proposition:finiteindex}
(Cell stabilizers in $\Delta^{f}_{\mathcal{B}}$)
Assume that $\widehat{S}$ satisfies Convention \ref{convention:SShatGamma}. Let $\Gamma$ be the full-support subgroup of $\widehat{S}$.

\begin{enumerate} 
\item If $v= \{ b_{1}, \ldots, b_{k} \}$ is a vertex of $\Delta^{f}_{\mathcal{B}}$, then the stabilizer 
$\Gamma_{v}$ has a finite index subgroup that is isomorphic to
\[ \prod_{i=1}^{k} \widehat{S}_{b_{i}}. \] 
\item Assume that, for each $b \in \mathcal{B}$, the action of $\widehat{S}_{b}$ on $\mathcal{E}(b)$ has finite orbits; i.e., if $v \in \mathcal{E}(b)$, then 
\[ \{ g \cdot v \mid g \in \widehat{S}_{b} \} \]
is finite. If $\sigma$ is a simplex of $\Delta^{f}_{\mathcal{B}}$, and $v$ is the vertex of $\sigma$ having the least height, then the stabilizer $\Gamma_{\sigma}$ has finite index in $\Gamma_{v}$. 
\end{enumerate}
\end{proposition}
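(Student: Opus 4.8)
The plan is to prove both parts by exploiting the fact that a full-support vertex $v = \{b_1,\dots,b_k\}$ induces a partition $\{supp(b_1),\dots,supp(b_k)\}$ of $X$, so that every $g \in \Gamma_v$ must permute the blocks $supp(b_i)$, together with the product decomposition of $st_{\uparrow}(v)$ from Proposition \ref{proposition:ascstarasproduct}.

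For part (1): the action of $\Gamma_v$ on the finite set $v=\{b_1,\dots,b_k\}$ (well-defined since $g$ is injective on $\mathcal{B}$, by Definition \ref{definition:actionsonexpansionsets}(3)) gives a homomorphism $\Gamma_v \to \mathrm{Sym}(k)$ whose kernel $K$, the pointwise stabilizer of $v$, has index at most $k!$. I would then exhibit an isomorphism $K \cong \prod_{i=1}^{k} \widehat{S}_{b_i}$ by sending $g \in K$ to the tuple of restrictions $\bigl(g_{\mid supp(b_1)},\dots,g_{\mid supp(b_k)}\bigr)$: each $g_{\mid supp(b_i)}$ lies in $\widehat{S}$ by Lemma \ref{lemma:restrictions} together with Convention \ref{convention:SShatGamma}(5), and fixes $b_i$ by Definition \ref{definition:actionsonexpansionsets}(3), hence lies in $\widehat{S}_{b_i}$. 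This map is a homomorphism (each block is preserved setwise by $K$), injective (the blocks cover $X$), and surjective: given $(s_1,\dots,s_k) \in \prod_i \widehat{S}_{b_i}$, the disjoint union $s_1 \amalg \cdots \amalg s_k$ is a bijection $X \to X$ lying in $\widehat{S}$ — directly by Convention \ref{convention:SShatGamma}(3) in the permutational case, and because a disjoint union of order-preserving homeomorphisms of the blocks (which are intervals partitioning $X$) is again an order-preserving homeomorphism of $X$ in the linear and cyclic cases — and it restricts to $s_i$ on $supp(b_i)$, hence fixes each $b_i$ and so lies in $K$.

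For part (2): first, $\Gamma_{\sigma} \subseteq \Gamma_v$, because the vertices of a simplex of $\Delta^{f}_{\mathcal{B}}$ have pairwise distinct heights (as in the proof of Lemma \ref{lemma:filtrationlemma}), so $v$ is the \emph{unique} vertex of $\sigma$ of least height, and since each $g \in \Gamma$ acts by a height-preserving simplicial automorphism (Theorem \ref{theorem:partialaction}), $g\cdot\sigma = \sigma$ forces $g\cdot v = v$. It then suffices to bound $[\Gamma_v : \Gamma_\sigma] \leq [\Gamma_v : K]\,[K : K\cap\Gamma_\sigma]$, where $K$ is the finite-index subgroup from part (1); by the orbit–stabilizer theorem $[K : K\cap\Gamma_\sigma] = |K\cdot\sigma|$, so it remains to show the $K$-orbit of $\sigma$ is finite. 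Write $\sigma = \{v = v_0, v_1,\dots,v_n\}$ in canonical order; all of $v_1,\dots,v_n$ lie in $st_\uparrow(v)$, which Proposition \ref{proposition:ascstarasproduct} identifies with $\prod_{i=1}^{k}\mathcal{E}(b_i)$ via $w \mapsto (r_{b_1}(w),\dots,r_{b_k}(w))$, and under this identification $K$ acts coordinatewise, with $g=(g_1,\dots,g_k) \in K$ acting on the $i$-th factor exactly as $g_i \in \widehat{S}_{b_i}$ does (using $g_{\mid supp(b')} = (g_i)_{\mid supp(b')}$ whenever $supp(b')\subseteq supp(b_i)$, so $g\cdot r_{b_i}(w) = g_i \cdot r_{b_i}(w)$). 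Hence $g\cdot\sigma$ is completely determined by the finitely many elements $g_i\cdot r_{b_i}(v_j)$ for $1\le i\le k$, $1\le j\le n$; each $r_{b_i}(v_j)$ lies in the finite set $E_i := \{r_{b_i}(v_1),\dots,r_{b_i}(v_n)\}\subseteq\mathcal{E}(b_i)$, and by hypothesis $\widehat{S}_{b_i}$ has finite orbits on $\mathcal{E}(b_i)$, so $g_i(E_i)$ is contained in the finite set $\bigcup_{e\in E_i}\widehat{S}_{b_i}\cdot e$. There are therefore only finitely many possibilities for the function $E_i \to \mathcal{E}(b_i)$ induced by $g_i$, and hence only finitely many possible $g\cdot\sigma$; thus $|K\cdot\sigma| < \infty$.

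The main obstacle I anticipate is the surjectivity claim in part (1) in the linear and cyclic settings, where Convention \ref{convention:SShatGamma}(3) is only available in its qualified form: one must check that a disjoint union of order-preserving homeomorphisms of the blocks $supp(b_i)$ is globally order-preserving, continuous, and has continuous inverse. This reduces to the observation that a homeomorphism of a half-open interval (or arc) respects its ends, so the pieces glue continuously along block boundaries. Everything else is bookkeeping with Definition \ref{definition:actionsonexpansionsets} and the product decomposition of $st_\uparrow(v)$.
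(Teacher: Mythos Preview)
Your proof is correct and follows essentially the same approach as the paper. Part (1) is identical: pass to the kernel of $\Gamma_v \to S_k$ and identify it with $\prod_i \widehat{S}_{b_i}$ via restriction, with surjectivity coming from Convention \ref{convention:SShatGamma}(3). For Part (2), you package the argument as a single orbit--stabilizer computation (the $K$-orbit of $\sigma$ is finite because each coordinate $r_{b_i}(v_j)$ has finite $\widehat{S}_{b_i}$-orbit), whereas the paper does the same thing iteratively, passing to successive finite-index subgroups fixing $v_2$, then $v_3$, and so on; both arguments rest on the same two ingredients --- the product decomposition of $st_\uparrow(v)$ and the finite-orbits hypothesis --- and your one-shot version is, if anything, slightly cleaner. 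Your explicit observation that $\Gamma_\sigma \subseteq \Gamma_v$ (via uniqueness of the minimum-height vertex) is a point the paper leaves implicit.
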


\begin{proof}
We first prove (1). Let $v = \{ b_{1}, \ldots, b_{k} \}$ be a vertex of $\Delta^{f}_{\mathcal{B}}$, and let 
$\Gamma_{v}$ be the stabilizer of $v$ in $\Gamma$. Each $\gamma \in \Gamma_{v}$ permutes $v$, resulting in a homomorphism $\phi: \Gamma_{v} \rightarrow S_{k}$, where $S_{k}$ denotes the symmetric group on $k$ symbols. Since $|S_{k}| < \infty$, the kernel, $\Gamma'_{v}$, has finite index in $\Gamma_{v}$. For each $i = 1, \ldots, k$, we then have a homomorphism $\pi_{i}: \Gamma'_{v} \rightarrow \widehat{S}_{b_{i}}$, which sends $\gamma$ to the restriction
$\gamma_{\mid supp(b_{i})}$. Since $supp(v) = X$, the images of these homomorphisms entirely determine $\gamma$; i.e., the homomorphism $\psi: \Gamma'_{v} \rightarrow \prod_{i=1}^{k} \widehat{S}_{b_{i}}$ defined by
\[ \gamma \mapsto (\gamma_{\mid supp(b_{1})}, \ldots, \gamma_{\mid supp(b_{k})}) \]
is injective. To prove surjectivity, we appeal to the fact that $\widehat{S}$ is closed under unions: if $(\alpha_{1}, \ldots, \alpha_{k}) \in \prod_{i=1}^{k} \widehat{S}_{b_{i}}$, we let 
$\gamma = \alpha_{1} \cup \ldots \cup \alpha_{k}$. Clearly, $\psi(\gamma) =
(\alpha_{1}, \ldots, \alpha_{k})$, proving surjectivity. The latter proof (of surjectivity) works in the cases of linear and cyclic expansion sets with very minor changes (one needs to be careful about the ordering), and the proof of injectivity is the same in all cases. This proves (1).

Now we prove (2). Let $\sigma = \{ v_{1}, \ldots, v_{n} \}$ be a simplex in $\Delta^{f}_{\mathcal{B}}$, with the $v_{i}$ arranged in the canonical order. We assume that $v_{1} = \{ b_{1}, \ldots, b_{k} \}$, and consider the vertex stabilizer group $\Gamma_{v_{1}}$. By (1), $\Gamma_{v_{1}}$ has a finite-index subgroup $\Gamma'_{v_{1}}$, which can be identified with the product $\prod_{i=1}^{k} \widehat{S}_{b_{i}}$. The vertex $v_{2}$ is a union
\[ r_{b_{1}}(v_{2}) \cup r_{b_{2}}(v_{2}) \cup \ldots \cup r_{b_{k}}(v_{2}), \]
by Definition \ref{definition:deltaB}, where each $r_{b_{i}}(v_{2}) \in \mathcal{E}(b_{i})$. Each $\gamma \in \Gamma
_{v_{1}}$ sends $r_{b_{i}}(v_{2})$ to another member of $\mathcal{E}(b_{i})$. There is thus a natural action
\[ \cdot : \Gamma'_{v_{1}}  \times \prod_{i=1}^{k} \mathcal{O}(r_{b_{i}}(v_{2})) \rightarrow  \prod_{i=1}^{k} \mathcal{O}(r_{b_{i}}(v_{2})), \]
where $\mathcal{O}(r_{b_{i}}(v_{2}))$ is an orbit under the action of $\widehat{S}_{b_{i}}$ on $\mathcal{E}(b_{i})$. 
Each set $\mathcal{O}(r_{b_{i}}(v_{2}))$ is finite by hypothesis, so a finite-index subgroup $\widetilde{\Gamma}_{v_{1}}$ acts trivially, and thus fixes $v_{2}$. After again passing to a finite index subgroup, we can find $\Gamma''_{v_{1}} \leq \Gamma'_{v_{1}}$ such that $\Gamma''_{v_{1}}$ fixes each member of $v_{2}$. We repeat this procedure, finding a sequence
\[ \Gamma'_{v_{1}} \geq \Gamma''_{v_{1}} \geq \ldots \Gamma^{(n)}_{v_{1}}, \]
where each $\Gamma^{(j)}_{v_{1}}$ fixes $v_{j}$ and each member of $v_{j}$, and each $\Gamma^{(j+1)}_{v_{1}}$ has finite index in $\Gamma^{(j)}_{v_{1}}$ ($j=1, \ldots, n-1$). Thus, $\Gamma^{(n)}_{v_{1}}$ is a finite-index subgroup of $\Gamma_{v_{1}}$ and $\Gamma^{(n)}_{v_{1}} \leq \Gamma_{\sigma}$. It follows that $[\Gamma_{v_{1}} : \Gamma_{\sigma}] < \infty$. 
\end{proof}

\begin{corollary} \label{corollary:Fnstabilizers}
(Finiteness properties of stabilizers)
Assume that $\widehat{S}$ satisfies Convention \ref{convention:SShatGamma}. Assume that, for each $b \in \mathcal{B}$, the action of $\widehat{S}_{b}$ on $\mathcal{E}(b)$ has finite orbits.

If each stabilizer group $\widehat{S}_{b}$ ($b \in \mathcal{B}$) is of type $F_{n}$, then 
every vertex- and cell-stabilizer in $\Delta^{f}_{\mathcal{B}}$ also has type 
$F_{n}$. 
\end{corollary}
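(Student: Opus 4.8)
The plan is to read this off directly from Proposition \ref{proposition:finiteindex}, combined with two standard permanence properties of the property $F_{n}$: (a) a finite direct product of groups of type $F_{n}$ is again of type $F_{n}$ (take the product of the respective $K(\pi,1)$'s with finite $n$-skeleta), and (b) $F_{n}$ is a commensurability invariant, i.e.\ if $H \leq G$ has finite index, then $G$ has type $F_{n}$ if and only if $H$ does (see, e.g., the standard discussion in \cite{Brown} or \cite{BB}). Neither of these requires anything beyond classical facts, so the corollary is essentially an assembly step.

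First I would handle vertex stabilizers. Fix a vertex $v = \{ b_{1}, \ldots, b_{k} \}$ of $\Delta^{f}_{\mathcal{B}}$. By hypothesis each $\widehat{S}_{b_{i}}$ has type $F_{n}$, so the finite direct product $\prod_{i=1}^{k} \widehat{S}_{b_{i}}$ has type $F_{n}$ by property (a). By Proposition \ref{proposition:finiteindex}(1), this product is isomorphic to a finite-index subgroup of the vertex stabilizer $\Gamma_{v}$, so $\Gamma_{v}$ has type $F_{n}$ by property (b) (passing from a finite-index subgroup up to the ambient group).

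Next I would handle an arbitrary cell. Let $\sigma$ be a simplex of $\Delta^{f}_{\mathcal{B}}$, and let $v$ be the vertex of $\sigma$ of least height. The hypothesis that $\widehat{S}_{b}$ acts on $\mathcal{E}(b)$ with finite orbits for every $b$ is exactly what is needed to apply Proposition \ref{proposition:finiteindex}(2), which yields $[\Gamma_{v} : \Gamma_{\sigma}] < \infty$. Since $\Gamma_{v}$ has type $F_{n}$ by the previous paragraph, property (b) (this time passing from a group down to a finite-index subgroup) shows that $\Gamma_{\sigma}$ has type $F_{n}$. The case of a vertex stabilizer is the special case $\sigma = \{ v \}$, so this completes the argument.

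There is no real obstacle here; the only things to be careful about are citing the permanence properties of $F_{n}$ correctly and noting that Proposition \ref{proposition:finiteindex}(2) genuinely needs the finite-orbit hypothesis, which is present by assumption. I would also remark that the same reasoning applies verbatim with $n = \infty$, since the permanence properties (a) and (b) hold for $F_{\infty}$ as well.
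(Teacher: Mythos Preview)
Your proposal is correct and matches the paper's own proof, which is the one-line observation that the corollary follows from Proposition \ref{proposition:finiteindex} together with closure of type $F_{n}$ under finite direct products and passage to and from finite-index subgroups. You have simply spelled out the details of that observation.
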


\begin{proof}
This follows from Proposition \ref{proposition:finiteindex}, and from the fact that the class of type $F_{n}$ groups is closed under finite products and under passage to finite-index subgroups.
\end{proof}

\subsection{Cocompactness of the action on $\Delta^{f}_{\mathcal{B}}$}
\label{subsection:cocompact}

\begin{proposition} \label{proposition:findiminv} 
Each complex $\Delta^{f}_{\mathcal{B},n}$ is 
$\Gamma$-invariant. 
\end{proposition}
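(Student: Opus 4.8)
The plan is to deduce this directly from Theorem \ref{theorem:partialaction}, which already gives us that $\Gamma$ acts on $\Delta^{f}_{\mathcal{B}}$ by simplicial automorphisms. Since $\Delta^{f}_{\mathcal{B},n}$ is by definition the subcomplex of $\Delta^{f}_{\mathcal{B}}$ \emph{spanned} by the vertices of height at most $n$, and a simplicial automorphism carries a spanned subcomplex to the subcomplex spanned by the image vertex set, it suffices to show that every $\gamma \in \Gamma$ carries the vertex set $\{ v \in \mathcal{V}_{\mathcal{B}} \mid supp(v) = X, \ |v| \leq n \}$ into itself. So the entire argument reduces to a statement about heights.

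First I would fix $\gamma \in \Gamma$ and a vertex $v = \{ b_{1}, \ldots, b_{k} \}$ of $\Delta^{f}_{\mathcal{B},n}$, so $k \leq n$ and $supp(v) = X$. Because $\gamma$ is a full-support element, $dom(\gamma) = X \supseteq supp(b_{i})$ for every $i$, so $\gamma \cdot b_{i}$ is defined for each $i$ by Definition \ref{definition:actionsonexpansionsets}(1), and $\gamma \cdot v = \{ \gamma \cdot b_{1}, \ldots, \gamma \cdot b_{k} \}$; the proof of Theorem \ref{theorem:partialaction} already checks that this is a vertex with $supp(\gamma \cdot v) = \gamma(supp(v)) = \gamma(X) = X$, hence $\gamma \cdot v \in \Delta^{f}_{\mathcal{B}}$. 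The one remaining point is that the listed elements $\gamma \cdot b_{1}, \ldots, \gamma \cdot b_{k}$ are pairwise distinct: by Definition \ref{definition:actionsonexpansionsets}(2) their supports are $\gamma(supp(b_{i}))$, which are pairwise disjoint and non-empty since $\gamma$ is injective and each $supp(b_{i})$ is non-empty and the $supp(b_{i})$ are pairwise disjoint. Thus $|\gamma \cdot v| = k \leq n$, so $\gamma \cdot v$ is again a vertex of $\Delta^{f}_{\mathcal{B},n}$.

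Finally I would invoke simpliciality of the $\Gamma$-action: any simplex $\sigma = \{ v_{0}, \ldots, v_{m} \}$ of $\Delta^{f}_{\mathcal{B},n}$ has all its vertices of height $\leq n$, each $\gamma \cdot v_{j}$ is then a vertex of height $\leq n$ by the previous paragraph, and $\gamma \cdot \sigma$ is a simplex of $\Delta^{f}_{\mathcal{B}}$ by Theorem \ref{theorem:partialaction}; since all of its vertices have height $\leq n$ it lies in $\Delta^{f}_{\mathcal{B},n}$. Hence $\gamma(\Delta^{f}_{\mathcal{B},n}) \subseteq \Delta^{f}_{\mathcal{B},n}$, and applying the same to $\gamma^{-1}$ gives equality, so $\Delta^{f}_{\mathcal{B},n}$ is $\Gamma$-invariant.

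There is no serious obstacle here: the only substantive observation is that each $\gamma \in \Gamma$, being a genuine bijection of $X$, induces a \emph{cardinality-preserving} map on vertices, so the height function $v \mapsto |v|$ is $\Gamma$-invariant; everything else is bookkeeping already done in Theorem \ref{theorem:partialaction}. If anything needs care, it is simply making explicit why $|\gamma \cdot v| = |v|$ (injectivity of $\gamma$ together with non-emptiness of supports), which is the step I would state most carefully.
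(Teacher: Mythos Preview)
Your proposal is correct and takes essentially the same approach as the paper: the paper's proof is a one-sentence appeal to the facts that $\Gamma$ preserves $\Delta^{f}_{\mathcal{B}}$ and acts in a height-preserving manner, and your write-up simply unpacks both of these (citing Theorem~\ref{theorem:partialaction} for the first and spelling out the injectivity/non-empty-support argument for the second).
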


\begin{proof}
This follows directly from the fact that the action of $\Gamma$ preserves the subcomplex $\Delta^{f}_{\mathcal{B}}$, and acts in a height-preserving manner.  
\end{proof}

\begin{proposition} \label{proposition:cocompact} (A sufficient condition for cocompactness) Assume that $\widehat{S}$ satisfies Convention \ref{convention:SShatGamma}. If the action of $\Gamma$ on $\mathcal{B}$ has finitely many orbits, and the action of each 
$\widehat{S}_{b}$ on $\mathcal{E}(b)$ is cocompact,
then the action
of $\Gamma$ on each subcomplex $\Delta^{f}_{\mathcal{B},n}$ is cocompact.
\end{proposition}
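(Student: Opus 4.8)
The plan is to show that $\Gamma$ acts on $\Delta^{f}_{\mathcal{B},n}$ with only finitely many orbits of simplices; since $\Delta^{f}_{\mathcal{B},n}$ is finite-dimensional (Lemma \ref{lemma:filtrationlemma}), this is equivalent to cocompactness. By Remark \ref{remark:canonical}, every simplex $\sigma$ of $\Delta^{f}_{\mathcal{B},n}$ has a canonically determined least-height vertex $v_{0}(\sigma)$, and $\gamma\cdot v_{0}(\sigma)=v_{0}(\gamma\cdot\sigma)$ for $\gamma\in\Gamma$. So it is enough to prove: (i) there are finitely many $\Gamma$-orbits of full-support vertices of height at most $n$; and (ii) for each such $v_{0}$, there are finitely many $\Gamma_{v_{0}}$-orbits of simplices $\sigma$ of $\Delta^{f}_{\mathcal{B},n}$ with $v_{0}(\sigma)=v_{0}$. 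The number of $\Gamma$-orbits of simplices is then bounded by the finite sum, over orbit representatives $v_{0}$, of the quantities from (ii).

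For (i) I would show that two full-support vertices $v_{0}=\{b_{1},\dots,b_{k}\}$ and $v_{0}'=\{b_{1}',\dots,b_{k}'\}$ lie in the same $\Gamma$-orbit whenever there is a bijection $v_{0}\to v_{0}'$ carrying each $b_{i}$ to some $b_{i}'$ in the same $\Gamma$-orbit. Given such a bijection, pick $\gamma_{i}\in\Gamma$ with $\gamma_{i}\cdot b_{i}=b_{i}'$, restrict $\gamma_{i}$ to $supp(b_{i})$ to obtain $\hat{s}_{i}\in\widehat{S}$ with $dom(\hat{s}_{i})=supp(b_{i})$, $\hat{s}_{i}(supp(b_{i}))=supp(b_{i}')$ and $\hat{s}_{i}\cdot b_{i}=b_{i}'$ (using Lemma \ref{lemma:restrictions} and Convention \ref{convention:SShatGamma}(5)), and set $\hat{s}=\hat{s}_{1}\amalg\cdots\amalg\hat{s}_{k}\in\widehat{S}$; since $v_{0},v_{0}'$ have full support, $\hat{s}$ is a bijection of $X$, so $\hat{s}\in\Gamma$ and $\hat{s}\cdot v_{0}=v_{0}'$. (In the linear or cyclic case one matches the $b_{i}$ to the $b_{i}'$ in a way respecting the order, exactly as in the proof of Proposition \ref{proposition:finiteindex}(1), so that $\hat{s}$ is order-preserving.) Since $\Gamma$ has finitely many orbits on $\mathcal{B}$ and $k\le n$, only finitely many ``orbit-type'' data can occur, which gives (i).

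For (ii) I would fix $v_{0}=\{b_{1},\dots,b_{k}\}$ and pass to the finite-index subgroup $\Gamma'_{v_{0}}\cong\prod_{i=1}^{k}\widehat{S}_{b_{i}}$ of $\Gamma_{v_{0}}$ from Proposition \ref{proposition:finiteindex}(1), so that it suffices to bound the number of $\Gamma'_{v_{0}}$-orbits. Via the canonical ordering and Definitions \ref{definition:deltaB} and \ref{definition:ascstarasposet}, the simplices of $\Delta_{\mathcal{B}}$ with least-height vertex $v_{0}$ are exactly the strictly increasing chains $v_{0}<v_{1}<\cdots<v_{m}$ in the poset $st_{\uparrow}(v_{0})$, and the requirement $\sigma\subseteq\Delta^{f}_{\mathcal{B},n}$ forces $m\le n-1$. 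Using the poset isomorphism $p\colon st_{\uparrow}(v_{0})\to\mathcal{E}(b_{1})\times\cdots\times\mathcal{E}(b_{k})$ of Proposition \ref{proposition:ascstarasproduct} — which is $\Gamma'_{v_{0}}$-equivariant for the coordinatewise action of $\prod_{i}\widehat{S}_{b_{i}}$, by the computation $\hat{s}\cdot r_{b_{i}}(u)=r_{\hat{s}\cdot b_{i}}(\hat{s}\cdot u)$ appearing in the proof of Theorem \ref{theorem:partialaction} — such a chain corresponds to a $k$-tuple of weakly increasing length-$(m+1)$ sequences in the $\mathcal{E}(b_{i})$, and two chains are $\Gamma'_{v_{0}}$-equivalent iff their sequences agree, coordinate by coordinate, up to the action of $\widehat{S}_{b_{i}}$. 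I would finish by noting that a weakly increasing length-$(m+1)$ sequence in $\mathcal{E}(b_{i})$ is determined by the simplex of $\mathcal{E}(b_{i})$ spanned by its distinct terms together with a monotone surjection of $\{0,\dots,m\}$ onto that simplex (the repetition pattern); since $\widehat{S}_{b_{i}}$ acts on the former and fixes the latter, and its action on $\mathcal{E}(b_{i})$ is cocompact (finitely many orbits of simplices), there are finitely many $\widehat{S}_{b_{i}}$-orbits of such sequences, for each of the finitely many $m\le n-1$. Multiplying over $i$ and summing over $m$ yields (ii).

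The main obstacle I anticipate is this last part of (ii): correctly translating ``cocompact action of $\widehat{S}_{b}$ on $\mathcal{E}(b)$'', which is a statement about simplices of $\mathcal{E}(b)$ (strictly increasing chains), into finiteness of orbits of the \emph{weakly} increasing finite sequences that actually index simplices of $\Delta_{\mathcal{B}}$ above $v_{0}$ — this requires carefully bookkeeping repetition patterns. Verifying that $p$ is genuinely $\Gamma'_{v_{0}}$-equivariant and that $\Gamma'_{v_{0}}$ acts coordinatewise on the product is the other place where care is needed; everything else is routine.
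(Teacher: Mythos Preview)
Your proposal is correct and follows essentially the same approach as the paper's proof: both first establish finitely many $\Gamma$-orbits of vertices via orbit-type data, then reduce to simplices above a fixed least-height vertex $v_{0}$, use the product decomposition of $st_{\uparrow}(v_{0})$ to encode a simplex as a tuple of weakly increasing sequences in the $\mathcal{E}(b_{i})$, and finish by separating each such sequence into its underlying strict chain (controlled by cocompactness of $\widehat{S}_{b_{i}}$ on $\mathcal{E}(b_{i})$) and a finite repetition pattern. The only cosmetic difference is that the paper phrases the final step as a pigeonhole-and-contradiction argument using matrix notation, whereas you give the equivalent direct count.
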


\begin{proof}
We fix a specific subcomplex $\Delta^{f}_{\mathcal{B},n}$. We will first show that there are only finitely many orbits of vertices in this complex, relative to the action of $\Gamma$. The specifics of the proof depend in very minor ways on whether the expansion set in question is linear, cyclic, or permutational. We will focus on the case of permutational expansion sets. (In the present context, ``permutational" means that the
inverse semigroup follows Convention \ref{convention:SShatGamma}, without the additional restrictions imposed on the linear and cyclic cases.)

Since the action of $\Gamma$ on $\mathcal{B}$ has finitely many orbits, we can find a finite set $\{ t_{1}, \ldots, t_{\ell} \} \subseteq \mathcal{B}$ of orbit representatives (i.e., a selection of exactly one $t_{i}$ from each orbit).  To each vertex $v \in \Delta^{f}_{\mathcal{B}}$, we associate a \emph{type vector}, which is an $\ell$-tuple
$(a_{1}, \ldots, a_{\ell})$ such that
\[ a_{i} = |\{ b \in v \mid b \text{ is in the same }\Gamma \text{-orbit as } t_{i} \}|. \]
Clearly, $a_{1} + \ldots + a_{\ell} = |v|$ and $a_{i} \geq 0$ for all $i$. Since $|v| \leq n$, it follows easily that only finitely many vectors can be the type vector of a vertex $v \in \Delta^{f}_{\mathcal{B},n}$. It therefore suffices to show that two vertices with the same type vector are actually in the same $\Gamma$-orbit. (In the linear case, we would consider the \emph{type sequence}, where two ordered vertices 
$v_{1} = \{ b_{1}, \ldots, b_{k} \}$ and $v_{2} = \{ b'_{1}, \ldots, b'_{k} \}$ have the same type sequence
if $b_{j}$ and $b'_{j}$ are in the same orbit under the action of $\Gamma$, for $j=1, \ldots, k$. One would then show that 
two vertices with the same type sequence are in the same orbit, and there are clearly only finitely many type sequences under our hypotheses. The cyclic case involves an analogous idea.)

Thus, let $v$ and $v'$ be vertices of $\Delta^{f}_{\mathcal{B},n}$ such that $v$ and $v'$ have the same type vector. It follows directly that $v$ and $v'$ have the same height. We can write 
$v = \{ b_{1}, \ldots, b_{k} \}$ and $v' = \{ b'_{1}, \ldots, b'_{k} \}$ where, for $i=1, \ldots, k$, $b_{i}$ and $b'_{i}$ are in the same $\Gamma$-orbit. We can build $\gamma \in \Gamma$ carrying $v$ to $v'$ as follows. For each $i=1, \ldots, k$, we are given $\gamma_{i} \in \Gamma$ 
such that $\gamma_{i} \cdot b_{i} = b'_{i}$. It follows that 
$\gamma_{i \mid supp(b_{i})} \cdot b'_{i}$. Note that $\gamma_{i \mid supp(b_{i})} \in \widehat{S}$ by Lemma \ref{lemma:restrictions}. Now, using Convention \ref{convention:SShatGamma}(3), we can take the union of the various $\gamma_{i \mid supp(b_{i})} \in \widehat{S}$ to create $\gamma \in \Gamma$ such that $\gamma \cdot v = v'$. Thus, there are only finitely orbits of vertices in $\Delta^{f}_{\mathcal{B},n}$.

It now suffices to show that there are only finitely many different orbits of simplices in $\Delta^{f}_{\mathcal{B},n}$. We suppose the contrary. Since $\Delta^{f}_{\mathcal{B},n}$ is finite dimensional, it follows that, for some $m$, there is an infinite collection $\mathcal{C}_{1}$ of $m$-dimensional simplices such that no two members of $\mathcal{C}_{1}$ are in the same $\Gamma$-orbit. Since there are only finitely many orbits of vertices in $\Delta^{f}_{\mathcal{B},n}$, we can find an infinite subcollection $\mathcal{C}_{2} \subseteq \mathcal{C}_{1}$ such that, whenever $\sigma_{1}, \sigma_{2} \in \mathcal{C}_{2}$, the lowest-height vertices in $\sigma_{1}$ and $\sigma_{2}$ lie in the same orbit. After replacing each such simplex with a suitable translate, we have a collection $\mathcal{C}$ of $m$-dimensional simplices such that:
\begin{enumerate}
\item each $\sigma \in \mathcal{C}$ has a certain fixed $v$ as its lowest-height vertex;
\item no two members of $\mathcal{C}$ are in the same $\Gamma$-orbit.
\end{enumerate}

We will now argue for a contradiction. It will be useful to develop a bit of notation for simplices in 
$\Delta^{f}_{\mathcal{B}}$. Let $\sigma = \{ w_{0}, \ldots, w_{m} \}$ be an $m$-simplex in
$\Delta^{f}_{\mathcal{B}}$ and let $w_{0} = \{ b_{1}, \ldots, b_{k} \}$. To the simplex $\sigma$, we associate an $(m+1) \times k$ matrix $M_{\sigma}$, such that the $ij$-entry (for $0 \leq i \leq m$ and $1 \leq j \leq k$) is $r_{b_{j}}(w_{i})$. The $i$th row of the matrix represents the vertex $v_{i}$, while the $j$th column represents expansions from $b_{j}$. (If we make the convention that the $0$th row is bottommost, and the $m$th is on the top, Figure \ref{matrix} illustrates the basic idea.) According to Definition \ref{definition:deltaB},
the matrix $M_{\sigma}$ has the following two properties:
\begin{enumerate}
\item no two rows of $M_{\sigma}$ are equal (since the vertices $v_{0}, \ldots, v_{m}$ are distinct);
\item the $j$th column, read from bottom to top (i.e., from row $0$ to row $m$), is a non-decreasing sequence (i.e., one possibly including repetitions) in $\mathcal{E}(b_{j})$, where the $0j$th entry is $\{ b_{j} \}$.
\end{enumerate}
Two such matrices represent the same simplex if and only if 
one can be obtained from the other by permuting the columns.

The matrices $M_{\sigma}$ for $\sigma \in \mathcal{C}$ all have the same $0$th (i.e., bottom) row, possibly after permutation of the columns (since all  simplices $\sigma \in \mathcal{C}$ have the same least-height vertex
$w_{0}$). We will assume, for the sake of simplicity, that the columns are ordered in such a way that the $0$th rows are all identical. If we delete repetitions from the $j$th column of $M_{\sigma}$, we obtain a simplex in $\mathcal{E}(b_{j})$, which we will call the \emph{simplex underlying the $j$th column} of $M_{\sigma}$. 

Now, still assuming that $\mathcal{C}$ is as above, we are forced to conclude, by the assumption that $\widehat{S}_{b}$ always acts cocompactly on $\mathcal{E}(b)$, that there is an infinite subcollection $\mathcal{C}'$ such that, if $\sigma_{1}, \sigma_{2} \in \mathcal{C}'$, then the simplex underlying the $j$th column of $M_{\sigma_{1}}$ is in the same $\widehat{S}_{b_{j}}$-orbit as the simplex underlying the $j$th column of $M_{\sigma_{2}}$, for $j=1, \ldots, k$. Since $\mathcal{C}' \subseteq \mathcal{C}$, no two members of $\mathcal{C}'$ lie in the same $\Gamma$-orbit. After replacing the family $\mathcal{C}'$ with suitable translates under the action of
$\Gamma'_{v_{0}}$, we arrive at an infinite family $\mathcal{C}''$ such that, if $\sigma_{1}, \sigma_{2} \in \mathcal{C}''$, then the simplex underlying the $j$th column of $M_{\sigma_{1}}$ is identical to the simplex underlying the $j$th column of $M_{\sigma_{2}}$, for $j=1, \ldots, k$. Moreover, no two members of $\mathcal{C}''$ lie in the same $\Gamma$-orbit. 

Let $\sigma_{1}, \sigma_{2} \in \mathcal{C}''$. The above discussion implies that, for each $j \in \{ 1, \ldots, k \}$, the simplex underlying the $j$th column of $M_{\sigma_{1}}$ is equal to the simplex underlying the $j$th column of $M_{\sigma_{2}}$. This does not mean that the matrices $M_{\sigma_{1}}$ and $M_{\sigma_{2}}$ are identical, since the repeated entries in the columns may occur in different rows. The map  from columns to underlying simplices is finite-to-one, however, which implies that we can find an infinite set $\mathcal{C}'''$ of distinct simplices $\sigma$ such that all of the matrices $M_{\sigma}$ are identical. This is impossible, and it follows that the action of $\Gamma$ on 
$\Delta^{f}_{\mathcal{B},n}$ is cocompact.
\end{proof}

\subsection{The finiteness criterion and a template for establishing type $F_{\infty}$}
\label{subsection:Brown}

Here we briefly recall Brown's Finiteness Criterion, and compare the necessary hypotheses with our results so far.

\begin{theorem} \cite{Brown} \label{theorem:Brown} (Brown's Finiteness Criterion)
Let $X$ be a CW-complex. Let $G$ be a group acting on $X$. If
\begin{enumerate}
\item $X$ is $(n-1)$-connected; 
\item $G$ acts cellularly on $X$, and
\item there is a filtration $X_{1} \subseteq X_{2} \subseteq \ldots \subseteq X_{k} \subseteq \ldots \subseteq X$
such that
\begin{enumerate}
\item $X = \bigcup_{k=1}^{\infty} X_{k}$;
\item $G$ leaves each $X_{k}^{(n)}$ invariant and acts cocompactly on each $X_{k}^{(n)}$; 
\item each $p$-cell stabilizer has type $F_{n-p}$, and
\item for sufficiently large $k$, $X_{k}$ is $(n-1)$-connected,
\end{enumerate}
\end{enumerate}
then $G$ is of type $F_{n}$. \qed
\end{theorem}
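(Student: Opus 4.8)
The plan is to deduce Theorem~\ref{theorem:Brown} from its cocompact special case and then to prove that special case by constructing a $K(G,1)$ with finite $n$-skeleton.

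\emph{Reduction to the cocompact case.} By hypothesis (3)(d) there is an index $k_{0}$ for which $X_{k_{0}}$ is $(n-1)$-connected. Replacing $X$ by $Y := X_{k_{0}}$, and passing to the barycentric subdivision if necessary so that the action is rigid (a cell fixed setwise is fixed pointwise), it suffices to prove: \emph{if $G$ acts on an $(n-1)$-connected $G$-CW-complex $Y$ with finitely many orbits of cells in each dimension $p \le n$, and the stabilizer of each $p$-cell with $p \le n$ has type $F_{n-p}$, then $G$ has type $F_{n}$.} Conditions (1) and (3)(a)--(c) for the original complex $X$ are precisely what one uses in applications to verify (3)(d), but they play no further role once $k_{0}$ is fixed.

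\emph{The cocompact case via the homotopy quotient.} Choose a free contractible $G$-CW-complex $EG$ and form the Borel construction $Y_{hG} = EG \times_{G} Y$, built up by attaching, for each orbit $G/H \times D^{p}$ of $p$-cells of $Y$, a copy of $(EG/H) \times D^{p}$ along $(EG/H) \times S^{p-1}$. Since $EG/H$ is a model for $BH$ and $H$ has type $F_{n-p}$, we may choose it to have finite $(n-p)$-skeleton; as the relative cells of $(EG/H)\times D^{p}$ rel $(EG/H)\times S^{p-1}$ are the products of the cells of $EG/H$ with the single $p$-cell of $D^{p}$, and there are finitely many orbits of $p$-cells for each $p \le n$, the complex $Y_{hG}$ has finite $n$-skeleton. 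Next, because $Y$ is $(n-1)$-connected, we may attach free $G$-orbits of cells to $Y$ in dimensions $\ge n+1$ (first killing $\pi_{n}(Y)$, then the successive higher homotopy groups) to obtain a contractible $G$-CW-complex $\widetilde{Y} \supseteq Y$ that agrees with $Y$ through dimension $n$. The bundle projection $\widetilde{Y}_{hG} = EG \times_{G} \widetilde{Y} \to EG \times_{G}(\mathrm{pt}) = BG$ has contractible fibre $\widetilde{Y}$, hence is a homotopy equivalence, so $\widetilde{Y}_{hG}$ is a $K(G,1)$; and it differs from $Y_{hG}$ only in cells of dimension $\ge n+1$. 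Thus $\widetilde{Y}_{hG}$ is a $K(G,1)$ with finite $n$-skeleton, i.e.\ $G$ has type $F_{n}$, completing the proof.

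\emph{An algebraic variant, and the main obstacle.} Alternatively, one shows $G$ is of type $FP_{n}$ directly: the cellular chain complex of $Y$, exact in degrees $\le n-1$ by the Hurewicz theorem, yields an exact sequence $C_{n}(Y) \to \dots \to C_{0}(Y) \to \bz \to 0$ of $\bz G$-modules in which each $C_{p}(Y)$ is a finite direct sum of modules induced from the $FP_{n-p}$ stabilizer subgroups, hence is $FP_{n-p}$ over $\bz G$; a dimension-shifting argument then gives that $\bz$ is $FP_{n}$ over $\bz G$. One upgrades to type $F_{n}$ as follows: for $n \le 1$ this is immediate (every group is $F_{0}$, and $F_{1}$ = finite generation follows from cocompactness of the $G$-action on $Y^{(1)}$ together with finite generation of the vertex stabilizers), while for $n \ge 2$ one first checks that $G$ is finitely presented ($\pi_{1}(Y)=1$, $Y/G$ has finite $2$-skeleton, vertex stabilizers finitely presented, edge stabilizers finitely generated) and then applies the standard fact that $FP_{n}$ plus finite presentability implies $F_{n}$. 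Either way the genuine content, and the point I expect to be the main obstacle, is the same: equivariantly coning $Y$ off to a contractible complex using only \emph{free} cells, so that the new cells do not disturb the finiteness of the $n$-skeleton of the homotopy quotient (topological route), equivalently the dimension-shifting lemma producing $FP_{n}$ together with the finite-presentability upgrade needed to reach $F_{n}$ rather than merely $FP_{n}$ (algebraic route). The reduction and the verification of the remaining hypotheses are routine.
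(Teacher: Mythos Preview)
Your argument is essentially the standard proof of Brown's criterion and is correct. Note, however, that the paper does not prove this theorem at all: it is stated with a citation to \cite{Brown} and a \qed, i.e., it is quoted as a known result and used as a black box. So there is no ``paper's own proof'' to compare against; you have supplied a proof where the paper supplies none.

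For what it is worth, the route you take (pass to a cocompact $(n-1)$-connected subcomplex $X_{k_0}$, form the Borel construction $EG\times_G Y$ using models $BH$ with finite $(n-p)$-skeleta for the $p$-cell stabilizers $H$, then freely cone off above dimension $n$) is exactly Brown's original approach in \cite{Brown}, and the algebraic variant via the filtered chain complex and the upgrade from $FP_n$ to $F_n$ is the other standard presentation. One small comment: hypotheses (1), (2), and (3)(a)--(c) of the theorem as stated are not literally ``used only to verify (3)(d)''; rather, once you fix $k_0$ you are applying the cocompact special case to $Y=X_{k_0}^{(n)}$, and it is (3)(b) and (3)(c) that guarantee the finiteness and stabilizer hypotheses for $Y$, while (3)(d) supplies the connectivity. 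Your reduction is fine, just phrase it so that (3)(b) and (3)(c) are visibly invoked for $Y$ rather than described as playing ``no further role.''
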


\begin{theorem} \label{theorem:template} (A template for proving that $\Gamma$ has type $F_{\infty}$)
Let $\mathcal{B}$ be an expansion set over $X$; let $\widehat{S}$ be an inverse semigroup acting on $\mathcal{B}$. Let $\Gamma$ be the full-support subgroup of $\widehat{S}$.
We can let $\mathcal{B}$ be linear, or cyclic, or neither. 
Assume that Convention \ref{convention:SShatGamma} is in place. If
\begin{enumerate}
\item the vertices of $\Delta^{f}_{\mathcal{B}}$ are a directed set with respect to $\nearrow$;
\item for each $b \in \mathcal{B}$ and $v \in \mathcal{V}_{\mathcal{B}}$ such that $\{ b \} \nearrow v$, 
$lk_{\uparrow}(b,v)$ is contractible (when $\{ b \} \neq v$);
\item each stabilizer group $\widehat{S}_{b}$ ($b \in \mathcal{B}$) has type $F_{\infty}$, and each set $\mathcal{E}(b)$ is finite;
\item the action of $\Gamma$ on $\mathcal{B}$ has finitely many orbits;
\item $\mathcal{B}$ is rich in contractions and has the bounded contractions property,
\end{enumerate}
then $\Gamma$ has type $F_{\infty}$.
\end{theorem}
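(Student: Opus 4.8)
The plan is to verify that all the hypotheses of Brown's Finiteness Criterion (Theorem \ref{theorem:Brown}) are met, for every $n$, by the action of $\Gamma$ on $X = \Delta^{f}_{\mathcal{B}}$ with the filtration $\{ \Delta^{f}_{\mathcal{B},k} \}$ from Definition \ref{definition:thefiltration}. Since $\Gamma$ will be shown to have type $F_n$ for all $n$, it has type $F_\infty$. So the body of the proof is a checklist.

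First I would record the structural facts that are immediate. Hypothesis (2) of Theorem \ref{theorem:Brown} (cellular action) holds by Theorem \ref{theorem:partialaction}. Hypothesis (3)(a) holds by Lemma \ref{lemma:filtrationlemma}. Hypothesis (3)(b): $\Gamma$-invariance of $\Delta^{f}_{\mathcal{B},k}$ is Proposition \ref{proposition:findiminv}; cocompactness of the action on $\Delta^{f}_{\mathcal{B},k}$ (hence on each skeleton) follows from Proposition \ref{proposition:cocompact}, whose hypotheses are exactly our (4) together with the cocompactness of each $\widehat{S}_b$ on the \emph{finite} set $\mathcal{E}(b)$ — and any action on a finite set is cocompact, so our (3) suffices here. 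Hypothesis (3)(c): by Corollary \ref{corollary:Fnstabilizers}, every cell stabilizer has type $F_\infty$ (hence type $F_{n-p}$ for every $p$), since our (3) gives $\widehat{S}_b$ of type $F_\infty$ and finiteness of $\mathcal{E}(b)$ makes the $\widehat{S}_b$-action on $\mathcal{E}(b)$ have finite orbits, which is the hypothesis of that corollary.

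It remains to establish the connectivity hypotheses (1) and (3)(d) of Theorem \ref{theorem:Brown}, for arbitrary $n$. Our hypothesis (2) says $lk_{\uparrow}(b,v)$ is contractible — in particular $(n-1)$-connected — for every relevant $b,v$, and our hypothesis (1) gives directedness; together with Definition \ref{definition:nconnectedE} this says $\mathcal{B}$ is $n$-connected for every $n$, so Theorem \ref{theorem:nconnectivity} gives that $\Delta^{f}_{\mathcal{B}}$ is contractible, hence $(n-1)$-connected, settling (1). For (3)(d) I would invoke Corollary \ref{corollary:connectivityeasy}: since $\mathcal{B}$ is rich in contractions and has the bounded contractions property (our hypothesis (5)), with constants $C_1 \le C_0$ (one may always enlarge $C_1$ so this holds), and since $\Delta^{f}_{\mathcal{B}}$ is $n$-connected, the corollary provides an explicit threshold $m_0(n)$ — depending on the type (linear, cyclic, or permutational) of $\mathcal{B}$ — such that $\Delta^{f}_{\mathcal{B},m}$ is $n$-connected for all $m \ge m_0(n)$. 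Thus for $k$ sufficiently large $\Delta^{f}_{\mathcal{B},k}$ is $(n-1)$-connected, giving (3)(d).

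Having verified all hypotheses of Theorem \ref{theorem:Brown} for each fixed $n$, we conclude $\Gamma$ has type $F_n$; since $n$ was arbitrary, $\Gamma$ has type $F_\infty$. The only genuinely substantive inputs are Theorem \ref{theorem:nconnectivity}, Corollary \ref{corollary:connectivityeasy}, and Proposition \ref{proposition:cocompact}, all already proved; the ``main obstacle'' is therefore purely bookkeeping — matching each clause of Brown's criterion to the right earlier result and checking that the minor side conditions (e.g. $C_0 \ge C_1$, finiteness of $\mathcal{E}(b)$ implying finite orbits and cocompactness) are harmlessly satisfied. One should also remark that the case division among linear, cyclic, and permutational expansion sets affects only the precise connectivity bounds and the cocompactness argument (type vectors versus type sequences), not the logical structure, so it is enough to note that each of Theorems \ref{theorem:connectivityeasylinear}, \ref{theorem:connectivityeasycyclic}, \ref{theorem:connectivityeasyperm} (packaged in Corollary \ref{corollary:connectivityeasy}) and Proposition \ref{proposition:cocompact} covers its respective case.
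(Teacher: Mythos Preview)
Your proof is correct and follows essentially the same route as the paper's own proof: both verify the hypotheses of Brown's Finiteness Criterion using Theorem \ref{theorem:nconnectivity} for contractibility, Corollary \ref{corollary:connectivityeasy} for connectivity of the filtration, Proposition \ref{proposition:cocompact} for cocompactness, and Corollary \ref{corollary:Fnstabilizers} for stabilizers. One trivial slip: to arrange $C_0 \geq C_1$ you should enlarge $C_0$ (any larger number remains an upper bound for $\{|v| : v \in \mathcal{E}(b)\}$), not $C_1$.
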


\begin{proof}
Properties (1) and (2) imply that $\mathcal{B}$ is an $n$-connected expansion set for all $n$ (Definition \ref{definition:nconnectedE}). It follows from Theorem \ref{theorem:nconnectivity} that $\Delta^{f}_{\mathcal{B}}$ is contractible. By Theorem \ref{theorem:partialaction}, $\Gamma$ acts by simplicial automorphisms on $\Delta^{f}_{\mathcal{B}}$. 

We note that $\{ \Delta^{f}_{\mathcal{B},k} \}$ is a 
filtration of $\Delta^{f}_{\mathcal{B}}$ such that
\[ \bigcup_{k=1}^{\infty} \Delta^{f}_{\mathcal{B},k}  = \Delta^{f}_{\mathcal{B}}, \] by Lemma \ref{lemma:filtrationlemma}, and that $\Gamma$ acts cocompactly on each $\Delta^{f}_{\mathcal{B},k}$ by Proposition \ref{proposition:cocompact}, and by properties (3) and (4).
Property (3) implies that cell stabilizers under the action of $\Gamma$ have type $F_{\infty}$ (by Corollary \ref{corollary:Fnstabilizers}). The connectivity of the complexes $\Delta^{f}_{\mathcal{B},k}$ tends to infinity with $k$ by Corollary \ref{corollary:connectivityeasy}. 

It follows from Theorem \ref{theorem:Brown} that $\Gamma$ has type $F_{\infty}$.
\end{proof}

\begin{remark}
(An alternate hypothesis)
In place of assumption (3) from Theorem \ref{theorem:template},
we can instead assume:
\begin{enumerate}
\item[(3')] each cell stabilizer group $\Gamma_{\sigma}$ has type $F_{\infty}$, and each
$\widehat{S}_{b}$ acts cocompactly on $\mathcal{E}(b)$.
\end{enumerate}
The conclusion is the same; i.e., $\Gamma$ has type $F_{\infty}$. We will need to do this in the case of the Lodha-Moore group.
\end{remark}

\begin{remark} \label{remark:discussion}
If $b_{1}, b_{2} \in \mathcal{B}$ are in the same $\widehat{S}$-orbit, then 
the groups $\widehat{S}_{b_{1}}$ and $\widehat{S}_{b_{2}}$ are easily seen to be conjugate. Similarly, each relative ascending link
$lk_{\uparrow}(b_{1},v_{1})$ is isomorphic to some relative ascending link
of the form $lk_{\uparrow}(b_{2},v_{2})$.

It follows that, in the presence of property (4) from 
\ref{theorem:template}, properties (2) and (3) require only a finite amount of checking (although (2) must still be proved for an arbitrary $v$). Notice also that the bounded contractions property is a consequence of (3) and (4).
\end{remark}

\section{Construction of expansion sets} \label{section:construction}

In this section, we turning an algebraic input (in the form of an inverse semigroup $S$ acting on $X$) into a topological output. The basic construction (Subsection \ref{subsection:basic}) is very general, but, as outlined in 
Subsection \ref{subsection:properties}, it lacks some of the properties demanded by 
Theorem \ref{theorem:template}. 

In the final subsections, we consider applications to various generalized Thompson groups,
including $V$, $nV$, R\"{o}ver's group, and the Lodha-Moore group. Our goal is to indicate some of the modifications that are needed in order to apply Theorem \ref{theorem:template} successfully. The list of examples is not intended to be exhaustive.

\subsection{The basic construction of an expansion set}
\label{subsection:basic}

Let $S$ be an inverse semigroup acting on the set $X$. We can assume that $X$ is linearly or cyclically ordered, or not. 
We will build
\begin{itemize}
\item an expansion set $\mathcal{B}$ over $X$ and
\item a larger inverse semigroup $\widehat{S}$ acting 
on $\mathcal{B}$
\end{itemize}
 from the given input; if $X$ is given a linear or cyclic ordering, we will build a linear or cyclic $\mathcal{B}$. Our construction will satisfy Convention \ref{convention:SShatGamma}. The resulting full-support groups $\Gamma$ will be $F$-like, $T$-like, or $V$-like, according to whether $\mathcal{B}$ is linear, cyclic, or neither.

Thus, let $X$ and $S$ be given. We let $\widehat{S}$ be the inverse semigroup determined by conditions (1)-(4) from 
Convention \ref{convention:SShatGamma}. (Recall that these 
conditions differ slightly, depending upon whether $X$ is endowed with an order, and that (4) imposes a constraint on $S$.) We let 
\[ \mathcal{D}^{+}_{S} = \{ D \mid D = dom(s),
\text{ for some }s \in S-\{0\} \}.\] 
These are the non-empty \emph{domains} of $S$. Define
\[ \mathcal{A} = \{ (\hat{s},D) \in \widehat{S} \times
\mathcal{D}^{+}_{S}\mid D \subseteq dom(\hat{s}) \}. \]
We impose an equivalence relation $\sim$ on $\mathcal{A}$ as follows:
\[ (\hat{s}_{1},D_{1}) \sim (\hat{s}_{2},D_{2}) \quad \Leftrightarrow \quad (\hat{s}_{2} \circ s)_{\mid D_{1}} = \hat{s}_{1 \mid D_{1}}, \]
where $s \in S$ is a bijection from $D_{1}$ to $D_{2}$. (The image of $s$ is itself necessarily a domain, since it is the domain of $s^{-1}$, which is in $S$ because $S$ is closed under inverses.) Equivalently, the equivalence relation $\sim$ is defined by a commutative diagram as in Figure \ref{commdiagram}.
\begin{figure}[!t]
\begin{center}
\begin{tikzpicture}
\draw[black,->] (.5,.6) -- (2.5,1.43);
\draw[black,->] (.5,2.5) -- (2.5,1.66);
\draw[black,->] (.2,2.25) -- (.2,.75);
\draw[black, ->] (1,1.8) arc (90:360:.3);
\node at (.2,2.5){$D_{1}$};
\node at (.2,.5){$D_{2}$};
\node at (-.05,1.5){$s$};
\node at (2.75,1.53){$X$};
\node at (1.5,.65){$\hat{s}_{2}$};
\node at (1.5,2.35){$\hat{s}_{1}$};
\end{tikzpicture}
\end{center}
\caption{The above commutative diagram determines the equivalence relation on $\mathcal{A}$.
The vertical arrow is labelled by a bijection from $S$, while the remaining arrows are members of $\widehat{S}$.}
\label{commdiagram}  
\end{figure}
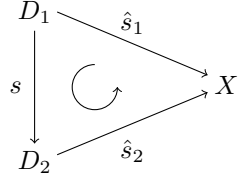
Let $\mathcal{B}$ be the set of equivalence classes of $\sim$, members of which are denoted by square brackets; i.e.,
\[ \mathcal{B} = \{ [\hat{s},D] \mid (\hat{s},D) \in \mathcal{A} \}. \]
For a given $b= [\hat{s},D]$, we define $supp(b) = \hat{s}(D)$. This assignment is easily seen to be well-defined. We note that, as required by Convention \ref{convention:SShatGamma}(5), $supp(b)$ is the domain
of some $\tilde{s} \in \widehat{S}$. This is because: (1) we can assume, up to equivalence, that $dom(\hat{s}) = D$, since an inverse semigroup is closed under restriction to subdomains (by Lemma \ref{lemma:restrictions}) and $S \subseteq \widehat{S}$, and (2) the image of $\hat{s}$ is the domain of $\hat{s}^{-1}$. It follows that $supp(b) = dom(\hat{s}^{-1})$, so that parts (1)-(5) of Convention \ref{convention:SShatGamma} are satisfied. 

We must next define the partially ordered set 
$\mathcal{E}(b)$, for each $b \in \mathcal{B}$.  It will be useful to introduce a preliminary definition. Let $b' = [\hat{s},D] \in v$, where $v$ is a vertex. Let $h:\widetilde{D} \rightarrow D$ be a bijection, where $h \in S$, and let $\mathcal{P} \subseteq \mathcal{D}^{+}_{S}$ ($|\mathcal{P}| \geq 2$) be a finite partition of $\widetilde{D}$ into finitely many domains. The act of replacing $b'$ with the members of the set
\[ \{ [\hat{s}h,E] \mid E \in \mathcal{P} \} \]
is called performing an \emph{elementary expansion} at $b'$. (In simpler terms, an elementary expansion is performed by first replacing $[\hat{s},D]$ with any equivalent pair $[\hat{s}',D']$, and then ``subdividing'' $[\hat{s}',D']$ using a partition of $D'$ into finitely many domains.) The definition of $\mathcal{E}(b)$ is inductive:
\begin{itemize}
\item $\{ b \} \in \mathcal{E}(b)$;
\item if $v \in \mathcal{E}(b)$ and $v'$ is the result of performing an elementary expansion at some $b \in v$,
then $v' \in \mathcal{E}(b)$ and we write $v \preceq v'$. 
\end{itemize}
The partial order $\leq$ on $\mathcal{E}(b)$ is 
obtained by taking the transitive closure of $\preceq$. It is not difficult to see that $\mathcal{E}(b)$ satisfies all of the requirements of Definition \ref{definition:expansionsets}, so
$\mathcal{B}$ is an expansion set. This is the basic construction of an expansion set $\mathcal{B}$.

We still need to show that $\widehat{S}$ acts on $\mathcal{B}$. For a given $f \in \widehat{S}$ and
$[g,D] \in \mathcal{B}$ such that $g(D) \subseteq dom(f)$, we let 
\[ f \cdot [g,D] = [fg,D]. \]
(We leave $f \cdot [g,D]$ undefined if $g(D) \not \subseteq dom(f)$.) Properties (1)-(4) from Definition \ref{definition:actionsonexpansionsets} are easily checked.
We have therefore produced the desired $\mathcal{B}$, $\widehat{S}$, and action of $\widehat{S}$ on $\mathcal{B}$. (Note that, in the linear or cyclic cases, one requires that supports be connected. But this is also an easy consequence of Convention \ref{convention:SShatGamma} and the definition of $\mathcal{B}$.) 

\subsection{Properties of the basic construction}
\label{subsection:properties}

Let us compare the properties of the basic construction with the hypotheses of Theorem \ref{theorem:template}. 

It is easiest to consider (1) and (2) from Theorem \ref{theorem:template} together. 

\begin{proposition} \label{proposition:1and2}
The basic construction satisfies properties (1) and (2) from
Theorem \ref{theorem:template}

\end{proposition}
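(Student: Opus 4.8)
The plan is to treat conditions (1) and (2) of Theorem~\ref{theorem:template} separately, disposing of (2) first, since for the basic construction it reduces to a statement of poset topology. Fix $b \in \mathcal{B}$ and $v \in \mathcal{V}_{\mathcal{B}}$ with $\{b\} \nearrow v$ and $\{b\} \neq v$. First I would observe that, for the basic construction, every vertex occurring in an expansion sequence that starts at $\{b\}$ again lies in $\mathcal{E}(b)$: this follows by peeling off the expansions one piece at a time (Lemma~\ref{lemma:expchar} together with Proposition~\ref{proposition:ascstarasproduct}) and using that $\mathcal{E}(b)$ is closed under elementary expansions by its very construction. Consequently $v \in \mathcal{E}(b)$, and on $\mathcal{E}(b)$ the relation $\nearrow$ coincides with the partial order $\leq$ (by the same observation, any step of an expansion sequence inside $\mathcal{E}(b)$ is a composite of elementary expansions, and conversely each elementary expansion is an expansion step). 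Hence $st_{\uparrow}(\{b\})$ is the order complex of the poset $\mathcal{E}(b)$, and by Definition~\ref{definition:relativeupward} the relative star $st_{\uparrow}(\{b\},v)$ is the order complex of the interval $[\{b\},v]$. Passing to the link of the minimum vertex $\{b\}$ identifies $lk_{\uparrow}(b,v)$ with the order complex of $(\{b\},v] = \{u \in \mathcal{E}(b) \mid \{b\} < u \leq v\}$. This poset is non-empty (it contains $v$, since $\{b\} \neq v$) and has $v$ as a maximum, so its order complex is a cone with apex $v$, hence contractible; this proves (2).

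For (1), the plan is a common-refinement argument, and the one genuinely new ingredient I would isolate first is that $\mathcal{D}^{+}_{S}$ is closed under non-empty finite intersections: if $D_{1} = dom(s_{1})$ and $D_{2} = dom(s_{2})$ with $s_{1},s_{2} \in S$, then the composite $s_{1} \circ (s_{2}^{-1} s_{2}) = s_{1} \circ id_{D_{2}}$ lies in $S$ and has domain $D_{1} \cap D_{2}$. Combined with Lemma~\ref{lemma:restrictions}, this shows that restrictions of members of $\widehat{S}$ to intersections of domains again lie in $\widehat{S}$. I would also record the easy fact that each $supp(b)$ is a finite disjoint union of domains of $S$: writing $b = [\hat{s},D]$ with $dom(\hat{s}) = D$ (harmless by Lemma~\ref{lemma:restrictions}), one has $supp(b) = dom(\hat{s}^{-1})$, and $\hat{s}^{-1}$ is a finite disjoint union of members of $S$ by Convention~\ref{convention:SShatGamma}(2).

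With these in hand, given $v_{1}, v_{2} \in (\Delta^{f}_{\mathcal{B}})^{0}$, I would write $v_{1} = \{[f_{i},D_{i}]\}_{i}$ and $v_{2} = \{[g_{j},E_{j}]\}_{j}$ with $dom(f_{i}) = D_{i}$ and $dom(g_{j}) = E_{j}$, decompose every support into domains, and use full support to see that these domains furnish two partitions of $X$ whose common refinement consists of the non-empty intersections $A \cap B$ (again domains, by the first step). For each $i$, the $f_{i}$-preimages of the refinement pieces lying inside $supp([f_{i},D_{i}])$ partition $D_{i}$ into domains, so performing the resulting elementary expansion at each $[f_{i},D_{i}]$ (and simply leaving $[f_{i},D_{i}]$ unchanged when this partition is trivial, which keeps the $|\mathcal{P}| \geq 2$ convention intact) yields a full-support vertex $w$ with $v_{1} \nearrow w$; symmetrically one obtains $w'$ with $v_{2} \nearrow w'$ by expanding the $[g_{j},E_{j}]$ along the $g_{j}$-preimages of the same pieces. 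The step I expect to require the most care is checking that $w = w'$ as vertices of $\Delta_{\mathcal{B}}$: for a refinement piece $A \cap B$ the two constructions produce pairs $[f_{i},D']$ and $[g_{j},E']$ with $f_{i}(D') = A \cap B = g_{j}(E')$, and one must exhibit the bijection $s \in S$ witnessing $(f_{i},D') \sim (g_{j},E')$ --- I would take $s$ to be the composite of a restriction of $g_{j}^{-1}$ with a restriction of $f_{i}$, which lies in $S$ by closure under restriction and composition. Granting this, $w = w'$ is a common $\nearrow$-upper bound of $v_{1}$ and $v_{2}$, so $(\Delta^{f}_{\mathcal{B}})^{0}$ is directed, which is (1).
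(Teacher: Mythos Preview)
Your argument is correct, and in fact your treatment of (2) is more explicit than the paper's. The paper establishes that each $\mathcal{E}(b)$ is a directed set and asserts that (2) ``follows directly''; you unpack this by identifying $lk_{\uparrow}(b,v)$ with the order complex of the half-open interval $(\{b\},v]$ and observing that $v$ is a maximum, so the complex is a cone. This requires the lemma you supply---that for the basic construction an expansion sequence from $\{b\}$ stays inside $\mathcal{E}(b)$ and that $\nearrow$ agrees with $\leq$ there---which the paper leaves implicit.

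For (1) the approaches diverge in organization. The paper first expands every $[f,D]$ to an ``identity vertex'' $v_{\mathcal{P}} = \{[id_{E_i},E_i]\}$, reducing the directedness check to the easy statement that $v_{\mathcal{P}_1}$ and $v_{\mathcal{P}_2}$ both expand to $v_{\mathcal{P}_1 \wedge \mathcal{P}_2}$. You skip this normalization and work directly with the given pairs, refining the two induced partitions of $X$ and pulling back; the price is that you must verify $[f_i,D'] \sim [g_j,E']$ by hand. One small imprecision to tighten there: when you write that the witness $s$ ``lies in $S$ by closure under restriction and composition,'' note that $f_i, g_j^{-1}$ are only in $\widehat{S}$; what makes $s \in S$ is that on the relevant pieces these restrictions coincide with restrictions of single members of $S$ (namely the $s_k$ and $t_\ell^{-1}$ from the disjoint-union decompositions of $f_i$ and $g_j$), and it is those one composes. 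With that clarification your route works and has the virtue of avoiding the auxiliary identity-vertex step; the paper's route buys a cleaner final verification.
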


\begin{proof}
Let $[f,D] \in \mathcal{B}$. We can assume, up to equivalence, that $dom(f) = D$ by Lemma \ref{lemma:restrictions}. Property (2) of Convention \ref{convention:SShatGamma} implies that $f$ is a finite disjoint union of members of $S$, so there is a finite partition 
$\mathcal{P} = \{ D_{1}, \ldots, D_{m} \}  
\subseteq \mathcal{D}^{+}_{S}$ of $D$ such that $f_{\mid D_{i}} = s_{i}$ for some $s_{i} \in S$, for each $i=1,\ldots,m$. We let $E_{i}$ ($i=1,\ldots,m$) denote the images of the $s_{i}$. There is an elementary  expansion that replaces the pair $[f,D]$ with the members of the set
\[ \{ [s_{1}, D_{1}], \ldots, [s_{m}, D_{m}] \}. \]
Next, note that, for each $i$, $[s_{i},D_{i}] = [id_{E_{i}},E_{i}]$. (One sets $D_{1} = D_{i}$, $D_{2} = E_{i}$, $s = s_{i}$, 
$\hat{s}_{1} = s_{i}$, and $\hat{s}_{2} = id_{E_{i}}$ in Figure \ref{commdiagram}.)

It follows from this that, for each $b \in \mathcal{B}$, there is some
$v \in \mathcal{E}(b)$ such that 
\[ v = \{ [id_{E_{1}},E_{1}], \ldots, [id_{E_{m}},E_{m}] \}, \]
for some partition $\mathcal{P}' = \{ E_{1}, \ldots, E_{m} \}$ of $supp(b)$. A vertex $v$ of the above form can essentially be identified with the partition $\mathcal{P}'$; we let the notation reflect this by writing $v_{\mathcal{P}'}$ in place of $v$. It is straightforward to show that two vertices $v_{\mathcal{P}_{1}}$ and $v_{\mathcal{P}_{2}}$ with the same support both expand to $v_{\mathcal{P}_{1} \wedge \mathcal{P}_{2}}$ by a sequence of elementary expansions. (The crucial observation here is that the intersection of two domains is a domain, by Lemma \ref{lemma:restrictions}.) It follows that each partial order
$\mathcal{E}(b)$ is a directed set, and that the vertex set of 
$\Delta^{f}_{\mathcal{B}}$ is directed with respect to the order $\nearrow$. Properties (1) and (2) from 
Theorem \ref{theorem:template} follow directly.
\end{proof}

The situation regarding property (3) from Theorem \ref{theorem:template} is a bit murkier. We can prove:

\begin{proposition}
\label{proposition:iso}
(Isomorphism type of $\widehat{S}_{b}$)
For each $b = [f,D] \in \mathcal{B}$ (where $\mathcal{B}$ denotes the basic construction), we have an isomorphism of groups
\[ \widehat{S}_{b} \cong \{ s \in S \mid dom(s) = im(s) = D \}, \]
where the operation on the right is composition of functions.
\end{proposition}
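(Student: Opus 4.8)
The plan is to exhibit the isomorphism concretely as conjugation by $f$. First I would reduce to the case $dom(f) = D$: since $D \in \mathcal{D}^{+}_{S}$ and an inverse semigroup is closed under restriction to subdomains (Lemma \ref{lemma:restrictions}) with $S \subseteq \widehat{S}$, the class $b = [f,D]$ has a representative $(f',D)$ with $dom(f') = D$ (take $f' = f \circ id_{D}$, and note $(f',D) \sim (f,D)$ with witness $id_{D} \in S$). Both $\widehat{S}_{b}$ and the right-hand group depend only on $b$ and on $D$, so we may assume $dom(f) = D$ from now on; then $supp(b) = f(D) = im(f)$, and $f$ restricts to a bijection $D \to f(D)$ whose inverse $f^{-1}$ again lies in $\widehat{S}$.

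Next I would record that $G_{D} := \{ s \in S \mid dom(s) = im(s) = D \}$ is a group under composition: it is closed under composition and inverse among partial bijections, and it contains $id_{D} = s^{-1}s$ for any $s \in S$ with $dom(s) = D$ (such an $s$ exists since $D \in \mathcal{D}^{+}_{S}$). Then define $\Phi : G_{D} \to \widehat{S}_{b}$ by $\Phi(s) = f \circ s \circ f^{-1}$. Each $\Phi(s)$ lies in $\widehat{S}$ because $\widehat{S}$ is an inverse semigroup containing $f$, $f^{-1}$, and $s$ (as $S \subseteq \widehat{S}$); in the linear or cyclic cases it is automatically order-preserving, being a composition of order-preserving maps. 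I would then check $dom(\Phi(s)) = f(D) = supp(b)$ and that $\Phi(s)\cdot b = [\, f s f^{-1} f,\, D\,] = [\, f s,\, D\,] = [\, f,\, D\,] = b$, where $f^{-1}f = id_{D}$ and $im(s) = D$ collapse the middle expression, and the last equality is the defining equivalence of $\mathcal{B}$ with witness $s : D \to D$ in $S$ (all restrictions being trivial since $dom(f)=D$). Hence $\Phi$ takes values in $\widehat{S}_{b}$.

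Then I would verify that $\Phi$ is a homomorphism — $\Phi(s_{1})\Phi(s_{2}) = f s_{1}(f^{-1}f)s_{2}f^{-1} = f s_{1}\, id_{D}\, s_{2} f^{-1} = f s_{1}s_{2} f^{-1} = \Phi(s_{1}s_{2})$, every insertion being legitimate because $s_{1},s_{2}$ have domain and image exactly $D$ — and injective, since precomposing $\Phi(s)$ with $f$ and postcomposing with $f^{-1}$ recovers $s$. For surjectivity, take $\hat{s} \in \widehat{S}_{b}$: the conditions $dom(\hat{s}) = supp(b) = f(D)$ and $\hat{s}\cdot b = b$, i.e.\ $[\hat{s}f, D] = [f,D]$, produce (by the definition of $\sim$) a bijection $s : D \to D$ in $S$, so $s \in G_{D}$, and after untangling restrictions using $dom(f) = D$ one gets $\hat{s} \circ f = f \circ s$; composing on the right with $f^{-1}$ gives $\hat{s} = f s f^{-1} = \Phi(s)$.

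I expect no conceptual obstacle here: once the reduction $dom(f) = D$ is in place, the map ``conjugate by $f$'' does everything. The only genuine care needed is bookkeeping with partial-bijection compositions — tracking domains and images so that the identities $f^{-1}f = id_{D}$, $id_{D}\, s_{i} = s_{i}$, and $f f^{-1} = id_{f(D)}$ are valid on the nose — and correctly reading the witness $s$ out of the equivalence relation defining $\mathcal{B}$. So the most delicate points will be the verification that $\Phi(s)\cdot b = b$ and its converse (extracting $\hat{s} = f s f^{-1}$ from the class equality), both purely at the level of partial-function algebra.
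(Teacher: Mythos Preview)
Your proof is correct and follows essentially the same idea as the paper's---both ultimately rest on conjugation by $f$. The paper's presentation is slightly different: it invokes Remark~\ref{remark:discussion} (stabilizers of $\widehat{S}$-orbit-equivalent elements are conjugate) to reduce to the special element $b' = [id_{D}, D]$, and then shows the literal equality $\widehat{S}_{b'} = S_{D}$, which makes the verification almost trivial (the only candidate for the witness $s$ in the commutative diagram is $\hat{s}$ itself). Your version is more self-contained---you do not appeal to Remark~\ref{remark:discussion} and instead build the conjugation $\Phi(s) = f s f^{-1}$ explicitly---at the cost of a little more bookkeeping with domains and images. Both routes are fine; the paper's is shorter, yours is more explicit.
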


\begin{proof}
We denote the group on the right side by $S_{D}$. Let $b' = [id_{D},D]$.
By Remark \ref{remark:discussion}, it suffices to show that 
the stabilizer $\widehat{S}_{b'}$ is isomorphic $S_{D}$. (Note that 
$S_{D}$ is indeed a group: since $D \in \mathcal{D}^{+}_{S}$ by the definition of $\mathcal{B}$, there is some $s$ having domain $D$, and it follows that $s^{-1}s = id_{D} \in S_{D}$. Thus, $S_{D}$ is non-empty. The properties of $S$ as an inverse semigroup guarantee the remaining group properties.)

We show that, indeed $\widehat{S}_{b'} = S_{D}$. First, let 
$\hat{s} \in \widehat{S}_{b'}$. It follows that $\hat{s}$ is a self-bijection of the set $D$, and that $[\hat{s},D] = [id_{D},D]$. Letting 
$D_{1} = D_{2} = D$, $\hat{s}_{1} = \hat{s}$, $\hat{s}_{2} = id_{D}$ in Figure \ref{commdiagram}, we find that $\hat{s}$ is the only possible label of the vertical arrow. Thus $\hat{s} \in S$, and so
$\hat{s} \in S_{D}$. The reverse inclusion $S_{D} \subseteq \widehat{S}_{b'}$ follows directly from the inclusion $S \subseteq \widehat{S}$.
\end{proof}

We also have:
\begin{proposition}
(The action of $\widehat{S}_{b}$ on $\mathcal{E}(b)$ is usually not cocompact) \label{proposition:nococompact} 
If the set
\[ \{ |v| \mid v \in \mathcal{E}(b) \} \]
is infinite, then the action of $\widehat{S}_{b}$ on $\mathcal{E}(b)$ is not cocompact (in particular, $\mathcal{E}(b)$  is not finite).
\end{proposition}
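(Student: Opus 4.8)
The plan is to exploit the fact that the action of $\widehat{S}_{b}$ on $\mathcal{E}(b)$ is height-preserving: once this is known, a cocompact action would force the heights $|v|$, for $v \in \mathcal{E}(b)$, to take only finitely many values, contradicting the hypothesis. So the argument is essentially a counting argument, and I expect no serious obstacle.

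First I would record the height-invariance. Given $\hat{s} \in \widehat{S}_{b}$ (so $\hat{s} \cdot b = b$ and $dom(\hat{s}) = supp(b)$) and a vertex $v = \{ b_{1}, \ldots, b_{m} \} \in \mathcal{E}(b)$, the action gives $\hat{s} \cdot v = \{ \hat{s} \cdot b_{1}, \ldots, \hat{s} \cdot b_{m} \}$. Since $\hat{s}$ is an injective partial bijection of $X$ and the sets $supp(b_{1}), \ldots, supp(b_{m})$ are non-empty and pairwise disjoint, their images $\hat{s}(supp(b_{i})) = supp(\hat{s} \cdot b_{i})$ (using Definition \ref{definition:actionsonexpansionsets}(2)) are non-empty and pairwise disjoint, hence the elements $\hat{s} \cdot b_{1}, \ldots, \hat{s} \cdot b_{m}$ are pairwise distinct. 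Therefore $|\hat{s} \cdot v| = m = |v|$, so $|\cdot|$ is constant on $\widehat{S}_{b}$-orbits. (One could alternatively invoke the order-preserving bijection $\mathcal{E}(b) \to \mathcal{E}(\hat{s} \cdot b) = \mathcal{E}(b)$ of Definition \ref{definition:actionsonexpansionsets}(4), but the direct argument above is what actually controls cardinalities.)

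Next I would conclude: viewing $\mathcal{E}(b)$ as a simplicial complex (Remark \ref{remark:Ebasasimplicialcomplex}), its $0$-skeleton is $\mathcal{E}(b)$ itself, and a cocompact simplicial action has only finitely many orbits of $0$-cells, say with representatives $v_{1}, \ldots, v_{r}$. By the previous step, every $v \in \mathcal{E}(b)$ satisfies $|v| = |v_{j}|$ for some $j$, so $\{ |v| \mid v \in \mathcal{E}(b) \} \subseteq \{ |v_{1}|, \ldots, |v_{r}| \}$ is finite, contradicting the hypothesis; hence the action is not cocompact. The parenthetical ``in particular $\mathcal{E}(b)$ is not finite'' is immediate, since a finite set $\mathcal{E}(b)$ would only contain finitely many distinct heights. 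The only points needing a word of care are the justification that the action is height-preserving and the standard fact that cocompactness of a simplicial action entails finitely many vertex orbits; neither is a real difficulty.
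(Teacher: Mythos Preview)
Your proof is correct and follows exactly the paper's approach: the paper's one-line proof simply states that the result follows immediately from the fact that the action of $\widehat{S}_{b}$ is height-preserving, and your argument spells out precisely this reasoning.
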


\begin{proof}
This follows immediately from the fact that the action of $\widehat{S}_{b}$ is height-preserving.
\end{proof}

We can prove the following restricted version of (4) from Theorem \ref{theorem:template}:

\begin{proposition} \label{proposition:orbits} 
(Orbits under the action of $\widehat{S}$)
Two members $b_{1} = [f_{1},D_{1}], b_{2}= [f_{2},D_{2}] \in \mathcal{B}$ (where $\mathcal{B}$ denotes the basic construction) are in the same $\widehat{S}$-orbit if and only if there is
some $s \in S$ such that $dom(s) = D_{1}$ and $im(s) = D_{2}$.
\end{proposition}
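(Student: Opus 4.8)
The plan is to peel back the two layers of definition in the basic construction — the partial action $f \cdot [g,D] = [fg,D]$ and the equivalence relation $\sim$ that cuts $\mathcal{B}$ out of $\mathcal{A}$ — so that the assertion collapses to a transparent statement about composing partial bijections. I would first normalize representatives: since each $D_i$ lies in $\mathcal{D}^{+}_S$, it is the domain of some member of $S \subseteq \widehat{S}$, so by Lemma \ref{lemma:restrictions} the restriction $f_{i \mid D_i}$ is again in $\widehat{S}$, and taking $s = id_{D_i}$ in the definition of $\sim$ shows $[f_i, D_i] = [f_{i \mid D_i}, D_i]$. Hence we may assume $dom(f_i) = D_i$, and we set $E_i := f_i(D_i) = supp(b_i)$.

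Before the two directions I would dispense with a small point about what ``same $\widehat{S}$-orbit'' means. The orbit relation is generated by single moves $g \cdot b = b'$ ($g \in \widehat{S}$), and the condition ``there is $s \in S$ with $dom(s) = D_1$ and $im(s) = D_2$'' is visibly an equivalence relation on $\mathcal{B}$: reflexive via $id_{D_1}$, symmetric via $s^{-1}$, and transitive by composition of partial bijections in $S$ — here one uses that $S$, like $\widehat{S}$, is an inverse semigroup. So once we show that a \emph{single} move $g\cdot b_1 = b_2$ is equivalent to the existence of such an $s$, it follows that the full (transitive-closure) orbit relation coincides with this condition, which is what the proposition asserts.

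For the ``if'' direction, given $s \in S$ with $dom(s) = D_1$ and $im(s) = D_2$, I would exhibit the single move witnessed by $f := f_2 \circ s \circ f_1^{-1}$. This $f$ lies in $\widehat{S}$ because $\widehat{S}$ is closed under composition and inverses (and, in the linear or cyclic settings, all three factors are order-preserving and continuous, so $f$ is too). A short computation of domains gives $dom(f) = E_1 = f_1(D_1)$, so $f \cdot b_1$ is defined and equals $[f f_1, D_1] = [f_2 s, D_1]$; applying the definition of $\sim$ with the bijection $s$ itself then identifies $[f_2 s, D_1] = [f_2, D_2] = b_2$. For the ``only if'' direction, a single move $g \cdot b_1 = b_2$ forces $E_1 \subseteq dom(g)$ and $[g f_1, D_1] = [f_2, D_2]$ (with $dom(gf_1) = D_1$), and the defining condition of $\sim$ for this equality directly supplies an $s \in S$ that is a bijection $D_1 \to D_2$, i.e.\ with $dom(s) = D_1$ and $im(s) = D_2$.

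I do not anticipate a genuine obstacle: the entire content is unwinding definitions. The only places that need a careful sentence are (a) the reduction from the transitive-closure orbit relation to a single $\widehat{S}$-move, which rests on the inverse-semigroup axioms, and (b) the routine but slightly fiddly bookkeeping of domains and images for composites of partial bijections, e.g.\ checking $dom(g f_1) = f_1^{-1}(dom(g) \cap im(f_1)) = D_1$ and the analogous identities for $f_2 \circ s \circ f_1^{-1}$ and $f_1^{-1}f_1 = id_{D_1}$.
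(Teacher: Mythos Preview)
Your proof is correct and follows essentially the same route as the paper: normalize so that $dom(f_i)=D_i$, then for the ``if'' direction exhibit $f_2 s f_1^{-1}\in\widehat{S}$ as the element carrying $b_1$ to $b_2$, and for the ``only if'' direction read the required $s\in S$ directly off the definition of $\sim$ applied to $[gf_1,D_1]=[f_2,D_2]$. Your extra paragraph on the transitive-closure issue is harmless but unnecessary here, since property (3) of Definition~\ref{definition:actionsonexpansionsets} together with closure of $\widehat{S}$ under composition already makes the single-move relation transitive.
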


\begin{proof}
Suppose first that there is $s \in S$ such that $s:D_{1} \rightarrow D_{2}$ is a bijection. We have
\[ b_{2} = [f_{2},D_{2}] = [f_{2}s,D_{1}] \]
by the definition of $\sim$. It follows that $f_{2}sf^{-1}_{1} \in \widehat{S}$ sends $b_{1}$ to $b_{2}$. 

Conversely, suppose that there is $\hat{s} \in \mathcal{S}$ such that
$\hat{s} \cdot b_{1}= b_{2}$. We assume that $dom(f_{1}) = D_{1}$ and $dom(f_{2}) = D_{2}$, without loss of generality. The definition of $\sim$ implies that
$f^{-1}_{2} \hat{s} f_{1} \in S$, and has the required domain $D_{1}$ and image $D_{2}$.
\end{proof}

Finally, we note that (5) from Theorem \ref{theorem:template} will almost never be satisfied by the basic construction:

\begin{proposition}
\label{proposition:nobound} 
The bounded contractions property is never satisfied when
\[ \{ |v| \mid v \in \mathcal{E}(b) \} \]
for some $b \in \mathcal{E}(b)$.
\end{proposition}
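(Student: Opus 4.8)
The statement (read with the evident correction: ``\ldots\ whenever $\{ |v| \mid v \in \mathcal{E}(b) \}$ is infinite for some $b \in \mathcal{B}$'') is a direct unwinding of Definition~\ref{definition:boundedcontractions}, and I expect no obstacle of any kind. The plan is as follows. Fix $b \in \mathcal{B}$ for which $\{ |v| \mid v \in \mathcal{E}(b) \}$ is infinite. Each $v \in \mathcal{E}(b)$ is a vertex, so $|v|$ is a positive integer; thus this set is an infinite set of positive integers and is therefore unbounded. Next observe the inclusion
\[ \{ |v| \mid v \in \mathcal{E}(b) \} \subseteq \{ |v| \mid v \in \mathcal{E}(b'), \text{ for some } b' \in \mathcal{B} \}, \]
so the set on the right is unbounded as well. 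By Definition~\ref{definition:boundedcontractions}, the bounded contractions property requires precisely that the set on the right be a bounded set of positive integers; hence it fails. There is no step in this argument that presents any difficulty, and nothing about the basic construction is used — the statement holds for an arbitrary expansion set.

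I would include this proposition, despite its triviality, only to record — alongside Propositions~\ref{proposition:nococompact} and~\ref{proposition:iso} — the sense in which the basic construction of Subsection~\ref{subsection:basic} falls short of the hypotheses of Theorem~\ref{theorem:template}: there the sets $\mathcal{E}(b)$ are generated by iterating arbitrarily many elementary expansions, so $\{ |v| \mid v \in \mathcal{E}(b) \}$ is typically infinite, and the discussion thereby motivates the modifications carried out in the later subsections (for $V$, $nV$, R\"over's group, and the Lodha--Moore group), where $\mathcal{E}$ is strategically cut down so that this set becomes finite and the bounded contractions property is recovered.
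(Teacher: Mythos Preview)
Your proposal is correct and matches the paper's own proof, which simply says ``This is true as a matter of definition, whether $\mathcal{B}$ is the basic construction or not.'' You have merely unpacked what that means, including the necessary correction of the typo in the statement.
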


\begin{proof}
This is true as a matter of definition, whether $\mathcal{B}$ is the basic construction or not.
\end{proof}

\begin{remark} (Strategy for defining $\mathcal{B}$)
\label{remark:strategy}
The results of this subsection suggest some of the strategy involved in modifying the basic construction:
\begin{enumerate}
\item It is best to keep the inverse semigroup $S$ as small as possible, since
the choice of $S$ directly affects the equivalence relation $\sim$ and also
the stabilizer subgroups $\widehat{S}_{b}$ (the latter by Proposition 
\ref{proposition:iso}). In practice, though, one may not have a great deal of flexibility on this matter.

Nothing prevents us from simply letting $S = \widehat{S}$ in the basic construction of $\mathcal{B}$. The problem with doing so is that stabilizer groups become very large: indeed, the stabilizer group of $[id_{X},X]$ would be the entire full-support group $\Gamma$ with this set-up. Considerations like these are the reason to make a distinction between $S$ and $\widehat{S}$ in the first place.

 The ``$S$-structures" from \cite{FH2} were devised exactly in order to limit the size of the stabilizer groups $\widehat{S}_{b}$. We will avoid a systematic treatment, but refer the interested reader to \cite{FH2} for details. Our treatment of R\"{o}ver's group (Subsection \ref{subsection:rover}) uses some similar ideas.

\item It is likewise better to make $\mathcal{E}(b)$ as small as possible, by (3).
The challenge is to shrink the size of $\mathcal{E}(b)$ without losing properties (1) and (2) from Theorem \ref{theorem:template}, while keeping property (4) from Definition \ref{definition:actionsonexpansionsets}. It is often possible to achieve this in practice, as we will soon see.

\end{enumerate}
\end{remark}

\begin{remark} \label{remark:comments}(Further comments)

\begin{enumerate}
\item In \cite{FH2}, two domains $D_{1}, D_{2} \in \mathcal{D}^{+}_{S}$ are said to have the \emph{same domain type} essentially if there is a bijection $s: D_{1} \rightarrow D_{2}$ such that $s \in S$. (The actual definition involves $S$-structures, but it is very similar to the above in spirit.) With this definition, we can restate Proposition \ref{proposition:orbits} as follows: two members 
$[f_{1},D_{1}], [f_{2},D_{2}]$ are in the same $\widehat{S}$-orbit if and only if $D_{1}$ and $D_{2}$ have the same domain type.

We note that the $\widehat{S}$-orbit of $b \in \mathcal{B}$ is not necessarily quite the same 
as the $\Gamma$-orbit of $b$. The potential difference arises from the fact that a given 
$\hat{s} \in \widehat{S}$ need not extend to a full-support $\gamma \in \Gamma$. This is a minor issue in practice.

\item Instead of proving (4) from Theorem \ref{theorem:template}, it is sometimes easier to show directly that 
$\Gamma$ acts on the vertices of $\Delta^{f}_{\mathcal{B},k}$ with finitely many orbits. A close reading of the proof of Proposition \ref{proposition:cocompact} will show that the latter property can replace (4) in the hypothesis of Proposition \ref{proposition:cocompact}. 

\item The ``rich-in-contractions" condition may be the most restrictive hypothesis in 
Theorem \ref{theorem:template}. While this condition figures heavily in our analysis of the descending link, it is not really necessary: in \cite{FH2} the authors consider groups like $QV$, which fail to have the ``rich-in-contractions" property. (The first proof that $QV$ has type $F_{\infty}$ seems to be due to \cite{QV}. This extended some results from \cite{Nucinkis}, where the groups $QF$ and $\widetilde{Q}V$ (among others) were shown to have type $F_{\infty}$.) The descending link in the associated complexes is instead analyzed with the aid of ``type vectors", which encode the domain types that occur in a given vertex $v \in \Delta_{\mathcal{B}}$. The resulting analysis is still entirely combinatorial in nature, but more involved. We refer the reader to Example 8.12 from \cite{FH2} for further details.

\end{enumerate}

\end{remark}

\subsection{Thompson's group $V$} \label{subsection:thompson} 

\subsubsection{The set-up} \label{subsubsection:setup} 

Let $X$ be the set of all infinite binary strings, and let $X^{fin}$ denote the set of all finite binary strings, including the empty string. For given $\omega_{1}, \omega_{2} \in X^{fin}$, we let $\sigma_{\omega_{1}}^{\omega_{2}}$  denote the partial transformation of $X$ that replaces the prefix $\omega_{1}$ of an infinite binary string $\omega$ with the prefix $\omega_{2}$. If the $\omega$ in question does not begin with the prefix $\omega_{1}$, then $\sigma_{\omega_{1}}^{\omega_{2}} (\omega)$ is not defined; i.e., 
 \[ \sigma^{\omega_{2}}_{\omega_{1}}(\omega) = 
  \begin{cases} \omega_{2}a_{1}a_{2}\ldots & \text{if } \omega = \omega_{1}a_{1}a_{2}\ldots \\

 \text{undefined} & \text{otherwise} \end{cases} \]
 where the $a_{i}$ are binary digits ($0$ or $1$). We let $0$ also denote the null transformation of $X$, which by definition is undefined for every input. It is then straightforward to check that the set
 \[ S = \{ \sigma^{\omega_{2}}_{\omega_{1}} \mid \omega_{1}, \omega_{2} \in X^{fin} \} \cup \{ 0 \} \]
 is an inverse semigroup acting on $X$. For a given finite string $\alpha \in X^{fin}$, we let 
 \[ X_{\alpha} = \{ \omega \in X \mid \omega = \alpha a_{1}a_{2}\ldots \}. \]
Thus, $X_{\alpha}$ is the set of infinite binary strings beginning with $\alpha$. Clearly, the set of all $X_{\alpha}$ ($\alpha \in X^{fin}$) is the set of non-empty
 domains $\mathcal{D}^{+}_{S}$. 
 
  We now extend $S$ to a larger inverse semigroup $\widehat{S}$, following Convention \ref{convention:SShatGamma}. If we choose to order $X$ linearly (i.e., lexicographically), then Convention \ref{convention:SShatGamma} leads us to an inverse semigroup $\widehat{S}$ whose full-support subgroup is Thompson's group $F$; if we choose the natural cyclic ordering, then we are led to a definition of Thompson's group $T$. We will ignore any order for the sake of this discussion; thus Convention \ref{convention:SShatGamma} defines $\widehat{S}$ to be the unrestricted disjoint union of members of $S$, and the full-support subgroup of $\widehat{S}$ will therefore be Thompson's group $V$.  
This choice affects the discussion in only minor ways.
 
The expansion set $(\mathcal{B}, X, supp, \mathcal{E})$ is defined as in Subsection \ref{subsection:basic}, but with the difference that $\mathcal{E}(b)$ is greatly restricted.  For each $b = [f,X_{\alpha}] \in \mathcal{B}$, we define
 \[ \mathcal{E}(b) = \{ \{ [f,X_{\alpha}] \}, \{ [f,X_{\alpha0}], [f,X_{\alpha1}] \} \}. \]
(The definition is the same for $F$, $T$, and $V$.) Note that $\alpha0$ denotes the concatenation of the string $\alpha$ with the additional symbol $0$; similarly for $\alpha1$.  
The partial order on $\mathcal{E}(b)$ is defined by the inequality
 \[ \{ [f,X_{\alpha}] \} \leq \{  [f,X_{\alpha0}], [f,X_{\alpha1}] \}, \]
which indeed is the only partial ordering that is compatible with Definition \ref{definition:expansionsets}(1).
 
\subsubsection{Tree-pair description} \label{subsubsection:treepair} 

It may be helpful to attach some intuition to the ideas of \ref{subsubsection:setup}. We will do this using the familiar language of tree-pairs \cite{CFP}. 

Let $b = [f,X_{\alpha}] \in \mathcal{B}$. We can represent $b$ by a special type of tree-pair $(T_{1},T_{2})$, as follows:
\begin{itemize}
\item $T_{1}$ and $T_{2}$ are finite binary trees;
\item the tree $T_{2}$ has at least as many leaves as $T_{1}$;
\item the vertex $\alpha$ is either an interior node or a leaf in $T_{1}$;
\item all leaves below the node $\alpha$ in $T_{1}$ are numbered $1, \ldots, n$ consecutively from left to right (for some $n$), with each leaf receiving a label (if $\alpha$ itself is a leaf, then it is the only leaf of $T_{1}$ to be numbered);
\item if $T_{1}$ has $n$ numbered leaves, then an (unordered) subset of the leaves of $T_{2}$ receive a label from the set
$\{1, 2, \ldots, n \}$.
\end{itemize}

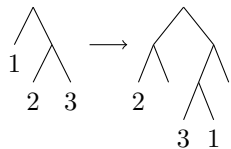
\begin{figure}[!b]

\begin{center}
\begin{tikzpicture}
\draw[black](.5,1.5) -- (.25,1); 
\node at (.25,.75){$1$};
\draw[black](.5,1.5) -- (1,.5);
\draw[black](.5,.5) -- (.75,1);
\node at (.5,.25){$2$};
\node at (1,.25){$3$};

\draw[black,->](1.25,1) -- (1.75,1); 

\draw[black](2.5,1.5) -- (2.1,1); 
\draw[black](2.1,1) -- (1.9,.5); 
\draw[black](2.1,1) -- (2.3,.5); 
\draw[black](2.5,1.5) -- (2.9,1); 
\draw[black](2.9,1) -- (2.7,.5); 
\draw[black](2.9,1) -- (3.1,.5); 
\draw[black](2.7,.5) -- (2.5,0); 
\draw[black](2.7,.5) -- (2.9,0); 
\node at (1.9,.25){$2$};
\node at (2.5,-.25){$3$};
\node at (2.9,-.25){$1$};
\end{tikzpicture}

\end{center} 

\caption{The given tree-pair represents a member $[g,X]$ of $\mathcal{B}$, the expansion set associated to Thompson's group $V$.}
\label{fig1}
\end{figure}

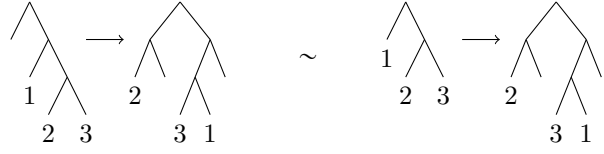
\begin{figure}[!t] 
\begin{center}
\begin{tikzpicture}
\draw[black](5.5,1.5) -- (5.25,1); 
\node at (5.25,.75){$1$};
\draw[black](5.5,1.5) -- (6,.5);
\draw[black](5.5,.5) -- (5.75,1);
\node at (5.5,.25){$2$};
\node at (6,.25){$3$};

\draw[black,->](6.25,1) -- (6.75,1); 

\draw[black](7.5,1.5) -- (7.1,1); 
\draw[black](7.1,1) -- (6.9,.5); 
\draw[black](7.1,1) -- (7.3,.5); 
\draw[black](7.5,1.5) -- (7.9,1); 
\draw[black](7.9,1) -- (7.7,.5); 
\draw[black](7.9,1) -- (8.1,.5); 
\draw[black](7.7,.5) -- (7.5,0); 
\draw[black](7.7,.5) -- (7.9,0); 
\node at (6.9,.25){$2$};
\node at (7.5,-.25){$3$};
\node at (7.9,-.25){$1$};

\draw[black](.5,1.5) -- (.25,1); 
\draw[black](.5,1.5) -- (1,.5);
\draw[black](.5,.5) -- (.75,1);
\draw[black](1,.5) -- (.75,0); 
\draw[black](1,.5) -- (1.25,0); 
\node at (.5,.25){$1$};
\node at (.75,-.25){$2$};
\node at (1.25,-.25){$3$};

\draw[black,->](1.25,1) -- (1.75,1); 

\draw[black](2.5,1.5) -- (2.1,1); 
\draw[black](2.1,1) -- (1.9,.5); 
\draw[black](2.1,1) -- (2.3,.5); 
\draw[black](2.5,1.5) -- (2.9,1); 
\draw[black](2.9,1) -- (2.7,.5); 
\draw[black](2.9,1) -- (3.1,.5); 
\draw[black](2.7,.5) -- (2.5,0); 
\draw[black](2.7,.5) -- (2.9,0); 
\node at (1.9,.25){$2$};
\node at (2.5,-.25){$3$};
\node at (2.9,-.25){$1$};

\node at (4.2,.75){$\sim$};

\end{tikzpicture}
\end{center}

\caption{On the left, we have tree notation for a pair $[f,X_{1}]$ and, on the right, tree notation for the equivalent pair 
$[f \sigma_{\epsilon}^{1}, X]$.}
\label{fig2}
\end{figure}

In Figure \ref{fig1}, we have pictured a typical member $[g,X]$ of $\mathcal{B}$, represented in the above way. Interior nodes and leaves of a finite binary tree have the obvious labellings by finite binary strings. The topmost vertex (called the \emph{root}) is labelled by the empty string. If the label of a given vertex is $\omega$, then its left and right children are labelled $\omega0$ and $\omega1$, respectively. With this convention, the (implicit) labels on the leaves of the left tree are $0$, $10$, and $11$ (reading from left to right). The labels for the leaves of the right tree are $00$, $01$, $100$, $101$, and $11$. The numbers $1,2,3$ define the transformations $\sigma_{\omega_{1}}^{\omega_{2}}$ that ``locally determine" $g$. Here, the ``$1$"s determine the transformation $\sigma_{0}^{101}$, while the ``$2$"s and ``$3$"s determine
$\sigma_{10}^{00}$ and $\sigma_{11}^{100}$, respectively. The transformation $g$ is defined by the equation
\[ g = \sigma_{0}^{101} \coprod \sigma_{10}^{00} \coprod \sigma_{11}^{100}.\]

\begin{figure}[!b]
\begin{center}
\begin{tikzpicture}
\draw[black](.5,1.5) -- (.25,1); 
\node at (.25,.75){$1$};
\draw[black](.5,1.5) -- (1,.5);
\draw[black](.5,.5) -- (.75,1);
\node at (.5,.25){$2$};
\node at (1,.25){$3$};

\draw[black,->](1.25,1) -- (1.75,1); 

\draw[black](2.5,1.5) -- (2.1,1); 
\draw[black](2.1,1) -- (1.9,.5); 
\draw[black](2.1,1) -- (2.3,.5); 
\draw[black](2.5,1.5) -- (2.9,1); 
\draw[black](2.9,1) -- (2.7,.5); 
\draw[black](2.9,1) -- (3.1,.5); 
\draw[black](2.7,.5) -- (2.5,0); 
\draw[black](2.7,.5) -- (2.9,0); 
\node at (2.9,-.25){$1$};
\node at (1.9,.25){$2$};
\node at (2.5,-.25){$3$};

\node at (4,.75){{\huge $\leadsto$}};

\draw[black](5.5,2.5) -- (5.25,2); 
\node at (5.25,1.75){$1$};
\draw[black](5.5,2.5) -- (5.75,2);
\draw[black,dashed](5.75,2) -- (6,1.5);
\draw[black,dashed](5.5,1.5) -- (5.75,2);

\draw[black,->](6.25,2) -- (6.75,2); 

\draw[black](7.5,2.5) -- (7.1,2); 
\draw[black,dashed](7.1,2) -- (6.9,1.5); 
\draw[black,dashed](7.1,2) -- (7.3,1.5); 
\draw[black](7.5,2.5) -- (7.9,2); 
\draw[black](7.9,2) -- (7.7,1.5); 
\draw[black](7.9,2) -- (8.1,1.5); 
\draw[black](7.7,1.5) -- (7.5,1); 
\draw[black](7.7,1.5) -- (7.9,1); 

\node at (7.9,.75){$1$};

\draw[black](5.5,.25) -- (5.25,-.25); 

\draw[black](5.5,.25) -- (6,-.75);
\draw[black](5.5,-.75) -- (5.75,-.25);
\node at (5.5,-1){$2$};
\node at (6,-1){$3$};

\draw[black,->](6.25,-.25) -- (6.75,-.25); 

\draw[black](7.5,.25) -- (7.1,-.25); 
\draw[black](7.1,-.25) -- (6.9,-.75); 
\draw[black](7.1,-.25) -- (7.3,-.75); 
\draw[black](7.5,.25) -- (7.9,-.25); 
\draw[black](7.9,-.25) -- (7.7,-.75); 
\draw[black](7.9,-.25) -- (8.1,-.75); 
\draw[black](7.7,-.75) -- (7.5,-1.25); 
\draw[black](7.7,-.75) -- (7.9,-1.25); 
\node at (6.9,-1){$2$};
\node at (7.5,-1.5){$3$};

\end{tikzpicture}
\end{center}
\caption{This figure illustrates the process of expansion from the pair $[g,X]$ of Figure \ref{fig1}. The top tree pair on the right is the restriction of $g$ to the left half of $X$, and the bottom pair is the restriction of $g$ to the right. The dashed carets indicate a portion of the tree pair that may be omitted without any loss of information.}
\label{fig3}
\end{figure}
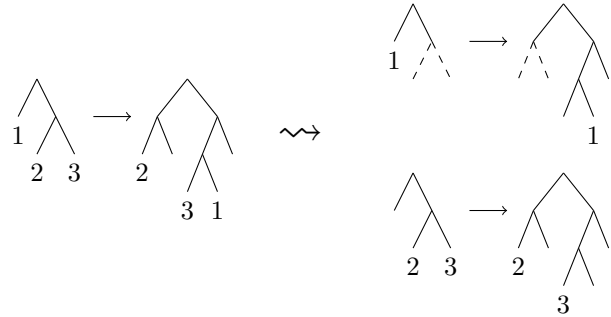

It is completely straightforward to read the support of a pair $[f,X_{\omega}]$ from its tree pair representative. One simply considers the labelled leaves in the range tree, which are unchanged under $\sim$, and therefore lead to a well-defined support. (Unlabelled leaves are not part of the image of $f$.) For instance, 
$supp([g,X]) = X_{00} \cup X_{100} \cup X_{101}$ if $[g,X]$ is as in Figure \ref{fig1}.

We note that the above tree-pair representatives are not unique. In addition to the usual rules for cancelling carets (as in \cite{CFP}), any given tree pair 
has a representative of the form $[g,X]$, up to $\sim$, because of the identity
\[ [f,X_{\alpha}] = [f\sigma_{\epsilon}^{\alpha},X], \]
where $\epsilon$ denotes the binary empty string. See Figure \ref{fig2}.
We note that the equivalence relation $\sim$ enables us to modify the domain tree by a suitable ``shift", but the range tree is unchanged.

The tree pair description of $\mathcal{E}$ is very simple: for a given pair $[g,X]$, expansion from $[g,X]$ consists of replacing 
$[g,X]$ with the restrictions to the left and right halves of $X$; i.e., $[g,X]$ is replaced with $[g_{\mid X_{0}}, X_{0}]$ and $[g_{\mid X_{1}}, X_{1}]$. Figure \ref{fig3} illustrates the resulting tree pairs if the original pair $[g,X]$ is the one from Figure \ref{fig1}. The tree pairs on the right side of Figure \ref{fig3} are simply obtained by deleting the appropriate labels, where, for instance, the ``$2$" and ``$3$" are deleted from the top tree pair since the leaves in the domain tree with those labels lie outside of $X_{0}$. We similarly delete the label ``$1$" from the bottom tree pair since the leaf of the domain tree with that label lies outside of $X_{1}$.  

Finally, we note for future reference that expansion is fully reversible; i.e., if two pairs $[g_{1},X]$ and $[g_{2},X]$ have disjoint support, then one can find a pair $[g,X]$ such that simple expansion from the latter pair creates $[g_{1},X]$ and $[g_{2},X]$. (We are stating this for the group $V$, but it would remain true for $F$ and $T$ in a suitably restricted sense.) The process is straightforward: one simply rewrites $[g_{1},X]$ and $[g_{2},X]$ in such a way that the second coordinates do not overlap, and then ``takes the union". (In fact, there are at least two different ways to do this.) We illustrate the process of contraction in Figure \ref{fig4}. The idea here is to alter the pair $[g_{1},X]$ into an equivalent pair with the domain $X_{0}$, and similarly alter $[g_{2},X]$ into a pair with domain $X_{1}$.

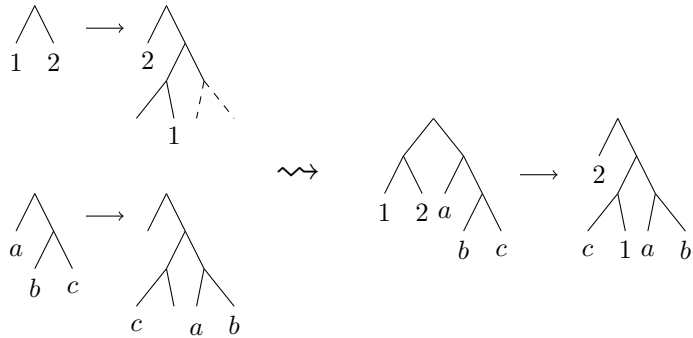
\begin{figure}[!t]
\begin{center}
\begin{tikzpicture}

\draw[black](.75,5.25) -- (1,5.75); 
\draw[black](1.25,5.25) -- (1,5.75); 
\node at (.75,5){$1$};
\node at (1.25,5){$2$};

\draw[black,->](1.68,5.45) -- (2.18,5.45); 

\draw[black](2.75,5.75) -- (2.5,5.25); 
\draw[black](2.75,5.75) -- (3,5.25); 
\draw[black](3,5.25) -- (2.75,4.75); 
\draw[black](3,5.25) -- (3.25,4.75); 
\draw[black](2.75,4.75) -- (2.35,4.25); 
\draw[black](2.75,4.75) -- (2.85,4.25);  
\draw[black,dashed](3.25,4.75) -- (3.15,4.25);   
\draw[black,dashed](3.25,4.75) -- (3.65,4.25); 
\node at (2.5,5.02){$2$};
\node at (2.85,4.02){$1$};

\draw[black](.75,2.75) -- (1,3.25); 
\draw[black](1.25,2.75) -- (1,3.25); 
\draw[black](1.25,2.75) -- (1,2.25); 
\draw[black](1.25,2.75) -- (1.5,2.25); 
\node at (.75,2.5){$a$};
\node at (1,2){$b$};
\node at (1.5,2){$c$};

\draw[black,->](1.68,2.95) -- (2.18,2.95); 

\draw[black](2.75,3.25) -- (2.5,2.75); 
\draw[black](2.75,3.25) -- (3,2.75); 
\draw[black](3,2.75) -- (2.75,2.25); 
\draw[black](3,2.75) -- (3.25,2.25); 
\draw[black](2.75,2.25) -- (2.35,1.75); 
\draw[black](2.75,2.25) -- (2.85,1.75);  
\draw[black](3.25,2.25) -- (3.15,1.75);   
\draw[black](3.25,2.25) -- (3.65,1.75); 
\node at (3.15,1.45){$a$};
\node at (3.65,1.5){$b$};
\node at (2.35,1.5){$c$};

\node at (4.5,3.5){{\huge $\leadsto$}}; 


\draw[black](6.3,4.25) -- (5.9,3.75); 
\draw[black](6.3,4.25) -- (6.7,3.75); 

\draw[black](5.65,3.25) -- (5.9,3.75); 
\draw[black](6.15,3.25) -- (5.9,3.75); 
\node at (5.65,3){$1$};
\node at (6.15,3){$2$};

\draw[black](6.45,3.25) -- (6.7,3.75); 
\draw[black](6.95,3.25) -- (6.7,3.75); 
\draw[black](6.95,3.25) -- (6.7,2.75); 
\draw[black](6.95,3.25) -- (7.2,2.75); 
\node at (6.45,3){$a$};
\node at (6.7,2.5){$b$};
\node at (7.2,2.5){$c$};

\draw[black,->](7.45,3.5) -- (7.95,3.5); 

\draw[black](8.75,4.25) -- (8.5,3.75); 
\draw[black](8.75,4.25) -- (9,3.75); 
\draw[black](9,3.75) -- (8.75,3.25); 
\draw[black](9,3.75) -- (9.25,3.25); 
\draw[black](8.75,3.25) -- (8.35,2.75); 
\draw[black](8.75,3.25) -- (8.85,2.75);  
\draw[black](9.25,3.25) -- (9.15,2.75);   
\draw[black](9.25,3.25) -- (9.65,2.75); 
\node at (8.5,3.5){$2$};
\node at (8.35,2.5){$c$};
\node at (8.85,2.5){$1$};
\node at (9.15,2.5){$a$};
\node at (9.65,2.5){$b$};

\end{tikzpicture}
\end{center}
\caption{This figure illustrates contraction, the reverse of expansion. The tree pairs on the left represent members of $\mathcal{B}$, both expressed in canonical form. To contract, we replace these pairs with equivalent ones having domains $X_{0}$ and $X_{1}$, respectively, and then take the union.}
\label{fig4}
\end{figure}

\subsubsection{Proof that $V$ has type $F_{\infty}$}

\begin{theorem} \cite{Brown} \label{theorem:VFinfinity} 
The group $V$ has type $F_{\infty}$.
\end{theorem}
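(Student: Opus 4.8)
The plan is to verify that the expansion set $\mathcal{B}$ constructed for $V$ in Subsection \ref{subsubsection:setup} satisfies all five hypotheses of Theorem \ref{theorem:template}, and then simply invoke that theorem. Since $V$ is permutational (we ignore any order on $X$), the ``rich-in-contractions'' and other conditions are to be read in the permutational sense.

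First I would dispense with hypotheses (1) and (2). By Proposition \ref{proposition:1and2}, the basic construction satisfies (1) and (2), but here $\mathcal{E}(b)$ is the restricted two-element poset $\{\{[f,X_\alpha]\},\{[f,X_{\alpha 0}],[f,X_{\alpha 1}]\}\}$, so I need to recheck directedness and the contractibility of relative ascending links directly. For directedness of $((\Delta^f_{\mathcal{B}})^0,\nearrow)$, I would observe that any vertex $v$ corresponds to a partition of $X$ into finitely many sets $X_\alpha$ (i.e.\ a finite binary tree in the range), and two such partitions have a common refinement (the ``meet'' of trees) reachable from each by simple expansions, exactly as in the tree-pair picture of \ref{subsubsection:treepair}; this is the standard fact that any two elements of $V$'s forest poset have an upper bound. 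For (2): if $\{b\}\nearrow v$ with $\{b\}\neq v$, then by Lemma \ref{lemma:expchar} and the product decomposition of Theorem \ref{theorem:topologyoflinksandstars}, $lk_\uparrow(b,v)$ is a join of relative ascending links over the children of $b$; but each individual $\mathcal{E}(b')$ here is a two-element chain, so each relative ascending star $st_\uparrow(b',r_{b'}(v))$ is either a single point or a single edge, hence each relative ascending link is either empty or a point, and at least one is a point — making the join (a join of points and empty sets, at least one nonempty) contractible. So (2) holds.

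Next, hypothesis (3): each $\mathcal{E}(b)$ has exactly two elements, so it is finite and the finite-orbit condition on the action of $\widehat{S}_b$ is automatic. For the stabilizers, by Proposition \ref{proposition:iso} we have $\widehat{S}_b\cong\{s\in S\mid dom(s)=im(s)=X_\alpha\}$ where $b=[f,X_\alpha]$; but every such $s$ is of the form $\sigma^{\alpha}_{\alpha}$ restricted suitably, and in fact the only $s\in S$ with $dom(s)=im(s)=X_\alpha$ is $\sigma^\alpha_\alpha=\mathrm{id}_{X_\alpha}$, so $\widehat{S}_b$ is trivial, hence certainly of type $F_\infty$. (One should double-check the inverse-semigroup $S$ here: the elements $\sigma^{\omega_2}_{\omega_1}$ with $\omega_1\neq\omega_2$ cannot have equal domain and image of the form $X_\alpha$ for dimension/prefix reasons, so indeed $S_{X_\alpha}=\{\mathrm{id}_{X_\alpha}\}$.) For hypothesis (4), by Proposition \ref{proposition:orbits}, $b_1=[f_1,X_{\alpha_1}]$ and $b_2=[f_2,X_{\alpha_2}]$ lie in the same $\widehat{S}$-orbit iff there is $s\in S$ with $dom(s)=X_{\alpha_1}$, $im(s)=X_{\alpha_2}$; taking $s=\sigma^{\alpha_2}_{\alpha_1}$ always works, so there is a \emph{single} $\widehat{S}$-orbit of elements of $\mathcal{B}$. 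A small caveat flagged in Remark \ref{remark:comments}(1) is that the $\widehat{S}$-orbit need not equal the $\Gamma$-orbit, so I would either verify directly that $\sigma^{\alpha_2}_{\alpha_1}$ extends to an element of $V$ (it clearly does: complete the bijection on the complementary basic clopen sets arbitrarily by a $V$-element), or use Remark \ref{remark:comments}(2) and check finiteness of vertex orbits in $\Delta^f_{\mathcal{B},k}$ directly via the type-vector argument of Proposition \ref{proposition:cocompact}.

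Finally, hypothesis (5): bounded contractions holds with $C_0=2$ since every $v\in\mathcal{E}(b)$ has $|v|\le 2$. Rich-in-contractions (permutational sense) requires a constant $C_1$ so that $lk_\downarrow(v)\neq\emptyset$ whenever $|v|\ge C_1$; I would take $C_1=2$ and argue, using the reversibility of expansion noted at the end of \ref{subsubsection:treepair} (and illustrated in Figure \ref{fig4}), that any vertex $v=\{b_1,\dots,b_k\}$ of $\Delta^f_{\mathcal{B}}$ with $k\ge 2$ admits a nontrivial contraction: pick any two of the $b_i$, rewrite them as $[g_1,X]$, $[g_2,X]$, and merge them into a single $[g,X]$ whose simple expansion recovers $b_i,b_j$, yielding a vertex $u$ of height $k-1$ with $u\in lk_\downarrow(v)$. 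With all five hypotheses verified, Theorem \ref{theorem:template} gives that $V$ has type $F_\infty$. The main obstacle I anticipate is not any single hard step but rather the bookkeeping in the rich-in-contractions verification: one must be careful that, for $V$, \emph{any} pair of pieces can be amalgamated (this is where the unordered/permutational nature is used — for $F$ and $T$ one could only contract adjacent pieces), and that the amalgamated piece genuinely lies in $\mathcal{B}$ and that the relevant $2$-element $\mathcal{E}$-poset is the one witnessing the expansion. Once that is nailed down, everything else is routine checking against the propositions already proved.
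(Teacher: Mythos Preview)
Your overall strategy is exactly the paper's: verify hypotheses (1)--(5) of Theorem \ref{theorem:template} for the expansion set of Subsection \ref{subsubsection:setup} and invoke that theorem. The verifications of (3) and (5) match the paper's, and your treatment of (1) is essentially the same as well. Two points deserve correction, though neither damages the conclusion.

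For (2), your appeal to Theorem \ref{theorem:topologyoflinksandstars} is misplaced. That theorem decomposes $lk_{\uparrow}(v,v')$ as a join indexed by the elements of the \emph{bottom} vertex $v$; when $v=\{b\}$ is a singleton the decomposition is trivial and says nothing about ``children of $b$''. The paper's argument is simpler and direct: since $\mathcal{E}(b)$ has exactly two elements, $lk_{\uparrow}(b)$ is a single point (the unique height-$2$ element of $\mathcal{E}(b)$), and for any $v$ with $\{b\}\nearrow v$, $\{b\}\neq v$, the first step of any expansion sequence must be that point, so $lk_{\uparrow}(b,v)$ is exactly that point---hence contractible.

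For (4), your claim of a single $\Gamma$-orbit is not quite right. You are correct that there is a single $\widehat{S}$-orbit (Proposition \ref{proposition:orbits} with $s=\sigma^{\alpha_2}_{\alpha_1}$), but the passage to $\Gamma$-orbits genuinely splits this in two: a $b=[g,X]$ with $g(X)=X$ cannot be sent by any $\gamma\in\Gamma$ to a $b'=[g',X]$ with $g'(X)\subsetneq X$, since $\gamma$ is a bijection of $X$ and hence preserves whether the support is all of $X$. Your proposed fix (``$\sigma^{\alpha_2}_{\alpha_1}$ clearly extends to an element of $V$'') does not repair this, because the relevant $\hat{s}$ sending $b_1$ to $b_2$ is $f_2 s f_1^{-1}$, which cannot extend to a bijection of $X$ when $supp(b_1)=X$ and $supp(b_2)\neq X$. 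The paper therefore argues directly that there are exactly two $\Gamma$-orbits (full support versus proper support), which is still finite, so hypothesis (4) holds either way.
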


\begin{proof}
We check the hypotheses of Theorem \ref{theorem:template}, beginning with the easiest ones.
First, we note that properties (1)-(5) from Convention \ref{convention:SShatGamma} are satisfied, because the construction of $\mathcal{B}$ is almost identical to the basic construction
in Subsection \ref{subsection:basic}. The proof that $\widehat{S}$ acts on $\mathcal{B}$ (also part of Convention \ref{convention:SShatGamma}) is entirely straightforward, so all parts of Convention \ref{convention:SShatGamma} are satisfied.

We note that the rich-in-contractions property is satisfied with $C_{1} = 2$ by Figure \ref{fig4} and the surrounding discussion. The bounded contractions property is likewise satisfied with 
constant $C_{0} = 2$, since the largest height of any vertex in any $\mathcal{E}(b)$ is $2$. Thus (5) from Theorem \ref{theorem:template} is satisfied.

Property (3) from Theorem \ref{theorem:template} is also easy to check: the group
$\widehat{S}_{b}$ is isomorphic to $S_{X}$ by Proposition \ref{proposition:iso}, and the latter group is trivial by the definition of $S$. The finiteness of $\mathcal{E}(b)$ is clear. 
This establishes (3).

We next consider (4) from Theorem \ref{theorem:template}. We claim that the action of $\Gamma$ 
on $\mathcal{B}$ has exactly two orbits: one containing the elements $[g,X]$ such that
$g(X) = X$, and the other consisting of the elements $[g,X]$ such that $g(X)$ is any proper subset of $X$. (Recall that $[g,X]$ is the form a general member of $\mathcal{B}$, up to equivalence, by the discussion in \ref{subsubsection:treepair}.) Indeed, it is rather clear that an element from one class cannot be in the $\Gamma$-orbit of an element from the other. Suppose that $[g_{1},X], [g_{2},X]$ are both such that $g_{1}(X) = X = g_{2}(X)$. We can then simply take $g_{2}g_{1}^{-1}$. Indeed, the latter is a member of $\Gamma$ by hypothesis, and it carries $[g_{1},X]$ to $[g_{2},X]$. Thus, any two full-support members of $\mathcal{B}$ are in the same orbit. If $[g_{1},X]$ and $[g_{2},X]$ are such that $g_{1}(X)$ and $g_{2}(X)$ are both proper subsets of $X$, then it is an easy exercise to find $f_{1}$ and $f_{2} \in \Gamma$ such that $f_{1}g_{1}(X) = f_{2}g_{2}(X) = X_{1}$. We can then define $\hat{g} \in \Gamma$ equal to  $f_{2}g_{2}g_{1}^{-1}f_{1}^{-1}$ on $X_{1}$, and define $\hat{g} = id_{X_{0}}$ on $X_{0}$. It follows that $\hat{g} \cdot [f_{1}g_{1},X] = [f_{2}g_{2},X]$, which shows that $[g_{1},X]$ and $[g_{2},X]$ are in the same $\Gamma$-orbit. This proves (4). 

Next consider (2) from Theorem \ref{theorem:template}. If $\{ b \} \nearrow v$ and $\{ b \} \neq v$, then there is a sequence
\[ \{ b \} = v_{0}, v_{1}, \ldots, v_{m} = v, \]
where each $v_{i}$ is a member of the upward link of $v_{i-1}$, for $i=1,\ldots, m$, where $m>0$. We note that $v_{1}$ is uniquely identified by this condition: it is the height-$2$ element of $\mathcal{E}(b)$, which is indeed the entirety of $lk_{\uparrow}(b)$. It follows that
$lk_{\uparrow}(b,v)$ is a single point, and therefore contractible.

Property (1) from Theorem \ref{theorem:template} can be verified in much the same way as in Proposition \ref{proposition:1and2}. The only difference is that, in Proposition \ref{proposition:1and2}, one has that a vertex of the form $v_{\mathcal{P}} = \{ [id_{E_{1}}, E_{1}], \ldots, [id_{E_{m}},E_{m}] \}$ lies in the set $\mathcal{E}(b)$, whereas, in general, we have only
\[ \{ b \}= v_{0}, v_{1}, \ldots, v_{m} = v_{\mathcal{P}}, \]
where each $v_{i}$ is in the upward link of $v_{i-1}$. (The distinction here is that we have an ascending edge-path connecting $\{ b\}$ to $v_{\mathcal{P}}$, rather than a single edge, as in the basic construction.) In any case, one has
$\{ b \} \nearrow v_{\mathcal{P}}$ and one argues just as in Proposition \ref{proposition:1and2} to prove that the vertices of $\Delta^{f}_{\mathcal{B}}$ are a directed set.
\end{proof}

\subsection{The Brin-Thompson groups $nV$} \label{subsection:brin} 

We next turn to the Brin-Thompson groups $nV$. We will concentrate on the group $2V$, the remaining groups (for $n>2$) being defined similarly.

Define $X$, $X^{fin}$, and $\sigma_{\alpha_{1}}^{\alpha_{2}}$ exactly as in Subsection \ref{subsection:thompson}. We define $S$ as follows:
\[ S = \{ \sigma_{\alpha_{1}}^{\alpha_{2}} \times \sigma_{\beta_{1}}^{\beta_{2}} \mid
\alpha_{1}, \alpha_{2}, \beta_{1}, \beta_{2} \in X^{fin} \} \cup \{ 0 \}. \]
The elements of $S$ act on $X^{2}$ coordinate-wise. That is, 
$\sigma_{\alpha_{1}}^{\alpha_{2}} \times \sigma_{\beta_{1}}^{\beta_{2}}$ acts on a pair $(\omega_{1}, \omega_{2}) \in X^{2}$ by replacing prefixes coordinate-by-coordinate. The transformation
$\sigma_{\alpha_{1}}^{\alpha_{2}} \times \sigma_{\beta_{1}}^{\beta_{2}}$ is undefined on the input $(\omega_{1},\omega_{2})$ if $\omega_{1}$ does not begin with $\alpha_{1}$ or if $\omega_{2}$ does not begin with 
$\alpha_{2}$. It is straightforward to check that $S$ is an inverse semigroup acting on $X^{2}$.

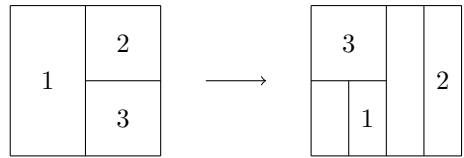
\begin{figure}[!h]
\begin{center}
\begin{tikzpicture}

\draw[black](0,0) -- (2,0);
\draw[black](0,0) -- (0,2);
\draw[black](0,2) -- (2,2);
\draw[black](2,0) -- (2,2);

\draw[black](1,0) -- (1,2);
\draw[black](1,1) -- (2,1);

\node at (.5,1){$1$};
\node at (1.5,1.5){$2$};
\node at (1.5,.5){$3$};

\draw[black,->](2.6,1) -- (3.4,1); 

\draw[black](4,0) -- (6,0);
\draw[black](4,0) -- (4,2);
\draw[black](4,2) -- (6,2);
\draw[black](6,0) -- (6,2);

\draw[black](5,0) -- (5,2);
\draw[black](5.5,0) -- (5.5,2);
\draw[black](4.5,0) -- (4.5,1);
\draw[black](4,1) -- (5,1);

\node at (4.75,.5){$1$};
\node at (5.75,1){$2$};
\node at (4.5,1.5){$3$};

\end{tikzpicture}
\end{center}
\caption{Here we have given a box-pair diagram that represents a member of 
$\widehat{S}$. The large left and right squares represent the product $X \times X$ (of the Cantor set with itself). The smaller rectangles represent subsets of $X \times X$, all of which have the form $X_{\omega_{1}} \times X_{\omega_{2}}$, for appropriate $\omega_{1}$ and $\omega_{2}$. The numberings of the boxes indicate the correspondence between subrectangles.} 
\label{fig5}
\end{figure}

\begin{figure}[!b]
\begin{center}
\begin{tikzpicture}

\draw[black] (1.4,0) -- (0,1.4);
\draw[black] (1.4,0) -- (2.8,1.4);
\draw[black] (0,1.4) -- (1.4, 2.8);
\draw[black] (2.8,1.4) -- (1.4,2.8);

\draw[gray] (1.4,0) -- (1.4,2.8);

\filldraw[lightgray] (1.1,-.8) rectangle (1.7,-.2);
\draw[black](1.1,-.8) rectangle (1.7,-.2);

\filldraw[lightgray] (-.8,1.1) rectangle (-.2,1.7);
\draw[black](-.8,1.1) rectangle (-.2,1.7);
\draw[black](-.8,1.4) -- (-.2,1.4); 

\filldraw[lightgray] (3,1.1) rectangle (3.6,1.7);
\draw[black](3,1.1) rectangle (3.6,1.7);
\draw[black](3.3,1.1) -- (3.3,1.7); 

\filldraw[lightgray] (1.1,3) rectangle (1.7,3.6);
\draw[black](1.1,3) rectangle (1.7,3.6);
\draw[black](1.1,3.3) -- (1.7,3.3); 
\draw[black](1.4,3) -- (1.4,3.6); 

\end{tikzpicture}
\end{center}
\caption{Here is a picture of $\mathcal{E}([f, X^{2}])$. The bottommost vertex (labelled by a shaded square) represents 
$\{ [f, X^{2}] \}$. The left and right vertices are labelled by two different dyadic subdivisions of the square: the left vertex is the result of a horizontal bisection of the square, while the right vertex  is the result of a vertical bisection of the square. The top vertex combines both bisections, resulting in four equal pieces.}
\label{fig6} 
\end{figure}
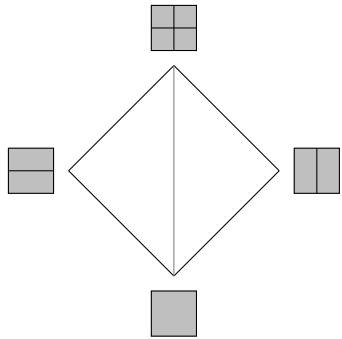

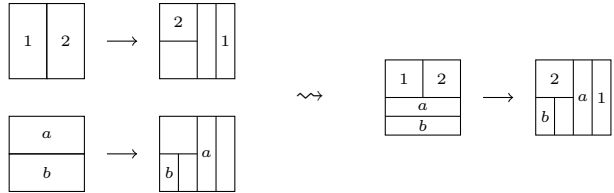
\begin{figure}[!b]
\begin{center}
\begin{tikzpicture}

\draw[black] (0,0) rectangle (1,.5);
\draw[black] (0,.5) rectangle (1,1); 
\node at (.5,.25){{\tiny $b$}};
\node at (.5,.75){{\tiny $a$}};

\draw[black,->](1.3,.5) -- (1.7,.5); 

\draw[black] (2,0) rectangle (3,1);
\draw[black] (2.5,0) -- (2.5,1);
\draw[black] (2.75,0) -- (2.75,1);
\draw[black] (2.25,0) -- (2.25, .5);
\draw[black] (2,.5) -- (2.5,.5);
\node at (2.125,.25){{\tiny $b$}};
\node at (2.625,.5){{\tiny $a$}};

\draw[black] (0,1.5) rectangle (.5,2.5);
\draw[black] (.5,1.5) rectangle (1,2.5); 
\node at (.25,2){{\tiny $1$}};
\node at (.75,2){{\tiny $2$}};

\draw[black,->](1.3,2) -- (1.7,2); 

\draw[black] (2,1.5) rectangle (3,2.5);
\draw[black] (2.5,1.5) -- (2.5,2.5);
\draw[black] (2.75,1.5) -- (2.75,2.5);
\draw[black] (2,2) -- (2.5,2);
\node at (2.875,2){{\tiny $1$}};
\node at (2.25,2.25){{\tiny $2$}};


\node at (4,1.25){{\large $\leadsto$}}; 

\draw[black] (5,.75) rectangle (6,1.75);
\draw[black] (5,1.25) -- ( 6,1.25);
\draw[black] (5.5, 1.25) -- (5.5, 1.75);
\draw[black] (5,1) -- (6,1);
\node at (5.25,1.5){{\tiny $1$}};
\node at (5.75,1.5){{\tiny $2$}};
\node at (5.5,1.125){{\tiny $a$}};
\node at (5.5,.875){{\tiny $b$}};

\draw[black,->](6.3,1.25) -- (6.7,1.25); 

\draw[black] (7,.75) rectangle (8,1.75);
\draw[black] (7.5,.75) -- (7.5,1.75);
\draw[black] (7.75,.75) -- (7.75,1.75);
\draw[black] (7.25,.75) -- (7.25,1.25);
\draw[black] (7,1.25) -- (7.5,1.25);
\node at (7.25,1.5){{\tiny $2$}};
\node at (7.875,1.25){{\tiny $1$}};
\node at (7.625, 1.25){{\tiny $a$}};
\node at (7.125, 1){{\tiny $b$}};

\end{tikzpicture}
\end{center}
\caption{Here we have depicted a contraction of two typical pairs $[f,X^{2}]$ and $[g,X^{2}]$ (on the top left and bottom left, respectively). We can compress the domains of each vertically, up to equivalence, which results in pairs with the domains $X \times X_{1}$ and $X \times X_{0}$, respectively. A straightforward check shows that the pair $[h,X^{2}]$ (on the right) expands to produce
$[f,X^{2}]$ and $[g,X^{2}]$: one simply restricts $h$ to the top and bottom halves of $X^{2}$ (i.e., the required expansion is the left vertex in Fig \ref{fig6}). Note that one could also produce a contraction by an analogous horizontal compression of $[f,X^{2}]$ and $[g,X^{2}]$, and the roles of $[f,X^{2}]$ and $[g,X^{2}]$ could also be reversed.} 

\label{fig7}
\end{figure}

We let $\widehat{S}$ be the inverse semigroup satisfying Convention \ref{convention:SShatGamma}. We note that no order is under consideration, so the construction of $\mathcal{B}$ will be ``permutational". A member of $\widehat{S}$ can be represented by a box-pair diagram, as in Figure \ref{fig5}. The given diagram
represents the disjoint union
\[ f = \left( \sigma_{0}^{01} \times \sigma_{\epsilon}^{0} \right) \coprod
\left( \sigma_{1}^{11} \times \sigma_{1}^{\epsilon} \right) \coprod
\left( \sigma_{1}^{0} \times \sigma_{0}^{1} \right), \]
where the factors of the above disjoint union represent ``$1$", ``$2$", and ``$3$" from Figure \ref{fig5}, respectively. Thus, the subrectangle
$X_{0} \times X$ (labelled by ``$1$" in the left box) is moved to the subrectangle $X_{01} \times X_{0}$ in the right box. The remaining factors send $X_{1} \times X_{1}$ and $X_{1} \times X_{0}$ to $X_{11} \times X$ and $X_{0} \times X_{1}$, respectively. On each piece, the transformation $f$ consists of stretching or compressing horizontally and vertically, while leaving left-to-right and bottom-to-top orientations unchanged. An unlabelled subrectangle in the right-hand square indicates a portion of $X^{2}$ that lies outside of the image of $f$.

The full-support group $\Gamma$ is the same as the group $2V$ from \cite{BrinHighD}. 
Our construction of the expansion set $\mathcal{B}$ follows that of the basic construction 
from Subsection \ref{subsection:basic}, the sole difference again being 
the definition of $\mathcal{E}(b)$. Each $b \in \mathcal{B}$ can be written in the form
$[f,X^{2}]$ up to equivalence. The members of $\mathcal{E}(b)$ (and their partial ordering)
can best be described with the aid of Figure \ref{fig6}. The bottommost vertex represents 
the vertex $\{ [f, X^{2}] \}$. The left vertex $v_{\ell}$ is
$\{ [f, X \times X_{i}] \mid i \in \{ 0, 1\} \}$, the right vertex $v_{r}$ is 
$\{ [f, X_{i} \times X] \mid i \in \{ 0, 1 \} \}$, and the top vertex $v_{t}$
is
$\{ [f, X_{i} \times X_{j}] \mid i,j \in \{ 0,1 \} \}$. The simplicial realization 
 is a topological square.

In the case of the groups $nV$ for $n>2$, one has a similar picture. The main difference is that the sets $\mathcal{E}(b)$ are cubes of dimension $n$. Each vertex in $\mathcal{E}(b)$ that is adjacent to $\{ b \}$ represents a bisection of the set $X^{n}$ in one of the coordinate directions. The other vertices in $\mathcal{E}(b)$ combine these subdivisions in an obvious way. If we let
an $n$-tuple $(a_{1}, \ldots, a_{n}) \in \{ 0,1 \}^{n}$ label each corner of the $n$-cube, where $b$ is the corner with the label $(0,0,\ldots, 0)$, then the
corner $(a_{1}, \ldots, a_{n})$ has a ``cut" in each coordinate direction for which $a_{i} =1$. 

\begin{theorem} \cite{nV}
\label{theorem:nVFinfinity}
The groups $nV$ have type $F_{\infty}$, for all $n \geq 2$.
\end{theorem}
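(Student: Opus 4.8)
The plan is to verify the five hypotheses of Theorem \ref{theorem:template} for the expansion set $\mathcal{B}$ associated to $nV$ (concentrating on $2V$, with the general case being a routine extension), exactly as was done for $V$ in the proof of Theorem \ref{theorem:VFinfinity}. Since the construction of $\mathcal{B}$ for $nV$ follows the basic construction of Subsection \ref{subsection:basic} except for the replacement of $\mathcal{E}(b)$ by the small poset pictured in Figure \ref{fig6}, all parts of Convention \ref{convention:SShatGamma} hold, and one may reuse most of the argument verbatim. I would proceed by handling the easy hypotheses first: property (3) follows because $\widehat{S}_{b} \cong S_{X^{2}}$ by Proposition \ref{proposition:iso}, and $S_{X^{2}}$ is trivial by the definition of $S$ (the only member of $S$ with domain and image both equal to $X^{2}$ is $id_{X^{2}}$), while each $\mathcal{E}(b)$ has exactly four elements, hence is finite. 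Property (5) follows because $C_{0} = 2$ works for bounded contractions (the maximum height of a vertex in any $\mathcal{E}(b)$ is $2$, as each bisection splits $X^{2}$ into two pieces), and $C_{1} = 2$ works for the rich-in-contractions property (in the permutational sense) by the contraction procedure illustrated in Figure \ref{fig7}: any two pairs $[f,X^{2}], [g,X^{2}]$ with disjoint support can be rewritten, up to equivalence, with domains $X \times X_{1}$ and $X \times X_{0}$, and then combined into a single $[h,X^{2}]$ whose (unique) horizontal bisection expansion recovers them, so $lk_{\downarrow}(v)$ is non-empty whenever $|v| \geq 2$.

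Next I would treat property (4), the finiteness of the orbits of $\Gamma$ acting on $\mathcal{B}$. As in the $V$ case, I expect exactly two orbits: one containing the $[g,X^{2}]$ with $g(X^{2}) = X^{2}$, and one containing those with $g(X^{2})$ a proper subset. Elements of the two classes cannot be related by $\Gamma$ since $\Gamma$ consists of full-support bijections; within the full-support class, $[g_{1},X^{2}]$ and $[g_{2},X^{2}]$ are carried to one another by $g_{2}g_{1}^{-1} \in \Gamma$; and within the proper-support class, one first uses elements of $\Gamma$ to move both images to, say, $X_{1} \times X$, and then glues on $id_{X_{0} \times X}$ to produce the required full-support conjugator, just as in the proof of Theorem \ref{theorem:VFinfinity}. (By Remark \ref{remark:comments}(1) this computes $\widehat{S}$-orbits; the translation to $\Gamma$-orbits is the same minor issue as in the $V$ case.) For $nV$ with $n > 2$ one argues identically with $X^{n}$ in place of $X^{2}$.

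Property (2) is immediate from the shape of $\mathcal{E}(b)$: if $\{b\} \nearrow v$ with $\{b\} \neq v$, then since $\mathcal{E}(b)$ is the face poset of a square with $\{b\}$ at the bottom, the only vertex of $lk_{\uparrow}(b)$ adjacent to $\{b\}$ is not unique here — rather $lk_{\uparrow}(b)$ itself is a path of length two (the two bisection vertices joined through the top vertex), so I should instead argue that $lk_{\uparrow}(b)$ is contractible (it is a tree, or more simply, $|st_{\uparrow}(b)| = |\mathcal{E}(b)|$ is a contractible square, and the relative links $lk_{\uparrow}(b,v)$ are full subcomplexes of a contractible square that are easily checked to be either a point or an interval, hence contractible). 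Finally, property (1) — that the vertices of $\Delta^{f}_{\mathcal{B}}$ form a directed set under $\nearrow$ — follows the template of Proposition \ref{proposition:1and2}: any pair $[f,X^{2}]$ can be expanded (by a sequence of the allowed bisections) to a vertex indexed by an arbitrarily fine dyadic rectangular subdivision of $X^{2}$, and two such "dyadic brick" vertices with the same support expand to a common refinement because the common refinement of two dyadic rectangular partitions is again dyadic and rectangular; one then argues as in Proposition \ref{proposition:1and2}, using ascending edge-paths rather than single edges. Applying Theorem \ref{theorem:template} gives that $2V$, and analogously each $nV$, has type $F_{\infty}$.

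The main obstacle I anticipate is the directedness hypothesis (1): one must be slightly careful that every vertex of $\mathcal{E}(b)$ reachable from $\{b\}$ corresponds to a \emph{dyadic rectangular} subdivision — the allowed expansions only ever bisect in coordinate directions, so not every partition of $supp(b)$ into domains arises — and that the meet (common refinement) of two such subdivisions is still reachable by a chain of coordinate bisections. This is true because a dyadic rectangular partition of $X^{2}$ is exactly a "brick pattern" refining a grid, and the meet of two grids is a grid; but it does require observing that the class of partitions appearing in the $nV$ construction is closed under meets and is generated by coordinate bisections, which is a genuine (if easy) point of difference from the fully general basic construction. Everything else is a direct transcription of the $V$ argument with $X$ replaced by $X^{n}$.
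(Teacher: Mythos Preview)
Your overall approach is the same as the paper's, but there are two specific issues.

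First, a minor factual error: you claim $C_{0}=2$ because ``each bisection splits $X^{2}$ into two pieces''. But $\mathcal{E}(b)$ also contains the top vertex $v_{t}=\{[f,X_{i}\times X_{j}]\mid i,j\in\{0,1\}\}$, which has height $4$. So the correct constant is $C_{0}=4$ for $2V$, and $C_{0}=2^{n}$ in general (as the paper states). This does not break anything, since the bounded contractions property still holds with the corrected constant, but your stated reason is wrong.

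Second, and more substantively, your treatment of property~(2) contains the gap that the paper's proof is actually devoted to closing. You assert that the relative links $lk_{\uparrow}(b,v)$ are ``easily checked to be either a point or an interval'', but you do not perform the check, and ``full subcomplex of a contractible complex'' does not imply contractible. The case to rule out is precisely $lk_{\uparrow}(b,v)=\{v_{\ell},v_{r}\}$ (two disconnected points), which would occur if $v_{\ell}\nearrow v$ and $v_{r}\nearrow v$ but $v_{t}\not\nearrow v$. The paper's argument is that this is impossible: if $v$ refines both coordinate bisections, then $v$ refines their common refinement $v_{t}$, so $v_{t}\nearrow v$ and $v_{t}\in lk_{\uparrow}(b,v)$. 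In other words, $lk_{\uparrow}(b,v)$ is closed under joins in the lattice of corners of the square, hence has a maximum, hence is contractible. For $n>2$ the same lattice argument works on the $n$-cube. You essentially state this closure-under-meets idea in your discussion of property~(1), but you need to invoke it explicitly for~(2); that is where the content lies.
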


\begin{proof}
The proof is very much like the proof of Theorem \ref{theorem:VFinfinity}. 

We begin with (5) from Theorem \ref{theorem:template}. The bounded contractions property holds with the constant $C_{0} = 2^{n}$. This follows from the description of $\mathcal{E}(b)$ in the immediately preceding discussion. The rich-in-contractions property holds with 
constant $C_{1} = 2$ (i.e., $C_{1}$ is independent of $n$). Figure \ref{fig7} illustrates how to contract any two members of $\mathcal{B}$. The figure shows the $n=2$ case, but the basic result is true for all $n$. Thus, (5) is satisfied.

We note that the stabilizer groups $\widehat{S}_{b}$ are all trivial, and the sets 
$\mathcal{E}(b)$ are finite. It follows that property (3) from Theorem \ref{theorem:template}
holds.

Properties (1) and (4) from Theorem \ref{theorem:template} are satisfied for essentially the same reasons as in the proof of Theorem \ref{theorem:VFinfinity}. 

Property (2) is somewhat more complicated to check. We consider the case $n=2$, and indicate the general idea for larger $n$. We can assume, by Remark \ref{remark:discussion}, that 
$b = [id_{X^{2}}, X^{2}]$. If $n=2$, the relative ascending link
$lk_{\uparrow}(b,v)$ is a complex spanned by some subset of three particular vertices, namely the top, left, and right corners from Figure \ref{fig6}. It is not difficult to see that, if
$lk_{\uparrow}(b,v)$ contains the top vertex $v_{t}$, then it must also contain both the left and the right vertices, since both of these are clearly less than $v_{t}$ in the partial order
$\nearrow$. The link in this case is therefore a cellulated line segment, and therefore contractible. If $lk_{\uparrow}(b,v)$ is a single vertex (either the left or the right), then
it is clearly contractible. Thus, the only potential problem is that $lk_{\uparrow}(b,v)$ consists of only the left and right vertices, but not the top vertex. This, however, is easily seen to be impossible: the vertex $v$ represents some common refinement of the partitions represented by the left and right vertices, which must be a refinement of the partition represented by the top vertex; thus, $v_{t} \nearrow v$. It follows that $lk_{\uparrow}(b,v)$ is necessarily contractible.

For larger $n$, the proof is similar; indeed, the corners of the $n$-cube have a natural lattice 
structure, and one can argue that $lk_{\uparrow}(b,v)$ is closed under least upper bounds. 
An idea like this was used in \cite{nV}.
\end{proof}

\subsection{A Nekrashevych-R\"{o}ver group} \label{subsection:rover}

We next consider a group first by R\"{o}ver \cite{Rover}, generalizations of which were considered by Nekrashevych \cite{Nek}.
R\"{o}ver's group combines the Grigorchuk group \cite{Grigorchuk} and Thompson's group $V$ into one. 

We need to develop some basic conventions for describing automorphisms of the rooted infinite binary tree $T$. The ends of such a tree can be identified with the Cantor set $X = \prod_{i=1}^{\infty} \{ 0, 1 \}$, and therefore each $g \in Aut(T)$ determines a function $g: X \rightarrow X$. If $g_{1}, g_{2} \in Aut(T)$, then we let $(g_{1}, g_{2})$ be the automorphism defined by the rule
\[ (g_{1},g_{2})(\omega) = \left\{ \begin{array}{ll} 0g_{1}(\omega_{1}) & \text{ if }\omega = 0\omega_{1} \\
1g_{2}(\omega_{1}) & \text{ if }\omega = 1\omega_{1} \end{array} \right. . \]
Thus, $(g_{1},g_{2})$ fixes the first symbol of $\omega$, acting either as $g_{1}$ or $g_{2}$ on the remainder of $\omega$, according to whether the first symbol was $0$ or $1$, respectively. 

The Grigorchuk group $G_{0}$ is generated by four automorphisms: $a$, $b$, $c$, and $d$. 
The generator $a$ alters the first entry of any input 
$\omega \in X$, replacing a $1$ with a $0$ or a $0$ with a $1$. The remainder of $\omega$ is unchanged. The generators $b$, $c$, and $d$ can be simultaneously defined using the above pair notation:
\[ b = (a,c), \quad 
c = (a,d), \quad  
d = (1,b). \]
We let $K$ denote the group $\{ 1, b, c, d \}$. This is indeed a group, which is well-known to be isomorphic to the elementary abelian group of order $4$.

Let 
\[ S = \{ \sigma_{\epsilon}^{\omega_{2}} g \sigma_{\omega_{1}}^{\epsilon} \mid g \in G_{0}; \, \omega_{1}, \omega_{2} \in X^{fin} \}. \]
We again let $\widehat{S}$ denote the collection of all finite disjoint unions of members of $S$. The full-support group $\Gamma$
is R\"{o}ver's group. 

The construction of the expansion set $\mathcal{B}$ uses a different definition of the equivalence relation $\sim$ between pairs. The domains $\mathcal{D}^{+}_{S}$ still take the form $X_{\alpha}$, where $\alpha \in X^{fin}$. 
We write $(f,X_{\alpha}) \sim (g,X_{\beta})$ if there $h: X_{\alpha} \rightarrow X_{\beta}$ such that 
$h = \sigma_{\epsilon}^{\beta}k \sigma_{\alpha}^{\epsilon}$ for some $k \in K$, and
\[ g_{\mid X_{\beta}} \circ h = f_{\mid X_{\alpha}}. \]
 The proof that $\sim$ is an equivalence relation is just like the argument in Subsection  \ref{subsection:basic}, and relies on the fact that $K$ is a group. As always, we define 
$supp([f,X_{\alpha}]) = f(X_{\alpha})$. 

The sets $\mathcal{E}(b)$ are defined as indicated in Figure \ref{fig8}. The bottommost vertex is $b = [f,X_{\alpha}]$. 
The left vertex represents the standard subdivision into left and right halves, exactly as in Thompson's groups $F$, $T$, and $V$. The
right vertex also represents a subdivision into left and right halves, but precomposed with a twisting factor
$\sigma_{\epsilon}^{\alpha1}a\sigma_{\alpha1}^{\epsilon}$ on the right half. The latter transformation acts below the node $\alpha1$ in the same way that $a$ acts at the root. The top vertex is a three-fold subdivision of $[f,X_{\alpha}]$, in which one first performs the left-right subdivision, and then applies this basic subdivision to the right half. The verification of (3) from Definition \ref{definition:expansionsets} comes down to the observation that the basic left-right subdivision of
$[f\sigma_{\epsilon}^{\alpha1}a\sigma_{\alpha1}^{\epsilon},X_{\alpha1}]$ undoes the twisting, resulting in the pairs
$[f, X_{\alpha10}]$ and $[f,X_{\alpha11}]$.  

\begin{theorem} \label{theorem:roever} \cite{BelkMatucci} (The R\"{o}ver group has type $F_{\infty}$)
The group $\Gamma$ has type $F_{\infty}$.
\end{theorem}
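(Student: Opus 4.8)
The plan is to check the five hypotheses of Theorem \ref{theorem:template} for the expansion set $\mathcal{B}$ constructed above, exactly as was done for $V$ and $nV$, with the modifications forced by the twisted equivalence relation $\sim$ and the fact that the stabilizer groups are no longer trivial. Since the construction of $\mathcal{B}$ follows the basic construction of Subsection \ref{subsection:basic} except for the definition of $\sim$ and of $\mathcal{E}$, all parts of Convention \ref{convention:SShatGamma} hold more or less automatically (one checks that $\widehat{S}$ is closed under disjoint unions, that every $supp(b)$ is a domain of some $\hat{s}\in\widehat{S}$, and that $\widehat{S}$ acts on $\mathcal{B}$ via $f\cdot[g,X_\alpha]=[fg,X_\alpha]$, where well-definedness uses that $K$ is a group).

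First I would dispose of the easy conditions. Property (5): the bounded contractions property holds with $C_0=3$, since every vertex in every $\mathcal{E}(b)$ from Figure \ref{fig8} has height at most $3$; and the rich-in-contractions property (in the permutational sense) holds with $C_1=2$, by an argument like the one accompanying Figure \ref{fig4} — given two pairs $[g_1,X],[g_2,X]$ with disjoint support, rewrite them using $\sim$ to have domains $X_0$ and $X_1$ (possibly inserting a twist) and take the union, so that every vertex $v$ with $|v|\ge 2$ has nonempty descending link. Property (4): the $\widehat{S}$-orbits of members of $\mathcal{B}$ are controlled by Proposition \ref{proposition:orbits} (adapted to the twisted $\sim$), and since any two finite binary strings $\alpha,\beta$ are connected by a bijection $\sigma_\alpha^\beta\in S$, there are only finitely many (in fact, essentially two: full support and proper support, as for $V$) $\Gamma$-orbits. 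Property (1): the vertices of $\Delta^f_{\mathcal{B}}$ form a directed set, verified as in Proposition \ref{proposition:1and2} and the proof of Theorem \ref{theorem:VFinfinity}, using that any two binary-tree subdivisions of $X$ have a common refinement reachable by the allowed expansions (the twisting factors do not obstruct this, since a twist can always be undone by one further expansion).

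The two genuinely substantive points are properties (2) and (3). For (3), I would compute $\widehat{S}_b$ for $b=[id_X,X]$ using the analogue of Proposition \ref{proposition:iso}: the stabilizer consists of those $\hat{s}\in\widehat{S}$ with $\hat{s}\cdot b=b$ and $dom(\hat{s})=X$, and the twisted equivalence relation makes this the group of $k\in K$ (together with those elements of $S$ realizing self-equivalences), which is finite — so $\widehat{S}_b$ is finite, hence of type $F_\infty$; more generally $\widehat{S}_b$ for $b=[f,X_\alpha]$ is conjugate to a subgroup of $K$, again finite. One must also check that $\widehat{S}_b$ acts on the finite set $\mathcal{E}(b)$ with finite orbits, which is immediate since both $\widehat{S}_b$ and $\mathcal{E}(b)$ are finite; this gives Corollary \ref{corollary:Fnstabilizers}. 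For (2) — the contractibility of $lk_{\uparrow}(b,v)$ for $\{b\}\nearrow v\neq\{b\}$ — I would argue as in the $nV$ case: $\mathcal{E}(b)$ from Figure \ref{fig8} has the shape of a square (bottom, left, right, top), and $lk_{\uparrow}(b,v)$ is the subcomplex spanned by those of $\{v_\ell,v_r,v_t\}$ that lie below $v$ in $\nearrow$; one shows this collection is closed under least upper bounds (if $v$ dominates both $v_\ell$ and $v_r$ then it dominates their common refinement $v_t$, since any binary subdivision refining both the untwisted and the twisted bisection refines the three-fold subdivision $v_t$ once the twist is accounted for), so $lk_{\uparrow}(b,v)$ is a point or a segment, hence contractible.

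The main obstacle I anticipate is property (2), and specifically the bookkeeping around the twisting factors $\sigma_\epsilon^{\alpha1}a\sigma_{\alpha1}^{\epsilon}$: one must verify carefully that an arbitrary $v$ with $b\nearrow v$ is obtained from $\{b\}$ by a sequence of the allowed (twisted) expansions, and that the ``closed under least upper bounds'' argument survives the presence of the twist — i.e., that the poset $\mathcal{E}(b)$ really does behave like the four-element square lattice and that relative upward links inherit this. This is the place where R\"over's group genuinely differs from $V$, and where Figure \ref{fig8} and the remark that ``the basic left-right subdivision of $[f\sigma_\epsilon^{\alpha1}a\sigma_{\alpha1}^{\epsilon},X_{\alpha1}]$ undoes the twisting'' do the essential work. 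Once (1)--(5) are in hand, Theorem \ref{theorem:template} yields that $\Gamma$ has type $F_\infty$. $\square$
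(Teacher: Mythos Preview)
Your proposal is correct and follows essentially the same approach as the paper: verify hypotheses (1)--(5) of Theorem \ref{theorem:template}, with the same constants $C_{0}=3$, $C_{1}=2$, the same observation that the stabilizers $\widehat{S}_{b}$ are finite (in fact isomorphic to $K$, of order four), and the same identification of property (2) as the only substantive step. The paper likewise treats (1), (3), (4), (5) as routine and, rather than carrying out the lattice-closure argument you sketch for (2), simply refers the reader to Claim 6.25 of \cite{FH2}; your anticipation that the twist bookkeeping is exactly where the real work lies is on target.
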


\begin{proof}
We note that the construction of this subsection is different from the previous ones, in that
we are using a heavily restricted subset of $S$ in order to define the equivalence relation. 
Indeed, there are only four possibilities for the transformation $h: X_{\alpha} \rightarrow X_{\beta}$ under our definition of $\sim$, while there are infinitely many bijections
$s: X_{\alpha} \rightarrow X_{\beta}$ such that $s \in S$. In \cite{FH2}, this restriction is a special instance of an ``$S$-structure". Here we have described the equivalence relation without direct reference to that idea.

The properties (1) and (3)-(5) are easy to check. Property (5) is satisfied with constants
$C_{0}=3$ and $C_{1}=2$, respectively. (The ``rich-in-contractions" property works in exactly the same way as in $V$.) The stabilizer groups $\widehat{S}_{b}$ have order $4$.

The most substantial part of the proof involves checking (2). We omit the argument, which is Claim 6.25 in \cite{FH2}.
\end{proof}

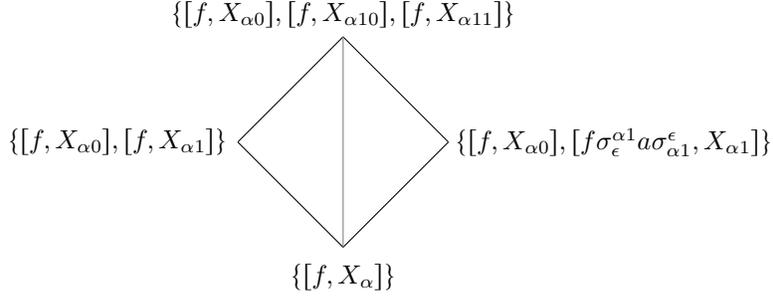
\begin{figure}[!t]
\begin{center}
\begin{tikzpicture}

\draw[black] (1.4,0) -- (0,1.4); 
\draw[black] (1.4,0) -- (2.8,1.4); 
\draw[black] (0,1.4) -- (1.4, 2.8); 
\draw[black] (2.8,1.4) -- (1.4,2.8); 
\node at (1.4,-.4){$\{ [f,X_{\alpha}] \}$};
\node at (-1.6,1.4){$\{ [f,X_{\alpha0}], [f,X_{\alpha1}] \}$};
\node at (5,1.4){$\{ [f,X_{\alpha0}], [f\sigma_{\epsilon}^{\alpha1}a\sigma_{\alpha1}^{\epsilon},X_{\alpha1}] \}$};
\node at (1.4,3.1){$\{ [f,X_{\alpha0}], [f,X_{\alpha10}], [f,X_{\alpha11}] \}$};

\draw[gray] (1.4,0) -- (1.4,2.8);

\end{tikzpicture}
\end{center}
\caption{The partially ordered set $\mathcal{E}(b)$ is a topological square; it is the realization of the Hasse diagram depicted here.}
\label{fig8}
\end{figure}

\subsection{The Lodha-Moore group and generalizations} \label{subsection:LM} 

In \cite{LM}, Lodha and Moore defined a group, which we will call the \emph{Lodha-Moore group}, and denote by $G$. Lodha and Moore \cite{LM} showed that the group $G$ is a finitely presented non-amenable group with no nonabelian free subgroups and can be defined by a presentation with only three generators and nine relators. Lodha \cite{Lodha} later showed that $G$ in fact has type $F_{\infty}$. This made $G$ the first-known $F_{\infty}$ non-amenable group with no nonabelian free subgroups.

The Lodha-Moore group is a group of piecewise projective homeomorphisms of the line. We will now describe a related group, here denoted $G'$, which is such that $G$ is an ascending HNN extension of $G'$ \cite{F(LM)}. (It follows, in particular, that $G$ has type $F_{\infty}$ if $G'$ does.) The group $G'$ is a group of piecewise projective homeomorphisms of the half-line $X = [0,\infty)$. Consider the inverse semigroup $S$ generated by the transformations $A:[0,1) \rightarrow [0,1/2)$, 
$B:[0,1) \rightarrow [1/2,1)$, $C:[0,1) \rightarrow [0,1)$, and $T:[0,\infty) \rightarrow [1,\infty)$, which are defined as follows:
\[ 
A(x) = \frac{x}{x+1}; \quad
B(x) = \frac{1}{2-x}; \quad
C(x) = \frac{2x}{x+1}; \quad 
T(x) = x+1.
\] 
We can consider an inverse semigroup $\widehat{S}$ by observing the ``linear" version of Convention \ref{convention:SShatGamma}, so that each transformation in $\widehat{S}$ is a homeomorphism (with respect to the usual topology on $\mathbb{R}$) between its domain and codomain. We define $G'$ to be the full-support group $\Gamma$ of homeomorphisms of $[0,\infty)$.

The set of domains $\mathcal{D}^{+}_{S}$ appears to be very difficult to describe. This presents a problem when one tries to apply the methods of \cite{FH2}, which make the set of \emph{all} domains in $\mathcal{D}^{+}_{S}$ part of the definition of the set $\mathcal{A}$. One idea from \cite{F(LM)} (and the main departure from the methods of \cite{FH2}) is to build from a simpler foundation. Let $I = [0,1)$, and let $\{ A, B \}^{\ast}$ denote the set of all positive words in the generators $A$ and $B$, including the empty word. We let 
\[ \mathcal{D}^{+}_{S,gen} = \{ T^{k}\omega(I) \mid \omega \in \{ A, B \}^{\ast}; k \geq 0 \} \cup 
\{ [k,\infty) \mid k \in \mathbb{Z}; k \geq 0 \}. \]
We call $\mathcal{D}^{+}_{S,gen}$ the set of \emph{generating domains}. There is a straightforward correspondence between the domains 
$\omega(I)$, for $\omega \in \{ A, B \}^{\ast}$ and the nodes of the infinite binary tree, which can be described inductively. 
The root of the tree corresponds to $I$. If a given node of the tree corresponds to the interval 
$\omega(I)$, then the left and right children correspond to $\omega A(I)$ and $\omega B(I)$, respectively. For instance, the interval $AB(I)$ is
$[1/3,1/2)$, which corresponds to the right child of the left child of the root.
Thus, the correspondence described here must not be confused with the usual correspondence between standard dyadic intervals and nodes of the infinite binary tree as described in \cite{CFP}, which assigns the interval $[1/4,1/2]$ to the same node. 
  
We now proceed to the description of the expansion set $\mathcal{B}$. We consider pairs
$(f,D)$ and $(g,E)$, where $f,g \in \widehat{S}$ and $D$ and $E$ are generating domains. The equivalence relation on such pairs is defined exactly as before: $(f,D) \sim (g,E)$ if
there is some $h \in S$ such that $h:D \rightarrow E$ is a bijection and
\[ g_{\mid E} \circ h = f_{\mid D}. \]
The support of a pair $[f,D]$ is $f(D)$.

The sets $\mathcal{E}(b)$ take two forms, depending on the form of $b$.
If $b = [f,[k,\infty)]$, then $\mathcal{E}(b)$ is a line segment. The lower vertex is $\{ b \}$, while the upper vertex is
\[ \{ [f, [k,k+1)], [f, [k+1,\infty) ] \}. \]
If $b = [f,D]$ for some $D \in \mathcal{D}^{+}_{S,gen}$ of the form $T^{k}\omega(I)$, then we can, up to equivalence, express $b$ in the form $[g,I]$. The simplicial complex $\mathcal{E}(b)$ is a simplicial cone on a cellulated real line, whose vertices are in the set $\frac{1}{2}\mathbb{Z}$. See Figure \ref{fig9}.
\begin{figure} [!h]
\begin{center}
\begin{tikzpicture}

\filldraw[lightgray] (4,1.5) -- (0,3) -- (1,3.25) -- cycle;
\filldraw[lightgray] (4,1.5) -- (2,3) -- (3,3.25) -- cycle;
\filldraw[lightgray] (4,1.5) -- (4,3) -- (5,3.25) -- cycle;
\filldraw[lightgray] (4,1.5) -- (6,3) -- (7,3.25) -- cycle;
\filldraw[lightgray] (4,1.5) -- (1,3.25) -- (2,3) -- cycle;
\filldraw[lightgray] (4,1.5) -- (3,3.25) -- (4,3) -- cycle;
\filldraw[lightgray] (4,1.5) -- (5,3.25) -- (6,3) -- cycle;
\filldraw[lightgray] (4,1.5) -- (7,3.25) -- (8,3) -- cycle;

\draw[gray, thick] (4,1.5) -- (0,3);
\draw[gray, thick] (0,3) -- (1,3.25);
\draw[gray, thick] (4,1.5) -- (1,3.25);
\draw[gray, thick] (1,3.25) -- (2,3);
\draw[gray, thick] (4,1.5) -- (2,3);
\draw[gray, thick] (2,3) -- (3,3.25);
\draw[gray, thick] (4,1.5) -- (3,3.25);
\draw[gray, thick] (3,3.25) -- (4,3);
\draw[gray, thick] (4,1.5) -- (4,3);
\draw[gray, thick] (4,3) -- (5,3.25);
\draw[gray, thick] (4,1.5) -- (5,3.25);
\draw[gray, thick] (5,3.25) -- (6,3);
\draw[gray, thick] (4,1.5) -- (6,3);
\draw[gray, thick] (6,3) -- (7,3.25);
\draw[gray, thick] (4,1.5) -- (7,3.25);
\draw[gray, thick] (7,3.25) -- (8,3);
\draw[gray, thick] (4,1.5) -- (8,3);

\node at (0,3.25){-2};
\node at (2,3.25){-1};
\node at (4,3.25){0};
\node at (6,3.25){1};
\node at (8,3.25){2};
\node at (4,1.25){b};

\filldraw [gray] (0,2.25) circle (1.5pt);
\filldraw [gray] (-.5,2.25) circle (1.5pt);
\filldraw [gray] (-1,2.25) circle (1.5pt);

\filldraw [gray] (8,2.25) circle (1.5pt);
\filldraw [gray] (8.5,2.25) circle (1.5pt);
\filldraw [gray] (9,2.25) circle (1.5pt);
\end{tikzpicture}
\end{center}
\caption{The infinite cellulated fan is the simplicial realization of $\mathcal{E}([g,I])$. Vertices that are higher on the page have a greater height (in the sense of Definition \ref{definition:height}): vertices with integral labels have height $2$, while the ``fractional" vertices have height $3$. The length of a maximal ascending chain in $\mathcal{E}([g,I])$ is $3$.}
\label{fig9}
\end{figure}
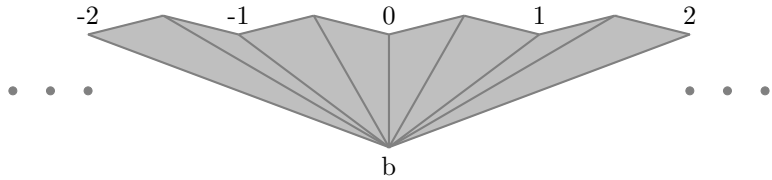
The vertices with integral labels $k$ represent vertices of the following form:
\[ \{ [gC^{k}, A(I)], [gC^{k}, B(I)] \}. \]
Such vertices represent a left-right subdivision of $[g,I]$ with a ``twisting" factor of $C^{k}$.
A ``fractional" vertex $k-1/2$ is obtained by subdividing the first member of the above vertex in half:
\[ \{ [gC^{k}, AA(I)], [gC^{k}, AB(I)], [gC^{k},B(I)] \}. \]

\begin{theorem} \cite{F(LM)}
The group $G'$ has type $F_{\infty}$.
 \end{theorem}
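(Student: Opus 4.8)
The plan is to verify the five hypotheses of Theorem \ref{theorem:template}, using the alternate hypothesis (3') in place of (3) (since the sets $\mathcal{E}(b)$ are infinite here — they are infinite cellulated fans — but the stabilizer groups $\widehat{S}_{b}$ act cocompactly on $\mathcal{E}(b)$, in fact with finitely many orbits of cells, because the action of $C^{\bz}$ or of the relevant finite group identifies the fan blades). So I will: (i) confirm that $\mathcal{B}$, $\widehat{S}$, and the action satisfy Convention \ref{convention:SShatGamma}, which is essentially by construction, noting that $\mathcal{B}$ is \emph{linear} (the half-line $X = [0,\infty)$ carries its usual order and every $supp(b)$ is an interval) and that we should use the linear versions of everything; (ii) establish the bounded contractions property (Definition \ref{definition:boundedcontractions}) with constant $C_{0} = 3$, since a maximal ascending chain in any $\mathcal{E}(b)$ has length $3$ and the largest height of a vertex in any $\mathcal{E}(b)$ is $3$ (the ``fractional'' vertices), while the $[f,[k,\infty)]$-type expansion sets have height at most $2$; (iii) establish the rich-in-contractions property in the linear sense with some constant $C_{1}$ by showing directly that any consecutively ordered vertex of sufficiently large height admits a contraction — one shows that an appropriate pair of adjacent members $[f,AA(I)], [f,AB(I)]$ (or the interval-at-infinity pairs) can be merged into a single pair of which it is an expansion, exactly as in the $V$ case but requiring care with the $C^{k}$ twisting factors and the $T$-shift at infinity; (iv) verify (4), finitely many $\Gamma$-orbits on $\mathcal{B}$, using Proposition \ref{proposition:orbits} and the observation that the generating domains $T^{k}\omega(I)$ all have the same domain type (any two are related by a member of $S$: $\omega_{1}(I) \to \omega_{2}(I)$ via $\sigma$-type maps, and $T^{k}$ shifts), while the half-line domains $[k,\infty)$ form one additional domain type — so there are boundedly many orbits; and (v) verify (1), that the vertices of $\Delta^{f}_{\mathcal{B}}$ form a directed set under $\nearrow$, arguing as in Proposition \ref{proposition:1and2}: two full-support vertices have supports that can be commonly refined because the intersection of two generating domains, though perhaps not itself a generating domain, can be reached from both by expansions (this is where one uses that $\mathcal{E}(b)$ includes all the subdivisions of the fan, and that $G'$ acts transitively enough).

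The step I expect to be the genuine obstacle is hypothesis (2): for each $b \in \mathcal{B}$ and each $v$ with $\{b\} \nearrow v$, $v \neq b$, the relative ascending link $lk_{\uparrow}(b,v)$ must be contractible. Unlike the $V$, $nV$, and R\"over cases — where $\mathcal{E}(b)$ is finite (a point, an $n$-cube, a square) and the relative link is visibly contractible by an elementary lattice argument — here $\mathcal{E}(b)$ is an infinite fan, and $lk_{\uparrow}(b,v)$ is the link of the cone point in the relative ascending star $st_{\uparrow}(b,v)$, which is some subfan determined by which cells of $\mathcal{E}(b)$ lie below $v$. Establishing that every such subcomplex is contractible requires a real combinatorial analysis of the poset structure of the fan $\mathcal{E}([g,I])$ together with the compatibility condition Definition \ref{definition:expansionsets}(3) (which constrains which subfans can arise as $\{u : u \nearrow v\}$). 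This is precisely the involved argument carried out in \cite{F(LM)}, and I would not reproduce it; I would instead cite \cite{F(LM)} for the verification of (2), exactly as the excerpt does for R\"over's group (where the analogous hard step is ``Claim 6.25 in \cite{FH2}'').

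With all five hypotheses of Theorem \ref{theorem:template} in hand — using (3') for the stabilizer/cocompactness condition — the theorem yields that $G' = \Gamma$ has type $F_{\infty}$. I would close by remarking, following the discussion preceding the statement, that since $G$ is an ascending HNN extension of $G'$ (with stable letter $T$), and ascending HNN extensions of type $F_{\infty}$ groups are again type $F_{\infty}$, one recovers Lodha's theorem that $G$ has type $F_{\infty}$; but since the statement to be proved is only about $G'$, this last remark is a parenthetical. The only subtlety I want to flag in the write-up is to be explicit that we are invoking the linear case throughout (linear expansion set, linear rich-in-contractions, the linear connectivity estimate Theorem \ref{theorem:connectivityeasylinear} feeding into Corollary \ref{corollary:connectivityeasy}), since $X = [0,\infty)$ is ordered and the supports are genuine intervals — this is the case that makes $G'$ an ``$F$''-like group rather than a ``$V$''-like one.
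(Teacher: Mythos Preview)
Your proposal is correct and follows essentially the same approach as the paper: verify hypotheses (1), (2), (3'), (4), (5) of Theorem \ref{theorem:template} in the linear setting, citing \cite{F(LM)} for the hard step (2). Your constants match ($C_{0}=3$; the paper gives $C_{1}=2$ explicitly), and your treatment of (3') via Proposition \ref{proposition:finiteindex} --- stabilizers virtually free abelian, hence $F_{\infty}$, with the cyclic $\widehat{S}_{b}$ acting cocompactly on the fan --- is exactly what the paper does.

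Two small points where the paper is sharper than your write-up. First, you treat (1) as essentially routine, ``arguing as in Proposition \ref{proposition:1and2}''; the paper flags this as ``somewhat more complicated than normal'' and cites Theorem 4.12 of \cite{F(LM)}, which depends on specific semigroup identities (Lemma 3.3 there). The issue is that intersections of generating domains are genuinely not generating domains, and the common-refinement argument requires real work with the $A$, $B$, $C$ relations --- so (1), like (2), should be attributed to \cite{F(LM)} rather than sketched. Second, for (2) you describe $lk_{\uparrow}(b,v)$ as ``some subfan''; the paper records the more precise fact that it is always a cellulated line segment in the fan of Figure \ref{fig9} (hence obviously contractible once one knows which segment arises), and notes that identifying this segment is the substantial content of \cite{F(LM)}. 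Neither point affects correctness, but both deserve explicit citation rather than a sketch.
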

 
\begin{proof}
We discuss the group $G'$ under the headings (1)-(5) from Theorem \ref{theorem:template}.

Property (1) is somewhat more complicated than normal. The directed set property is proved in Theorem 4.12 of \cite{F(LM)}; this crucially uses a few semigroup identities from Lemma 3.3 of \cite{F(LM)}.

 Properties (3'), (4), and (5) pose few difficulties. The stabilizer groups $\widehat{S}_{b}$ are infinite cyclic when $b=[g,I]$, or trivial when $b = [g,[0,\infty)]$. It follows from Proposition \ref{proposition:finiteindex} that every vertex stabilizer group $\Gamma_{v}$ has a finite-index free abelian subgroup. This shows that every cell stabilizer $\Gamma_{\sigma}$ is a subgroup of a virtually free abelian group, and therefore has type $F_{\infty}$. (In fact, a closer analysis reveals that vertex stabilizers are always free abelian, not merely virtually so.) The cocompactness of the action of
 $\widehat{S}_{b} \cong \mathbb{Z}$ ($b=[g,I]$) on
 the cellulated fan of Figure \ref{fig9} is rather clear, and the cocompactness is trivial when $b=[g,[0,\infty)]$. This proves (3'). Property (4) is satisfied with three orbits; property (5) is satisfied with the constants
 $C_{0} = 3$ and $C_{1} =2$. 
 
Property (2) is, by far, the most difficult to check. We omit a detailed discussion, but note that, 
for $b=[id_{I},I]$, the relative ascending link $lk_{\uparrow}(b,v)$ is always a cellulated line segment in the fan pictured in Figure \ref{fig9}.  (Property (2) is trivial to check when
$b=[id,[0,\infty)]$; the relative ascending link is always a point in this case.) The proof occupies a large part of \cite{F(LM)} and it is adapted from some of the ideas of \cite{LM}.
\end{proof}
 
\begin{remark}
\label{remark:final}
One feature of the proof from \cite{F(LM)} (and the argument in this paper) is that it handles the ``$F$", ``$T$", and ``$V$" versions of the Lodha-Moore group equally well. In particular,
\cite{F(LM)} proved that some of the ``$V$" versions of the Lodha-Moore group also have type $F_{\infty}$, which answered a question from \cite{LodhaZ}, where the ``$T$" versions 
of the Lodha-Moore group were proved to have type $F_{\infty}$.

Finally, we note that Kodama \cite{Kodama} introduced various $n$-ary generalizations of the Lodha-Moore group, and proved that the groups in question are finitely presented counterexamples to the von Neumann-Day problem. Moawad \cite{Moawad} used methods similar to the ones we have outlined in this paper to show that certain similar groups have type $F_{\infty}$. The groups in question contain Kodama's groups as proper subgroups. We note that the question of $F_{\infty}$ remains open for the groups in \cite{Kodama}. 
\end{remark}

\bibliographystyle{plain}
\bibliography{biblio}

\end{document}